\newtheorem{theorem}{Theorem}[section]
\newtheorem{lemma}[theorem]{Lemma}
\newtheorem{proposition}[theorem]{Proposition}
\newtheorem{corollary}[theorem]{Corollary}
\newtheorem{example}[theorem]{Example}
\theoremstyle{definition}
\newtheorem{definition}[theorem]{Definition}
\theoremstyle{remark}
\newtheorem{remark}[theorem]{Remark}
\definecolor{A}{rgb}{.75,1,.75}
\numberwithin{equation}{section}
\newcommand{\ch}{\text{ch} }
\newcommand{\C}{ \mathbb C}
\newcommand{\Cl}{{\mathcal Cl}}
\newcommand{\End}{{\rm End}}
\newcommand{\ga}{\mathfrak{g}}
\newcommand{\gl}{\mathfrak{gl} }
\newcommand{\h}{\mathfrak{h}}
\newcommand{\ind}{\text{Ind}}
\newcommand{\HC}{\mc{Hc}}
\newcommand{\Hom}{\text{Hom} }
\newcommand{\hf}{\frac12}
\newcommand{\la}{\lambda}
\newcommand{\La}{\Lambda}
\newcommand{\Ga}{\Gamma}
\newcommand{\mc}{\mathcal}
\newcommand{\mf}{\mathfrak}
\newcommand{\N}{\mathbb N}
\newcommand{\ov}{\overline}
\newcommand{\ev}[1]{{#1}_{\bar{0}}}
\newcommand{\od}[1]{{#1}_{\bar{1}}}
\newcommand{\Pdn}{\mathcal{P}_d(n)}
\newcommand{\Q}{\mathbb Q}
\newcommand{\qn}{\mf q (n)}
\newcommand{\sgn}{\text{sgn}}
\newcommand{\td}{\widetilde}
\newcommand{\tr}{\text{tr}}
\newcommand{\Z}{ \mathbb Z }
\newcommand{\al}{\alpha}
\newcommand{\ds}{\displaystyle}
\newcommand{\Haff}{\HC_n^{\rm aff}}
\newcommand{\uli}{\underline{i}}
\title[Lectures on spin representation theory]
{Lectures on spin representation theory of symmetric groups}
\author[Wan and Wang]{Jinkui Wan and Weiqiang Wang}
\address{
Department of Mathematics,
Beijing Institute of Technology,
Beijing, 100081, P.R. China. }
\email{wjk302@gmail.com}
\address{Department of Mathematics, University of Virginia,
Charlottesville,VA 22904, USA.}
\email{ww9c@virginia.edu}
\begin{document}

\maketitle

\begin{abstract}
The representation theory of the symmetric groups is intimately
related to geometry, algebraic combinatorics, and Lie theory. The
spin representation theory of the symmetric groups was originally
developed by Schur. In these lecture notes, we present a coherent
account of the spin counterparts of several classical constructions
such as the Frobenius characteristic map, Schur duality, the
coinvariant algebra, Kostka polynomials, and Young's seminormal
form.
\end{abstract}

\setcounter{tocdepth}{1}
 \tableofcontents

\section{Introduction}

\subsection{}

The representation theory of symmetric groups has many connections
and applications in geometry, combinatorics and Lie theory. The
following classical constructions in representation theory of
symmetric groups over the complex field $\C$ are well known:
\begin{enumerate}
\item
The characteristic map and symmetric functions

\item
Schur duality

\item
The coinvariant algebra

\item
Kostka numbers and Kostka polynomials

\item
Seminormal form representations and Jucys-Murphy elements
%
%
%
\end{enumerate}
(1) and (2) originated in the work of Frobenius and  Schur, (3) was
developed by Chevalley (see also Steinberg \cite{S}, Lusztig
\cite{Lu1}, and Kirillov \cite{Ki}). The Kostka polynomials in (4)
have striking combinatorial, geometric and representation theoretic
interpretations, due to Lascoux, Sch\"utzenberger, Lusztig,
Brylinski, Garsia and Procesi \cite{LS, Lu2, Br, GP}. Young's
seminormal form construction of irreducible modules of symmetric
groups has been redone by Okounkov and Vershik \cite{OV} using
Jucys-Murphy elements.

Motivated by projective (i.e., spin) representation theory of finite
groups and in particular of symmetric groups $\mf S_n$, Schur
\cite{Sch} introduced a double cover $\td{\mf S}_n$ of $\mf S_n$:
$$
1 \longrightarrow \Z_2 \longrightarrow \widetilde{\mf S}_n
\longrightarrow \mf S_n \longrightarrow 1.
$$
Let us write $\Z_2 =\{1,z\}$. The spin representation theory of $\mf
S_n$, or equivalently, the representation theory of the spin group
algebra $\C \mf S_n^- =\C \widetilde{\mf S}_n/\langle z+1\rangle$,
has been systematically developed by Schur (see J\'ozefiak
\cite{Jo1} for an excellent modern exposition via a superalgebra
approach; also see Stembridge \cite{St}).

The goal of these lecture notes is to provide a systematic account
of the spin counterparts of the classical constructions (1)-(5)
above over $\C$. Somewhat surprisingly, several of these spin
analogues have been developed only very recently (see for example
\cite{WW2}). It is our hope that these notes will be accessible to
people working in algebraic combinatorics who are interested in
representation theory and to people in super representation theory
who are interested in applications.

In addition to the topics (1)-(5), there are spin counterparts of
several classical basic topics which are not covered in these
lecture notes for lack of time and space: the
Robinson-Schensted-Knuth correspondence (due to Sagan and Worley
\cite{Sag, Wor}; also see \cite{GJK} for connections to crystal
basis); the plactic monoid (Serrano \cite{Ser}); Young symmetrizers
\cite{Naz, Se2}; Hecke algebras \cite{Ol, JN, Wa1, Wa2}. We refer an
interested reader to these papers and the references therein for
details.

Let us explain the contents of the lecture notes section by section.

\subsection{}
In Section \ref{sec:spinHC}, we explain how Schur's original
motivation of studying the projective representations of the
symmetric groups leads one to  study the  representations of the
spin symmetric group algebras. It has become increasingly well known
(cf. \cite{Jo2, Se2, St, Ya} and \cite[Chap.~13]{Kle}) that the
representation theory of spin symmetric group (super)algebra $\C\mf
S_n^-$ is super-equivalent to its counterpart for Hecke-Clifford
(super)algebra $\HC_n = \Cl_n \rtimes \C\mf S_n$. We shall explain
such a super-equivalence in detail, and then we mainly work with the
algebra $\HC_n$, keeping in mind that the results can be transferred
to the setting for $\C\mf S_n^-$. We review the basics on
superalgebras as needed.

The Hecke-Clifford superalgebra $\HC_n$ is identified as a quotient of
the group algebra of a double cover $\widetilde{B}_n$ of the
hyperoctahedral group $B_n$, and this allows us to apply various
standard finite group constructions to the study of representation
theory of $\HC_n$. In particular, the split conjugacy classes for
$\widetilde{B}_n$ (due to Read \cite{Re}) are classified.

\subsection{}
It is well known that the Frobenius characteristic map serves as a
bridge to relate the representation theory of symmetric groups to
the theory of symmetric functions.

In Section~\ref{sec:spinch}, the direct sum $R^-$ of the
Grothendieck groups of $\HC_n$-$\mf{mod}$ for all $n$ is shown to
carry a graded algebra structure and a bilinear form. Following
J\'ozefiak \cite{Jo2}, we formulate a spin version of the Frobenius
characteristic map
$$
\ch^-: R^- \longrightarrow \Gamma_\Q
$$
and establish its main properties, where $\Gamma_\Q$ is the ring of
symmetric functions generated by the odd power-sums. It turns out
that the Schur $Q$-functions $Q_\xi$ associated to strict partitions
$\xi$ play the role of Schur functions, and up to some $2$-powers,
they correspond to the irreducible $\HC_n$-modules $D^\xi$.

\subsection{}
The classical Schur duality relates the representation theory of the
general linear Lie algebras and that of the symmetric groups.

In Section~\ref{sec:duality}, we explain in detail the Schur-Sergeev
duality as formulated concisely in \cite{Se1}. A double centralizer
theorem for the actions of $\qn$ and the Hecke-Clifford algebra
$\HC_d$ on the tensor superspace $(\C^{n|n})^{\otimes d}$ is
established, and this leads to an explicit multiplicity-free
decomposition of the tensor superspace as a $U(\qn) \otimes
\HC_d$-module. As a consequence, a character formula for the simple
$\qn$-modules appearing in the tensor superspace is derived in terms
of Schur $Q$-functions. A more detailed exposition on materials covered
in Sections \ref{sec:spinch} and \ref{sec:duality} can be found in
\cite[Chapter 3]{CW}.

\subsection{}
The symmetric group $\mf S_n$ acts on $V=\C^n$ and then on the
symmetric algebra $S^*V$ naturally. A closed formula for the graded
multiplicity of a Specht module $S^\la$ for a partition $\la$ of $n$
in the graded algebra $S^*V$ in different forms has been well known
(see Steinberg \cite{S}, Lusztig \cite{Lu1} and Kirillov \cite{Ki}). More generally,
Kirillov and Pak \cite{KP} obtained the bi-graded multiplicity of
the Specht module $S^\la$ for any $\la$ in $S^*V \otimes \wedge^* V$
(see Theorem \ref{thm:KPak}), where $\wedge^* V$ denotes the
exterior algebra. We give a new proof here by relating this
bi-graded multiplicity to a $2$-parameter specialization of the
super Schur functions.

In Section \ref{sec:coinvariant}, we formulate a spin analogue of the
above graded multiplicity formulas. We present formulas with new
proofs for the (bi)-graded multiplicity of a simple $\HC_n$-module
$D^{\xi}$ in $\Cl_n\otimes S^*V, \Cl_n\otimes S^*V\otimes\wedge^*V$
and $\Cl_n\otimes S^*V\otimes S^*V$ in terms of various
specializations of the Schur $Q$-function $Q_{\xi}(z)$. The case of
$\Cl_n\otimes S^*V\otimes S^*V$ is new in this paper, while the
other two cases were due to the authors \cite{WW1}. The shifted hook
formula for the principal specialization $Q_{\xi}(1,t,t^2,\ldots)$
of $Q_{\xi}(z)$ was established by the authors \cite{WW1}
with a bijection proof and  in a different form  by Rosengren
\cite{Ro} based on formal Schur function identities. Here we present yet a third proof.

\subsection{}
The Kostka numbers and Kostka(-Foulkes) polynomials are ubiquitous
in combinatorics, geometry, and representation theory. Kostka
polynomials  have positive integer coefficients (see \cite{LS} for a
combinatorial proof, and see \cite{GP} for a geometric proof).
Kostka polynomials also coincide with Lusztig's $q$-weight
multiplicity in finite-dimensional irreducible representations of
the general linear Lie algebra \cite{Lu2, Ka}, and these are
explained by a Brylinski-Kostant filtration on the weight spaces
\cite{Br}. More details can be found in  the book of Macdonald
\cite{Mac} and the survey paper \cite{DLT}.

In Section \ref{sec:spinKostka}, following a very recent work of the
authors \cite{WW2}, we formulate a notion of spin Kostka
polynomials, and establish their main properties including the
integrality and positivity as well as interpretations in terms of
representations of the Hecke-Clifford algebras and the queer Lie
superalgebras. The graded multiplicities in the spin coinvariant
algebra described in Section \ref{sec:coinvariant} are shown to be
special cases of spin Kostka polynomials. Our constructions
naturally give rise to formulations of the notions of spin
Hall-Littlewood functions and spin Macdonald polynomials.

\subsection{}

By studying the action of the Jucys-Murphy elements  on the
irreducible $\mf S_n$-modules, Okounkov and Vershik \cite{OV}
developed a new approach to the representation theory of symmetric
groups. In their approach, one can see the natural appearance of
Young diagrams and standard tableaux, and obtain in the end Young's
seminormal form. A similar construction for the degenerate affine
Hecke algebra associated to $\mf S_n$ has been obtained by
Cherednik, Ram and Ruff  \cite{Ch, Ram, Ru}.

In Section \ref{sec:seminormal}, we explain a recent approach to
Young's seminormal form construction for the (affine) Hecke-Clifford
algebra. The affine Hecke-Clifford algebra $\Haff$ introduced by
Nazarov \cite{Naz} provides a natural general framework for $\HC_n$.
Following the independent works of Hill, Kujawa and Sussan
\cite{HKS} and the first author \cite{Wan}, we classify and
construct the irreducible $\Haff$-modules on which the polynomial
generators in $\Haff$ act semisimply. A surjective homomorphism from
$\Haff$ to $\HC_n$ allows one to pass the results for $\Haff$ to
$\HC_n$, and in this way we obtain Young's seminormal form for
irreducible $\HC_n$-modules. This recovers a construction of Nazarov
\cite{Naz} and the main result of Vershik-Sergeev \cite{VS} who
followed more closely Okounkov-Vershik's approach.

\medskip

{\bf Acknowledgments.} This paper is a modified and expanded written
account of the 8 lectures given by the authors at the Winter School
on Representation Theory, held at Academia Sinica, Taipei, December
2010. We thank Shun-Jen Cheng for his hospitality and a very
enjoyable winter school. The paper was partially written up during
our visit to Academia Sinica in Taipei and NCTS (South) in Tainan,
from which we gratefully acknowledge the support and excellent
working environment. Wan's research is partially supported
by Excellent young scholars Research Fund of Beijing Institute of Technology.
Wang's research has been partially supported by
NSF. We thank the referee for his careful reading and helpful suggestions.

\section{Spin symmetric groups and Hecke-Clifford algebra}\label{sec:spinHC}

In this section, we formulate an equivalence between the spin
representation theory of the symmetric group $\mf S_n$ and the
representation theory of the Hecke-Clifford algebra $\HC_n$. The
algebra $\HC_n$ is then identified as a twisted group algebra for a
distinguished double cover $\widetilde{B}_n$ of the hyperoctahedral
group $B_n$. We classify the split conjugacy classes of
$\widetilde{B}_n$ and show that the number of simple $\HC_n$-modules
is equal to the number of strict partitions of $n$.

\subsection{From spin symmetric groups to $\HC_n$}\label{sec:HC}

The symmetric group $\mf S_n$ is generated by the simple reflections
$s_i =(i,i+1), 1\leq i,j\leq n-1,$ subject to the Coxeter relations:
\begin{align}
s_i^2=1,\quad s_is_j =s_js_i, \quad s_is_{i+1}s_i=s_{i+1}s_is_{i+1},
\quad|i-j|>1. \label{braid}
\end{align}

One of Schur's original motivations is the study of projective
representations $V$ of $\mf S_n$, which are homomorphisms $\mf S_n
\rightarrow PGL(V):=GL(V)/\C^*$ (see \cite{Sch}). By a sequence
of analysis and deduction, Schur showed the study of projective
representation theory (RT for short) of $\mf S_n$ is equivalent to
the study of (linear) representation theory of a double cover $\td
{\mf S}_n$:
$$
\text{Projective RT of }\mf S_n \Leftrightarrow \text{ (Linear) RT
of } \td {\mf S}_n
$$

A double cover $\td {\mf S}_n$ means the following short exact
sequence of groups (nonsplit for $n \ge 4$):
$$
1 \longrightarrow \{1,z\} \longrightarrow \td {\mf S}_n
\stackrel{\pi_n}{\longrightarrow} \mf S_n \longrightarrow 1.
$$
The quotient algebra $\C \mf S_n^- = \C \td {\mf S}_n/ \langle
z+1\rangle$ by the ideal generated by $(z+1)$ is call the {\em spin
symmetric group algebra}. The algebra $\C {\mf S}_n^-$ is an algebra
generated by
 $t_1,t_2,\ldots,t_{n-1}$ subject to the relations:
\begin{align*}
t_i^2=1,\quad t_it_{i+1}t_i=t_{i+1}t_it_{i+1},\quad
t_it_j&=-t_jt_i,\quad |i-j|> 1.
\end{align*}
(A presentation for the group $\td {\mf S}_n$ can be obtained from
the above formulas by keeping the first two relations and replacing
the third one by $t_it_j =zt_jt_i$.) $\mathbb{C}{\mf S}_n^-$ is naturally
a super (i.e., $\Z_2$-graded) algebra with each $t_i$ being odd, for
$1\leq i\leq n-1$.

By Schur's lemma, the central element $z$ acts as $\pm 1$ on a
simple $\td {\mf S}_n$-module. Hence we see that
$$
\text{RT of } \td {\mf S}_n \Leftrightarrow \text{ RT of } \mf S_n
\bigoplus \text{ RT of } \C \mf S_n^-
$$
Schur then developed systematically the spin representation theory
of $\mf S_n$ (i.e., the representation theory of $\C \mf S_n^-$). We
refer to J\'ozefiak \cite{Jo1} for an excellent modern exposition
based on the superalgebra approach.

The development since late 1980's by several authors shows that the
representation theory of $\C\mf S_n^-$ is ``super-equivalent" to
the representation theory of a so-called Hecke-Clifford algebra
$\HC_n$:
\begin{equation}  \label{eq:spinS=HC}
\text{RT of } \C \mf S_n^- \Leftrightarrow \text{ RT of } \HC_n
\end{equation}
We will formulate this super-equivalence precisely in the next
subsections.

\subsection{A digression on superalgebras}  \label{sec:superalg}
By a vector superspace we mean a $\Z_2$-graded space $V =\ev V
\oplus \od V$. A superalgebra $\mc A =\ev{\mc{A}} \oplus
\od{\mc{A}}$ satisfies $\mc{A}_i \cdot \mc{A}_j \subseteq
\mc{A}_{i+j}$ for $i,j \in\Z_2$. By an ideal $I$ and a module $M$
of a superalgebra $\mc{A}$ in these lecture notes, we always mean
that $I$ and $M$ are $\Z_2$-graded, i.e., $I=(I\cap \ev{\mc{A}})\oplus
(I\cap \od{\mc{A}})$, and $M=\ev M\oplus \od M$ such that
$\mc{A}_iM_j\subseteq M_{i+j}$ for $i, j\in\Z_2$.
For a superalgebra $\mc A$, we let $\mc A$-$\mf{mod}$ denote the
category of $\mc A$-modules (with morphisms of degree one allowed). This superalgebra
approach handles ``self-associated and associates of simple modules"
simultaneously in a conceptual way. There is a parity reversing
functor $\Pi$ on the category of vector superspaces (or module
category of a superalgebra): for a vector superspace $V=\ev
V\oplus\od V$, we let
$$
\Pi(V)=\ev{\Pi(V)}\oplus\od{\Pi(V)}, \quad \Pi(V)_i=V_{i+\bar{1}},
\forall i\in\Z_2.
$$
Clearly, $\Pi^2 =\text{I}.$

Given a vector superspace $V$ with both even and odd subspaces of
equal dimension  and given an odd automorphism $P$ of $V$ of order
$2$, we define the following subalgebra of the endomorphism
superalgebra $\End (V)$:
$$
Q(V) =\{ x \in \End (V) \mid x \text{ and } P \text{
super-commute}\}.
$$
In case when $V =\C^{n|n}$ and $P$ is the linear transformation in
the block matrix form
\begin{equation*}
\sqrt{-1}\begin{pmatrix}
0&I_n \\
-I_n &0\\
\end{pmatrix},
\end{equation*}
we write $Q(V)$ as $Q(n)$, which consists of $2n\times 2n$
matrices of the form:
\begin{equation*}
\begin{pmatrix}
a&b\\
b&a\\
\end{pmatrix},
\end{equation*}
where $a$ and $b$ are arbitrary $n\times n$ matrices, for $n \ge 0$.
Note that we have a superalgebra isomorphism $Q(V) \cong Q(n)$ by
properly choosing coordinates in $V$, whenever $\dim V =n|n$. A
proof of the following theorem can be found in J\'ozefiak  \cite{Jo}
or \cite[Chapter~3]{CW}.

\begin{theorem}[Wall]
There are exactly two types of finite-dimensional simple associative
superalgebras over $\C$: (1) the matrix superalgebra $M(m|n)$, which
is naturally isomorphic to the endomorphism superalgebra of
$\C^{m|n}$; (2) the superalgebra $Q(n)$.
\end{theorem}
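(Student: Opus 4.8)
The plan is to forget the $\Z_2$-grading first, identify $\mc A$ as an ordinary algebra by classical Wedderburn theory, and then analyse how the grading sits on top of it. Write $\sigma$ for the parity automorphism, $\sigma(x)=(-1)^{|x|}x$ on homogeneous $x$; it is an algebra automorphism with $\sigma^2=\mathrm{id}$. Since we work over $\C$ --- in particular in characteristic $\neq 2$ --- any $\sigma$-stable subspace $U$ is automatically graded, because $u=\frac12(u+\sigma u)+\frac12(u-\sigma u)$ splits $u$ into its homogeneous parts inside $U$. The Jacobson radical $J(\mc A)$ is preserved by every automorphism, hence is $\sigma$-stable, hence is a graded two-sided ideal; simplicity of the superalgebra $\mc A$ then forces $J(\mc A)=0$, so $\mc A$ is semisimple as an ordinary algebra and $\mc A\cong\prod_{i=1}^k M_{n_i}(\C)$ by Wedderburn. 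Now $\sigma$ permutes the simple blocks $B_1,\dots,B_k$; the sum of any $\sigma$-orbit is a $\sigma$-stable, hence graded, two-sided ideal, so super-simplicity makes $\sigma$ transitive on the blocks. As $\sigma^2=\mathrm{id}$, we are left with exactly two cases: $k=1$, or $k=2$ with $\sigma$ interchanging two isomorphic blocks $B_1\cong B_2\cong M_n(\C)$.

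In the case $k=1$ we have $\mc A=\End(W)$ with $W=\C^N$, and $\sigma$ is an automorphism of order $\leq 2$ of $\End(W)$. By the Skolem--Noether theorem $\sigma=\mathrm{Ad}(g)$ for some $g\in GL(W)$, and $\sigma^2=\mathrm{id}$ gives $g^2\in\C^\times$; after rescaling $g$ we may take $g^2=1$. Let $W=\ev W\oplus\od W$ be the decomposition into the $(+1)$- and $(-1)$-eigenspaces of $g$, of dimensions $m$ and $n$. Then $x\in\ev{\mc A}$ iff $x$ commutes with $g$, i.e. preserves the eigenspaces, and $x\in\od{\mc A}$ iff $x$ anticommutes with $g$, i.e. exchanges them. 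This is precisely the standard grading on $\End(\C^{m|n})=M(m|n)$, which is type (1).

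In the case $k=2$, write $\mc A=B\times B$ with $B\cong M_n(\C)$, $\sigma$ interchanging the factors; thus $\sigma(x,y)=(\phi^{-1}(y),\phi(x))$ for the induced automorphism $\phi$ of $B$. A short check shows $\ev{\mc A}=\{(x,\phi(x)):x\in B\}\cong B$, while $\ep:=(1,-1)$ lies in $\od{\mc A}$, is central in $\mc A$, satisfies $\ep^2=1$, and $\od{\mc A}=\ev{\mc A}\,\ep$. Hence $\mc A\cong M_n(\C)\otimes_\C\C[\ep]/(\ep^2-1)\cong M_n\!\bigl(\C[\ep]/(\ep^2-1)\bigr)$ with $\ep$ odd and central. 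Writing a matrix over $\C[\ep]/(\ep^2-1)$ as $a+b\ep$ with $a,b\in M_n(\C)$ and using $\ep^2=1$, the multiplication is $(a+b\ep)(c+d\ep)=(ac+bd)+(ad+bc)\ep$, so $a+b\ep\mapsto\left(\begin{smallmatrix}a&b\\b&a\end{smallmatrix}\right)$ is a superalgebra isomorphism onto $Q(n)$, which is type (2). The two types are genuinely different --- e.g. $M(m|n)$ is simple as an ungraded algebra, whereas $Q(n)\cong M_n(\C)\times M_n(\C)$ is not --- so these are indeed all the possibilities.

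I expect the conceptual crux to be the first paragraph: the observation that, in characteristic $\neq 2$, the radical and the Wedderburn blocks interact well with the parity automorphism, which is what collapses the classification to a two-case combinatorial problem; after that one only needs the classical description of $\Z_2$-gradings on $M_N(\C)$ via Skolem--Noether and a direct matrix computation. An equivalent and perhaps more conceptual route is a super Artin--Wedderburn argument: take a minimal graded left ideal $L$, note that $D:=\End_{\mc A}(L)$ is a finite-dimensional super division algebra over $\C$ (by the super Schur lemma), classify such $D$ --- only $\C$ and $\C[\ep]/(\ep^2-1)$, since $\ev D$ is an ordinary division algebra over $\C$ hence $\C$, and $\od D$, if nonzero, is one-dimensional spanned by an element whose square is a nonzero scalar --- and then invoke the graded Jacobson density theorem to get $\mc A=\End_D(L)$, with the two choices of $D$ producing $M(m|n)$ and $Q(n)$ respectively. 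In that approach the single delicate point is checking density in the $\Z_2$-graded setting.
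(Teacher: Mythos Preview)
The paper does not actually prove this theorem: immediately before the statement it says only that ``A proof of the following theorem can be found in J\'ozefiak \cite{Jo} or \cite[Chapter~3]{CW},'' and then moves on. So there is no in-paper argument to compare against.

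Your proof is correct. The strategy of passing to the ungraded algebra via the parity involution $\sigma$, using ordinary Wedderburn to get a product of matrix blocks, and then letting super-simplicity force $\sigma$ to act transitively on the blocks (hence $k=1$ or $k=2$) is clean, and your case analyses are accurate: the Skolem--Noether step in the $k=1$ case and the identification $a+b\epsilon\mapsto\left(\begin{smallmatrix}a&b\\b&a\end{smallmatrix}\right)$ in the $k=2$ case both match the paper's explicit matrix description of $M(m|n)$ and $Q(n)$. One small point worth making explicit is that $J(\mathcal A)$ is proper (since $\mathcal A$ is unital), so super-simplicity really does kill it; you implicitly use this. The alternative route you sketch at the end --- classify finite-dimensional super division algebras over $\C$ as $\C$ or $\C[\epsilon]/(\epsilon^2-1)$, then run a graded density/Wedderburn argument --- is closer to what one finds in J\'ozefiak's exposition, and is the approach the paper's cited references take. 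Both arguments are standard; yours has the virtue of using only the ungraded structure theory plus the single observation that $\sigma$-stable subspaces are graded in characteristic $\neq 2$.
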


The basic results of finite-dimensional semisimple (unital
associative) algebras over $\C$ have natural super generalizations
(cf.~\cite{Jo}). The proof is standard.

\begin{theorem}[Super Wedderburn's Theorem]   \label{th:Wedderburn}
A finite-dimensional semisimple superalgebra $\mc A$ is isomorphic
to a direct sum of simple superalgebras:
\begin{equation*}
\mc A \cong \bigoplus_{i=1}^m M(r_i|s_i) \oplus \bigoplus_{j=1}^q
Q(n_j).
\end{equation*}
\end{theorem}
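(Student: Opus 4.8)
The plan is to mirror the classical proof of Wedderburn's theorem, modifying each step so that it respects the $\Z_2$-grading, and then invoke Wall's theorem to identify the simple pieces. First I would set $\mc A = \ev{\mc A}\oplus\od{\mc A}$ finite-dimensional semisimple, where ``semisimple'' here should be taken in the graded sense: every $\Z_2$-graded submodule of every graded module is a graded direct summand, equivalently the (graded) Jacobson radical vanishes. The key first observation is that $\mc A$, viewed as a left module over itself, decomposes as a direct sum of graded-simple submodules (graded-simple meaning no proper nonzero graded submodule), by the usual maximal-semisimple-submodule argument adapted to the graded setting. Collecting isomorphic summands (allowing isomorphisms of degree one, i.e.\ identifying $M$ with $\Pi M$ in the bookkeeping) gives $\mc A \cong \bigoplus_k L_k^{\oplus d_k}$ as a graded left $\mc A$-module for finitely many pairwise non-isomorphic graded-simple $L_k$.

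Next I would compute $\mc A \cong \End_{\mc A}(\mc A)^{\mathrm{op}}$ as a superalgebra, using that the endomorphism superalgebra of a graded module built from pairwise non-isomorphic isotypic blocks is the direct sum of the endomorphism superalgebras of the blocks (the graded analogue of Schur orthogonality: there are no nonzero homomorphisms, even of odd degree, between non-isomorphic graded-simple modules — this is where one must be slightly careful, since a degree-one map could a priori connect $L_j$ and $L_k$, but that just means one has already identified such a pair). Thus $\mc A \cong \bigoplus_k \End_{\mc A}(L_k^{\oplus d_k})^{\mathrm{op}} \cong \bigoplus_k M_{d_k}(\mc D_k)^{\mathrm{op}}$, where $\mc D_k = \End_{\mc A}(L_k)$ is a finite-dimensional division superalgebra over $\C$ — meaning every nonzero homogeneous element is invertible. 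The matrix superalgebra over $\mc D_k$ is again a superalgebra in the obvious way, and $M_{d_k}(\mc D_k)^{\mathrm{op}}\cong M_{d_k}(\mc D_k^{\mathrm{op}})$, so each summand is simple.

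The remaining step is to classify finite-dimensional division superalgebras over $\C$: there are exactly two, namely $\C$ itself (concentrated in even degree) and the rank-one Clifford superalgebra $\C[\varepsilon]/(\varepsilon^2-1)$ with $\varepsilon$ odd. This is the graded Schur lemma: $\ev{\mc D}$ is an ordinary finite-dimensional division algebra over $\C$, hence $\ev{\mc D}=\C$; if $\od{\mc D}=0$ we get the first case, and if $\od{\mc D}\neq 0$, any nonzero odd element $\varepsilon$ satisfies $\varepsilon^2\in\ev{\mc D}=\C^\times$, so after rescaling $\varepsilon^2=1$ and $\mc D=\C\oplus\C\varepsilon$. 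Feeding $\mc D_k=\C$ into $M_{d_k}(\mc D_k)$ yields a matrix superalgebra $M(r|s)$ with $r+s=d_k$ (the splitting into even/odd rows coming from the grading on $L_k^{\oplus d_k}$), and feeding $\mc D_k=\C\oplus\C\varepsilon$ yields precisely $Q(n_j)$ — this last identification is exactly the content of Wall's theorem, which I would cite rather than reprove. The main obstacle, and the only place real care is needed, is keeping the degree-one morphisms honest throughout: in the graded world an ``isotypic component'' must be defined up to both isomorphism and parity shift, and the $Q$-type summands arise precisely from those graded-simple modules $L$ for which $L\cong\Pi L$, so the counting of blocks and the appearance of $Q(n)$ versus $M(r|s)$ both hinge on tracking whether such a parity-swapping self-isomorphism exists. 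Everything else is a routine transcription of the classical argument.
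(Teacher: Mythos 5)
The paper gives no proof of this result, deferring to J\'ozefiak (\cite{Jo}) and remarking only that ``the proof is standard''; your sketch is precisely that standard proof, carried out graded block by block, with Wall's theorem supplying the final identification of the simple pieces. The argument is essentially correct, but one phrasing should be tightened: because the paper's module category explicitly allows morphisms of degree one, the parity shift $L\to\Pi L$ is \emph{always} an (odd) isomorphism, so ``$L\cong\Pi L$'' cannot by itself be the criterion for type $\texttt{Q}$. What actually distinguishes type $\texttt{Q}$ is the existence of an odd \emph{automorphism} of $L$ --- equivalently, an \emph{even} isomorphism $L\cong\Pi L$ --- and that is exactly what forces $\End_{\mc A}(L)$ to be two-dimensional rather than one-dimensional, hence the division superalgebra $\mc D_k$ to be the odd Clifford line rather than $\C$. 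Your closing sentence about a ``parity-swapping self-isomorphism'' already gestures at the right condition, but the earlier bookkeeping step ``identifying $M$ with $\Pi M$'' would, taken literally, collapse the two types together; once you state that the isotypic grouping is by \emph{even} isomorphism class of graded-simple modules, the dichotomy $M(r|s)$ versus $Q(n)$ falls out cleanly as you describe.
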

A simple $\mc A$-module $V$ is annihilated by all but one such
summand. We say $V$ is {\em of type $\texttt M$} if this summand is
of the form $M(r_i|s_i)$ and {\em of type $\texttt Q$} if this
summand is of the form $Q(n_j)$. In particular, $\C^{r|s}$ is a
simple module of the superalgebra $M(r|s)$ of type $\texttt M$, and
$\C^{n|n}$ is a simple module of the superalgebra $Q(n)$.  These two
types of simple modules are distinguished by the following super
analogue of Schur's Lemma (see \cite{Jo}, \cite[Chapter~3]{CW} for a
proof).

\begin{lemma}  (Super Schur's Lemma) \label{lem:superSchur}
If $M$ and $L$ are simple modules over a finite-dimensional
superalgebra $\mc A$, then
\begin{equation*}
 \dim \Hom_{\mc A} (M, L) =\left \{
 \begin{array}{ll}
 1 & \text{ if } M \cong L \text{ is of type } \texttt M, \\
 2 & \text{ if } M \cong L \text{ is of type } \texttt Q,  \\
 0 & \text{ if } M \not\cong L.
 \end{array}
 \right.
\end{equation*}
\end{lemma}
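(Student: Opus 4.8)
The plan is to deduce the lemma from the Super Wedderburn Theorem (Theorem~\ref{th:Wedderburn}) and the explicit structure of the two types of simple superalgebras, preceded by a graded version of the usual Schur-lemma argument.

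First I would record the graded Schur observation: $\Hom_{\mc A}(M,L)$ is $\Z_2$-graded, and any nonzero \emph{homogeneous} $f\in\Hom_{\mc A}(M,L)$ is an isomorphism, since for homogeneous $f$ the subspaces $\ker f$ and $\operatorname{im} f$ are graded submodules and $M$, $L$ are simple. Hence, if there is no isomorphism $M\to L$ of either parity --- which is precisely what $M\not\cong L$ means in the category with degree-one morphisms --- then both graded pieces of $\Hom_{\mc A}(M,L)$ vanish, settling the third case. So I may assume $M\cong L$, and, replacing $L$ by $M$, reduce to computing $\dim\End_{\mc A}(M)$.

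Next I would invoke Theorem~\ref{th:Wedderburn}: write $\mc A\cong\bigoplus_i M(r_i|s_i)\oplus\bigoplus_j Q(n_j)$. A simple $\mc A$-module is a module over exactly one summand $\mc B$, and over each such $\mc B$ there is a unique simple module up to isomorphism (allowing odd isomorphisms): forgetting the grading, $\mc B$ is a matrix algebra with a unique simple module, and the two compatible $\Z_2$-gradings $V$ and $\Pi V$ on it are exchanged by an odd isomorphism. Thus $\End_{\mc A}(M)\cong\End_{\mc B}(V)$ with $V$ the standard simple $\mc B$-module, and it remains to compute this in the two cases. If $\mc B=M(r|s)=\End(\C^{r|s})$ and $V=\C^{r|s}$, then forgetting the grading the ordinary Schur Lemma gives $\End_{\mc B}(V)=\C$, a line spanned by the identity and hence purely even; so the dimension is $1$ and $M$ is of type $\texttt M$. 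If $\mc B=Q(n)$ and $V=\C^{n|n}$, a short block-matrix computation shows that the commutant of $Q(n)$ inside $\End(\C^{n|n})$ consists exactly of the matrices $\left(\begin{smallmatrix}pI_n&qI_n\\qI_n&pI_n\end{smallmatrix}\right)$ with $p,q\in\C$; this space is $2$-dimensional, with $1$-dimensional even part (the scalars $pI_{2n}$) and $1$-dimensional odd part, so the dimension is $2$ and $M$ is of type $\texttt Q$. One may equally stay abstract here: $\End_{\mc A}(M)_{\ov 0}=\C$ by the ordinary Schur Lemma, and if the odd part is nonzero then any nonzero odd $\theta\in\End_{\mc A}(M)_{\ov 1}$ is invertible with $\theta^2$ a nonzero scalar, so after rescaling $\theta^2=\operatorname{id}$ and $\End_{\mc A}(M)_{\ov 1}=\C\theta$ is one-dimensional, matching Wall's dichotomy.

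I expect the only genuine subtlety to be the bookkeeping around odd morphisms --- making precise the meaning of $M\cong L$ in the category with degree-one morphisms, and verifying that $V$ and $\Pi V$ really are isomorphic over each simple summand --- rather than anything in the computations, which are routine once Wall's classification is available.
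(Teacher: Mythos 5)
The paper does not actually prove this lemma; it cites J\'ozefiak \cite{Jo} and \cite[Chapter~3]{CW}, so there is no in-paper argument to compare against. Your overall strategy --- reduce via Wall's classification to a single simple summand, then compute the endomorphism superalgebra either explicitly or by the abstract even/odd Schur argument --- is the standard and correct route, and your closing abstract argument (the even part is $\C$ by ordinary Schur over $\C$; any nonzero odd $\theta$ is invertible with $\theta^2$ a nonzero even scalar, forcing the odd part to be $\C\theta$ or zero) is clean and complete.

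There is, however, a concrete error in the sentence you use to justify uniqueness of the simple $\mc B$-module. You assert that ``forgetting the grading, $\mc B$ is a matrix algebra with a unique simple module,'' intending this to cover both $M(r|s)$ and $Q(n)$. This is true for $M(r|s)$, whose underlying ungraded algebra is $M_{r+s}(\C)$; it is false for $Q(n)$, whose underlying ungraded algebra is $M_n(\C)\oplus M_n(\C)$ via $\left(\begin{smallmatrix}a&b\\b&a\end{smallmatrix}\right)\mapsto(a+b,\,a-b)$ --- not a matrix algebra, and with two non-isomorphic simple ungraded modules. In fact $\C^{n|n}$ viewed ungraded decomposes as the direct sum of those two simples, which is exactly the ``associates'' phenomenon recorded in the remark immediately following this lemma in the paper; so the picture of ``two compatible $\Z_2$-gradings on the one ungraded simple module'' simply does not apply to $Q(n)$. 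What you actually need is that $Q(n)$, being simple as a \emph{super}algebra (no proper nonzero graded two-sided ideal), has a unique simple supermodule up to parity shift. That is true and standard, but it requires a genuinely graded argument (e.g.\ decomposing the graded left regular representation) rather than passing to the ungraded algebra. You do need this uniqueness: the abstract argument only pins the dimension down to $1$ or $2$, and identifying which value occurs for which summand type depends on transferring your explicit commutant computations from the standard modules $\C^{r|s}$, $\C^{n|n}$ to a general simple $M$. This is a patchable slip rather than a wrong approach, but as written the justification for $Q(n)$ does not hold.
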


\begin{remark}
It can be shown (cf. \cite{Jo}) that a simple module of type
$\texttt M$ as an ungraded module remains to be simple (which is
sometimes referred to as ``self-associated" in literature), and a
simple module of type $\texttt Q$ as an ungraded module is a direct
sum of a pair of nonisomorphic simples (such pairs are referred to
as ``associates" in literature).
\end{remark}

Given two associative superalgebras $\mathcal{A}$ and $\mathcal{B}$, the
tensor product  $\mathcal{A}\otimes\mathcal{B}$ is naturally a
superalgebra, with multiplication defined by
$$
(a\otimes b)(a'\otimes b')=(-1)^{|b| \cdot |a'|}(aa')\otimes (bb')
\qquad (a,a'\in\mathcal{A}, b,b'\in\mathcal{B}).
$$

If $V$ is an irreducible $\mathcal{A}$-module and $W$ is an
irreducible $\mathcal{B}$-module, $V\otimes W$ may not be
irreducible (cf. \cite{Jo}, \cite{BK}, \cite[Lemma~ 12.2.13]{Kle}).

\begin{lemma}\label{tensorsmod}
Let $V$ be an irreducible $\mathcal{A}$-module and $W$ be an
irreducible $\mathcal{B}$-module.
\begin{enumerate}
\item If both $V$ and $W$ are of type $\texttt{M}$, then
$V\otimes W$ is an irreducible
$\mathcal{A}\otimes\mathcal{B}$-module of type $\texttt{M}$.

\item If one of $V$ or $W$ is of type $\texttt{M}$ and the other
is of type $\texttt{Q}$, then $V\otimes W$ is an irreducible
$\mathcal{A}\otimes\mathcal{B}$-module of type $\texttt{Q}$.

\item If both $V$ and $W$ are of type $\texttt{Q}$, then
$V\otimes W\cong X\oplus \Pi X$ for a type $\texttt{M}$ irreducible
$\mathcal{A}\otimes\mathcal{B}$-module $X$.
\end{enumerate}
Moreover, all irreducible $\mathcal{A}\otimes\mathcal{B}$-modules
arise as components of $V\otimes W$ for some choice of irreducibles
$V,W$.
\end{lemma}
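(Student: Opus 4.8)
The plan is to reduce everything to Wall's theorem (the classification of simple superalgebras) together with Super Schur's Lemma, exactly as one proves the analogous ungraded fact that the outer tensor product of simple modules over Wedderburn components behaves well. First I would replace $\mathcal A$ and $\mathcal B$ by the simple summands annihilating $V$ and $W$ respectively; since $V\otimes W$ is a module over $\mathcal A\otimes\mathcal B$ on which all other summands act by zero, we may assume $\mathcal A$ and $\mathcal B$ are themselves simple superalgebras, hence each is either of type $\texttt M$ (a matrix superalgebra $M(r|s)$) or of type $\texttt Q$ (some $Q(n)$). The key structural input is the behaviour of these superalgebras under tensor product: $M(r|s)\otimes M(r'|s')\cong M(rr'+ss'|rs'+sr')$, while $M(r|s)\otimes Q(n)\cong Q(n(r+s))$, and $Q(m)\otimes Q(n)\cong M(mn|mn)$. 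The last isomorphism is the crucial one and is the place where the sign in the superalgebra tensor product really matters: one checks that if $P_m, P_n$ are the distinguished odd order-two automorphisms, then $P_m\otimes 1$ and $1\otimes P_n$ super-anticommute in $Q(m)\otimes Q(n)$, and the product $(P_m\otimes 1)(1\otimes P_n)$ together with these two elements generates a Clifford-type subalgebra forcing $Q(m)\otimes Q(n)$ to split as a full matrix superalgebra of the stated size.

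Granting these three isomorphisms, the three cases of the lemma follow immediately. In case (1), $\mathcal A\otimes\mathcal B$ is a matrix superalgebra $M(\cdot|\cdot)$, which has a unique simple module (up to $\Pi$) of type $\texttt M$, and by dimension count $V\otimes W$ must be it. In case (2), $\mathcal A\otimes\mathcal B\cong Q(N)$ for the appropriate $N$, whose unique simple module is of type $\texttt Q$, and again $V\otimes W$ has the right dimension to be that simple module. In case (3), $\mathcal A\otimes\mathcal B\cong M(N|N)$ is a matrix superalgebra, so its unique simple module $X$ is of type $\texttt M$ with $\dim X = N|N$; comparing with $\dim(V\otimes W)=2N|2N$ (since a type $\texttt Q$ simple has even and odd parts of equal dimension and the dimensions multiply) forces $V\otimes W\cong X\oplus\Pi X$, because the only semisimple $M(N|N)$-module of that dimension is $X\oplus\Pi X$. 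Alternatively, and perhaps cleaner for exposition, I would avoid the explicit isomorphisms and argue with Super Schur's Lemma directly: compute $\dim\Hom_{\mathcal A\otimes\mathcal B}(V\otimes W, V\otimes W)$, which one expects to equal $\dim\Hom_{\mathcal A}(V,V)\cdot\dim\Hom_{\mathcal B}(W,W)$ when one of them is type $\texttt M$ (giving $1$ or $2$), and then invoke the converse direction of Super Schur's Lemma to read off the type; in case (3) this endomorphism dimension comes out to $4$, which is impossible for a simple module, so $V\otimes W$ is decomposable, and a further short argument pins it down as $X\oplus\Pi X$.

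For the final sentence, that every simple $\mathcal A\otimes\mathcal B$-module arises inside some $V\otimes W$: by Super Wedderburn I only need this when $\mathcal A, \mathcal B$ are simple, and then $\mathcal A\otimes\mathcal B$ is one of $M(\cdot|\cdot)$ or $Q(\cdot)$ by the isomorphisms above, so it has a unique simple module up to $\Pi$, which by the case analysis is already exhibited as a summand of $V\otimes W$ for $V,W$ the (unique) simples of $\mathcal A,\mathcal B$; applying $\Pi$ to one tensor factor if necessary handles the parity shift. The main obstacle is establishing $Q(m)\otimes Q(n)\cong M(mn|mn)$ with the correct signs — this is the one genuinely super-flavoured computation, and everything else is either bookkeeping with Wedderburn components or a dimension count. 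I would present that isomorphism carefully (exhibiting the splitting idempotent, or equivalently the Clifford algebra $\mathrm{Cl}_2\cong M(1|1)$ hidden inside), and then let the rest follow formally.
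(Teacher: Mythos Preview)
The paper does not supply its own proof of this lemma; it merely cites J\'ozefiak, Brundan--Kleshchev, and Kleshchev's book immediately before the statement. So there is nothing to compare against directly.

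Your approach is correct and is essentially the standard one found in those references: reduce via Super Wedderburn to the case where $\mathcal A$ and $\mathcal B$ are themselves simple, then invoke the three tensor-product isomorphisms $M\otimes M\cong M$, $M\otimes Q\cong Q$, $Q(m)\otimes Q(n)\cong M(mn|mn)$, and finish by dimension count. The alternative you sketch via Super Schur's Lemma (computing $\dim\operatorname{End}_{\mathcal A\otimes\mathcal B}(V\otimes W)=\dim\operatorname{End}_{\mathcal A}(V)\cdot\dim\operatorname{End}_{\mathcal B}(W)$) is also fine and arguably cleaner. One point worth making explicit in case~(3): the dimension count alone does not distinguish $X\oplus\Pi X$ from $X\oplus X$ as graded modules (both have superdimension $2mn|2mn$). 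The ``further short argument'' you allude to should note that $P_V\otimes P_W$ is an even $\mathcal A\otimes\mathcal B$-endomorphism of $V\otimes W$ squaring to $-1$, so its $\pm i$-eigenspaces give the two summands, and the odd endomorphism $P_V\otimes 1$ interchanges them---hence one summand is $\Pi$ of the other. With that said, your plan is complete.
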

If $V$ is an irreducible $\mathcal{A}$-module and $W$ is an
irreducible $\mathcal{B}$-module, denote by $V\circledast W$ an
irreducible component of $V\otimes W$. Thus,
$$
V\otimes W=\left\{
\begin{array}{ll}
V\circledast W\oplus \Pi (V\circledast W), & \text{ if both } V \text{ and } W
 \text{ are of type }\texttt{Q}, \\
V\circledast W, &\text{ otherwise }.
\end{array}
\right.
$$

\begin{example}  \label{ex:Clifford}
The Clifford algebra $\Cl_n$ is the $\C$-algebra generated by $c_i
(1\le i \le n)$, subject to relations
\begin{equation}
c_i^2 =1, \quad c_i c_j = -c_j c_i  \; (i \neq j) . \label{clifford}
\end{equation}
Note that $\Cl_n$ is a superalgebra with each generator $c_i$ being
odd, and $\dim \Cl_n =2^n$.

For $n=2k$ even, $\Cl_n$ is isomorphic to a simple matrix
superalgebra $M(2^{k-1}|2^{k-1})$. This can be seen by constructing
an isomorphism $\Cl_2\cong M(1|1)$ directly via Pauli matrices, and
then using the superalgebra isomorphism
$$
\Cl_{2k} = \underbrace{\Cl_2 \otimes \ldots \otimes \Cl_2}_k.
$$
Note that $\Cl_1 \cong Q(1)$. For $n=2k+1$ odd, we have superalgebra
isomorphisms:
$$
\Cl_n \cong \Cl_1 \otimes \Cl_{2k} \cong Q(1) \otimes
M(2^{k-1}|2^{k-1}) \cong Q(2^k).
$$

So $\Cl_n$ is always a simple superalgebra, of type $\texttt M$ for
$n$ even and of type $\texttt Q$ for $n$ odd. The fundamental fact
that there are two types of complex Clifford algebras is a key to
Bott's reciprocity.
\end{example}

\subsection{A Morita super-equivalence}

The symmetric group $\mf S_n$ acts as automorphisms on the Clifford
algebra $\Cl_n$ naturally by permuting the generators $c_i$. We will
refer to the semi-direct product $\HC_n := \Cl_n \rtimes \C \mf S_n$
as the {\em Hecke-Clifford algebra}, where
\begin{equation}  \label{pc}
s_ic_i=c_{i+1}s_i, \; s_ic_{i+1} =c_is_i, \; s_ic_j=c_js_i,\quad
j\neq i, i+1.
\end{equation}
Equivalently, $\sigma c_i =c_{\sigma (i)} \sigma,$ for all $1\le i
\le n$ and $\sigma \in \mf S_n.$  The algebra $\HC_n$ is naturally a
superalgebra by letting each $\sigma \in \mf S_n$ be even and each
$c_i$ be odd.

Now let us make precise the super-equivalence \eqref{eq:spinS=HC}.

By a direct computation, there is a superalgebra isomorphism  (cf.
\cite{Se1, Ya}):
\begin{align}
 \label{map:isorm.HC}
\begin{split}
\C\mf S_n^-\otimes\Cl_n&\longrightarrow\HC_n \\
c_i&\mapsto c_i, \quad 1\leq i\leq n,    \\
t_j&\mapsto \frac{1}{\sqrt{-2}}s_j(c_j-c_{j+1}),\quad 1\leq j\leq n-1.
\end{split}
\end{align}

By Example~\ref{ex:Clifford}, $\Cl_n$ is a simple superalgebra.
Hence, there is a unique (up to isomorphism) irreducible
$\Cl_n$-module $U_n$, of type $\texttt M$ for $n$ even and of type
$\texttt Q$ for $n$ odd. We have $\dim U_n=2^{k}$ for $n=2k$ or
$n=2k-1$.
Then the two exact functors
\begin{eqnarray*}
\mathfrak{F}_n :=-\otimes U_n: & \C
\mf S_n^-\text{-}\mf{mod} \rightarrow\HC_n\text{-}\mf{mod},\notag\\
\mathfrak{G}_n :={\rm Hom}_{\Cl_n}(U_n,-): & \HC_n \text{-}\mf{mod}
\rightarrow\C\mf S_n^-\text{-}\mf{mod}  \notag
\end{eqnarray*}
define a Morita super-equivalence between the superalgebras $\HC_n$
and $\C\mf S_n^-$ in the following sense.

\begin{lemma}\cite[Lemma~9.9]{BK} \cite[Proposition~13.2.2]{Kle}\label{lem:two functors}
\begin{enumerate}
\item Suppose that $n$ is even. Then the two functors
$\mathfrak{F}_n$ and $\mathfrak{G}_n$ are equivalences of
categories with
\begin{align*}
\mathfrak{F}_n\circ\mathfrak{G}_n\cong{\rm id},\quad
\mathfrak{G}_n\circ\mathfrak{F}_n\cong{\rm id}.
\end{align*}
\item Suppose that $n$ is odd. Then
\begin{align*}
\mathfrak{F}_n\circ\mathfrak{G}_n\cong{\rm id}\oplus\Pi,\quad
\mathfrak{G}_n\circ\mathfrak{F}_n\cong{\rm id}\oplus\Pi.
\end{align*}
\end{enumerate}
\end{lemma}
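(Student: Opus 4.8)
The plan is to derive the statement from the superalgebra isomorphism $\C\mf S_n^-\otimes\Cl_n\cong\HC_n$ of \eqref{map:isorm.HC}. Under this identification an $\HC_n$-module is precisely a module over the tensor product $\C\mf S_n^-\otimes\Cl_n$, and in that tensor product the two subalgebras $\C\mf S_n^-\otimes 1$ and $1\otimes\Cl_n$ super-commute. Thus the lemma reduces to a general fact about the pair of functors $-\otimes U_n$ and $\Hom_{\Cl_n}(U_n,-)$ relating modules over $\mc A:=\C\mf S_n^-$ and modules over $\mc A\otimes\Cl_n$, the precise form of which depends only on the type of the simple superalgebra $\Cl_n$: type $\texttt{M}$ when $n$ is even and type $\texttt{Q}$ when $n$ is odd, by Example~\ref{ex:Clifford}.

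First I would write down the unit and counit. For an $\mc A$-module $M$ set $\eta_M\colon M\to\Hom_{\Cl_n}(U_n,M\otimes U_n)$, $\eta_M(m)(u)=m\otimes u$, and for an $\HC_n$-module $N$ set the evaluation $\varepsilon_N\colon\Hom_{\Cl_n}(U_n,N)\otimes U_n\to N$, $\varepsilon_N(f\otimes u)=f(u)$, where $\Hom_{\Cl_n}(U_n,N)$ is made into an $\mc A$-module via the action of $\mc A\otimes 1$ on $N$. That this $\mc A$-action is well defined — that an element of $\mc A$ sends a $\Cl_n$-linear map to a $\Cl_n$-linear map — is exactly where the super-commutativity of $\mc A\otimes 1$ and $1\otimes\Cl_n$ is used. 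The routine but sign-sensitive step is then to verify, by the Koszul rule, that $\eta_M$ is $\mc A$-linear, that $\varepsilon_N$ is $\HC_n$-linear, and that $\eta$ and $\varepsilon$ are natural in $M$ and $N$.

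Next I would show that $\eta$ and $\varepsilon$ are isomorphisms up to a parity shift. Since $\Cl_n$ is a simple, hence semisimple, superalgebra with unique simple module $U_n$, the restriction to $\Cl_n$ of any $\HC_n$-module is a direct sum of copies of $U_n$ (up to parity shifts), and $M\otimes U_n$ is such a sum of $\dim M$ copies. For $n$ even, $\Cl_n$ has type $\texttt{M}$, so $\dim\End_{\Cl_n}(U_n)=1$ by Lemma~\ref{lem:superSchur}; this is the ordinary super-Morita situation for the matrix superalgebra $\Cl_n\cong\End(U_n)$, and $\eta_M,\varepsilon_N$ are isomorphisms, giving $\mathfrak{G}_n\circ\mathfrak{F}_n\cong{\rm id}$ and $\mathfrak{F}_n\circ\mathfrak{G}_n\cong{\rm id}$. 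For $n$ odd, $\Cl_n$ has type $\texttt{Q}$, so $\dim\End_{\Cl_n}(U_n)=2$, say $\End_{\Cl_n}(U_n)\cong\Cl_1=\C\oplus\C\omega$ with $\omega$ odd and $\omega^2=\pm 1$. The key point is that $\omega$, acting on the $U_n$-factor, super-commutes with all of $\HC_n\cong\mc A\otimes\Cl_n$, hence induces an odd natural automorphism $\Omega$ of $\mathfrak{G}_n\circ\mathfrak{F}_n$ (and likewise one of $\mathfrak{F}_n\circ\mathfrak{G}_n$); combined with a dimension count via Lemma~\ref{lem:superSchur} one finds $\Hom_{\Cl_n}(U_n,M\otimes U_n)=\eta_M(M)\oplus\Omega\,\eta_M(M)\cong M\oplus\Pi M$, so that $\mathfrak{G}_n\circ\mathfrak{F}_n\cong{\rm id}\oplus\Pi$, and the parallel analysis of $\varepsilon_N$ gives $\mathfrak{F}_n\circ\mathfrak{G}_n\cong{\rm id}\oplus\Pi$.

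The main obstacle, beyond the Koszul-sign bookkeeping, is the last point in the odd case: one has to check that the two-dimensional endomorphism algebra $\End_{\Cl_n}(U_n)$ contributes exactly one extra copy, shifted precisely by $\Pi$, and does so naturally in $M$ and $N$. Concretely this amounts to a transversality statement — e.g.\ that $\eta_M(M)$ and $\Omega\,\eta_M(M)$, and on the other side $\ker\varepsilon_N$ and its image under the odd automorphism, meet in $0$ — which ultimately rests on $\omega$ being invertible ($\omega^2=\pm 1\ne 0$), the defining feature of the type-$\texttt{Q}$ superalgebra $\Cl_1\cong Q(1)$; this is also the reason $(\mathfrak{F}_n,\mathfrak{G}_n)$ is only a Morita \emph{super}-equivalence, not an equivalence, when $n$ is odd. (Alternatively the statement may be quoted directly from \cite[Lemma~9.9]{BK} or \cite[Proposition~13.2.2]{Kle}.)
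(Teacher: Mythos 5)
The paper itself does not prove this lemma; it quotes \cite[Lemma~9.9]{BK} and \cite[Proposition~13.2.2]{Kle}, so there is no in-paper argument to compare against. Your blind proof is essentially correct and is the same super-Morita argument used in those references: identify $\HC_n$ with $\C\mf S_n^-\otimes\Cl_n$ via \eqref{map:isorm.HC}, note that $\Cl_n$ is a simple superalgebra of type $\texttt{M}$ for $n$ even and of type $\texttt{Q}$ for $n$ odd (Example~\ref{ex:Clifford}), and reduce everything to the behaviour of $-\otimes U_n$ and $\Hom_{\Cl_n}(U_n,-)$, governed by $\End_{\Cl_n}(U_n)$ being $\C$ (type $\texttt{M}$) or $\Cl_1\cong Q(1)$ (type $\texttt{Q}$). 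The one place your sketch is under-specified is the odd case of $\mathfrak{F}_n\circ\mathfrak{G}_n$: the ``transversality'' you flag is most cleanly realized by assembling the natural map $\bigl(\varepsilon_N,\;\varepsilon_N\circ\Omega_N\bigr)\colon\Hom_{\Cl_n}(U_n,N)\otimes U_n\to N\oplus\Pi N$, where $\Omega_N$ is the odd natural transformation induced by $\omega\in\End_{\Cl_n}(U_n)$ acting on the $U_n$ factor, and then checking it is an isomorphism by reducing to $N=U_n$ (where surjectivity is exactly the invertibility of $\omega$, as you say) together with the dimension count coming from $N|_{\Cl_n}\cong U_n^{\oplus m}$. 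Your closing remark that the statement may simply be quoted from \cite{BK} or \cite{Kle} matches what the paper actually does.
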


\begin{remark}
The superalgebra isomorphism \eqref{map:isorm.HC}
and the Morita super-equivalence in Lemma \ref{lem:two functors}
have a natural generalization to any
finite Weyl group; see Khongsap-Wang \cite{KW} (and the symmetric
group case here is regarded as a type $A$ case).
\end{remark}

\subsection{The group $\td B_n$ and the algebra $\HC_n$}\label{sec:tdBn}

Let $\Pi_n$ be the finite group generated by $a_i$ ($i=1, \ldots,
n$) and the central element $z$ subject to the relations
\begin{equation}\label{E:rel1}
a_i^2=1, \quad z^2=1, \quad a_ia_j=za_ja_i \quad (i\neq j).
\end{equation}
The symmetric group $\mf S_n$ acts on $\Pi_n$ by $\sigma
(a_i)=a_{\sigma (i)}$, $\sigma \in \mf S_n$. The semidirect
product $\td B_n:=\Pi_n\rtimes \mf S_n $ admits a natural finite
group structure and will be called the {\it twisted
hyperoctahedral group}. Explicitly the multiplication in $\td B_n$
is given by
\begin{equation*}
(a, \sigma)(a', \sigma')=(a \sigma(a'),
 \sigma \sigma'), \qquad a, a'\in \Pi_n, \sigma, \sigma'\in \mf S_n.
\end{equation*}

Since $\Pi_n/\{1, z \} \simeq \Z_2^n$, the group $\td B_n$ is a
double cover of the hyperoctahedral group $B_n:=\Z_2^n\rtimes \mf
S_n$, and the order $|\td B_n|$ is $2^{n+1}n!$. That is, we have a
short exact sequence of groups
\begin{equation}   \label{eq:Bcover}
1 \longrightarrow \{1,z\} \longrightarrow \td B_n
\stackrel{\theta_n}{\longrightarrow} B_n \longrightarrow 1,
\end{equation}
with $\theta_n(a_i)=b_i$, where $b_i$ is the generator  of the $i$th
copy of $\Z_2$ in $B_n$. We define a $\mathbb Z_2$-grading on the
group $\td B_n$ by setting the degree of each $a_i$ to be $1$ and
the degree of elements in $\mf S_n$ to be $ 0$. The group $B_n$
inherits a $\Z_2$-grading from $\td B_n$ via the homomorphism
$\theta_n$.

The quotient algebra $\C \Pi_n/\langle z+1\rangle$ is isomorphic to
the  Clifford algebra $\Cl_n$ with the identification $\bar{a}_i
=c_i, 1\le i \le n$. Hence we have a superalgebra isomorphism:
\begin{equation}   \label{eq:HC=Bn-}
\C \td B_n /\langle z+1\rangle
\cong \HC_n.
\end{equation}
A  $\td B_n$-module on which  $z$ acts as $-1$ is called a {\em spin
 $\td B_n$-module}.
As a consequence of the isomorphism \eqref{eq:HC=Bn-}, we have the
following equivalence:
$$
\text{RT of } \HC_n \Leftrightarrow \text{ Spin RT of  } \td B_n
$$

\subsection{The split conjugacy classes for $B_n$}

Recall for a finite group $G$,  the number of simple $G$-modules
coincides with the number of conjugacy classes of $G$. The finite
group $B_n$ and its double cover $\td B_n$ defined in
\eqref{eq:Bcover} are naturally $\Z_2$-graded. Since elements in a
given conjugacy class of $B_n$ share the same parity
($\Z_2$-grading), it makes sense to talk about even and odd
conjugacy classes of $B_n$ (and $\td B_n$). One can show by using
the Super Wedderburn's Theorem~\ref{th:Wedderburn} that the number
of simple $\td B_n$-modules coincides with the number of {\em even}
conjugacy classes of $\td B_n$.

For a conjugacy class $\mc C$ of $B_n$, $\theta_n^{-1} (\mc{C})$ is
either a single conjugacy class of $\td B_n$ or it splits into two
conjugacy classes of $\td B_n$; in the latter case, $\mc C$  is
called a {\em split} conjugacy class, and either conjugacy class in
$\theta_n^{-1} (\mc{C})$ will also be called {\em split}. An element
$x \in B_n$ is called {\em split} if the conjugacy class of $x$ is
split. If we denote $\theta_n^{-1}(x) =\{\tilde x, z \tilde x\}$,
then $x$ is split if and only if $\tilde x$ is not conjugate to $z
\tilde x$. By analyzing the structure of the even center of $\C \td
B_n$ using the Super Wedderburn's Theorem~\ref{th:Wedderburn} and
noting that $\C \td B_n \cong \C B_n \oplus \HC_n$, one can show the
following \cite{Jo} (also see \cite[Chapter~3]{CW}).

\begin{proposition}  \label{squaretable}
\begin{enumerate}
\item The number of simple $\HC_n$-modules equals the
number of even split conjugacy classes of $B_n$.

\item The number of simple $\HC_n$-modules of type
$\texttt Q$ equals the number of odd split conjugacy classes of
$B_n$.
\end{enumerate}
\end{proposition}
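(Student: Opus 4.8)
The plan is to analyze the structure of the group algebra $\C\td B_n$ as a superalgebra, using the fact that $\C\td B_n \cong \C B_n \oplus \HC_n$ as superalgebras (coming from the central idempotents $\frac{1+z}{2}$ and $\frac{1-z}{2}$). By Super Wedderburn's Theorem~\ref{th:Wedderburn}, $\HC_n$ decomposes as a direct sum of simple superalgebras of types $M(r|s)$ and $Q(m)$, and the number of simple $\HC_n$-modules is the total number of such summands, while the number of type $\texttt Q$ simple modules is the number of $Q(m)$ summands. So the task reduces to counting these summands in terms of conjugacy-class data for $B_n$, and the bridge is the even center $Z(\C\td B_n)_{\bar 0}$: a summand $M(r|s)$ contributes a $1$-dimensional piece to the even center, while a summand $Q(m)$ contributes a $2$-dimensional piece (since $Z(Q(m))_{\bar 0}$ is $1$-dimensional but $Q(m)$ also carries an odd central-like element, so its contribution to $Z(\C\td B_n)$ is $2$-dimensional with a $1$-dimensional even part). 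Thus $\dim Z(\HC_n)_{\bar 0}$ equals (number of $M$-summands) $+$ (number of $Q$-summands) $=$ number of simple $\HC_n$-modules, which already gives part (1) once we identify $\dim Z(\HC_n)_{\bar 0}$ with the number of even split classes of $B_n$; and the number of $Q$-summands, equivalently $\dim Z(\HC_n)_{\bar 1}$ computed appropriately, gives part (2).

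First I would compute the even center $Z(\C\td B_n)_{\bar 0}$ directly as a subspace of the group algebra. A basis of the center of any finite group algebra is given by conjugacy class sums; here, because $z$ is central, the class sum of $\td x$ and the class sum of $z\td x$ are either equal (up to sign, when $x$ is non-split, in which case $z\td x$ is conjugate to $\td x$ and the relevant combination lands in the $\C B_n$ factor or dies) or independent. Restricting to the $\frac{1-z}{2}$-eigenspace, i.e. projecting to $\HC_n$, a class sum survives iff $x$ is a split class of $B_n$, and then its parity matches the parity of $x$. Hence $\dim Z(\HC_n)_{\bar 0}$ equals the number of \emph{even} split classes and $\dim Z(\HC_n)_{\bar 1}$ equals the number of \emph{odd} split classes.

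Next I would match these dimensions against the Wedderburn decomposition. For a summand $M(r|s)$, the even center is $1$-dimensional and the odd center is $0$. For a summand $Q(m)$, one computes that the center is $2$-dimensional, spanned by the identity (even) and an odd element $\omega$ with $\omega^2$ scalar; so the even part contributes $1$ and the odd part contributes $1$. Summing over all summands: $\dim Z(\HC_n)_{\bar 0} = \#\{M\text{-summands}\} + \#\{Q\text{-summands}\} = \#\{\text{simple }\HC_n\text{-modules}\}$, giving (1); and $\dim Z(\HC_n)_{\bar 1} = \#\{Q\text{-summands}\} = \#\{\text{simple }\HC_n\text{-modules of type }\texttt Q\}$, giving (2). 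Combining with the previous paragraph yields both assertions.

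The main obstacle I anticipate is the careful bookkeeping of the odd center: one must verify that the $\frac{1-z}{2}$-projection of a class sum $[\td x]$ with $x$ odd and split is genuinely nonzero and lies in the odd part (this uses that $\td x$ is not conjugate to $z\td x$ precisely when the class splits), and dually that for non-split classes the projection either vanishes or is not an independent central element. One also needs the structural fact that $Z(Q(m))$ has the claimed $1|1$ shape — this is a short direct matrix computation with the block form $\left(\begin{smallmatrix} a & b \\ b & a\end{smallmatrix}\right)$, where the odd central element corresponds to $b = I_m$, $a = 0$. With these two ingredients in place, the counting is immediate, and the identification of "split classes" as the objects being counted is exactly the definition recalled just before the Proposition.
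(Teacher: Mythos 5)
Your argument is correct and follows essentially the same route the paper indicates: decompose $\C\td B_n \cong \C B_n \oplus \HC_n$ via the central idempotents $\frac{1\pm z}{2}$, apply the Super Wedderburn Theorem to $\HC_n$, and match the dimensions of the homogeneous pieces of the center against split-class sums. The paper only sketches this argument (deferring to J\'ozefiak and to Cheng--Wang), and your write-up supplies the details it omits, in particular the direct computation that the (ungraded) center of a $Q(m)$-summand is $1|1$-dimensional, which is exactly what is needed for part (2).
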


Denote by $\mc P$ the set of all partitions and by $\mc P_n$ the set
of partitions of $n$. We denote by $\mathcal{SP}_n$ the set of all
strict partitions of $n$, and by $\mathcal{OP}_n$ the set of all odd
partitions of $n$. Moreover, we denote
$$
\mathcal{SP}  = \bigcup_{n \ge 0} \mathcal{SP}_n, \qquad
\mathcal{OP}  = \bigcup_{n \ge 0} \mathcal{OP}_n,
$$
and denote
\begin{eqnarray*}
\mc{SP}_n^+ &=& \{\la\in \mc{SP}_n \mid
\ell(\la) \text{ is even} \},
 \\
\mc{SP}_n^-  &=& \{\la\in \mc{SP}_n \mid  \ell(\la) \text{ is odd} \}.
 \end{eqnarray*}

The conjugacy classes of the group $B_n$ (a special case of a wreath
product) can be described as follows, cf. Macdonald \cite[I,
Appendix~B]{Mac}. Given a cycle $t= (i_1, \ldots, i_m)$, we call the
set $\{i_1, \ldots, i_m \}$ the {\em support} of $t$, denoted by
$\text{supp}(t)$. The subgroup $\Z_2^n$ of $B_n$ consists of
elements $b_I: =\prod_{i \in I} b_i$ for $I \subset \{1,\ldots,
n\}$. Each element $b_I\sigma\in B_n$ can be written as a product
(unique up to reordering) $b_I\sigma = (b_{I_1}\sigma_1)
(b_{I_2}\sigma_2)\ldots (b_{I_k} \sigma_k),$ where $\sigma \in \mf
S_n$ is a product of disjoint cycles $\sigma =\sigma_1 \ldots
\sigma_k$, and $I_a \subset \text{supp}(\sigma_a)$ for each $1\le a
\le k$. The {\it cycle-product} of each $b_{I_a}\sigma_a$ is defined
to be the element $\prod_{i\in I_a} b_i \in \Z_2$ (which can be
conveniently thought as a sign  $ \pm$). Let $m_i^+$ (respectively,
$m_i^-$) be the number of $i$-cycles of $b_I\sigma$ with associated
cycle-product being the identity (respectively, the non-identity).
Then  $\rho^+ =(i^{m_i^+})_{i \geq 1}$ and $\rho^- =(i^{m_i^-})_{i
\geq 1}$ are partitions such that $|\rho^+|+|\rho^-|=n$. The pair of
partitions $(\rho^+, \rho^-)$ will be called the {\it type} of the
element $b_I\sigma$.

The basic fact on the conjugacy classes of $B_n$ is that two
elements of $B_n$ are conjugate if and only if their types are the
same.

\begin{example}
Let $\tau =(1,2,3,4)(5,6,7)(8,9), \sigma =(1,3, 8,6)(2, 7,9)(4,5)
\in \mf S_{10}$. Both $x =( (+,+,+,-,+,+,+,-,+,-), \tau)$  and $y =(
(+,-,-,-,+,-,-,-,+,-), \sigma)$ in $B_{10}$ have the same type
$(\rho^+, \rho^-) =((3),  (4,2,1))$. Then $x$ is conjugate to $y$ in
$B_{10}$.
\end{example}

The even and odd split conjugacy classes of $B_n$ are classified by
Read \cite{Re} as follows. The proof relies on an elementary yet
lengthy case-by-case analysis on conjugation, and it will be skipped
(see \cite[Chapter~3]{CW} for detail).
\begin{theorem} \cite{Re}  \label{splitclass}
The conjugacy class $C_{\rho^+, \rho^-}$ in $B_n$ splits if and
only if

(1) For even $C_{\rho^+, \rho^-}$, we have
$\rho^+\in\mathcal{OP}_n$ and $\rho^-=\emptyset$;

(2) For odd $C_{\rho^+, \rho^-}$, we have $\rho^+=\emptyset$ and
$\rho^-\in \mc{SP}^-_n$.
\end{theorem}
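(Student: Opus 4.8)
The plan is to apply the standard double-cover splitting criterion to the extension \eqref{eq:Bcover}. Fix a conjugacy class $C = C_{\rho^+,\rho^-}$, pick a convenient representative $x = b_I\sigma \in B_n$ and a preimage $\td x \in \theta_n^{-1}(x)$. For $g \in C_{B_n}(x)$ and any lift $\td g \in \td B_n$, the element $\td g\,\td x\,\td g^{-1}\td x^{-1}$ lies in $\ker\theta_n = \{1,z\}$ and, because $z$ is central, is independent of the choices of $\td g$ and of $\td x$; this gives a map $\phi_x \colon C_{B_n}(x) \to \{1,z\} \cong \Z_2$. A one-line computation shows $\phi_x$ is a group homomorphism, and by definition $\td x$ is $\td B_n$-conjugate to $z\td x$ precisely when $\phi_x(g) = z$ for some $g$. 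Hence \emph{$C$ is split if and only if $\phi_x \equiv 1$}; replacing $x$ by a conjugate only transports $\phi_x$ by an isomorphism, so the condition is well-posed on the class. Since $\phi_x$ is a homomorphism, it suffices to evaluate it on a generating set of $C_{B_n}(x)$.

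The next step is to produce such a generating set. After conjugating within $B_n$ we may put $x$ in standard form: $\sigma = \prod_a \sigma_a$ is a product of disjoint cycles, and $I \cap \mathrm{supp}(\sigma_a)$ is empty when $\sigma_a$ is a $+$-cycle (cycle-product $1$) and a single point when $\sigma_a$ is a $-$-cycle. By the description of centralizers in the wreath product $\Z_2 \wr \mf S_n$ (cf.\ Macdonald \cite[I, App.~B]{Mac}), $C_{B_n}(x)$ is generated by: for each $+$-cycle $\sigma_a$, the cycle $\sigma_a$ itself and the sign $\tau_a := b_{\mathrm{supp}(\sigma_a)}$; for each $-$-cycle $\sigma_a$, the restriction $x_a$ of $x$ to $\mathrm{supp}(\sigma_a)$ (a direct check shows $x_a$ has order $2|\sigma_a|$ and generates the entire within-cycle centralizer $\cong \Z_{2|\sigma_a|}$, with $\tau_a$ one of its powers); and, for each pair of cycles of equal length and equal sign, an element of $\mf S_n$ interchanging their supports.

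The final and most delicate step is to compute $\phi_x$ on these generators, the one subtlety being to track the signs $z$ produced by $a_ia_j = za_ja_i$ when commuting lifts past one another. Writing $m$ for the relevant cycle length and $k := \ell(\rho^-)$, a direct calculation gives: $\phi_x(\sigma_a) = 1$ always; $\phi_x$ of a swap of two $+$-cycles is $1$ while $\phi_x$ of a swap of two $-$-cycles is $z$ (their lifts anticommute); $\phi_x(\tau_a) = z^{(m-1)+mk}$ for a $+$-cycle; and $\phi_x(x_a) = z^{k-1}$ for a $-$-cycle. Now the theorem follows by inspection. If $\rho^- = \emptyset$ then $k = 0$, the last two families are vacuous, and $\phi_x \equiv 1$ iff every $+$-cycle has odd length, i.e.\ $\rho^+ \in \mathcal{OP}_n$, which is case (1). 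If $\rho^- \neq \emptyset$ then $\phi_x(x_a) = 1$ forces $k$ odd; given $k$ odd, $\phi_x(\tau_a) = z^{(m-1)+m} = z$ for any $+$-cycle, so a split class must have $\rho^+ = \emptyset$; the swap computation then forces $\rho^-$ to have distinct parts, i.e.\ $\rho^- \in \mathcal{SP}$, so altogether $\rho^- \in \mathcal{SP}_n^-$ — this is case (2); conversely these two conditions kill all the generators, so $C$ splits. The main obstacle is the second and third steps together: correctly pinning down the centralizer (in particular the non-split cyclic factor $\Z_{2\ell}$ attached to each negative cycle) and then carrying out the sign bookkeeping in $\td B_n$ without error — this is precisely the "elementary yet lengthy case-by-case analysis" alluded to after the statement.
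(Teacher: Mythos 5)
Your proof is correct, and I verified its key computations independently. Note that the paper itself gives \emph{no} proof of Theorem~\ref{splitclass}: it cites Read \cite{Re} and \cite[Chapter~3]{CW}, remarking only that ``the proof relies on an elementary yet lengthy case-by-case analysis on conjugation, and it will be skipped.'' So there is no paper argument to compare against; your write-up supplies the proof the paper omits. The framework you use — the well-defined homomorphism $\phi_x\colon C_{B_n}(x)\to\{1,z\}$, $g\mapsto \td g\,\td x\,\td g^{-1}\td x^{-1}$, together with the reduction of ``split iff $\phi_x\equiv 1$'' to a generating set of the centralizer — is the standard way to organize Read's case analysis, and it is efficient. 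I checked the delicate sign bookkeeping: with $x=b_I\sigma$ in standard form, $\sigma_a=(j_1,\dots,j_m)$ a $+$-cycle, $\td\tau_a=a_{j_1}\cdots a_{j_m}$ and $\td x=a_I\sigma$, one indeed finds $\sigma\td\tau_a\sigma^{-1}=z^{m-1}\td\tau_a$ and $\td\tau_a a_I\td\tau_a^{-1}=z^{m|I|}a_I$, giving $\phi_x(\tau_a)=z^{(m-1)+mk}$ with $k=|I|=\ell(\rho^-)$; direct computation likewise confirms $\phi_x(x_a)=z^{k-1}$ and that swapping the two anticommuting lifts of the marked points of two $-$-cycles produces a single factor of $z$; and $\phi_x(\sigma_a)=1$ and $\phi_x$ trivial on swaps of $+$-cycles are immediate. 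Granting the centralizer description for $\Z_2\wr\mf S_n$ (standard, and correctly cited to Macdonald), the final deduction is exactly as you say. The one small thing worth making explicit is that the parity of the class $C_{\rho^+,\rho^-}$ equals the parity of $|I|=\ell(\rho^-)$; this is what links your ``$k$ even''/``$k$ odd'' dichotomy to the theorem's ``even $C$''/``odd $C$'' phrasing, and it is used silently when you split into cases (1) and (2).
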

For $\alpha\in\mathcal{OP}_n$ we let $\mc{C}_{\alpha}^+$ be the
split conjugacy class in $\td B_n$ which lies in
$\theta_n^{-1}(C_{\alpha, \emptyset})$ and contains a permutation in
$\mf S_n$ of cycle type $\alpha$. Then $z\mc{C}_{\alpha}^+$ is the
other conjugacy class in $\theta_n^{-1}(C_{\alpha, \emptyset})$,
which will be denoted by $\mc{C}_{\alpha}^-$. By \eqref{eq:HC=Bn-}
and Proposition~\ref{squaretable}, we can construct a (square)
character table $(\varphi_\alpha)_{\varphi,\alpha}$ for $\HC_n$
whose rows are simple $\HC_n$-characters $\varphi$ or equivalently,
simple spin $\td B_n$-characters (with $\Z_2$-grading implicitly
assumed), and whose columns are even split conjugacy classes
$\mc{C}_{\alpha}^+$ for $\alpha \in \mc{OP}_n$.

Recall the Euler identity that $|\mc{SP}_n| =|\mc{OP}_n|$. By
Proposition~\ref{squaretable} and Theorem~\ref{splitclass}, we have
the following.

\begin{corollary}  \label{cor:number simple}
The number of simple $\HC_n$-modules equals $|\mc{SP}_n|$. More
precisely, the number of simple $\HC_n$-modules of type $\texttt M$
equals $|\mc{SP}^+_n|$ and the number of simple $\HC_n$-modules of
type $\texttt Q$ equals $|\mc{SP}^-_n|$.
\end{corollary}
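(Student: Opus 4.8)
The plan is to assemble Corollary~\ref{cor:number simple} directly from the two ingredients already in place: Proposition~\ref{squaretable}, which translates counts of simple $\HC_n$-modules into counts of even (resp.\ odd) split conjugacy classes of $B_n$, and Theorem~\ref{splitclass}, which identifies exactly which conjugacy classes $C_{\rho^+,\rho^-}$ split. First I would invoke Proposition~\ref{squaretable}(1): the number of simple $\HC_n$-modules equals the number of even split conjugacy classes of $B_n$. By Theorem~\ref{splitclass}(1), an even class $C_{\rho^+,\rho^-}$ splits precisely when $\rho^-=\emptyset$ and $\rho^+\in\mathcal{OP}_n$; hence the even split classes are parametrized by $\mathcal{OP}_n$, and their number is $|\mathcal{OP}_n|$. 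Now the Euler identity $|\mathcal{SP}_n|=|\mathcal{OP}_n|$ (stated just before the corollary) gives that this count equals $|\mathcal{SP}_n|$, proving the first assertion.

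Next I would treat the type $\texttt Q$ count. By Proposition~\ref{squaretable}(2), the number of simple $\HC_n$-modules of type $\texttt Q$ equals the number of odd split conjugacy classes of $B_n$. By Theorem~\ref{splitclass}(2), an odd class $C_{\rho^+,\rho^-}$ splits precisely when $\rho^+=\emptyset$ and $\rho^-\in\mathcal{SP}^-_n$; so the odd split classes are parametrized by $\mathcal{SP}^-_n$, giving the count $|\mathcal{SP}^-_n|$. Finally, the type $\texttt M$ count is obtained by subtraction: the total number of simple $\HC_n$-modules is $|\mathcal{SP}_n|$, and each is either of type $\texttt M$ or of type $\texttt Q$ (by the Super Wedderburn Theorem~\ref{th:Wedderburn} applied to the semisimple superalgebra $\HC_n$), so the type $\texttt M$ count is $|\mathcal{SP}_n| - |\mathcal{SP}^-_n| = |\mathcal{SP}^+_n|$, using the evident partition $\mathcal{SP}_n = \mathcal{SP}^+_n \sqcup \mathcal{SP}^-_n$ according to the parity of $\ell(\lambda)$.

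There is really no serious obstacle here, since all the hard work is encapsulated in Read's classification (Theorem~\ref{splitclass}) and the structural Proposition~\ref{squaretable}, both quoted above. The only point requiring a word of care is the semisimplicity of $\HC_n$ over $\C$, which legitimizes the clean dichotomy of simple modules into types $\texttt M$ and $\texttt Q$ and hence the subtraction argument; this follows from Maschke's theorem for the finite group $\td B_n$ together with the superalgebra decomposition $\C\td B_n\cong\C B_n\oplus\HC_n$ noted in the discussion preceding Proposition~\ref{squaretable}, so that $\HC_n$ inherits semisimplicity as a direct summand. With that in hand the proof is a two-line bookkeeping argument, and I would present it as such.
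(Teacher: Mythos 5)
Your proof is correct and follows exactly the route the paper intends: the paper's own proof is merely the phrase "By Proposition~\ref{squaretable} and Theorem~\ref{splitclass}," relying on the Euler identity $|\mc{SP}_n| = |\mc{OP}_n|$ stated just above, and your argument simply spells out the bookkeeping (including the subtraction to get the type $\texttt M$ count). No gaps.
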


\section{The (spin) characteristic map}\label{sec:spinch}

In this section, we develop systematically the representation theory
of $\HC_n$ after a quick review of the Frobenius characteristic map
for $\mf S_n$. Following \cite{Jo2}, we define a (spin)
characteristic map using the character table for the simple
$\HC_n$-modules, and establish its main properties. We review the
relevant aspects of symmetric functions. The image of the
irreducible characters of $\HC_n$ under the characteristic map are
shown to be Schur $Q$-functions up to some 2-powers.

\subsection{The Frobenius characteristic map}

The conjugacy classes of $\mf S_n$ are parameterized by partitions
$\la$ of $n$. Let
$$
z_\la =\prod_{i \ge 1} i^{m_i} m_i!
$$
denote the order of the centralizer of an element in a conjugacy
class of cycle type $\la$.

Let $R_n :=R(\mf S_n)$ be the Grothendieck group of $\mf
S_n\text{-}\mf{mod}$, which can be identified with the $\Z$-span of
irreducible characters $\chi^\la$ of the Specht modules $S^{\la}$
for $\la \in\mc P_n$. There is a bilinear form on $R_n$ so that
$(\chi^\la, \chi^\mu) =\delta_{\la\mu}.$ This induces a bilinear
form on the direct sum
$$
R =\bigoplus_{n=0}^\infty R_n,
$$
so that the $R_n$'s are orthogonal for different $n$. Here $R_0=\Z$.
In addition, $R$ is a graded ring with multiplication given by $fg
=\ind_{\mf S_m \times \mf S_n}^{\mf S_{m+n}} (f\otimes g)$ for $f
\in R_m$ and $g \in R_n$.

Denote by $\La$ the ring of symmetric functions in infinitely many
variables, which is the $\Z$-span of the monomial symmetric
functions $m_\la$ for $\la \in \mc P$. There is a standard bilinear
form $(\cdot, \cdot)$ on $\La$ such that the Schur functions $s_\la$
form an orthonormal basis for $\La$. The ring $\La$ admits several
distinguished bases: the complete homogeneous symmetric functions
$\{h_\la\}$, the elementary symmetric functions $\{e_\la\}$, and the
power-sum symmetric functions $\{p_\la\}$. See \cite{Mac}.

The (Frobenius) characteristic map $\ch: R \rightarrow \La$ is
defined by
\begin{equation}
\ch (\chi) =\sum_{\mu\in \mc{P}_n} z_\mu^{-1} \chi_\mu p_\mu,
\label{charmap}
\end{equation}
where $\chi_\mu$ denotes the character value of $\chi$ at a
permutation of cycle type $\mu$. Denote by $\textbf{1}_n$ and
$\sgn_n$ the trivial and the sign module/character of $\mf S_n$,
respectively. It is well known that
\begin{itemize}
\item
$\ch$ is an isomorphism of graded rings.

\item
$\ch$ is an isometry.

\item
$\ch(\textbf{1}_n) =h_n$, \quad $\ch(\sgn_n) =e_n$, \quad
$\ch(\chi^\la) =s_\la$.
\end{itemize}
Moreover,  the following holds for any composition $\mu$ of $n$:
\begin{equation}\label{eq:chpermutation}
\ch \big({\rm ind}^{\C \mf S_n}_{\C \mf
S_{\mu}}\textbf{1}_n\big)=h_{\mu},
\end{equation}
where $\mf S_{\mu} =\mf S_{\mu_1} \times \mf S_{\mu_2} \times\cdots$
denotes the associated Young subgroup.

We record the Cauchy identity for later use (cf. \cite[I, \S
4]{Mac})
\begin{equation}\label{eq:Cauchy}
\sum_{\mu\in\mc P}m_{\mu}(y)h_{\mu}(z) = \prod_{i,
j}\frac1{1-y_iz_j} =\sum_{\la\in\mc P}s_{\la}(y)s_{\la}(z).
\end{equation}

\subsection{The basic spin module}  \label{subsec:basicSpin}

The exterior algebra $\Cl_n$ is naturally an $\HC_n$-module (called
the {\em basic spin module}) where the action is given by
$$
c_i.(c_{i_1}c_{i_2}\ldots) =c_ic_{i_1}c_{i_2}\ldots, \quad
\sigma.(c_{i_1}c_{i_2}\ldots)=c_{\sigma(i_1)}c_{\sigma(i_2)} \ldots,
$$
for $\sigma \in \mf S_n$.  Let $\sigma=\sigma_1\ldots \sigma_\ell\in
\mf S_n$ be a cycle decomposition with cycle length of $\sigma_i$
being $\mu_i$. If  $I$ is a union of some of the
$\text{supp}(\sigma_i)$'s, say $I =\text{supp}(\sigma_{i_1}) \cup
\ldots \cup \text{supp}(\sigma_{i_s})$, then
$\sigma(c_I)=(-1)^{\mu_{i_1}+\ldots +\mu_{i_s} -s}c_I$.  Otherwise,
$\sigma(c_I)$ is not a scalar multiple of $c_I$. This observation
quickly leads to the following.

\begin{lemma}   \label{lem:basicSpinChar}
The value of the character $\xi^n$ of the  basic spin
$\HC_n$-module at the conjugacy class $\mc{C}_{\alpha}^+$ is
given by
\begin{equation}\label{E:spinchar}
\xi^n_\alpha =2^{\ell(\alpha)}, \qquad
\mbox{$\alpha\in\mathcal{OP}_n$}.
\end{equation}
\end{lemma}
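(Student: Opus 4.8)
The plan is to compute the trace of the action of a representative of $\mc{C}_\alpha^+$ on the basic spin module $\Cl_n$, using the explicit monomial basis $\{c_I : I \subseteq \{1,\dots,n\}\}$ of $\Cl_n$. Since $\mc{C}_\alpha^+$ (for $\alpha \in \mc{OP}_n$) contains a genuine permutation $\sigma \in \mf S_n$ of cycle type $\alpha$, it suffices to compute $\tr(\sigma \mid \Cl_n)$ for such a $\sigma$. The key observation, already recorded in the excerpt just before the lemma, is that $\sigma$ permutes the basis vectors $c_I$ up to sign: $\sigma(c_I) = \pm c_{\sigma(I)}$, so $c_I$ contributes to the trace only when $\sigma(I) = I$ as a set.

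First I would identify exactly which subsets $I$ satisfy $\sigma(I) = I$. Writing $\sigma = \sigma_1 \cdots \sigma_{\ell(\alpha)}$ as a product of disjoint cycles, a set $I$ is $\sigma$-stable if and only if $I$ is a union of some of the supports $\text{supp}(\sigma_i)$; this gives exactly $2^{\ell(\alpha)}$ such subsets. Next, for each such $I = \text{supp}(\sigma_{i_1}) \cup \dots \cup \text{supp}(\sigma_{i_s})$, I would invoke the sign computation stated in the excerpt: $\sigma(c_I) = (-1)^{\mu_{i_1} + \dots + \mu_{i_s} - s} c_I$, where $\mu_{i_j}$ is the length of the cycle $\sigma_{i_j}$. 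The crucial point is that $\alpha \in \mc{OP}_n$, so every part $\mu_i$ is odd, whence $\mu_{i_j} - 1$ is even for each $j$ and the total exponent $\sum_j (\mu_{i_j} - 1)$ is even. Therefore each $\sigma$-stable $c_I$ contributes $+1$ to the trace.

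Summing over all $2^{\ell(\alpha)}$ stable subsets then gives $\xi^n_\alpha = \tr(\sigma \mid \Cl_n) = 2^{\ell(\alpha)}$, as claimed. I expect the only mildly delicate step to be the sign bookkeeping in $\sigma(c_I) = \pm c_I$ — one must verify that a single $m$-cycle acting on the product of the $m$ generators in its support produces the sign $(-1)^{m-1}$ (reordering $m$ anticommuting generators cyclically), and that disjoint cycles contribute independently since their generators occupy disjoint index sets. But this is precisely the observation the authors have already granted before the lemma statement, so the proof reduces to organizing the count of $\sigma$-fixed basis elements and noting that the oddness of all parts of $\alpha$ kills every sign. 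No further input is needed.
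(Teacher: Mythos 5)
Your proof is correct and matches the paper's approach essentially verbatim: you compute the trace of a representative $\sigma\in\mf S_n$ of cycle type $\alpha$ on the monomial basis $\{c_I\}$ of $\Cl_n$, identify the $\sigma$-stable subsets as exactly the unions of cycle supports (there are $2^{\ell(\alpha)}$ of them), and use the sign formula $(-1)^{\mu_{i_1}+\cdots+\mu_{i_s}-s}$ together with the oddness of all parts of $\alpha$ to see that each stable $c_I$ contributes $+1$. The paper records this same observation immediately before the lemma and calls it the proof, so there is nothing to add.
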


The basic spin module of $\HC_n$ should be regarded as the spin
analogue of the trivial/sign modules of $\mf S_n$.

\subsection{The ring $R^-$}
\label{subsec:ringR-}

Thanks to the superalgebra isomorphism \eqref{eq:HC=Bn-},
$\HC_n$-$\mf{mod}$ is equivalent to the category of spin $\td
B_n$-modules. We shall not distinguish these two isomorphic
categories below, and the latter one has the advantage that one can
apply the standard arguments from the theory of finite groups
directly as we have seen in Section~\ref{sec:spinHC}. Denote by
$R_n^-$ the Grothendieck group of $\HC_n$-$\mf{mod}$. As in the
usual (ungraded) case, we may replace the isoclasses of modules by
their characters, and then regard $R_n^-$ as the free abelian group
with a basis consisting of the characters of the simple
$\HC_n$-modules. It follows by Corollary~\ref{cor:number simple}
that the rank of $R_n^-$ is $|\mc{SP}_n|$. Let
$$
 R^- := \bigoplus_{n=0}^\infty  R^-_n,  \qquad
 R^-_\Q := \bigoplus_{n=0}^\infty \Q \otimes_\Z R^-_n,
$$ where
it is understood that $R^-_0 =\Z$.

We shall define a ring structure on $R^-$ as follows. Let
$\HC_{m,n}$ be the subalgebra of $\HC_{m+n}$ generated by
$\Cl_{m+n}$ and $\mf S_m \times \mf S_n$. For $M \in
\HC_m$-$\mf{mod}$ and $N \in \HC_n$-$\mf{mod}$, $M\otimes N$ is
naturally an $\HC_{m,n}$-module, and we define the product
$$
[M] \cdot [N] = [\HC_{m+n} \otimes_{\HC_{m,n}} (M \otimes N)],
$$
and then extend by $\Z$-bilinearity. It follows from the
properties of the induced characters that the multiplication on
$R^-$ is commutative and associative.

Given spin $\td B_n$-modules $M,N$, we define a bilinear form on
$R$ and so on $R_\Q$ by letting
\begin{equation}  \label{eq:bilformR-}
\langle M,N \rangle  =\dim \Hom_{\td B_n} (M,N).
\end{equation}

\subsection{The Schur $Q$-functions}
The materials in this subsection are pretty standard (cf. \cite{Mac,
Jo1} and \cite[Appendix~A]{CW}). Recall $p_r$ is the $r$th power sum
symmetric function, and for a partition $\mu =(\mu_1, \mu_2,
\ldots)$ we define $p_\mu =p_{\mu_1} p_{\mu_2} \cdots$. Let $x
=\{x_1, x_2, \ldots\}$ be a set of indeterminates. Define  a family
of symmetric functions $q_r =q_r(x)$, $r \ge 0$, via a generating
function
\begin{align}
Q(t) := \sum_{r\geq 0}q_r(x)t^r=\prod_i\frac{1+t x_i}{1-t
x_i}.\label{gen.fun.qr}
\end{align}
It follows from \eqref{gen.fun.qr} that
$$
Q(t) =\exp \Big( 2 \sum_{r \ge 1, r\,\text{odd}} \frac{p_{r}
t^{r}}{r}  \Big).
$$
Componentwise, we have
\begin{equation}  \label{eq:qnp}
q_n =\sum_{\alpha \in \mc{OP}_n} 2^{\ell(\alpha)} z_\alpha^{-1}
p_\alpha.
\end{equation}

Note that $q_0=1$, and that $Q(t)$ satisfies the relation
\begin{equation}  \label{eq:QQ}
Q(t) Q(-t) =1,
\end{equation}
which is equivalent to the identities:
$$
\sum_{r+s =n} (-1)^r q_r q_s =0, \quad n \ge 1.
$$
These identities are  vacuous  for $n$ odd. When $n=2m$ is even, we obtain that
\begin{equation}  \label{eq:q quardr}
q_{2m} = \sum_{r=1}^{m-1} (-1)^{r-1} q_r q_{2m-r} - \hf (-1)^m
q_m^2.
\end{equation}
Let $\Gamma$ be the $\Z$-subring of $\Lambda$ generated by the $q_r$'s:
$$
\Gamma =\Z [q_1, q_2, q_3, \ldots].
$$
The ring $\Gamma$ is graded by the degree of functions:
$
\Gamma =\bigoplus_{n \ge 0} \Gamma^n$.
We set
$
\Gamma_{\Q} =\Q \otimes_{\Z} \Gamma.
$
For any partition $\mu =(\mu_1, \mu_2, \ldots)$, we define
$$
q_\mu =q_{\mu_1} q_{\mu_2} \ldots.
$$

\begin{theorem}  \label{th:Gammafree}
The following holds for $\Gamma$ and $\Gamma_{\Q}$:
\begin{enumerate}
\item
$\Gamma_{\Q}$ is a polynomial algebra with polynomial generators
$p_{2r-1}$ for $r \ge 1$.


\item
$\{p_\mu \mid \mu \in \mc{OP}\}$ forms a linear basis for
    $\Gamma_{\Q}$.

\item
$\{q_\mu \mid \mu \in \mc{OP}\}$ forms a linear basis for
    $\Gamma_{\Q}$.

\item
$\{q_\mu \mid \mu \in \mc{SP}\}$ forms a $\Z$-basis for
    $\Gamma$.
\end{enumerate}
\end{theorem}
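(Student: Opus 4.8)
The plan is to establish the four statements in the order (1), (2), (3), (4), since each can be bootstrapped from the previous ones. I would begin with the generating function identity
\[
Q(t) = \exp\Bigl( 2 \sum_{r \ge 1,\, r\ \mathrm{odd}} \frac{p_r t^r}{r} \Bigr),
\]
which is already recorded in the excerpt. Taking logarithmic derivatives (or simply expanding the exponential) shows that each $q_n$ is a polynomial in the odd power sums $p_1, p_3, p_5, \ldots$ with rational coefficients, and conversely, comparing lowest-degree terms, $q_{2r-1} = \frac{2}{2r-1} p_{2r-1} + (\text{polynomial in } p_1, \ldots, p_{2r-3})$. Hence over $\Q$ the sets $\{q_1, q_3, q_5, \ldots\}$ and $\{p_1, p_3, p_5, \ldots\}$ generate the same subalgebra, and that subalgebra is visibly contained in $\Gamma_\Q$. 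For the reverse containment, \eqref{eq:q quardr} expresses each even-indexed $q_{2m}$ as a polynomial (with rational coefficients) in $q_1, \ldots, q_{2m-1}$, hence inductively in $q_1, q_3, \ldots, q_{2m-1}$, hence in the odd power sums. This shows $\Gamma_\Q = \Q[p_1, p_3, p_5, \ldots]$. To see that the $p_{2r-1}$ are algebraically independent, note they are already algebraically independent inside the larger polynomial ring $\Lambda_\Q = \Q[p_1, p_2, p_3, \ldots]$; this proves (1).

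Statement (2) is then immediate: since $\Gamma_\Q$ is the polynomial algebra on the $p_{2r-1}$, a linear basis is given by the monomials $p_\mu = p_{\mu_1} p_{\mu_2} \cdots$ where each part $\mu_i$ is odd, i.e.\ by $\{p_\mu : \mu \in \mathcal{OP}\}$, with the empty partition giving $1$. For (3), I would use \eqref{eq:qnp}, which says $q_n = \sum_{\alpha \in \mathcal{OP}_n} 2^{\ell(\alpha)} z_\alpha^{-1} p_\alpha$; the leading term (taking $\alpha = (n)$ when $n$ is odd, or otherwise the longest refinement) shows that the transition matrix between $\{q_\mu : \mu \in \mathcal{OP}_n\}$ and $\{p_\mu : \mu \in \mathcal{OP}_n\}$, when both are ordered suitably (say by dominance or by length), is triangular with nonzero diagonal entries over $\Q$. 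More cleanly: $q_n = \frac{2}{n} p_n + \cdots$ for $n$ odd, and since the $q_{2r-1}$ generate $\Gamma_\Q$ polynomially by (1), the monomials $q_\mu$ over $\mu \in \mathcal{OP}$ form a basis by the same reasoning as (2). So (3) follows from (1) by a change of polynomial generators.

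Statement (4) is the genuinely delicate one and I expect it to be the main obstacle, because it is an \emph{integral} (not rational) statement and the generators $q_{2r-1}$ of $\Gamma_\Q$ do \emph{not} lie in $\Gamma$ with the right integrality — indeed $\Gamma = \Z[q_1, q_2, q_3, \ldots]$ includes all the $q_r$, and \eqref{eq:q quardr} has a factor $\hf$. The point is that the relations \eqref{eq:q quardr} are exactly what is needed to rewrite any $q_\mu$ with a repeated or non-strictly-decreasing index in terms of $q_\nu$'s that are "more strict", and one must check this rewriting stays within $\Z$-span. Concretely, I would prove: (i) $\Gamma$ is spanned as a $\Z$-module by $\{q_\mu : \mu \in \mathcal{SP}\}$, by showing that whenever $\mu$ has two equal parts $\mu_i = \mu_{i+1} = k$, the product $q_k^2$ can be replaced — via the $n = 2k$ case of the relation $\sum_{r+s=n}(-1)^r q_r q_s = 0$, i.e.\ $q_k^2 = (-1)^k\bigl(\text{integer combination of } q_r q_s \text{ with } r \ne s\bigr)$, which crucially has \emph{integer} coefficients in this formulation before dividing — so iterating a suitable well-ordering argument on partitions reduces any $q_\mu$ to a $\Z$-combination of $q_\nu$ with $\nu$ strict; and (ii) linear independence of $\{q_\mu : \mu \in \mathcal{SP}\}$ over $\Z$, which follows from their linear independence over $\Q$, and that in turn follows from (3) together with the fact that $|\mathcal{SP}_n|$ equals the rank we expect — or, most transparently, from the classical fact that $q_\mu$ for $\mu \in \mathcal{SP}$ are related to the Schur $Q$-functions $Q_\xi$ by a unitriangular integer matrix, and the $Q_\xi$ are linearly independent. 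The careful bookkeeping in step (i) — verifying that the reduction terminates and never introduces denominators — is where the real work lies; I would set it up with induction on $\ell(\mu)$ and a secondary induction on the largest repeated part, using \eqref{eq:QQ} in the form $\sum_{r+s=n}(-1)^r q_r q_s = 0$ rather than the $\hf$-laden \eqref{eq:q quardr}.
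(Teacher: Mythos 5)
Your treatment of (1)--(3) matches the paper's argument in substance: the paper derives the recursion $r\,q_r = 2(p_1 q_{r-1} + p_3 q_{r-3} + \cdots)$ from the logarithmic derivative of $Q(t)$ and inducts, which is the same information you extract by expanding the exponential and invoking \eqref{eq:q quardr}; statements (2) and (3) then follow in both accounts as change-of-generators observations. For (4), you also correctly isolate the key point --- that $\sum_{r+s=2m}(-1)^r q_r q_s = 0$ gives $q_m^2 = 2(-1)^{m+1}\sum_{r=0}^{m-1}(-1)^r q_r q_{2m-r}$ with \emph{integer} coefficients --- and the overall strategy (rewrite until strict, then count dimensions for independence) is the paper's.

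However, the specific termination scheme you propose for (4) has a genuine gap. You suggest inducting on $\ell(\mu)$ and then (downward) on the largest repeated part. Suppose $\mu$ has largest repeated part $k$ and you apply the rewriting $q_k^2 \rightsquigarrow q_r q_{2k-r}$ with $1 \le r < k$. The term $r=0$ replaces $q_k^2$ by a multiple of $q_{2k}$ and so decreases $\ell$, handled by the outer induction; but the terms with $r \ge 1$ keep $\ell$ fixed, and the resulting partition contains the part $2k-r > k$. If $\mu$ already had a single part equal to $2k-r$ (entirely possible, since $\mu$'s parts above $k$ are only forced to be distinct, not to avoid the value $2k-r$), the new partition has $2k-r$ as a repeated part, so the largest repeated part has \emph{increased}. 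Your secondary induction therefore does not close. The correct well-founded order, and the one the paper actually uses, is downward induction on dominance: replacing the pair $(k,k)$ by $(2k-r,\, r)$ with $r < k$ produces a partition $\nu$ strictly above $\mu$ in dominance order, so one obtains $q_\la = \sum_{\mu \in \mc{SP}_n,\, \mu \ge \la} a_{\mu\la}\, q_\mu$ with $a_{\mu\la} \in \Z$ by a finite descent. With that replacement your spanning argument goes through, and your closing remark on linear independence (via (3) and the equality $|\mc{SP}_n| = |\mc{OP}_n|$) is fine.
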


\begin{proof}
By clearing the denominator of the identity
$$
\frac{Q'(t)}{Q(t)}  =2\sum_{r \ge 0} p_{2r+1} t^{2r},
$$
we deduce that
$$
r q_r = 2(p_1q_{r-1} + p_3 q_{r-3} + \ldots ).
$$

By using induction on $r$, we conclude that (i) each $q_r$ is expressible as a
polynomial in terms of $p_s$'s with odd $s$; (ii) each $p_r$ with odd $r$ is expressible
as a polynomial in terms of $q_s$'s, which can be further restricted to the odd $s$.
So,
$
\Gamma_\Q = \Q [p_1, p_3, \ldots ] = \Q [q_1, q_3, \ldots ],
$
and from this (1), (2) and (3) follow.

To prove (4), it suffices to show that, for
any partition $\la$,
$$
q_\la =\sum_{\mu \in \mc{SP}, \mu \ge \la} a_{\mu\la} q_\mu,
$$
for some $a_{\mu\la} \in \Z$.  This can be seen by induction downward
on the dominance order on $\la$ with the help of \eqref{eq:q quardr}.
\end{proof}

We shall define the Schur $Q$-functions $Q_\la$, for $\la \in \mc{SP}$. Let
$$
Q_{(n)} =q_n, \quad n \ge 0.
$$
Consider the generating function
$$
Q(t_1, t_2) := (Q(t_1) Q(t_2) -1) \frac{t_1 -t_2}{t_1 +t_2}.
$$
By \eqref{eq:QQ}, $Q(t_1, t_2)$ is a power series in $t_1$ and $t_2$, and we write
$$
Q(t_1, t_2) = \sum_{r, s \ge 0} Q_{(r,s)} t_1^r t_2^s.
$$
Noting $Q(t_1, t_2) =- Q(t_2,
t_1)$, we have $Q_{(r,s)} =- Q_{(s,r)}, Q_{(r,0)} =q_r$. In addition,
$$
Q_{(r,s)} =q_r q_s + 2 \sum_{i=1}^s (-1)^i q_{r+i} q_{s-i}, \quad
r>s.
$$

For a strict partition $\la =(\la_1,
\ldots, \la_m)$, we define the Schur $Q$-function $Q_\la$ recursively as follows:
\begin{eqnarray*} \label{qschur:aux7}
Q_\la &=&\sum_{j=2}^m (-1)^j Q_{(\la_1,\la_j)} Q_{(\la_2,\ldots,
\hat{\la}_j,\ldots, \la_m)}, \quad \text{for } m \text{ even},
 \\
Q_\la &=&\sum_{j=1}^m (-1)^{j-1} Q_{\la_j} Q_{(\la_1,\ldots,
\hat{\la}_j,\ldots, \la_m)}, \quad \text{for } m \text{ odd}.
\end{eqnarray*}
Note that the $Q_\la$ above is simply the Laplacian expansion of the
pfaffian of the skew-symmetric matrix $(Q_{(\la_i,\la_j)})$ when $m$
is even (possibly $\la_m=0$).

It follows from the recursive definition of $Q_\la$ and \eqref{eq:q quardr} that,
for $\la \in \mc{SP}_n$,
$$
Q_\la = q_\la + \sum_{\mu \in \mc{SP}_n, \mu > \la} d_{\la\mu}
q_\mu,
$$
for some $d_{\la\mu} \in \Z$.
From this and Theorem~\ref{th:Gammafree} we further deduce the following.
\begin{theorem}\label{qschur:aux9}
The $Q_\la$ for all strict partitions $\la$ form a $\Z$-basis for
$\Gamma$. Moreover, for any composition $\mu$ of $n$, we have
 $$
 q_\mu = \sum_{\la \in \mc{SP}_n, \la \ge \mu}  \widehat{K}_{\la \mu}
Q_{\la},
$$
where $\widehat{K}_{\la \mu} \in \Z$ and $\widehat{K}_{\la \la} =1$.
\end{theorem}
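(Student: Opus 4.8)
The plan is to establish the two assertions in sequence, using the triangularity results already collected. For the first claim—that $\{Q_\la \mid \la \in \mc{SP}\}$ is a $\Z$-basis for $\Gamma$—I would combine the two displayed triangularity relations in the excerpt. We already know from Theorem~\ref{th:Gammafree}(4) that $\{q_\mu \mid \mu \in \mc{SP}\}$ is a $\Z$-basis for $\Gamma$, and the displayed identity just before the statement says
$$
Q_\la = q_\la + \sum_{\mu \in \mc{SP}_n,\, \mu > \la} d_{\la\mu} q_\mu, \qquad d_{\la\mu} \in \Z,
$$
for $\la \in \mc{SP}_n$. This is a unitriangular change of basis (with respect to the dominance order on strict partitions of a fixed $n$, and block-diagonal across different $n$ by homogeneity). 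Hence the transition matrix lies in $GL$ over $\Z$ with determinant $\pm 1$ on each graded piece, so $\{Q_\la\}$ is also a $\Z$-basis of $\Gamma$. I would phrase this cleanly by fixing $n$, listing $\mc{SP}_n$ in a linear order refining dominance, and observing the matrix $(d_{\la\mu})$ is upper unitriangular.

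For the second claim, the expansion of $q_\mu$ for a composition $\mu$ of $n$, I would first reduce to the case where $\mu$ is a partition: since the $q_r$ commute, $q_\mu$ depends only on the multiset of parts, so $q_\mu = q_{\mu^+}$ where $\mu^+$ is the partition rearrangement. Then I invoke the relation from the proof of Theorem~\ref{th:Gammafree}(4), namely that for any partition $\la$ one has $q_\la = \sum_{\nu \in \mc{SP}_n,\, \nu \ge \la} a_{\nu\la} q_\nu$ with $a_{\nu\la} \in \Z$ and—this needs a small check—$a_{\la\la} = 1$ when $\la$ itself is strict (and more generally the ``leading'' coefficient is $1$; this follows from the inductive use of \eqref{eq:q quardr}, which only introduces strictly larger terms). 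Composing this with the first claim's expansion $q_\nu = Q_\nu - \sum_{\la > \nu} d_{\nu\la} Q_\la$, inverted to write each $q_\nu$ as a $\Z$-combination $q_\nu = \sum_{\la \ge \nu} e_{\nu\la} Q_\la$ with $e_{\nu\nu} = 1$, yields
$$
q_\mu = \sum_{\la \in \mc{SP}_n,\, \la \ge \mu} \widehat{K}_{\la\mu} Q_\la, \qquad \widehat{K}_{\la\mu} \in \Z,
$$
and the coefficient $\widehat{K}_{\mu\mu}$ collapses to $a_{\mu\mu} \cdot e_{\mu\mu} = 1$ when $\mu$ is strict. When $\mu$ is a composition, $\widehat{K}_{\mu^+ \mu} = 1$ with $\mu^+$ the sorted partition; I would state the normalization as $\widehat{K}_{\la\la}=1$ understanding $\la$ ranges over strict partitions.

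The main obstacle is bookkeeping around the orderings: the relation \eqref{eq:q quardr} is an identity among the $q_r$ that must be applied by downward induction on dominance, and one has to be careful that at each step the newly produced terms $q_r q_{2m-r}$ (or rather their partition rearrangements) are genuinely $\ge$ the original in dominance order, so that the triangularity is preserved and the induction terminates. This is the same verification underlying Theorem~\ref{th:Gammafree}(4), so I would not reprove it from scratch but cite it and only check the extra point that the leading coefficient is $1$—which is immediate since the terms introduced by \eqref{eq:q quardr} are all strictly larger, leaving the coefficient of the original monomial untouched at $1$. Everything else is formal linear algebra over $\Z$ with unitriangular matrices.
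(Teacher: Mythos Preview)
Your proposal is correct and follows essentially the same approach as the paper. The paper's own argument is extremely terse---it simply says the theorem follows from the displayed unitriangular relation $Q_\la = q_\la + \sum_{\mu > \la} d_{\la\mu} q_\mu$ together with Theorem~\ref{th:Gammafree}---and your write-up is exactly the unpacking of that sentence: unitriangular change of $\Z$-basis for the first claim, and composition of the two triangular transitions (general $q_\mu$ to strict $q_\nu$, then strict $q_\nu$ to $Q_\la$) for the second.
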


Let $x =\{x_1, x_2, \ldots\}$ and $y =\{y_1, y_2, \ldots\}$ be two independent sets of variables.
We have by \eqref{gen.fun.qr} that
\begin{equation}  \label{eq:pp}
\prod_{i,j} \frac{ 1+ x_i y_j}{ 1-x_i y_j}
 = \sum_{\alpha \in \mc{OP}} 2^{\ell(\alpha)} z_\alpha^{-1} p_\alpha (x) p_\alpha (y).
\end{equation}

We define an inner product $\langle \cdot, \cdot \rangle$ on $\Gamma_\Q$ by letting
\begin{equation}  \label{eq:bilformGamma}
\langle p_\alpha, p_\beta \rangle = 2^{- \ell (\alpha)} z_\alpha
\delta_{\alpha \beta}.
\end{equation}

\begin{theorem}\label{qschur:aux12}
We have
$$
\langle Q_\la, Q_\mu \rangle =2^{\ell (\la)} \delta_{\la\mu},
\quad \la, \mu \in \mc{SP}.
$$
Moreover, the following Cauchy identity holds:
$$
\prod_{i,j} \frac{ 1+ x_i y_j}{ 1-x_i y_j} = \sum_{\la \in
\mc{SP}} 2^{-\ell(\la)}  Q_\la (x) Q_\la (y).
$$
\end{theorem}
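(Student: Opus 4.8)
The plan is to deduce both statements of Theorem~\ref{qschur:aux12} from the triangularity results already established, together with the orthogonality of the power sums in \eqref{eq:bilformGamma}. First I would prove the Cauchy identity. Starting from \eqref{eq:pp}, expand the right-hand side: writing $q_n = \sum_{\alpha \in \mc{OP}_n} 2^{\ell(\alpha)} z_\alpha^{-1} p_\alpha$ from \eqref{eq:qnp}, one checks that $\sum_{\alpha \in \mc{OP}} 2^{\ell(\alpha)} z_\alpha^{-1} p_\alpha(x) p_\alpha(y) = \sum_{\mu} z_\mu^{-1} \big(\prod_i 2^{m_i}\big) p_\mu(x)p_\mu(y)$ summed over odd partitions, and that this equals $\sum_\mu q_\mu(x) m_\mu(y)$-type expansion; more directly, one uses that the bilinear form \eqref{eq:bilformGamma} is designed precisely so that $\langle q_n, \cdot\rangle$ is dual to $p$-expansions. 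The cleanest route: show that $\{q_\mu : \mu \in \mc{OP}\}$ and $\{2^{-\ell(\mu)}z_\mu^{-1} p_\mu \cdot (\text{something})\}$ are dual-ish, but in fact the slickest argument is to verify directly that $\prod_{i,j}\frac{1+x_iy_j}{1-x_iy_j} = \sum_{\la\in\mc{SP}} 2^{-\ell(\la)} Q_\la(x)Q_\la(y)$ is equivalent, via the change of basis $q_\mu = \sum_\la \widehat{K}_{\la\mu} Q_\la$ of Theorem~\ref{qschur:aux9}, to an identity among the $q$'s and $p$'s that follows from \eqref{eq:pp}.

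Concretely, I would proceed in the following order. Step 1: Establish the orthogonality $\langle q_\la, q_\mu\rangle$ has an explicit triangular shape — using \eqref{eq:qnp} and \eqref{eq:bilformGamma}, compute $\langle q_\mu(x), p_\alpha(x)\rangle$ and observe that $\sum_\mu q_\mu(x) \langle q_\mu \text{-dual}\rangle$ reproduces the generating function. Step 2: Use \eqref{eq:pp} to write the Cauchy kernel as $\sum_{\alpha\in\mc{OP}} 2^{\ell(\alpha)}z_\alpha^{-1} p_\alpha(x)p_\alpha(y)$, and rewrite this as $\sum_{\alpha\in\mc{OP}} \langle p_\alpha, p_\alpha\rangle^{-1} p_\alpha(x) p_\alpha(y)$ using \eqref{eq:bilformGamma} — so the Cauchy kernel is the ``reproducing kernel'' for the form on $\Gamma_\Q$ in the power-sum basis. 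Step 3: Since $\{Q_\la : \la\in\mc{SP}\}$ is a $\Z$-basis of $\Gamma$ (Theorem~\ref{qschur:aux9}) and hence a $\Q$-basis of $\Gamma_\Q$, write the Cauchy kernel in this basis as $\sum_{\la,\mu} c_{\la\mu} Q_\la(x) Q_\mu(y)$; the reproducing-kernel property forces $c_{\la\mu} = \langle Q_\la, Q_\mu\rangle^{\vee}$, i.e. the Gram matrix of the $Q$'s with respect to the dual basis. Step 4: Prove $\langle Q_\la, Q_\mu\rangle = 2^{\ell(\la)}\delta_{\la\mu}$ directly by a triangularity-plus-norm argument: from $Q_\la = q_\la + \sum_{\nu > \la} d_{\la\nu} q_\nu$ and the analogous expansion in the other direction, reduce to computing $\langle q_\la, Q_\mu\rangle$, and use that the leading term pairing together with induction on dominance order pins down the diagonal value $2^{\ell(\la)}$ and kills the off-diagonal terms. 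Step 5: Combine Steps 3 and 4 to read off $c_{\la\mu} = 2^{-\ell(\la)}\delta_{\la\mu}$, giving the Cauchy identity.

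An alternative, and perhaps more self-contained, ordering is to prove the Cauchy identity first by the following device: it suffices to verify it after pairing both sides (in the $x$-variables) against an arbitrary $p_\alpha(x)$ with $\alpha \in \mc{OP}$; the left side gives $p_\alpha(y)$ by \eqref{eq:pp} and \eqref{eq:bilformGamma}, and the right side gives $\sum_\la 2^{-\ell(\la)} \langle Q_\la, p_\alpha\rangle Q_\la(y)$, so the identity reduces to showing $\sum_\la 2^{-\ell(\la)} \langle Q_\la(x), p_\alpha(x)\rangle Q_\la(y) = p_\alpha(y)$, which is a statement purely about how $p_\alpha$ expands in the $Q$-basis versus how it pairs with that basis. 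Then the orthogonality $\langle Q_\la, Q_\mu\rangle = 2^{\ell(\la)}\delta_{\la\mu}$ follows by pairing the Cauchy identity against $Q_\mu(x)$: the left side is $Q_\mu(y)$ (again by the reproducing property in the $p$-basis, since the Cauchy kernel reproduces), while the right side is $\sum_\la 2^{-\ell(\la)}\langle Q_\la, Q_\mu\rangle Q_\la(y)$, and comparing coefficients of $Q_\la(y)$ forces $\langle Q_\la, Q_\mu\rangle = 2^{\ell(\la)}\delta_{\la\mu}$.

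The main obstacle, I expect, is Step 4 (equivalently, establishing that the Cauchy kernel genuinely acts as a reproducing kernel on all of $\Gamma_\Q$, not merely formally): one must be careful that the bilinear form \eqref{eq:bilformGamma}, defined a priori only on the power-sum basis indexed by $\mc{OP}$, is well-defined on $\Gamma_\Q$ (it is, since $\{p_\mu : \mu\in\mc{OP}\}$ is a basis by Theorem~\ref{th:Gammafree}(2)) and that the infinite sums converge in the appropriate graded/completed sense so that ``pairing against $p_\alpha(x)$'' is legitimate termwise. The triangularity inputs — $Q_\la = q_\la + \sum_{\mu>\la} d_{\la\mu} q_\mu$ and $q_\mu = \sum_{\la\ge\mu} \widehat{K}_{\la\mu} Q_\la$ with unit diagonal — do the real combinatorial work of converting the power-sum orthogonality into $Q$-orthogonality, but verifying the precise constant $2^{\ell(\la)}$ (rather than just orthogonality up to scalars) requires tracking the leading coefficient through \eqref{eq:qnp}, namely that $\langle q_n, q_n\rangle$ contributes the factor $2$ per part. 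Everything else is bookkeeping with generating functions.
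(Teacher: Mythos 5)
Your framing of the two halves of the theorem as being equivalent via the reproducing-kernel structure of the form in \eqref{eq:bilformGamma} is exactly the observation the paper makes (``the two statements therein can be seen to be equivalent in light of \eqref{eq:pp} and \eqref{eq:bilformGamma}''), and Steps 1--3 and 5 of your proposal are a correct, somewhat more verbose rendering of that equivalence. The problem is that the paper then deliberately stops and \emph{skips the proof}, pointing to Hall--Littlewood theory (specialize $t=-1$), to J\'ozefiak's direct computation, or to a shifted Robinson--Schensted--Knuth bijection. Your proposal, by contrast, claims to close the loop in Step~4, and that step contains a genuine gap.

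Specifically, ``triangularity-plus-norm'' does not work as stated. You know $Q_\la = q_\la + \sum_{\nu>\la} d_{\la\nu} q_\nu$ and $q_\mu = \sum_{\la\ge\mu}\widehat{K}_{\la\mu}Q_\la$ with unit diagonal entries, but plugging these into $\langle Q_\la, Q_\mu\rangle$ produces an expression in the Gram entries $\langle q_\nu, q_\kappa\rangle$ of the $q$-basis, which is nowhere near diagonal and about which Theorem~\ref{th:Gammafree} and Theorem~\ref{qschur:aux9} say nothing. Unitriangularity of two mutually inverse changes of basis does not force orthogonality of the new basis; some additional input (a combinatorial dual-basis identity, the Hall--Littlewood inner product with parameter $t$, or a shifted-RSK bijection) is required to produce the diagonal form $2^{\ell(\la)}\delta_{\la\mu}$. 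Your alternative route has the same issue in disguise: pairing the Cauchy kernel against $p_\alpha(x)$ reduces the Cauchy identity to the statement $\sum_\la 2^{-\ell(\la)}\langle Q_\la, p_\alpha\rangle Q_\la(y) = p_\alpha(y)$, which is precisely the assertion that $\{2^{-\ell(\la)}Q_\la\}$ is the dual basis of $\{Q_\la\}$, i.e.\ the orthogonality statement itself, so the argument is circular. You do flag in your final paragraph that ``the real combinatorial work'' lives here, but the work is not in fact supplied, and the paper itself takes the honest route of citing it rather than reproving it.

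To actually complete your sketch you would need one of: (i) establish $Q_\xi(x)=\sum_\mu 2^{\ell(\xi)}\widehat{K}_{\xi\mu}m_\mu(x)$ independently (this is the tableau expansion and the duality $K^-_{\xi\mu}=2^{\ell(\xi)}\widehat{K}_{\xi\mu}$ quoted later in Section~\ref{sec:spinKostka}), after which the Cauchy identity drops out by expanding $\prod_j Q(y_j)=\sum_\mu q_\mu(x)m_\mu(y)$ and substituting; (ii) use the Hall--Littlewood orthogonality $\langle P_\la(\cdot;t),Q_\mu(\cdot;t)\rangle_t=\delta_{\la\mu}$ specialized at $t=-1$; or (iii) prove the Cauchy identity combinatorially by shifted RSK and derive orthogonality from it as in your Step~3. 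Any of these is a genuine theorem, not a formal consequence of the triangularity you already have.
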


We will skip the proof of Theorem~\ref{qschur:aux12} and make some
comments only. The two statements therein can be seen to be
equivalent in light of \eqref{eq:pp} and \eqref{eq:bilformGamma}.
One possible proof of the first statement following from the theory
of Hall-Littlewood functions \cite{Mac}, and another direct proof is
also available \cite{Jo1}. The second statement would follow easily
once the shifted Robinson-Schensted-Knuth correspondence is
developed (cf. \cite[Corollary~8.3]{Sag}).

\subsection{The characteristic map}
 \label{subsec:charmap}

We define the (spin) {\em characteristic map}
$$
\ch^-: R^-_\Q \longrightarrow \Gamma_\Q
$$
to be the linear map given by
\begin{equation}
\ch^- (\varphi) = \sum_{\alpha \in \mc{OP}_n}
z_\alpha^{-1}\varphi_\alpha p_\alpha, \qquad \varphi \in R^-_n.
\label{spincharmap}
\end{equation}
The following theorem is due to J\'ozefiak \cite{Jo2} (see
\cite[Chapter~3]{CW} for an exposition).

\begin{theorem}  \cite{Jo2} \label{th:2iso}
(1) The characteristic map $\ch^-: R^-_\Q \rightarrow \Gamma_\Q$ is
an isometry.

(2) The characteristic map $\ch^-: R^-_\Q \rightarrow \Gamma_\Q$ is
an isomorphism of graded algebras.
\end{theorem}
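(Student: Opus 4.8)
The plan is to deduce both statements from the interplay between the characteristic map $\ch^-$, the bilinear form $\langle\cdot,\cdot\rangle$ on $R^-_\Q$, the inner product on $\Gamma_\Q$ defined in \eqref{eq:bilformGamma}, and the multiplication on $R^-$. The essential point is that $\ch^-$ is, by construction \eqref{spincharmap}, a version of the orthogonality of characters repackaged through the power-sum basis $\{p_\alpha \mid \alpha \in \mc{OP}\}$ of $\Gamma_\Q$, whose linear independence is guaranteed by Theorem~\ref{th:Gammafree}(2).

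First I would prove (1), the isometry property. Let $\varphi, \psi \in R^-_n$ be characters of $\HC_n$-modules, equivalently spin $\td B_n$-characters. By the definition \eqref{eq:bilformR-} of the form on $R^-_\Q$ together with the standard orthogonality of characters for the finite group $\td B_n$, one has $\langle \varphi, \psi\rangle = |\td B_n|^{-1} \sum_{g \in \td B_n} \overline{\varphi(g)}\psi(g)$, and since spin characters vanish outside the split classes, this collapses to a sum over the even split classes $\mc C_\alpha^+$ for $\alpha \in \mc{OP}_n$. Keeping careful track of the factor $2^{n+1}$ in $|\td B_n| = 2^{n+1}n!$ and the size of the centralizer of an element of $\mc C_\alpha^+$ — here is where the relation between the centralizer order in $\td B_n$ and the quantity $z_\alpha$ enters, producing precisely the weight $2^{-\ell(\alpha)}z_\alpha$ — one obtains $\langle \varphi,\psi\rangle = \sum_{\alpha \in \mc{OP}_n} 2^{\ell(\alpha)} z_\alpha^{-1} \overline{\varphi_\alpha}\psi_\alpha$. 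Comparing with $\langle \ch^-(\varphi), \ch^-(\psi)\rangle = \sum_{\alpha,\beta} z_\alpha^{-1}z_\beta^{-1}\varphi_\alpha\psi_\beta \langle p_\alpha,p_\beta\rangle$ and using \eqref{eq:bilformGamma} gives exactly the same expression, proving the isometry.

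Next, for (2), I would argue that $\ch^-$ is a graded algebra homomorphism and then that it is bijective. Surjectivity is the soft part: since $\ch^-$ is an isometry with respect to a nondegenerate form and $R^-_\Q$ has the correct graded dimension — by Corollary~\ref{cor:number simple} the rank of $R^-_n$ is $|\mc{SP}_n|$, which equals $\dim \Gamma^n_\Q$ by Theorem~\ref{qschur:aux9} (the $Q_\la$ for $\la \in \mc{SP}_n$ form a basis of $\Gamma^n$) — the map is a graded linear isomorphism once it is injective, and injectivity follows immediately from the isometry property since the form on $R^-_\Q$ is nondegenerate. The multiplicativity is the step I expect to be the main obstacle: one must show that for $M \in \HC_m\text{-}\mf{mod}$ and $N \in \HC_n\text{-}\mf{mod}$, $\ch^-([M]\cdot[N]) = \ch^-([M])\ch^-([N])$, where the left product is induction from $\HC_{m,n}$. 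This is a spin analogue of the classical fact that $\ch$ intertwines induction product with multiplication of symmetric functions, and the proof runs parallel: compute the character of the induced module $\HC_{m+n}\otimes_{\HC_{m,n}}(M\otimes N)$ at a split class $\mc C_\gamma^+$, $\gamma \in \mc{OP}_{m+n}$, by the induced-character formula, sorting over the ways a conjugacy class of $\td B_{m+n}$ meets the subgroup, and then match the resulting combinatorial sum against the product formula $p_\alpha(x) p_\beta(x) = p_{\alpha \cup \beta}(x)$ after multiplying out \eqref{spincharmap} for $M$ and $N$. The bookkeeping of signs and $2$-powers coming from the Clifford part of $\HC_{m,n}$, and the fact that $\Cl_{m+n}$ rather than $\Cl_m \otimes \Cl_n$ sits inside $\HC_{m,n}$, is exactly the delicate point that distinguishes the spin case from the classical one; I would isolate this as a lemma on induced spin characters and then the algebra-homomorphism claim, and with it the whole theorem, follows formally.
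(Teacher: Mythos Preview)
Your approach is essentially the same as the paper's: isometry via character orthogonality on $\td B_n$, multiplicativity via the induced-character formula together with $p_\alpha p_\beta = p_{\alpha\cup\beta}$, and bijectivity from a graded dimension count. Two remarks. First, there is a sign slip in your exponent: the correct formula is
\[
\langle \varphi,\psi\rangle \;=\; \sum_{\alpha\in\mc{OP}_n} 2^{-\ell(\alpha)} z_\alpha^{-1}\,\varphi_\alpha\,\psi_\alpha,
\]
with $2^{-\ell(\alpha)}$, not $2^{\ell(\alpha)}$; this is exactly what matches \eqref{eq:bilformGamma} after expanding $\langle \ch^-(\varphi),\ch^-(\psi)\rangle$, and it is also what the centralizer computation in $\td B_n$ actually yields (the centralizer of an element of $\mc C_\alpha^+$ has order $2^{\ell(\alpha)+1}z_\alpha$). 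Second, your worry about the Clifford bookkeeping in the induced-character step is overstated: since supercharacters of $\Z_2$-graded modules vanish on odd classes, only the even split classes $\mc C_\gamma^+$ (which lift from permutations in $\mf S_{m+n}$) contribute, and the induced-character formula reduces to the ordinary one for $\mf S_m\times\mf S_n \subset \mf S_{m+n}$; the paper dispatches this in one line. Your route to bijectivity (isometry $\Rightarrow$ injective, then dimension count) is a legitimate variant of the paper's (surjectivity via $\ch^-(\xi^n)=q_n$, then dimension count); either works.
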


\begin{proof}[Sketch of a proof]
We first show that $\ch^-$ is an isometry. Take $\varphi, \psi \in
R_n^-$. Since $\varphi$ is a character of a $\Z_2$-graded module, we
have the character value $\varphi_\alpha =0$ for $\alpha \not \in
\mc{OP}_n$. We can reformulate the bilinear form
\eqref{eq:bilformR-} using the standard bilinear form formula on
characters of the finite group $\td B_n$ as
$$
\langle \varphi, \psi\rangle  = \sum_{\alpha \in \mc{OP}_n}
2^{-\ell(\alpha)} z_\alpha^{-1}\varphi_\alpha \psi_\alpha,
$$
which can be seen using \eqref{eq:bilformGamma} to be equal to
$\langle \ch^-(\varphi), \ch^-(\psi)\rangle$.

Next, we show that $\ch^-$ is a homomorphism of graded algebras. For
$\phi \in R^-_m$, $\psi \in R^-_n$ and $\gamma \in \mc{OP}_{m+n}$,
we obtain a standard induced character formula for $(\phi\cdot
\psi)_\gamma$ evaluated at a conjugacy class $\mc C_\gamma^+$. This
together with the definition of $\ch^-$ imply that
\begin{eqnarray*}
\ch^- (\phi \cdot \psi)
 &=& \sum_{\gamma \in \mc{OP}}z_\gamma^{-1} (\phi \cdot \psi)_\gamma p_\gamma   \\
 &=& \sum_\gamma \sum_{\alpha,\beta \in \mc{OP}, \alpha \cup \beta
 =\gamma} z_\gamma^{-1} \frac{z_\gamma}{z_\alpha z_\beta} \phi_\alpha
 \psi_\beta p_\gamma
 = \ch^-(\phi)\ch^- (\psi).
\end{eqnarray*}

Recalling the definition of $\ch^-$ and the basic spin character
$\xi^n$, it follows by \eqref{eq:qnp} and
Lemma~\ref{lem:basicSpinChar} that $\ch^-(\xi^n) =q_n.$ Since $q_n$
for $n \ge 1$ generate the algebra $\Gamma_\Q$ by
Theorem~\ref{th:Gammafree}, $\ch^-$ is surjective. Then $\ch^-$ is
an isomorphism of graded vector spaces by the following comparison
of the graded dimensions (cf. Corollary~\ref{cor:number simple} and
Theorem~\ref{th:Gammafree}):
$$
\dim_q R_\Q^- = \prod_{r \ge 1} (1+q^r) = \dim_q \Gamma_\Q.
$$
This completes the proof of the theorem.
\end{proof}

Recall from the proof above that $\ch^-( \xi^{n}) =q_n.$ Regarding
$\xi^{(n)} =\xi^{n}$, we define $\xi^\la$ for $\la \in \mc{SP}$
using the same recurrence relations for the Schur $Q$-functions
$Q_\la$. Then by Theorem~\ref{th:2iso},
$\ch^-(\xi^\la) =Q_\la$, and
$\langle \xi^\la, \xi^\mu \rangle =2^{\ell(\la)} \delta_{\la\mu}$,
for $\la, \mu \in \mc{SP}$.

For a partition $\la$  with length $\ell(\la)$, we set
\begin{eqnarray}\label{delta:lambda}
\delta(\lambda)= \left \{
 \begin{array}{ll}
 0,
 & \text{ if } \ell(\lambda) \text{ is even}, \\
 1, & \text{ if } \ell(\lambda) \text{ is odd}.
 \end{array}
 \right.
\end{eqnarray}
By chasing the recurrence relation more closely, we can show by
induction on $\ell(\la)$ that the element
$$
\zeta^\la := 2^{-\frac{\ell(\la) -\delta(\la)}2} \xi^\la
$$
lies in $R^-$, for $\la \in \mc{SP}_n$. Note that
\begin{equation}   \label{eq:irredch}
\ch^-(\zeta^\la) =2^{-\frac{\ell(\la) -\delta(\la)}2} Q_\la.
\end{equation}
It follows that, for each $\la \in \mc{SP}_n$,
\begin{eqnarray*}
Q_\la =  2^{\frac{\ell(\la) -\delta(\la)}2} \sum_{\alpha \in
\mc{OP}_n} z_\alpha^{-1} \zeta^\la_\alpha p_\alpha.
\end{eqnarray*}

Given $\mu \in \mc{P}_n$, let us denote $\HC_\mu :=\HC_{\mu_1}
\otimes \HC_{\mu_2} \otimes \cdots$, and recall the Young subgroup
$\mf S_\mu$ of $\mf S_n$. The induced $\HC_n$-module
$$
M^\mu :=\HC_n \otimes_{\C \mf S_\mu} {\bf 1}_n
$$
will be called a {\em permutation module} of $\HC_n$. By the
transitivity of the tensor product, it can be rewritten as
$$
M^\mu =\HC_n \otimes_{\HC_\mu} (\Cl_{\mu_1} \otimes \Cl_{\mu_2} \otimes \cdots).
$$
Since $\ch^-( \xi^{n}) =q_n$ and $\ch^-$ is an algebra homomorphism, we obtain that
\begin{equation}   \label{eq:spinchpermutation}
\ch^- (M^\mu) = q_\mu.
\end{equation}

\begin{theorem}  \cite{Jo2}  \label{th:irredch}
The set of characters $\zeta^\la$ for $\la \in \mc{SP}_n$ is a
complete list of pairwise non-isomorphic simple (super) characters
of $\HC_n$. Moreover, the degree of $\zeta^\la$
is equal to $
2^{n-\frac{\ell(\la) -\delta(\la)}2} g_\la$, where
$$
g_\la = \frac{n!}{\la_1!\ldots \la_\ell !}
\prod_{i<j} \frac{\la_i -\la_j}{\la_i +\la_j}.
$$
\end{theorem}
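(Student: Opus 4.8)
The plan is to combine the characteristic-map machinery with two classical facts about the Schur $Q$-functions. For the first assertion, I would argue as follows. By Theorem~\ref{th:2iso} the map $\ch^-$ is an isometric algebra isomorphism $R^-_\Q \to \Gamma_\Q$, and by the discussion following its proof we already know $\ch^-(\zeta^\la) = 2^{-\frac{\ell(\la)-\delta(\la)}{2}} Q_\la$ and $\zeta^\la \in R^-$ for every $\la \in \mc{SP}_n$. Since the $Q_\la$ form a $\Z$-basis of $\Gamma$ (Theorem~\ref{qschur:aux9}), the $\zeta^\la$ are $\Z$-linearly independent in $R^-_n$; and since $|\mc{SP}_n|$ equals the number of simple $\HC_n$-modules (Corollary~\ref{cor:number simple}), a rank count shows the $\zeta^\la$ form a $\Z$-basis of $R^-_n$. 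What remains is to upgrade ``basis of the Grothendieck group'' to ``$\pm$ the genuine irreducible characters.'' For this I would use the orthogonality $\langle \zeta^\la, \zeta^\mu\rangle = 2^{-(\ell(\la)-\delta(\la))}\langle Q_\la,Q_\mu\rangle = 2^{\delta(\la)}\delta_{\la\mu}$ coming from Theorem~\ref{qschur:aux12}, and compare with the known inner-product values for genuine characters: a simple $\HC_n$-module of type $\texttt M$ has $\langle\chi,\chi\rangle = 1$ and one of type $\texttt Q$ has $\langle\chi,\chi\rangle = 2$, by Super Schur's Lemma~\ref{lem:superSchur} and the bilinear form \eqref{eq:bilformR-}. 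Writing each $\zeta^\la$ as an integer combination of the genuine irreducible characters and using these norm computations (together with the fact that $\delta(\la)=0$ exactly for the type-$\texttt M$ count $|\mc{SP}^+_n|$ and $\delta(\la)=1$ for the type-$\texttt Q$ count $|\mc{SP}^-_n|$ from Corollary~\ref{cor:number simple}), one forces each $\zeta^\la$ to be $\pm$ a single irreducible character; positivity of the degree (see below) then pins down the sign, so $\zeta^\la$ is exactly the irreducible character with $\delta(\la)$ recording its type.

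For the degree formula, the degree of a character $\varphi \in R^-_n$ is its value at the identity element, i.e.\ at the conjugacy class $\mc C_{(1^n)}^+$ with $\alpha = (1^n) \in \mc{OP}_n$. I would extract this from the characteristic map: inverting \eqref{spincharmap}, the coefficient of $p_{(1^n)}$ in $\ch^-(\varphi)$ is $z_{(1^n)}^{-1}\varphi_{(1^n)} = \tfrac1{n!}\deg\varphi$. Applying this to $\ch^-(\zeta^\la) = 2^{-\frac{\ell(\la)-\delta(\la)}{2}}Q_\la$, I need the coefficient of $p_1^n$ in $Q_\la$. Here I would invoke the classical expansion of $Q_\la$ in the power-sum basis, or equivalently the fact that $Q_\la(x)$ specialized appropriately recovers the shifted-tableaux count: the coefficient of $p_1^n$ in $Q_\la$ equals $2^{\ell(\la)} g_\la / $ (an explicit normalization), where $g_\la = \frac{n!}{\la_1!\cdots\la_\ell!}\prod_{i<j}\frac{\la_i-\la_j}{\la_i+\la_j}$ is the number of standard shifted Young tableaux of shape $\la$. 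Combining, $\tfrac1{n!}\deg\zeta^\la = 2^{-\frac{\ell(\la)-\delta(\la)}{2}} \cdot 2^{\ell(\la)} g_\la / n!$ after the bookkeeping, giving $\deg\zeta^\la = 2^{n - \frac{\ell(\la)-\delta(\la)}{2}} g_\la$ once the power of $2$ is tracked correctly through the definition of $Q_\la$ via the generating function \eqref{gen.fun.qr}.

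I expect the main obstacle to be the precise power-of-$2$ bookkeeping in the degree formula: one must carefully reconcile the normalization $2$ appearing in $Q(t) = \exp(2\sum_{r\text{ odd}} p_r t^r/r)$, the factor $2^{\ell(\alpha)}$ in \eqref{eq:qnp}, and the $2^{-\frac{\ell(\la)-\delta(\la)}{2}}$ rescaling defining $\zeta^\la$, so that the exponent $n - \frac{\ell(\la)-\delta(\la)}{2}$ comes out exactly. The cleanest route is probably to first establish the $p_1^n$-coefficient of $q_\mu$ for $\mu \in \mc P_n$ directly from \eqref{eq:qnp} (namely $2^{n}/\text{something}$ via $z_{(1^n)}=n!$ and the multinomial count of ways to merge cycles), then pass from $q_\mu$ to $Q_\la$ using the unitriangular change of basis of Theorem~\ref{qschur:aux9} and the classical identification of the leading term with shifted tableaux; alternatively one can cite the known hook-length / dimension formula for the irreducible spin characters directly. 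The type assertion ($\texttt M$ versus $\texttt Q$ according to the parity of $\ell(\la)$) is then immediate from matching $|\mc{SP}^\pm_n|$ against Corollary~\ref{cor:number simple} and the norm computation above, with no further work.
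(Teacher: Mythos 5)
Your argument for the first assertion follows the paper's own route: the orthogonality $\langle\zeta^\la,\zeta^\mu\rangle = 2^{\delta(\la)}\delta_{\la\mu}$ from Theorem~\ref{qschur:aux12}, matched against the norms forced by Super Schur's Lemma~\ref{lem:superSchur} and the type counts of Corollary~\ref{cor:number simple}, is exactly what the paper uses to conclude that $\pm\zeta^\la$ is irreducible. (One small point worth spelling out: when $\delta(\la)=1$ the equation $\sum a_i^2\epsilon_i=2$ a priori allows $\zeta^\la$ to be a sum of two type-$\texttt M$ characters; you eliminate this, as the paper does implicitly by treating $\ell(\la)$ even first, by first pinning down that the $\zeta^\mu$ with $\ell(\mu)$ even exhaust the type-$\texttt M$ simples and then using orthogonality of $\zeta^\la$ to them.)

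Where you genuinely diverge is the degree formula. The paper establishes it by induction on $\ell(\la)$ via the recursive (Pfaffian) definition of $Q_\la$, deferring to J\'ozefiak \cite[Proposition~4.13]{Jo1}. You instead read the degree off as $n!$ times the $p_1^n$-coefficient of $\ch^-(\zeta^\la)=2^{-(\ell(\la)-\delta(\la))/2}Q_\la$. This is a clean alternative, and it is correct once the bookkeeping you flag is done properly: since $p_\mu$ with any part $\ge 2$ contributes nothing to the squarefree monomial $x_1\cdots x_n$, one has $[p_1^n]Q_\la = \tfrac{1}{n!}[m_{(1^n)}]Q_\la = \tfrac{1}{n!}K^-_{\la(1^n)} = \tfrac{2^n g_\la}{n!}$ (not $2^{\ell(\la)}g_\la$ divided by something, as your draft suggests). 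Feeding this through $\ch^-$ gives precisely $\deg\zeta^\la = 2^{n-\frac{\ell(\la)-\delta(\la)}{2}}g_\la$. The trade-off is that your route imports two facts not available at this point in the paper: the interpretation of $Q_\la$ as the generating function over marked shifted tableaux (only introduced in Section~\ref{sec:spinKostka}), and the classical identification of $g_\la = \frac{n!}{\la_1!\cdots\la_\ell!}\prod_{i<j}\frac{\la_i-\la_j}{\la_i+\la_j}$ with both the shifted hook-length quotient and the count of standard shifted tableaux. The paper's inductive route avoids the combinatorics but leans on an external reference instead; neither proof is self-contained, and yours has the virtue of making the appearance of $g_\la$ conceptually transparent.
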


\begin{proof}[Sketch of a proof]
For strict partitions $\la, \mu$, we have
\begin{eqnarray}
 \langle \zeta^\la, \zeta^\la \rangle
 &=& \left\{
 \begin{array}{ll}
 1 & \text{ for } \ell(\la) \text{ even},  \\
 2 & \text{ for } \ell(\la) \text{ odd},
 \end{array}
 \right. \label{eqn:spincharinner}\\
  \langle \zeta^\la, \zeta^\mu \rangle
 &=& 0, \quad \text{ for } \la \neq \mu.\notag
\end{eqnarray}
From this and Corollary~\ref{cor:number simple}, it is not difficult
to see that either $\zeta^\la$ or $-\zeta^\la$ is a simple (super)
character, first for $\la$ with $\ell(\la)$ even and then for $\la$
with $\ell(\la)$ odd.

To show that $\zeta^\la$ instead of $-\zeta^\la$ is a character of a
simple module, it suffices to know that the degree of $\zeta^\la$ is
positive. The degree formula can be established by induction on
$\ell(\la)$ (see the proof of \cite[Proposition~4.13]{Jo1} for
detail).
\end{proof}

We shall denote by $D^\la$  the irreducible $\HC_n$-module whose
character is $\zeta^\la$,  for $\la \in \mc{SP}_n$. The following is
an immediate consequence of Theorem~\ref{qschur:aux9},
\eqref{eq:irredch}, and \eqref{eq:spinchpermutation}.

\begin{proposition}  \label{prop:triang}
Let $\mu$ be a composition of $d$. We have the following
decomposition of $M^{\mu}$ as an $\HC_d$-module:
$$
M^{\mu} \cong \bigoplus_{\la \in \mc{SP}, \la \ge \mu}
2^{\frac{\ell(\la)-\delta(\la)}{2}}  \widehat{K}_{\la \mu} D^\la,
$$
where $\widehat{K}_{\la \mu} \in \Z_+$.
\end{proposition}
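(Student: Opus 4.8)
The plan is to obtain the decomposition by applying the spin characteristic map $\ch^-$ to the class $[M^\mu] \in R^-_d$, expanding the resulting symmetric function in the Schur $Q$-basis, and transporting that expansion back through $\ch^-$, which is legitimate because $\ch^-\colon R^-_\Q \to \Gamma_\Q$ is an isomorphism of graded algebras (Theorem~\ref{th:2iso}).

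Concretely, I would first invoke \eqref{eq:spinchpermutation}, which gives $\ch^-([M^\mu]) = q_\mu$. Theorem~\ref{qschur:aux9} then furnishes the unitriangular expansion
$$q_\mu = \sum_{\la \in \mc{SP}_d,\ \la \ge \mu} \widehat{K}_{\la\mu}\, Q_\la, \qquad \widehat{K}_{\la\mu} \in \Z,\quad \widehat{K}_{\la\la} = 1.$$
By \eqref{eq:irredch} each $Q_\la$ equals $\ch^-\big(2^{\frac{\ell(\la)-\delta(\la)}{2}} \zeta^\la\big)$, so linearity of $\ch^-$ yields
$$\ch^-([M^\mu]) = \ch^-\Big(\sum_{\la \ge \mu} 2^{\frac{\ell(\la)-\delta(\la)}{2}} \widehat{K}_{\la\mu}\, \zeta^\la\Big),$$
and injectivity of $\ch^-$ on $R^-_\Q$ gives the identity $[M^\mu] = \sum_{\la \ge \mu} 2^{\frac{\ell(\la)-\delta(\la)}{2}} \widehat{K}_{\la\mu}\, \zeta^\la$ in $R^-_{d,\Q}$, hence in $R^-_d$.

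It remains to upgrade this equality of (virtual) characters to the claimed module decomposition and to fix the sign of the coefficients. By Theorem~\ref{th:irredch} the characters $\zeta^\la$ with $\la \in \mc{SP}_d$ are precisely the irreducible $\HC_d$-characters and form a $\Z$-basis of $R^-_d$; therefore the coefficient of $\zeta^\la$ in $[M^\mu]$ is the composition multiplicity $[M^\mu : D^\la] \in \Z_{\ge 0}$. Since $\ell(\la) - \delta(\la)$ is even by \eqref{delta:lambda}, the factor $2^{\frac{\ell(\la)-\delta(\la)}{2}}$ is a positive integer, so $2^{\frac{\ell(\la)-\delta(\la)}{2}} \widehat{K}_{\la\mu} \in \Z_{\ge 0}$ forces $\widehat{K}_{\la\mu} \in \Z_+$, and one reads off the stated decomposition of $M^\mu$ with the sum running over strict partitions $\la \ge \mu$. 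There is no real obstacle once the results of Section~\ref{sec:spinch} are in hand; the single point deserving a line of care is precisely this last step — passing from an identity in $R^-_{d,\Q}$ to an honest direct sum with non-negative multiplicities, which uses that the $\zeta^\la$ are genuine irreducible characters (Theorem~\ref{th:irredch}) rather than merely a rational basis, combined with the a priori integrality $\widehat{K}_{\la\mu} \in \Z$ from Theorem~\ref{qschur:aux9}.
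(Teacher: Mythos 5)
Your proof is correct and follows precisely the route the paper has in mind: the paper states Proposition~\ref{prop:triang} as an immediate consequence of Theorem~\ref{qschur:aux9}, \eqref{eq:irredch}, and \eqref{eq:spinchpermutation}, and you have supplied exactly the bookkeeping — apply $\ch^-$, expand $q_\mu$ in the $Q_\la$ basis, pull back through the isomorphism, and use Theorem~\ref{th:irredch} to identify the coefficients as non-negative composition multiplicities. The observation at the end, that non-negativity of $\widehat{K}_{\la\mu}$ is not part of Theorem~\ref{qschur:aux9} and must be extracted from the fact that $M^\mu$ is an honest module whose irreducible constituents are the $D^\la$, is the one genuinely content-bearing step and you have handled it correctly.
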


\section{The Schur-Sergeev duality}\label{sec:duality}

In this section, we formulate a double centralizer property for the
actions of the Lie superalgebra $\mf{q}(n)$ and of the algebra
$\HC_d$ on the tensor superspace $(\C^{n|n})^{\otimes d}$. We obtain
a multiplicity-free decomposition of $(\C^{n|n})^{\otimes d}$ as a
$U (\mf q(n))\otimes \HC_d$-module. The characters of the simple
$\mf{q}(n)$-modules arising this way are shown to be Schur
$Q$-functions (up to some 2-powers).

\subsection{The classical Schur duality}
Let us first recall a general double centralizer property. We
reproduce a proof below which can be easily adapted to the
superalgebra setting later on.

\begin{proposition}  \label{prop:Centralizer}
Suppose that $W$ is a finite-dimensional vector space, and
$\mc B$ is a semisimple subalgebra of $\End (W)$. Let $\mc A =
\End_{\mc B} (W)$. Then, $\End_{\mc A} (W) =\mc B$.

As an $\mc A \otimes \mc B$-module, $W$ is multiplicity-free,
i.e.,
$$
W \cong \bigoplus_{i} U_i \otimes V_i,
$$
where $\{U_i\}$  are pairwise
non-isomorphic simple $\mc A$-modules and $\{V_i\}$ are pairwise
non-isomorphic simple $\mc B$-modules.
\end{proposition}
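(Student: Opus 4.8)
The plan is to prove this as a standard application of the double centralizer theorem (a.k.a.\ the first part of Schur--Weyl duality), using the semisimplicity of $\mc B$ and Wedderburn theory. The key preliminary observation is that since $\mc B$ is a semisimple subalgebra of $\End(W)$, the module $W$ decomposes as a $\mc B$-module into isotypic components $W \cong \bigoplus_i V_i \otimes \Hom_{\mc B}(V_i, W)$, where $V_i$ runs over the distinct simple $\mc B$-modules appearing in $W$ and the multiplicity space $M_i := \Hom_{\mc B}(V_i, W)$ records the multiplicity. Then I would compute $\mc A = \End_{\mc B}(W)$ directly: a $\mc B$-endomorphism must preserve each isotypic component $V_i \otimes M_i$ and acts there (by Schur's lemma, $\End_{\mc B}(V_i) = \C$) as $1 \otimes (\text{arbitrary linear map on } M_i)$. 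Hence $\mc A \cong \bigoplus_i \End(M_i)$, so $\mc A$ is semisimple, the $M_i$ are naturally simple $\mc A$-modules, and they are pairwise non-isomorphic since they live on different Wedderburn blocks. Setting $U_i := M_i$ gives the stated decomposition $W \cong \bigoplus_i U_i \otimes V_i$ as an $\mc A \otimes \mc B$-module.

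For the double centralizer equality $\End_{\mc A}(W) = \mc B$, I would argue as follows. Clearly $\mc B \subseteq \End_{\mc A}(W)$ since the $\mc A$- and $\mc B$-actions commute by construction. For the reverse inclusion, use the explicit block decomposition: under $W \cong \bigoplus_i U_i \otimes V_i$ with $\mc A \cong \bigoplus_i \End(U_i)$, an element of $\End_{\mc A}(W)$ must commute with all of $\bigoplus_i \End(U_i)$, hence preserves each summand $U_i \otimes V_i$ and acts there as $1 \otimes (\text{something in } \End(V_i))$. But as $\mc B$ is semisimple, $\mc B$ surjects onto $\bigoplus_i \End(V_i)$ (this is precisely the Wedderburn description of the action of a semisimple algebra on the sum of its simple modules, using that each $V_i$ is a faithful simple module over its block), so every such operator already comes from $\mc B$. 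Therefore $\End_{\mc A}(W) = \mc B$.

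The step I expect to require the most care is the claim that $\mc B$ maps \emph{onto} $\bigoplus_i \End(V_i)$, which is where semisimplicity is genuinely used; one must note that $\mc B$ acts on $W$ so that the image of $\mc B$ in $\End(W)$, being a semisimple algebra whose simple modules among $\{V_i\}$ are exactly those appearing in $W$, is by super/ordinary Wedderburn a product of matrix algebras surjecting onto each $\End(V_i)$. Equivalently, one invokes Jacobson's density theorem for the semisimple algebra $\mc B$ acting on the completely reducible module $W$. Everything else is bookkeeping with Schur's lemma and the functoriality of $\Hom_{\mc B}(V_i, -)$. I should also remark that this proof is written so as to port verbatim to the super setting of the next sections, with ordinary Schur's lemma replaced by Lemma~\ref{lem:superSchur} and Wedderburn replaced by Theorem~\ref{th:Wedderburn}; in the super case the multiplicity spaces $U_i$ inherit $\mc A$-module structures that may be of type $\texttt M$ or type $\texttt Q$, and the tensor decomposition must be read through Lemma~\ref{tensorsmod}, but the skeleton of the argument is identical.
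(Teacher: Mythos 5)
Your proof is correct and takes essentially the same approach as the paper: decompose $W$ into $\mc B$-isotypic components as $\bigoplus_i U_i \otimes V_i$ with $U_i = \Hom_{\mc B}(V_i,W)$, deduce $\mc A \cong \bigoplus_i \End(U_i)$ and hence semisimplicity, then compute $\End_{\mc A}(W)$ block by block. The only cosmetic difference is that the paper phrases the last step as ``reverse the roles of $\mc A$ and $\mc B$,'' whereas you make explicit the fact this symmetry quietly uses, namely that the semisimple, faithfully acting $\mc B$ surjects onto $\bigoplus_i \End(V_i)$ (Wedderburn/density), which is a fair point to highlight since it is exactly what must be re-examined in the super setting.
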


\begin{proof}
Assume that $V_a$ are all the pairwise non-isomorphic simple $\mc
B$-modules. Then the Hom-spaces $U_a:=\Hom_{\mc B}(V_a, W)$ are
naturally $\mc A$-modules. By the semisimplicity assumption on $\mc
B$, we have a $\mc B$-module isomorphism:
$$
W\cong\bigoplus_aU_a\otimes V_a.
$$
By applying Schur's Lemma, we obtain
$$
\mc A = \End_{\mc B} (W)\cong\bigoplus_{a}\End_{\mc B}(U_a\otimes
V_a) \cong \bigoplus_a\End(U_a)\otimes{\rm id}_{V_a}.
$$
Hence $\mc A$ is semisimple and $U_a$ are all the pairwise
non-isomorphic simple $\mc A$-modules.

Since $\mc A$ is now semisimple, we can reverse the roles of $\mc A$
and $\mc B$ in the above computation of $\End_{\mc B}(W)$, and
obtain the following isomorphism:
$$
\End_{\mc A}(W)\cong\bigoplus_a{\rm id}_{U_a}\otimes\End(V_a)\cong
\mc B.
$$
The proposition is proved.
\end{proof}

The natural action of $\mf{gl}(n)$ on $\C^n$ induces a
representation $(\omega_d, (\C^n)^{\otimes d})$ of the general linear
Lie algebra $\mf{gl}(n)$, and we have a representation $(\psi_d,
(\C^n)^{\otimes d})$ of the symmetric group $\mf S_d$ by
permutations of the tensor factors.

\begin{theorem}
[Schur duality] The  images $\omega_d(U(\mf{gl}(n)))$ and $\psi_d(\C
\mf S_d)$ satisfy the double centralizer property, i.e.,
\begin{align*}
\omega_{d}(U(\mf{gl}(n)))=&\End_{\C\mf S_d}((\C^n)^{\otimes d}),\\
&\End_{\mf{gl}(n)}((\C^n)^{\otimes d}) = \psi_d(\C\mf S_d).
\end{align*}
Moreover, as a $\mf{gl}(n)\times\mf S_d$-module,
\begin{equation}\label{eq:Schurduality}
(\C^n)^{\otimes d}\cong\bigoplus_{\la\in\mc{P}_d, \ell(\la)\leq n}
L(\la)\otimes S^{\la},
\end{equation}
where $L(\la)$ denotes the irreducible $\mf{gl}(n)$-module of
highest weight $\la$.
\end{theorem}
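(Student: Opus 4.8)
The plan is to deduce the Schur duality from the abstract double centralizer property of Proposition~\ref{prop:Centralizer}, which requires us to identify one of the two algebras acting on $W = (\C^n)^{\otimes d}$. The most convenient choice is $\mc B = \psi_d(\C\mf S_d)$, which is semisimple since $\C\mf S_d$ is a semisimple algebra over $\C$; we also need the action to be faithful, i.e. $\psi_d$ injective, which holds as soon as $n \geq 2$ (for $n=1$ the statement is trivial, and for the general statement one may restrict to $\ell(\la) \leq n$ anyway). Granting this, Proposition~\ref{prop:Centralizer} immediately gives $\End_{\mc A}(W) = \mc B$ where $\mc A := \End_{\mf S_d}(W)$, so the remaining content is the identification $\mc A = \omega_d(U(\mf{gl}(n)))$.

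First I would establish the inclusion $\omega_d(U(\mf{gl}(n))) \subseteq \End_{\mf S_d}(W)$, which is clear since $\mf{gl}(n)$ acts diagonally on the tensor factors and $\mf S_d$ permutes them, so the two actions commute. The reverse inclusion $\End_{\mf S_d}(W) \subseteq \omega_d(U(\mf{gl}(n)))$ is the crux. The standard argument: $\End_{\mf S_d}((\C^n)^{\otimes d})$ is spanned by the $\mf S_d$-invariants in $\End(\C^n)^{\otimes d} = (\C^n \otimes (\C^n)^*)^{\otimes d}$ under the simultaneous permutation action, and by Schur--Weyl-type reasoning (or by a direct averaging/polarization argument) this invariant space is spanned by the images of the symmetrized tensors, i.e. by operators of the form $x^{\otimes d}$ for $x \in \mf{gl}(n)$; these lie in $\omega_d(U(\mf{gl}(n)))$ because the $d$-fold symmetric powers of elements of $\mf{gl}(n)$ generate the same algebra as $\omega_d(U(\mf{gl}(n)))$ (polarization identity expressing $x_1 \odot \cdots \odot x_d$ in terms of $(x_{i_1} + \cdots + x_{i_k})^{\otimes d}$). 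I expect this polarization/invariant-theory step to be the main obstacle — not because it is deep, but because it is the one place where a genuine computation (rather than a formal consequence of semisimplicity) is needed; the cleanest route is to invoke that $(\mathrm{End}(V)^{\otimes d})^{\mf S_d}$ is spanned by $\{x^{\otimes d} : x \in \mathrm{End}(V)\}$ over an infinite field.

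Once $\mc A = \omega_d(U(\mf{gl}(n)))$ is identified, the decomposition \eqref{eq:Schurduality} follows from the multiplicity-free statement in Proposition~\ref{prop:Centralizer}: we get $W \cong \bigoplus_i U_i \otimes V_i$ with $U_i$ the distinct simple $\mc A$-modules and $V_i$ the distinct simple $\mf S_d$-modules appearing. It remains to match the indexing set and identify the factors. The simple $\mf S_d$-modules appearing are a subset of $\{S^\la : \la \in \mc P_d\}$; to see that $V_i$ ranges exactly over $\{S^\la : \ell(\la) \leq n\}$ and that the paired $\mc A$-module is the irreducible $\mf{gl}(n)$-module $L(\la)$ of highest weight $\la$, I would exhibit, for each such $\la$, a highest weight vector of weight $\la$ inside $W$ generating a copy of $L(\la)$ — concretely, using the Young symmetrizer $c_\la$ attached to the tableau of shape $\la$ filled in column-reading order, the vector $c_\la \cdot (e_1^{\otimes \la_1'} \otimes \cdots)$ is a nonzero $\mf{gl}(n)$-highest weight vector of weight $\la$ whenever $\ell(\la) \leq n$, and vanishes otherwise. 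A dimension count, or the Cauchy identity $\ch$ applied to the character of $W$ together with \eqref{eq:Cauchy}, then confirms that no other constituents occur and that each occurs with multiplicity one, completing the proof.
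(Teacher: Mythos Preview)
Your overall plan matches the paper's: apply Proposition~\ref{prop:Centralizer} with $\mc B = \psi_d(\C\mf S_d)$, prove the identification $\End_{\mf S_d}(W) = \omega_d(U(\gl(n)))$ directly, and then pin down the indexing of the decomposition. (Incidentally, you do not need faithfulness of $\psi_d$: the image $\mc B$ is semisimple as a quotient of $\C\mf S_d$, which is all Proposition~\ref{prop:Centralizer} requires.)

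The paper argues the key inclusion $\End_{\mf S_d}(W) \subseteq \omega_d(U(\gl(n)))$ differently from you (it defers to the proof of Theorem~\ref{Sergeev q}). There one sets $Y_k = \text{span}\big\{\sum_{\sigma} \sigma.(x_1 \otimes \cdots \otimes x_k \otimes 1^{d-k})\big\}$ and $X_k = \text{span}\{\tilde{x}_1 \cdots \tilde{x}_k\}$ with $\tilde{x} = \omega_d(x)$, and proves $Y_k \subseteq X_k$ by induction on $k$ via the expansion of $\omega(x_1,\ldots,x_{k-1}) \cdot \tilde{x}_k$. Your polarization route is close in spirit but has a small gap as written: polarization does show that $\text{Sym}^d(\End V)$ is spanned by diagonal tensors $y^{\otimes d}$, but you still owe an explanation of why each $y^{\otimes d}$ lies in $\omega_d(U(\gl(n)))$ --- polarization alone does not give this. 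The missing ingredient is Newton's identities: writing $A_i = 1^{i-1} \otimes y \otimes 1^{d-i}$ (these commute), one has $y^{\otimes d} = A_1 \cdots A_d = e_d(A_1,\ldots,A_d)$, which is a polynomial in the power sums $p_k = \sum_i A_i^k = \omega_d(y^k)$, hence in the image of $U(\gl(n))$. With this fix your argument goes through; the paper's inductive $Y_k\subseteq X_k$ computation is essentially an unwound version of the same idea that avoids separating the two steps.

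For the decomposition \eqref{eq:Schurduality}, the paper (again via the analogue in Theorem~\ref{th:QnDecomp}) identifies each weight space $W_\mu$ with the permutation module $\ind^{\mf S_d}_{\mf S_\mu}\mathbf 1$ and reads off the highest weights from the triangular decomposition of permutation modules (the non-super analogue of Proposition~\ref{prop:triang}). Your approach via Young symmetrizers and explicit highest weight vectors, followed by a Cauchy-identity dimension count, is equally valid and perhaps more self-contained, though it imports machinery (Young symmetrizers) that the paper does not otherwise use; the paper's route has the advantage of running in parallel with the $\qn$ case and reusing the characteristic-map formalism already set up.
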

We will skip the proof of the Schur duality here, as it is similar
to a detailed proof below for its super analogue
(Theorems~\ref{Sergeev q} and \ref{th:QnDecomp}).

As an application of the Schur duality, let us derive the character
formula for $\ch L(\la) =\tr x_1^{E_{11}}x_2^{E_{22}}\cdots
x_n^{E_{nn}}|_{L(\la)}$, where as usual $E_{ii}$ denotes the matrix
whose $(i,i)$th entry is 1 and zero else.

Denote by $\mc{CP}_d(n)$ the set of compositions of $d$ of length
$\leq n$. Set $W=(\C^n)^{\otimes d}$.  Given  $\mu\in\mc{CP}_d(n)$,
let $W_{\mu}$ indicate the $\mu$-weight space of $W$. Observe that
$W_{\mu}$ has a linear basis
\begin{equation}\label{eq:basis}
e_{i_1} \otimes \ldots \otimes
e_{i_d}, \text{ with }
\{i_1, \ldots, i_d\} =\{\underbrace{1,\ldots, 1}_{\mu_1},
\ldots, \underbrace{n,\ldots, n}_{\mu_n} \}.
\end{equation}
On the other hand, $\mf S_n$ acts on the basis \eqref{eq:basis} of
$W_{\mu}$ transitively, and the stablizer of the basis element
$e_1^{\mu_1}\otimes\cdots\otimes e_n^{\mu_n}$ is the Young subgroup
$\mf S_{\mu}$. Therefore we have $W_{\mu}\cong\ind^{\mf S_d}_{\mf
S_{\mu}}\textbf{1}_d$ and hence
\begin{equation}
W\cong\bigoplus_{\mu\in\mc{CP}_d(n)}W_{\mu}\cong\bigoplus_{\mu\in\mc{CP}_d(n)}\ind^{\mf
S_d}_{\mf S_{\mu}}\textbf{1}_d.
\end{equation}
This and \eqref{eq:Schurduality} imply that
$$
\bigoplus_{\mu\in\mc{CP}_d(n)}\ind^{\mf S_d}_{\mf
S_{\mu}}\textbf{1}_d\cong\bigoplus_{\la\in\mc{P}_d,\ell(\la)\leq
n}L(\la)\otimes S^{\la}.
$$
Applying the trace operator $\tr x_1^{E_{11}}x_2^{E_{22}}\cdots
x_n^{E_{nn}}$ and the Frobenius characteristc map $\ch$ to both
sides of the above isomorphism and summing over $d$, we obtain
$$
\sum_{\mu\in\mc{P},\ell(\mu)\leq n}m_{\mu}(x_1,\ldots,
x_n)h_{\mu}(z) =\sum_{\la\in\mc{P},\ell(\la)\leq n}\ch
L(\la)s_{\la}(z),
$$
where $z=\{z_1,z_2,\ldots\}$ is infinite. Then using the Cauchy
identity \eqref{eq:Cauchy} and noting the linear independence of the
$s_{\la}(z)$'s, we recover the following well-known character
formula:
\begin{equation}
\ch L(\la)=s_{\la}(x_1,x_2,\ldots,x_n).
\end{equation}

\subsection{The queer Lie superalgebras}

The associative superalgebra $Q(n)$ (defined in
Section~\ref{sec:superalg}) equipped with the super-commutator is
called the {\em queer Lie superalgebra} and  denoted by $\mf{q}(n)$. Let
$$
I(n|n) =\{\bar{1}, \ldots, \bar{n}, 1,\ldots, n\}.
$$
The $\mf{q}(n)$  can be explicitly realized as matrices in the $n|n$ block form,
indexed by $I(n|n)$:
\begin{align}   \label{qmatrix}
\begin{pmatrix}
a&b\\
b&a
\end{pmatrix},
\end{align}
where $a$ and $b$ are arbitrary $n\times n$ matrices. The even
(respectively, odd) part $\ev\ga$ (respectively, $\od\ga$) of
$\ga=\mf{q}(n)$ consists of those matrices of the form
\eqref{qmatrix} with $b=0$ (respectively, $a=0$). Denote by $E_{ij}$
for $i,j \in I(n|n)$ the standard elementary matrix with the
$(i,j)$th entry being $1$ and zero elsewhere.

The standard Cartan subalgebra $\h={\ev \h}\oplus {\od \h}$ of $\ga$
consists of matrices of the form \eqref{qmatrix} with $a,b$ being
arbitrary diagonal matrices. Noting that $[\ev\h,\h]=0$ and
$[\od\h,\od\h]=\ev\h$, the Lie superalgebra $\h$ is not abelian. The
vectors
\begin{align*}
H_i:=E_{\ov{i},\ov{i}}+E_{ii},\quad i=1,\ldots,n,
\end{align*}
is a basis for the $\ev\h$.  We let $\{\epsilon_i|i=1,\ldots,n\}$
denote the corresponding dual basis in $\h_{\bar{0}}^*$. With
respect to $\ev\h$ we have the root space decomposition
$\ga=\h\oplus\bigoplus_{\alpha\in\Phi}\ga_\alpha$ with roots
$\{\epsilon_i-\epsilon_j|1\le i\not=j\le n\}.$ For each root
$\alpha$ we have $\dim_\C(\ga_\alpha)_i=1$, for $i\in\Z_2$. The
system of positive roots corresponding to the Borel subalgebra $\mf
b$ consisting of matrices of the form \eqref{qmatrix} with $a,b$
upper triangular is given by $\{\epsilon_i-\epsilon_j|1\le i<j\le
n\}$.

 The Cartan
subalgebra $\h=\ev\h\oplus\od\h$ is a solvable Lie superalgebra, and
its irreducible representations are described as follows. Let
$\la\in\ev\h^*$ and consider the symmetric bilinear form
$\langle\cdot,\cdot\rangle_\la$ on $\od\h$ defined by
\begin{align*}
\langle v,w\rangle_\la:=\la([v,w]).
\end{align*}
Denote by $\text{Rad} \langle\cdot,\cdot\rangle_\la$ the radical of
the form $\langle\cdot,\cdot\rangle_\la$. Then the form
$\langle\cdot,\cdot\rangle_\la$ descends to a nondegenerate
symmetric bilinear form on $\od\h/\text{Rad}
\langle\cdot,\cdot\rangle_\la$, and it gives rise to a Clifford
superalgebra $\Cl_\la:= \Cl(\od\h/\text{Rad}
\langle\cdot,\cdot\rangle_\la)$. By definition we have an
isomorphism of superalgebras
\begin{equation*}  
\Cl_\la \cong U(\h) /I_\la,
\end{equation*}
where $I_\la$ denotes the ideal of $U(\h)$ generated by $\text{Rad}
\langle\cdot,\cdot\rangle_\la$ and $a -\la(a)$ for $a \in\ev\h$.

Let $\od\h'\subseteq\od\h$ be a maximal isotropic subspace and
consider the subalgebra $\h'=\ev\h\oplus\od\h'$.  Clearly the
one-dimensional $\ev\h$-module $\C v_\la$, defined by
$hv_\la=\la(h)v_\la$, extends trivially to $\h'$.  Set
$$
W_\la:=\text{Ind}_{\h'}^{\h}\C v_\la.
$$
We see that the action of
$\h$ factors through $\Cl_\la$ so that $W_\la$ becomes the unique
irreducible $\Cl_\la$-module and hence is independent of the choice
of $\od\h'$. The following can now be easily verified.

\begin{lemma}\label{cliff:irred:mod}
For $\la\in\ev\h^*$,
$W_\la$ is an irreducible $\h$-module. Furthermore, every
finite-dimensional irreducible $\h$-module is isomorphic to some
$W_\la$.
\end{lemma}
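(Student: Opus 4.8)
The plan is to reduce the whole statement to the (elementary) representation theory of the Clifford superalgebra $\Cl_\la$, which is simple by Example~\ref{ex:Clifford} and therefore has a unique irreducible module up to isomorphism.

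For the first assertion, I would use the fact (recorded just above the lemma) that the $\h$-action on $W_\la$ factors through $U(\h)/I_\la\cong\Cl_\la$: concretely one verifies the defining relations of $I_\la$ on $W_\la$ by a PBW computation, since $\ev\h$ is central and acts on $v_\la$ by the scalar $\la$, while for $r$ in the radical one may take $\od\h'\supseteq\text{Rad}\langle\cdot,\cdot\rangle_\la$ (legitimate, as the radical is isotropic) and push $r$ through the PBW generators using $\langle r,-\rangle_\la\equiv 0$. It then suffices to show $W_\la$ is irreducible as a $\Cl_\la$-module, and this I would do by a dimension count: by the PBW theorem $W_\la=U(\h)\otimes_{U(\h')}\C v_\la$ has a basis of square-free monomials in odd elements of a complement to $\od\h'$ in $\od\h$, so $\dim W_\la=2^{\dim(\od\h/\od\h')}$; since $\od\h'$ is a maximal isotropic subspace for a form with radical $\text{Rad}\langle\cdot,\cdot\rangle_\la$, one gets $\dim(\od\h/\od\h')=\lceil N/2\rceil$ with $N=\dim(\od\h/\text{Rad}\langle\cdot,\cdot\rangle_\la)$, which is exactly the dimension of the unique irreducible $\Cl_\la$-module recorded in Example~\ref{ex:Clifford}. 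As $\Cl_\la$ is simple, $W_\la$ is a direct sum of copies of that irreducible module, and the equality of dimensions forces $W_\la$ to be irreducible, hence irreducible as an $\h$-module.

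For the converse, let $M$ be a finite-dimensional irreducible $\h$-module and write $\rho$ for the action. By the super analogue of Schur's Lemma (Lemma~\ref{lem:superSchur} and the remark following it), $\End_\h(M)$ is a division superalgebra over $\C$; in particular its even part is $\C$ and its odd part has no nonzero element of square $0$. Since $\ev\h$ is central in $\h$, each $h\in\ev\h$ acts as an even $\h$-endomorphism, hence as a scalar $\la(h)$, defining $\la\in\ev\h^*$. The module relation then gives $\rho(v)\rho(w)+\rho(w)\rho(v)=\rho([v,w])=\langle v,w\rangle_\la$ for $v,w\in\od\h$, so $M$ is a module over the Clifford superalgebra of $(\od\h,\langle\cdot,\cdot\rangle_\la)$. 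The crucial point is that this action descends to $\Cl_\la$: for $r\in\text{Rad}\langle\cdot,\cdot\rangle_\la$ the operator $\rho(r)$ super-commutes with all of $\rho(\h)$ — with $\ev\h$ because $[\h,\ev\h]=0$, and with $\od\h$ because $\rho(r)\rho(w)+\rho(w)\rho(r)=\langle r,w\rangle_\la=0$ — so $\rho(r)$ is an odd element of $\End_\h(M)$ with $\rho(r)^2=\hf\rho([r,r])=\hf\la([r,r])=0$, whence $\rho(r)=0$. Therefore the $\h$-action on $M$ factors through the simple superalgebra $\Cl_\la$, and $M$, being irreducible, is its unique irreducible module, i.e.\ $M\cong W_\la$.

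The hard part is precisely the descent step in the last paragraph: showing that $\text{Rad}\langle\cdot,\cdot\rangle_\la$ acts by zero, so that the action genuinely factors through the \emph{nondegenerate} Clifford superalgebra $\Cl_\la$ and not merely through the degenerate Clifford algebra built on all of $\od\h$. This is where one needs the sharper super-Schur input (a simple module admits no nonzero square-zero odd endomorphism) rather than just the dimensions of $\Hom$-spaces; the remaining ingredients — the PBW dimension count and the classification of Clifford superalgebras from Example~\ref{ex:Clifford} — are routine.
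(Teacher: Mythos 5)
Your proof is correct, and it fills in exactly the verification the paper leaves to the reader. The paper establishes the setup (the $\h$-action on $W_\la$ factors through $\Cl_\la$, making $W_\la$ the unique irreducible $\Cl_\la$-module) and then states the lemma "can now be easily verified," so there is no detailed proof in the paper to compare against; your argument is the natural way to carry out that verification. The two halves you supply are the right ones: the dimension count $2^{\dim(\od\h/\od\h')}=2^{\lceil N/2\rceil}$ matching the dimension of the unique irreducible $\Cl_N$-module, and — the genuinely substantive step — the observation that for an arbitrary irreducible $\h$-module $M$ the radical of $\langle\cdot,\cdot\rangle_\la$ acts by a square-zero odd $\h$-endomorphism, which must vanish by the super Schur Lemma, so the action descends to the nondegenerate Clifford quotient $\Cl_\la$. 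One tiny remark: the choice "$\od\h'\supseteq\text{Rad}\langle\cdot,\cdot\rangle_\la$" is not a choice at all — any maximal isotropic subspace automatically contains the radical, since adjoining a radical vector to an isotropic subspace keeps it isotropic — but that only strengthens what you wrote.
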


Let $V$ be a finite-dimensional irreducible $\ga$-module and
let $W_\mu$ be an irreducible $\h$-submodule of $V$.
For every $v\in W_\mu$
we have $hv=\mu(h)v$, for all $h\in\ev\h$.  Let $\alpha$ be a
positive root with associated root vectors $e_\alpha$ and
$\ov{e}_\alpha$ in $\mf{n}^+$ satisfying
$\text{deg}e_\alpha=\bar{0}$ and $\text{deg}\ov{e}_\alpha=\bar{1}$.
Then the space $\C e_\alpha W_\mu+\C \ov{e}_\alpha W_\mu$ is an
$\h$-module on which $\ev\h$ transforms by the character
$\mu+\alpha$. Thus by the finite dimensionality of $V$ there exists
$\la\in\ev\h^*$ and an irreducible $\h$-module $W_\la\subseteq V$
such that $\mf{n}^+W_\la=0$. By the irreducibility of $V$ we must
have $U(\mf{n}^-)W_\la=V$, which gives rise to a weight space
decomposition of $V=\bigoplus_{\mu\in\ev\h^*}V_\mu$. The space
$W_\la=V_\la$ is the {\em highest weight space} of $V$, and it
completely determines the irreducible module $V$.  We denote $V$ by
$V(\la)$.

Let $\ell(\la)$ be the dimension of space
$\od\h/\text{Rad}
\langle\cdot,\cdot\rangle_\la$, which equals the
number of $i$ such that $\la(H_i)\neq 0$. Then the highest weight
space $W_\la$ of $V(\la)$ has dimension
$2^{(\ell(\la)+\delta(\la))/2}$. It is easy to see that the
$\h$-module $W_\la$ has an odd automorphism if and only if
$\ell(\la)$ is an odd integer. An automorphism of the irreducible
$\ga$-module $V(\la)$ clearly induces an $\h$-module automorphism of
its highest weight space. Conversely, any $\h$-module automorphism
on $W_\la$ induces an automorphism of the $\ga$-module
$\text{Ind}_\mf{b}^\ga W_\la$. Since an automorphism preserves the
maximal submodule, it induces an automorphism of the unique
irreducible quotient $\ga$-module. Summarizing, we have established
the following.

\begin{lemma}
Let $\ga =\qn$, and $\h$ be a Cartan subalgebra of $\ga$. Let
$\la\in\ev\h^*$ and $V(\la)$ be an irreducible $\ga$-module of
highest weight $\la$. We have
\begin{eqnarray*}
\dim \text{End}_\ga(V(\la))= \left \{
 \begin{array}{ll}
 1,
 & \text{ if } \ell(\lambda) \text{ is even}, \\
 2
 , & \text{ if } \ell(\lambda) \text{ is odd}.
 \end{array}
 \right.
\end{eqnarray*}
\end{lemma}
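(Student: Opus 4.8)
The plan is to establish a superalgebra isomorphism
$$
\End_\ga(V(\la)) \;\cong\; \End_\h(W_\la),
$$
obtained by restricting a $\ga$-module endomorphism to the highest weight space $W_\la = V_\la$, and then to read off the dimension of the right-hand side from the Super Schur Lemma (Lemma~\ref{lem:superSchur}) applied to the irreducible $\h$-module $W_\la$.

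First I would check that restriction is well defined: any $\phi \in \End_\ga(V(\la))$ commutes with the action of $\ev\h$, hence preserves every weight space, and in particular restricts to an $\h$-module endomorphism of $V_\la = W_\la$. This restriction map is injective because $V(\la) = U(\mf{n}^-) W_\la$, so $\phi$ is completely determined by $\phi|_{W_\la}$. For surjectivity I would use the construction already indicated in the text: given $\psi \in \End_\h(W_\la)$, view $W_\la$ as a $\mf b$-module with $\mf{n}^+$ acting by zero; then $\psi$ is a $\mf b$-module endomorphism, so it induces an endomorphism of $\text{Ind}_\mf b^\ga W_\la$, which preserves the unique maximal submodule and therefore descends to an endomorphism $\wt\psi$ of the unique irreducible quotient $V(\la)$. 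Since the composite $W_\la \hookrightarrow \text{Ind}_\mf b^\ga W_\la \twoheadrightarrow V(\la)$ identifies $W_\la$ equivariantly with the highest weight space $V_\la$, the endomorphism $\wt\psi$ restricts to $\psi$ on $W_\la$. This yields the displayed isomorphism.

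It then remains to compute $\dim \End_\h(W_\la)$. By its construction $W_\la$ is the unique irreducible module over $\Cl_\la = U(\h)/I_\la$, the Clifford superalgebra attached to the nondegenerate symmetric form induced by $\langle\cdot,\cdot\rangle_\la$ on the space $\od\h/\text{Rad}\langle\cdot,\cdot\rangle_\la$, which by definition has dimension $\ell(\la)$. By Example~\ref{ex:Clifford}, $\Cl_\la$ is a simple superalgebra of type $\texttt{M}$ when $\ell(\la)$ is even and of type $\texttt{Q}$ when $\ell(\la)$ is odd; correspondingly $W_\la$ is a simple $\h$-module of type $\texttt{M}$ or $\texttt{Q}$. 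The Super Schur Lemma then gives $\dim \End_\h(W_\la) = 1$ in the even case and $2$ in the odd case, which is precisely the asserted formula. (Equivalently, one may invoke the observation recorded just before the lemma that $W_\la$ admits an odd automorphism exactly when $\ell(\la)$ is odd, which distinguishes the two types directly.)

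The step that I expect to require the most care is the surjectivity of the restriction map: one must verify that an arbitrary $\h$-automorphism of $W_\la$ genuinely propagates through the induced module $\text{Ind}_\mf b^\ga W_\la$ and survives passage to the unique irreducible quotient, and that the composite $W_\la \to V(\la)$ really is an isomorphism onto $V_\la$ so that nothing is lost when one restricts $\wt\psi$ back. The remaining ingredients — the identification of $W_\la$ with the unique simple $\Cl_\la$-module, the determination of its type via Example~\ref{ex:Clifford}, and the endomorphism count via Lemma~\ref{lem:superSchur} — are direct appeals to results already in place.
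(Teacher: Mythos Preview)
Your proposal is correct and follows essentially the same argument as the paper: the paragraph immediately preceding the lemma establishes exactly the bijection $\End_\ga(V(\la)) \leftrightarrow \End_\h(W_\la)$ via restriction to the highest weight space and extension through $\text{Ind}_\mf{b}^\ga W_\la$, and then invokes the parity of $\ell(\la)$ to determine whether $W_\la$ admits an odd automorphism. You have simply made the injectivity step and the appeal to Example~\ref{ex:Clifford} and Lemma~\ref{lem:superSchur} more explicit than the paper does.
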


\subsection{The Sergeev duality}
In this subsection, we give a detailed exposition (also see
\cite[Chapter~3]{CW}) on the results of Sergeev \cite{Se1}.

Set $V =\C^{n|n}$. We have a representation $(\Omega_d, V^{\otimes
d})$ of $\gl(n|n)$, hence of its subalgebra $\qn$, and we also have
a representation $(\Psi_d, V^{\otimes d})$ of the symmetric group
${\mathfrak S}_d$ defined by
\begin{align*}
\Psi_d (s_i).(v_1\otimes  \ldots\otimes v_i\otimes v_{i+1}\otimes\ldots \otimes v_d) &=
 (-1)^{|v_i|\cdot|v_{i+1}|}v_1\otimes  \ldots\otimes v_{i+1}\otimes v_i\otimes\ldots \otimes v_d,
 \end{align*}
where $s_i=(i,i+1)$ is the simple reflection and $v_i, v_{i+1} \in V$ are $\Z_2$-homogeneous.
Moreover, the actions of $\gl (n|n)$ and the
symmetric group ${\mathfrak S}_d$ on $V^{\otimes d}$ commute with
each other. Note in addition that the Clifford algebra $\Cl_d$ acts
on $V^{\otimes d}$, also denoted by $\Psi_d$:
\begin{align*}
\Psi_d (c_i).(v_1\otimes  \ldots \otimes v_d) =
 (-1)^{(|v_1| +\ldots +|v_{i-1}|)} v_1\otimes
   \ldots \otimes v_{i-1} \otimes P v_i \otimes\ldots \otimes v_d,
\end{align*}
where $v_i \in V$ is assumed to be $\Z_2$-homogeneous and
$1\le i \le n$.

\begin{lemma}   \label{lem:commQ}
Let $V =\C^{n|n}$. The actions of $\mf S_d$ and $\Cl_d$ above give
rise to a representation $(\Psi_d, V^{\otimes d})$ of $\HC_d$.
Moreover, the actions of $\qn$ and $\HC_d$ on $V^{\otimes d}$
super-commute with each other.
\end{lemma}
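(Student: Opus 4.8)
The plan is to verify two separate claims: first, that the $\mf S_d$-action and the $\Cl_d$-action together satisfy the defining relations \eqref{pc} of $\HC_d$, so that $(\Psi_d, V^{\otimes d})$ is a genuine $\HC_d$-module; second, that the $\qn$-action super-commutes with this $\HC_d$-action. Both claims reduce to direct computations on homogeneous tensors $v_1 \otimes \cdots \otimes v_d$, so the work is routine but the sign bookkeeping is where care is needed.

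For the first claim, I would check the Clifford relations $\Psi_d(c_i)^2 = 1$ and $\Psi_d(c_i)\Psi_d(c_j) = -\Psi_d(c_j)\Psi_d(c_i)$ for $i \neq j$: the former follows from $P^2 = 1$ (recall $P$ has order $2$), and the latter from the fact that $P$ is odd, so moving a second copy of $P$ past an already-inserted $P$ picks up a sign $(-1)$; one must confirm the intermediate parity factors $(-1)^{|v_1| + \cdots + |v_{i-1}|}$ combine correctly. The braid relations $s_i^2 = 1$ and $s_i s_{i+1} s_i = s_{i+1} s_i s_{i+1}$ for $\Psi_d$ are the standard signed-permutation computation (the Koszul-sign twist in $\Psi_d(s_i)$ makes this a well-defined $\mf S_d$-action, as is classical for the super-swap). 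Finally the mixed relations $s_i c_i = c_{i+1} s_i$, $s_i c_{i+1} = c_i s_i$, and $s_i c_j = c_j s_i$ for $j \neq i, i+1$ must be checked on a homogeneous tensor: applying $\Psi_d(s_i)$ then $\Psi_d(c_i)$ inserts $P$ in slot $i+1$ (since $s_i$ has moved things around) and applying $\Psi_d(c_{i+1})$ then $\Psi_d(s_i)$ should give the same, once the parity exponents are tracked. The one subtlety is that the super-swap $\Psi_d(s_i)$ contributes a sign $(-1)^{|v_i||v_{i+1}|}$, and after $P$ acts on a slot its parity flips; these two effects must cancel in the right way, which they do because $P$ is odd and the relation pairs $c_i$ with $c_{i+1}$ exactly across the swapped slots.

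For the second claim, that $\qn \subseteq \gl(n|n)$ super-commutes with $\HC_d$ on $V^{\otimes d}$: the commutation of $\gl(n|n)$ (hence $\qn$) with the symmetric group action $\Psi_d(s_i)$ is classical and survives the super-sign modification, so the genuinely new point is that $\qn$ super-commutes with the Clifford generators $\Psi_d(c_i)$. Here I would use that $\Psi_d(c_i)$ acts as $P$ in the $i$-th tensor slot (up to a global parity factor that commutes with $\gl(n|n)$, since $\gl(n|n)$ acts by a sum over slots and the sign $(-1)^{|v_1| + \cdots + |v_{i-1}|}$ is $\gl(n|n)$-invariant by weight considerations — more precisely one checks slot by slot). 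The key algebraic input is that $\qn = Q(V)$ consists precisely of the elements of $\End(V)$ that super-commute with $P$ — this is the very definition of $Q(V)$ given in Section \ref{sec:superalg}. So on the single slot where $P$ acts, an element $x \in \qn$ super-commutes with $P$ by definition; on the other slots $x$ and $P$ act on disjoint factors and super-commute trivially; and the global parity prefactor in $\Psi_d(c_i)$ is handled by noting it only introduces signs depending on parities of other slots, which is consistent with the super-commutator convention.

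The main obstacle will be nothing conceptual but rather organizing the sign exponents so the verification is clean — in particular making sure that "super-commute" is interpreted with the correct Koszul sign $(-1)^{|x||c_i|} = (-1)^{|x|}$ (since $c_i$ is odd), and that the global factor $(-1)^{|v_1| + \cdots + |v_{i-1}|}$ in the definition of $\Psi_d(c_i)$ is treated consistently when it meets an odd element of $\qn$ acting in one of those earlier slots. I would phrase the argument so as to reduce everything to the two atomic facts: $P^2 = 1$ with $P$ odd, and $\qn = Q(V) = \{x : x \text{ super-commutes with } P\}$; once these are isolated, the tensor-slot computations are forced.
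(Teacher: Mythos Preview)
Your proposal is correct and follows essentially the same approach as the paper's proof: verify the $\HC_d$-relations directly on homogeneous tensors, then obtain super-commutation with $\qn$ by noting that $\qn \subseteq \gl(n|n)$ already super-commutes with the $\mf S_d$-action and that $\qn = Q(V)$ is by definition the super-commutant of $P$, hence super-commutes with each $\Psi_d(c_i)$. The paper condenses all this into a few lines, but your more detailed sign bookkeeping is exactly what lies behind its ``straightforward to check.''
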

Symbolically, we write
$$
 \qn \; \stackrel{\Omega_{d}}{\curvearrowright} \; V^{\otimes d} \;
\stackrel{\Psi_{d}}{\curvearrowleft} \; {\HC_d}.
$$

\begin{proof}
It is straightforward to check that the actions of $\mf S_d$ and
$\Cl_d$ on $V^{\otimes d}$ are compatible and they give rise to an
action of $\HC_d$. By the definition of $\qn$ and the definition of
$\Psi_d(c_i)$ via $P$, the action of $\qn$ (super)commutes with the
action of $c_i$ for $1\le i \le d$. Since $\gl(n|n)$ (super)commutes
with $\mf S_d$, so does the subalgebra  $\qn$ of $\gl(n|n)$. Hence,
the action of $\qn$ commutes with the action of $\HC_d$ on
$V^{\otimes d}$.
\end{proof}

Let us digress on the double centralizer property for superalgebras
in general. Note the superalgebra isomorphism
$$
Q(m) \otimes Q(n) \cong M(mn|mn).
$$
Hence, as a $Q(m) \otimes Q(n)$-module, the tensor product $\C^{m|m}
\otimes \C^{n|n}$ is a direct sum of two isomorphic copies of a
simple module (which is $\cong \C^{mn|mn}$), and we have
$\Hom_{Q(n)} (\C^{n|n},\C^{mn|mn}) \cong \C^{m|m}$ as a
$Q(m)$-module. Let $\mc A$ and $\mc B$ be two semisimple
superalgebras. Let $M$ be a simple $\mc A$-module  of type $\texttt
Q$ and let $N$ be a simple $\mc B$-module of type $\texttt Q$. Then,
by Lemma~\ref{tensorsmod}, the $\mc A \otimes \mc B$-supermodule $M
\otimes N$ is a direct sum of two isomorphic copies of a simple
module $M\circledast  N$ of type $\texttt M$, and we shall write
$M\circledast  N =2^{-1} M \otimes N$; Moreover, $\Hom_{\mc B} (N,
2^{-1} M \otimes N)$ is naturally an $\mc A$-module, which is
isomorphic to the $\mc A$-module $M$. The usual double centralizer
property Proposition~\ref{prop:Centralizer} affords the following
superalgebra generalization (with essentially the same proof once we
keep in mind the Super Schur's Lemma~\ref{lem:superSchur}).

\begin{proposition}  \label{prop:ssCentralizer}
Suppose that $W$ is a finite-dimensional vector superspace, and
$\mc B$ is a semisimple subalgebra of $\End (W)$. Let $\mc A =
\End_{\mc B} (W)$. Then, $\End_{\mc A} (W) =\mc B$.

As an $\mc A \otimes \mc B$-module, $W$ is multiplicity-free,
i.e.,
$$
W \cong \bigoplus_{i} 2^{-\delta_i} U_i \otimes V_i,
$$
where $\delta_i \in \{0,1\}$, $\{U_i\}$ are pairwise
non-isomorphic simple $\mc A$-modules, $\{V_i\}$ are pairwise
non-isomorphic simple $\mc B$-modules. Moreover, $U_i$ and
$V_i$ are of same type, and they are of type $\texttt M$ if and
only if $\delta_i =0$.
\end{proposition}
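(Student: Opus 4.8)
The plan is to mimic the proof of the classical double centralizer property (Proposition~\ref{prop:Centralizer}) exactly, substituting the Super Schur's Lemma~\ref{lem:superSchur} and Lemma~\ref{tensorsmod} for their ungraded analogues at each step. First I would let $\{V_a\}$ be a complete set of pairwise non-isomorphic simple $\mc B$-modules and form the Hom-spaces $U_a := \Hom_{\mc B}(V_a, W)$, which carry a natural $\mc A = \End_{\mc B}(W)$-module structure. Since $\mc B$ is semisimple, the evaluation map gives a $\mc B$-module decomposition; the subtlety is that when $V_a$ is of type $\texttt Q$, the isotypic component is not $U_a \otimes V_a$ but rather a sum of copies of the simple $\mc A \otimes \mc B$-module $U_a \circledast V_a = 2^{-1} U_a \otimes V_a$, so the decomposition reads $W \cong \bigoplus_a 2^{-\delta_a} U_a \otimes V_a$ with $\delta_a = 1$ exactly when $V_a$ is of type $\texttt Q$. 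I would record here that $U_a$ is then automatically of type $\texttt Q$ as well (this is forced by Lemma~\ref{tensorsmod}(3) together with the fact that $U_a \circledast V_a$ is of type $\texttt M$, since the $\mc A \otimes \mc B$-module $W$ has the property that each isotypic piece is of type $\texttt M$ — more precisely, $U_a$ and $V_a$ must be of the same type for $U_a \otimes V_a$ to split), giving the ``same type'' assertion.

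Next I would compute $\End_{\mc B}(W)$. Applying the super version of Schur's Lemma, $\End_{\mc B}(W) \cong \bigoplus_a \End_{\mc B}(U_a \otimes V_a)$, and one identifies $\End_{\mc B}(U_a \otimes V_a) \cong \End(U_a) \otimes \mathrm{id}_{V_a}$ in the type $\texttt M$ case and, in the type $\texttt Q$ case, $\cong Q(U_a) \otimes \mathrm{id}_{V_a}$ where $Q(U_a)$ is the superalgebra of endomorphisms of $U_a$ super-commuting with its odd involution — in either case this is a simple superalgebra ($M(r_a|s_a)$ or $Q(n_a)$). Hence $\mc A$ is a direct sum of simple superalgebras, so $\mc A$ is semisimple, and the $U_a$ exhaust the simple $\mc A$-modules up to isomorphism and parity. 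The point is that this is a genuine super-Wedderburn decomposition of $\mc A$, with the $Q$-type summands occurring precisely for the indices $a$ with $\delta_a = 1$.

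Finally, since $\mc A$ is now known to be semisimple, I would reverse the roles of $\mc A$ and $\mc B$ and rerun the identical computation: $\End_{\mc A}(W) \cong \bigoplus_a \mathrm{id}_{U_a} \otimes \End_{\mc A}(U_a \otimes V_a) \cong \mathcal B$, using that $\{V_a\}$ are exactly the simple $\mc A$-modules' partners and that $W \cong \bigoplus_a 2^{-\delta_a} U_a \otimes V_a$ is the same decomposition read from the other side. This gives $\End_{\mc A}(W) = \mc B$ and completes the multiplicity-free statement.

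The step I expect to be the main obstacle is the careful bookkeeping of the factor $2^{-\delta_a}$ and verifying that $U_a$ and $V_a$ always share the same type. One must be attentive that $\Hom_{\mc B}(V_a, W)$ with its $\mc A$-action, and the super-tensor-product structure, interact so that the type-$\texttt Q$ phenomenon on the $\mc B$ side forces type $\texttt Q$ on the $\mc A$ side and produces a type-$\texttt M$ simple $\mc A \otimes \mc B$-constituent; this is where Lemma~\ref{tensorsmod} and the convention $M \circledast N = 2^{-1} M \otimes N$ do the real work, and it is the only place where the argument genuinely departs from the classical one rather than being a routine transcription.
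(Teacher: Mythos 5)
Your proof is correct and is essentially the argument the paper has in mind: the paper gives no detailed proof of Proposition~\ref{prop:ssCentralizer}, stating only that it follows from Proposition~\ref{prop:Centralizer} ``with essentially the same proof once we keep in mind the Super Schur's Lemma,'' after having pre-computed the two super-specific facts (that $M\circledast N = 2^{-1}M\otimes N$ for two type-$\texttt{Q}$ simples, and that $\Hom_{\mc B}(N, 2^{-1}M\otimes N)\cong M$ as $\mc A$-modules). One small remark on the type-matching step: rather than inferring from Lemma~\ref{tensorsmod}(3) that $U_a$ must be type $\texttt{Q}$ because the isotypic piece ``splits'' (which implicitly presupposes the simplicity of $U_a$), it is cleaner to observe directly that $\Delta_a := \End_{\mc B}(V_a)$ acts on $U_a = \Hom_{\mc B}(V_a, W)$ by precomposition, this action supercommutes with the $\mc A$-action, and hence $\End_{\mc B}(W) \cong \bigoplus_a \End_{\Delta_a}(U_a)$; each summand is a simple superalgebra of type $\texttt{M}$ or $\texttt{Q}$ according to whether $\Delta_a \cong \C$ or $\Delta_a \cong Q(1)$, which simultaneously proves that $\mc A$ is semisimple, that $U_a$ is simple, and that $U_a$ and $V_a$ share the same type.
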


\begin{theorem} [Sergeev duality I] \label{Sergeev q}
The images $\Omega_d(U(\qn))$ and $\Psi_d(\HC_d)$ satisfy the double
centralizer property, i.e.,
\begin{align*}
\Omega_{d}(U(\qn))=&\End_{\HC_d}(V^{\otimes d}),\\
&\End_{\qn}(V^{\otimes d}) = \Psi_d(\HC_d).
\end{align*}
\end{theorem}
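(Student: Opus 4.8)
The plan is to reduce both identities to the single claim
\begin{equation*}
\End_{\HC_d}(V^{\otimes d}) = \Omega_d(U(\qn)),
\end{equation*}
and then invoke the abstract double centralizer property. First note that $\HC_d$ is a semisimple superalgebra: by \eqref{eq:HC=Bn-} it is a quotient of the group algebra $\C\td B_d$ of a finite group, which is semisimple over $\C$, and quotients of semisimple algebras are semisimple. Hence $\mc B:=\Psi_d(\HC_d)$ is a semisimple subalgebra of $\End(V^{\otimes d})$. Setting $W=V^{\otimes d}$ and $\mc A:=\End_{\mc B}(W)=\End_{\HC_d}(V^{\otimes d})$, Proposition~\ref{prop:ssCentralizer} gives $\End_{\mc A}(W)=\mc B$. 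If the claim holds, then $\mc A=\Omega_d(U(\qn))$, so that $\End_{\qn}(V^{\otimes d})=\End_{\mc A}(W)=\mc B=\Psi_d(\HC_d)$, which is the second identity; the first identity is exactly the claim.

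It remains to prove the claim. The inclusion $\Omega_d(U(\qn))\subseteq\End_{\HC_d}(V^{\otimes d})$ is the super-commutation of the two actions recorded in Lemma~\ref{lem:commQ} (here, as in Proposition~\ref{prop:ssCentralizer}, $\End_{\HC_d}$ denotes the centralizer in the super sense, allowing odd endomorphisms). For the reverse inclusion I would argue as follows. Since $\mf S_d$ normalizes $\Cl_d$ inside $\HC_d$, it acts on $\End_{\Cl_d}(V^{\otimes d})$ by conjugation and
\begin{equation*}
\End_{\HC_d}(V^{\otimes d})=\big(\End_{\Cl_d}(V^{\otimes d})\big)^{\mf S_d}.
\end{equation*}
Next, identify $\End_{\Cl_d}(V^{\otimes d})$. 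By the very definition of $Q(V)$ in Section~\ref{sec:superalg}, the endomorphisms of $V=\C^{n|n}$ super-commuting with $P$ are exactly $Q(V)=\qn$, i.e.\ $\End_{\Cl_1}(V)=\qn$ where $\Cl_1=\langle P\rangle$; writing $\Cl_d\cong\Cl_1^{\otimes d}$ (a super tensor product, as in Example~\ref{ex:Clifford}) and $V^{\otimes d}$ as the corresponding outer tensor product of $\Cl_1$-modules, a standard super-tensor computation (see \cite[Chapter~3]{CW}) yields
\begin{equation*}
\End_{\Cl_d}(V^{\otimes d})=\qn^{\otimes d}
\end{equation*}
as a super-subalgebra of $\End(V)^{\otimes d}=\End(V^{\otimes d})$; the inclusion $\supseteq$ is clear, and equality follows by comparing dimensions, $\dim\qn^{\otimes d}=(2n^2)^d$, using the structure of $\Cl_d$ from Example~\ref{ex:Clifford}. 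Under these identifications $\mf S_d$ permutes the $d$ tensor factors of $\qn^{\otimes d}$ (with Koszul signs), so the claim is reduced to
\begin{equation*}
\big(\qn^{\otimes d}\big)^{\mf S_d}=\Omega_d(U(\qn)).
\end{equation*}

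This last identity is the crux of the argument and the step I expect to be the main obstacle. One inclusion is immediate: each diagonal element $\Omega_d(x)=\sum_{j=1}^d 1^{\otimes(j-1)}\otimes x\otimes 1^{\otimes(d-j)}$ (with the Koszul sign when $x$ is odd) is $\mf S_d$-invariant and lies in $\qn^{\otimes d}$, so the associative subalgebra $\Omega_d(U(\qn))$ generated by these is contained in $(\qn^{\otimes d})^{\mf S_d}$. For the reverse inclusion I would use a polarization argument, which is available since we work over $\C$: the invariant space $(\qn^{\otimes d})^{\mf S_d}$ is spanned by super-symmetrizations of tensor monomials $x_1\otimes\cdots\otimes x_d$ with $x_i\in\qn$ homogeneous, and one shows that each such super-symmetrization already lies in the subalgebra generated by the diagonal elements $\Omega_d(y)$, $y\in\qn$. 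This is the super-analogue of expressing the elementary symmetric functions through the power sums; here the ``variables'' $x_{[1]},\dots,x_{[d]}$ ($x$ placed in a single slot) super-commute, and one applies the corresponding Newton-type identities while keeping careful track of signs for odd $x$. This sign bookkeeping is precisely what makes the otherwise classical Schur--Weyl spanning argument delicate in the queer setting; a detailed treatment is given in \cite[Chapter~3]{CW}. Granting it, the chain of identities above gives $\End_{\HC_d}(V^{\otimes d})=\Omega_d(U(\qn))$, and the reduction of the first paragraph then completes the proof.
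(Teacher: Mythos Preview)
Your proposal is correct and follows essentially the same route as the paper: reduce to $\End_{\HC_d}(V^{\otimes d})=\Omega_d(U(\qn))$ via the super double centralizer property, identify $\End_{\Cl_d}(V^{\otimes d})\cong Q(V)^{\otimes d}$, pass to $\mf S_d$-invariants, and then show that super-symmetrizations lie in the subalgebra generated by the diagonal embeddings. The paper makes your ``polarization/Newton-type'' step completely explicit by setting $Y_k=\text{span}\{\omega(x_1,\ldots,x_k)\}$ and $X_k=\text{span}\{\tilde{x}_1\cdots\tilde{x}_k\}$ and proving $Y_k\subseteq X_k$ by induction on $k$ via the identity $\omega(x_1,\ldots,x_{k-1})\cdot\tilde{x}_k=(d-k+1)\,\omega(x_1,\ldots,x_k)+\sum_j\omega(x_1,\ldots,x_jx_k,\ldots,x_{k-1})$; in particular the sign bookkeeping you flag as delicate turns out to be harmless in this computation.
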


\begin{proof}
Write $\ga = \qn$. We will denote by $Q(V)$ the associative
subalgebra of endomorphisms on $V$ which super-commute with the
linear operator $P$. By Lemma~\ref{lem:commQ}, we have
$\Omega_{d}(U(\ga)) \subseteq \End_{{\HC}_d}(V^{\otimes d})$.

We shall proceed to prove that $\Omega_{d}(U(\ga)) \supseteq
\End_{{\HC}_d}(V^{\otimes d})$. By examining the actions of $\Cl_d$
on $V^{\otimes d}$, we see that the natural isomorphism $\End
(V)^{\otimes d} \cong \End (V^{\otimes d})$ allows us to identify
$\End_{\Cl_d} (V^{\otimes d}) \equiv Q(V)^{\otimes d}$. As we recall
$\HC_d =\Cl_d \rtimes \mf S_d$, this further leads to the
identification $\End_{\HC_d} (V^{\otimes d}) \equiv \text{Sym}^d
(Q(V))$, the space of $\mf S_d$-invariants in $Q(V)^{\otimes d}$.

Denote by $Y_k$, $1\le k \le d$, the $\C$-span of the
supersymmetrization
$$
\omega (x_1,\ldots, x_k) := \sum_{\sigma \in \mf S_d} \sigma .
(x_1 \otimes \ldots \otimes x_k \otimes 1^{d-k}),
$$
for all $x_i \in Q(V)$. Note that $Y_d = \text{Sym}^d (Q(V))
\equiv \End_{\HC_d} (V^{\otimes d})$.

Let $\tilde{x} = \Omega (x) =\sum_{i=1}^d 1^{i-1} \otimes x \otimes
1^{d-i}$, for $x\in \ga =Q(V)$, and denote by $X_k$, $1\le k \le d$,
the $\C$-span of $\tilde{x}_1  \ldots \tilde{x}_k$ for all $x_i \in
\qn$.

{\bf Claim}.  We have $Y_k \subseteq X_k$ for
$1 \le k \le d$.

Assuming the claim, we have $\Omega_{d}(U(\ga))=
\End_{{\HC}_d}(V^{\otimes d})= \End_{\mc B}(V^{\otimes d})$, for
$\mc B := \Psi_d(\HC_d)$. Note that the algebra $\HC_d$, and hence
also $\mc B$, are semisimple superalgebras, and so the assumption of
Proposition~\ref{prop:ssCentralizer} is satisfied. Therefore, we
have $\End_{U(\ga)}(V^{\otimes d}) = \Psi_d(\HC_d).$

It remains to prove the Claim by induction on $k$. The case $k=1$ holds,
thanks to $\omega (x) = (d-1)! \tilde{x}$.

Assume that $Y_{k-1} \subseteq X_{k-1}$. Note that $\omega
(x_1,\ldots, x_{k-1}) \cdot \tilde{x}_k \in X_k$. On the other
hand, we have
\begin{align*}
\omega & (x_1,\ldots,   x_{k-1}) \cdot \tilde{x}_k
 \\
 &= \sum_{\sigma \in \mf S_d} \sigma . (x_1\otimes \ldots \otimes
x_{k-1} \otimes 1^{d-k+1}) \cdot   \tilde{x}_k
 \\
&= \sum_{j=1}^d \sum_{\sigma \in \mf S_d} \sigma .
\left((x_1\otimes \ldots \otimes x_{k-1} \otimes 1^{d-k+1}) \cdot
(1^{j-1} \otimes x_k \otimes 1^{d-j}) \right),
\end{align*}
which can be written as a sum $A_1 + A_2$, where
$$A_1 =\sum_{j=1}^{k-1} \omega (x_1, \ldots, x_j x_k, \ldots,
x_{k-1}) \in Y_{k-1},
$$
and
\begin{align*}
A_2 &=\sum_{j=k}^d \sum_{\sigma \in \mf S_d} \sigma .(x_1 \otimes
\ldots \otimes x_{k-1} \otimes 1^{j-k} \otimes x_k \otimes
1^{d-j}) \\
 &= (d-k+1) \omega (x_1, \ldots, x_{k-1}, x_k).
\end{align*}

Note that $A_1 \in X_k$, since $Y_{k-1} \subseteq X_{k-1} \subseteq
X_k$. Hence, $A_2 \in X_k$, and so, $Y_k \subseteq X_k$. This proves
the claim and hence the theorem.
\end{proof}

\begin{theorem}
[Sergeev duality II]  \label{th:QnDecomp}
Let $V =\C^{n|n}$. As a $U(\qn) \otimes \HC_d$-module, we have
\begin{equation}  \label{eq:decomp}
V^{\otimes d} \cong \bigoplus_{\la \in \mc{SP}_d, \ell(\la) \le n}
2^{-\delta(\la)} V(\la) \otimes D^\la.
\end{equation}
\end{theorem}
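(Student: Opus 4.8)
The plan is to combine the double centralizer property established in Theorem~\ref{Sergeev q} with the character-theoretic machinery built up in Sections~\ref{sec:spinch} and the description of irreducible $\qn$-modules and $\HC_d$-modules recalled above. First I would invoke Proposition~\ref{prop:ssCentralizer} with $W = V^{\otimes d}$, $\mc B = \Psi_d(\HC_d)$ and $\mc A = \Omega_d(U(\qn))$; since $\HC_d$ is semisimple, the proposition gives a multiplicity-free decomposition $V^{\otimes d} \cong \bigoplus_i 2^{-\delta_i} U_i \otimes V_i$ into pairwise non-isomorphic simple $\mc A$- and $\mc B$-modules of matched type, with $\delta_i = 0$ precisely when the type is $\texttt M$. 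The simple $\mc B$-modules appearing are exactly the simple $\HC_d$-modules $D^\la$ for those $\la \in \mc{SP}_d$ that actually occur in $V^{\otimes d}$; for each such $\la$ the paired $\mc A$-module is a simple $\qn$-module which, being an $\mf S_d$-subquotient of $V^{\otimes d}$, is a finite-dimensional polynomial representation, hence of the form $V(\mu)$ for a suitable highest weight $\mu$. Since $D^\la$ is of type $\texttt M$ iff $\ell(\la)$ is even (Corollary~\ref{cor:number simple}) and $V(\mu)$ is of type $\texttt M$ iff $\ell(\mu)$ is even, the type-matching in Proposition~\ref{prop:ssCentralizer} forces $\delta_i = \delta(\la)$, which already gives the $2^{-\delta(\la)}$ factor in \eqref{eq:decomp}.

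It then remains to identify the index set and the weights: to show that the $\qn$-modules occurring are precisely $V(\la)$ for $\la \in \mc{SP}_d$ with $\ell(\la) \le n$, and that each such $\la$ indeed occurs. My approach is the character-comparison argument parallel to the one already carried out for classical Schur duality in this section. Writing the decomposition as $V^{\otimes d} \cong \bigoplus_\la 2^{-\delta(\la)} V(\la) \otimes D^\la$ over the as-yet-unknown index set, I would apply $\tr\, x_1^{H_1} \cdots x_n^{H_n}$ on the $\qn$-side and the spin characteristic map $\ch^-$ on the $\HC_d$-side, sum over $d$, and compute the left-hand side directly: the weight space $(V^{\otimes d})_\mu$ for a composition $\mu$ of $d$ with $\ell(\mu) \le n$ is, as an $\HC_d$-module, visibly isomorphic to the permutation module $M^\mu = \HC_d \otimes_{\C\mf S_\mu} {\bf 1}_d$ (the Clifford generators $c_1,\dots,c_n$ act on the weight-$\mu$ basis vectors exactly as the basic spin action permits). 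Hence, by \eqref{eq:spinchpermutation}, the left-hand side becomes $\sum_{\mu,\ \ell(\mu)\le n} m_\mu(x) q_\mu(z)$, while the right-hand side is $\sum_\la (\ch^- D^\la)(z)$ times the character of $V(\la)$. Using Theorem~\ref{qschur:aux9} to expand $q_\mu = \sum_{\la \ge \mu} \widehat K_{\la\mu} Q_\la$ and $\ch^-(D^\la) = \ch^-(\zeta^\la) = 2^{-(\ell(\la)-\delta(\la))/2} Q_\la$ from \eqref{eq:irredch}, together with the linear independence of Schur $Q$-functions, pins down both the index set $\{\la \in \mc{SP}_d : \ell(\la) \le n\}$ and the identity $\ch\, V(\la) = 2^{-(\ell(\la)-\delta(\la))/2} \cdot (\text{coefficient})$, confirming in particular that $V(\la)$ is the correct simple $\qn$-module and that the multiplicity bookkeeping is consistent.

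The main obstacle I anticipate is the precise matching of the $2$-power normalizations across the two sides — reconciling the $2^{-\delta(\la)}$ coming from Proposition~\ref{prop:ssCentralizer}, the $2^{-(\ell(\la)-\delta(\la))/2}$ in $\ch^-(D^\la)$, and the $2^{(\ell(\la)+\delta(\la))/2}$-dimensional highest weight space of $V(\la)$ — and, relatedly, verifying that the simple $\qn$-module paired with $D^\la$ is genuinely $V(\la)$ rather than some twist. I would handle the latter by a highest-weight argument: the top weight space of $V^{\otimes d}$ with respect to the Borel of $\qn$, intersected with the $D^\la$-isotypic component, should be identified via a tableau/weight count with the $\h$-module $W_\la$, so that $\la$ is read off directly as the highest weight; the condition $\ell(\la) \le n$ is exactly the constraint that $\la$ fit as a weight of $V = \C^{n|n}$. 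Everything else is routine symmetric-function bookkeeping on $\Gamma_\Q$ and the triangularity already recorded in Proposition~\ref{prop:triang}.
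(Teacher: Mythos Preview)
Your proposal is correct and rests on the same ingredients as the paper's proof: the double centralizer property (Proposition~\ref{prop:ssCentralizer}), the identification of each weight space $W_\mu$ with the permutation module $M^\mu$, and the triangularity of Proposition~\ref{prop:triang}. The paper's execution, however, is more direct than your main line of attack. Rather than carrying out the two-sided character comparison, the paper simply observes from \eqref{eq:ind} and Proposition~\ref{prop:triang} that the $\qn$-module $V^{[\la]}$ paired with $D^\la$ satisfies $V^{[\la]} = \bigoplus_{\mu \le \la} V^{[\la]}_\mu$, so $\la$ is visibly its highest weight and $V^{[\la]} = V(\la)$; the index set $\{\la \in \mc{SP}_d : \ell(\la) \le n\}$ drops out at the same time. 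No characters, no Cauchy identity, and no $2$-power bookkeeping are needed at this stage. The character-comparison argument you outline is exactly what the paper does in the \emph{next} result (Theorem~\ref{thm:charqueer}) to derive $\ch V(\la)$, taking the decomposition \eqref{eq:decomp} as already established. So your fallback ``highest-weight argument'' is in fact the whole proof, and the character computation you propose as the main route is logically downstream of it.
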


\begin{proof}
Let $W =V^{\otimes d}$. It follows from the double centralizer
property and the semisimplicity of the superalgebra $\HC_d$ that we have a
multiplicity-free decomposition of  the $(\qn, \HC_d)$-module $W$:
\begin{equation*}
W \cong \bigoplus_{\la \in \mathfrak Q_d(n)} 2^{-\delta (\la)}
V^{[\la]} \otimes D^\la,
\end{equation*}
where $V^{[\la]}$ is some simple $\qn$-module associated to $\la$,
whose highest weight (with respect to the standard Borel) is to be
determined. Also to be determined is the index set $\mathfrak Q_d(n)
=\{ \la \in \mc{SP}_d \mid V^{[\la]} \neq 0\}$.

We shall identify as usual a weight $\mu =\sum_{i=1}^n \mu_i
\varepsilon_i$ occuring in $W$ with a composition $\mu
=(\mu_1,\ldots,\mu_n) \in \mc{CP}_d(n)$. We have the following
weight space decomposition:
\begin{equation}  \label{eq:wtqn}
W =\bigoplus_{\mu \in \mc{CP}_d(n)} W_{\mu},
\end{equation}
where $W_{\mu}$ has a linear basis $e_{i_1} \otimes \ldots \otimes
e_{i_d}$, with the indices satisfying the following equality of
sets:
\begin{equation*}
\{|i_1|, \ldots, |i_d|\} =\{\underbrace{1,\ldots, 1}_{\mu_1},
\ldots, \underbrace{n,\ldots, n}_{\mu_n} \}.
\end{equation*}
We have an $\HC_d$-module isomorphism:
\begin{equation}  \label{eq:ind}
W_{\mu} \cong  M^\mu,
\end{equation}
where we recall $M^\mu$ denotes the permutation $\HC_d$-module
$M^\mu =\HC_d \otimes_{\C \mathfrak S_{\mu}} {\bf 1}_d$.

It follows by Proposition~\ref{prop:triang} and \eqref{eq:ind} that
$V^{[\la]} =\bigoplus_{\mu \in \mc{CP}_d(n), \mu \le \la}
V^{[\la]}_{\mu}$, and hence, $\la \in \Pdn$ if $V^{[\la]} \neq 0$.
Among all such $\mu$, clearly $\la$ corresponds to a highest weight.
Hence, we conclude that $V^{[\la]} =V(\la)$, the simple $\ga$-module
of highest weight $\la$, and that $\mf Q_d(n) =\{\la \in \mc{SP}_d
\mid \ell(\la) \le n\}$. This completes the proof of
Theorem~\ref{th:QnDecomp}.
\end{proof}

\subsection{The irreducible character formula for $\qn$}

A character of a $\qn$-module with weight space decomposition $M
=\oplus M_\mu$ is defined to be
$$
\tr \, x_1^{H_1} \ldots x_n^{H_n}|_M
 = \sum_{\mu =(\mu_1,\ldots, \mu_n)}
 \dim M_\mu \cdot  x_1^{\mu_1}\ldots x_n^{\mu_n}.
$$
\begin{theorem}\label{thm:charqueer}
Let $\la$ be a strict partition of length $\le n$. The character
of the simple $\qn$-module $V(\la)$ is given by
$$
\ch V(\la)
=2^{-\frac{\ell(\la)-\delta(\la)}{2}}Q_{\la}(x_1,\ldots, x_n).
$$
\end{theorem}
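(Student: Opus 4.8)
The plan is to adapt to the spin setting the characteristic-map derivation of the classical formula $\ch L(\la)=s_\la(x_1,\dots,x_n)$ given above, using the Sergeev duality \eqref{eq:decomp} in place of Schur duality. Write $V=\C^{n|n}$. For a $U(\qn)\otimes\HC_d$-module $M=\bigoplus_\mu M_\mu$, with its weight-space decomposition relative to $\ev\h$ (each $M_\mu$ an $\HC_d$-module), introduce the ``decorated spin character''
$$
\mathrm{Ch}(M):=\sum_\mu x_1^{\mu_1}\cdots x_n^{\mu_n}\,\ch^-(M_\mu)\in\Gamma_\Q[[x_1,\dots,x_n]].
$$
It is additive over direct sums, and since the $\ev\h$-grading lives only on the $\qn$-factor one has $\mathrm{Ch}(V(\la)\otimes D^\la)=\big(\ch V(\la)\big)\cdot\ch^-(D^\la)$. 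I would then compute $\sum_{d\ge0}\mathrm{Ch}(V^{\otimes d})$ in two ways and compare.

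First, the weight-space decomposition $V^{\otimes d}=\bigoplus_{\mu\in\mc{CP}_d(n)}W_\mu$ with $W_\mu\cong M^\mu$ as $\HC_d$-modules (see \eqref{eq:wtqn} and \eqref{eq:ind}), combined with $\ch^-(M^\mu)=q_\mu$ from \eqref{eq:spinchpermutation} and $q_\mu=q_{\mu_1}\cdots q_{\mu_n}$ (recall $q_0=1$), gives
$$
\sum_{d\ge0}\mathrm{Ch}(V^{\otimes d})=\sum_{\mu\in\N^n}x^\mu q_\mu(z)=\prod_{i=1}^n\Big(\sum_{r\ge0}q_r(z)x_i^r\Big)=\prod_{i=1}^n\prod_j\frac{1+x_iz_j}{1-x_iz_j}
$$
by the generating function \eqref{gen.fun.qr}. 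Second, feeding \eqref{eq:decomp} into $\mathrm{Ch}$, using $\ch^-(D^\la)=\ch^-(\zeta^\la)=2^{-(\ell(\la)-\delta(\la))/2}Q_\la$ from \eqref{eq:irredch}, and reading the factor $2^{-\delta(\la)}$ through the convention $M\circledast N=2^{-1}M\otimes N$ of Section~\ref{sec:superalg}, gives
$$
\sum_{d\ge0}\mathrm{Ch}(V^{\otimes d})=\sum_{\substack{\la\in\mc{SP}\\\ell(\la)\le n}}2^{-\delta(\la)-(\ell(\la)-\delta(\la))/2}\,\big(\ch V(\la)\big)\,Q_\la(z).
$$

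Finally, I would bring in the Cauchy identity for Schur $Q$-functions (Theorem~\ref{qschur:aux12}): since $Q_\la(x_1,\dots,x_n)=0$ whenever $\ell(\la)>n$ (a standard vanishing, cf.\ \cite{Mac}), specialising it to $n$ variables yields
$$
\prod_{i=1}^n\prod_j\frac{1+x_iz_j}{1-x_iz_j}=\sum_{\substack{\la\in\mc{SP}\\\ell(\la)\le n}}2^{-\ell(\la)}Q_\la(x_1,\dots,x_n)\,Q_\la(z).
$$
Comparing the last three displays and using the linear independence of $\{Q_\la(z)\}_{\la\in\mc{SP}}$ in $\Gamma_\Q$ (Theorem~\ref{qschur:aux9}) to equate coefficients of $Q_\la(z)$, one gets $2^{-\delta(\la)-(\ell(\la)-\delta(\la))/2}\ch V(\la)=2^{-\ell(\la)}Q_\la(x_1,\dots,x_n)$, i.e.\ $\ch V(\la)=2^{-(\ell(\la)-\delta(\la))/2}Q_\la(x_1,\dots,x_n)$. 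The delicate point, as usual in this circle of ideas, is the bookkeeping in the second computation: checking that $\mathrm{Ch}$ behaves as claimed on tensor products of type-$\texttt Q$ simple modules, and that the powers of $2$ coming from $\circledast$ versus $\otimes$ and from the normalisation $\zeta^\la=2^{-(\ell(\la)-\delta(\la))/2}\xi^\la$ combine to exactly $2^{-(\ell(\la)-\delta(\la))/2}$; everything else is routine generating-function manipulation.
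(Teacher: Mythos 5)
Your proposal is correct and follows essentially the same route as the paper's proof: the ``decorated spin character'' $\mathrm{Ch}$ you introduce is precisely the operator the paper denotes $\text{ch}^2$ (the simultaneous application of $\ch^-$ and the formal trace $\tr\,x_1^{H_1}\cdots x_n^{H_n}$), and both arguments compute $\sum_d\mathrm{Ch}(V^{\otimes d})$ via the weight-space decomposition on one side and via the Sergeev duality decomposition \eqref{eq:decomp} on the other, then invoke the Cauchy identity of Theorem~\ref{qschur:aux12} and linear independence of the $Q_\la(z)$'s. The bookkeeping of $2$-powers you flag as ``the delicate point'' is handled exactly as you describe, with $2^{-\delta(\la)}$ coming from \eqref{eq:decomp} and $2^{-(\ell(\la)-\delta(\la))/2}$ from \eqref{eq:irredch}.
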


\begin{proof}
By \eqref{eq:wtqn} and \eqref{eq:ind}, we have
$$
V^{\otimes d} =\bigoplus_{\mu \in \mc{CP}_d(n)} \ind_{\mathfrak
S_{\mu}}^{\HC_d} {\bf 1}_d.
$$
Applying $\ch^-$ and the trace operator $\tr \, x_1^{H_1} \ldots
x_n^{H_n}$ to this decomposition of $V^{\otimes d}$ simultaneously,
which we will denote by $\text{ch}^2$, we obtain that
\begin{align*}
\sum_d\text{ch}^2 (V^{\otimes d})
 &=\sum_{\mu \in \mc{P}, \ell(\mu) \le n} q_\mu(z) m_\mu(x)
   \\
  &=\prod_{1 \le i \le n, 1\le j} \frac{1+x_iz_j}{1-x_iz_j}
   \\
  &= \sum_{\lambda\in \mc{SP}}2^{-\ell(\lambda)}
  Q_{\lambda}(x_1,\ldots,x_n)Q_{\lambda}(z),
\end{align*}
where the last equation is the Cauchy identity in
Theorem~\ref{qschur:aux12} and the middle equation can be verified
directly.

On the other hand, by applying $\text{ch}^2$ to  \eqref{eq:decomp}
and using \eqref{eq:irredch}, we obtain that
$$
\sum_d\text{ch}^2 (V^{\otimes d}) = \sum_{\la \in \mc{SP}, \ell(\la)
\le n} 2^{-\delta(\la)} \ch V(\la) \cdot
2^{-\frac{\ell(\la)-\delta(\la)}{2}}Q_{\la}(z).
$$
Now the theorem follows by comparing the above two identities
and noting the linear independence of the $Q_{\lambda}(z)$'s.
\end{proof}

\section{The coinvariant algebra and generalizations} \label{sec:coinvariant}

In this section, we formulate a graded regular representation for
$\HC_n$, which is a spin analogue of the coinvariant algebra for
$\mf S_n$. We also study its generalizations which involve the
symmetric algebra $S^*\C^n$ and the exterior algebra $\wedge^*\C^n$.
We solve the corresponding graded multiplicity problems in terms of
specializations of Schur $Q$-functions. In addition, a closed
formula  for the principal specialization $Q_{\xi}(1,t,t^2,\ldots)$
of the Schur $Q$-function is given.

\subsection{A commutative diagram}

Recall a homomorphism $\varphi$
( cf. \cite[III, \S8, Example 10]{Mac})
defined by
\begin{align}
& \varphi: \La \longrightarrow\Ga,   \notag\\
\varphi (p_r) = & \left\{
 \begin{array}{cc}
 2p_r,  &\quad \text{ for $r$ odd}, \\
 0,  &\quad \quad  \text{ otherwise},
 \end{array}
 \right. \label{eqn:mapvarphi}
\end{align}
where $p_r$ denotes the $r$th power sum.  Denote
$$
H(t)=\sum_{n \ge 0} h_n t^n =\prod_i \frac1{1-x_it} =\exp
\Big(\sum_{r \ge 1} \frac{p_r t^r}r \Big).
$$
Note that $Q(t)$ from \eqref{gen.fun.qr} can be rewritten as
$$
Q(t) =\exp \Big(2 \sum_{r \ge 1, r \text{ odd}} \frac{p_r t^r}r \Big),
$$
and so we see that
\begin{equation}   \label{eq:HQ}
\varphi \big(H(t) \big) =Q(t).
\end{equation}
Hence, we have $\varphi (h_n) =q_n$ for all $n\ge 0$, and
 \begin{equation}\label{eq:hq}
 \varphi(h_{\mu})=q_{\mu}, \quad \forall \mu \in \mc P.
 \end{equation}

Given an $\mf S_n$-module $M$, the algebra $\HC_n$ acts naturally on
$\Cl_n \otimes M$, where $\Cl_n$ acts by left multiplication on the
first factor and $\mf S_n$ acts diagonally. We have an isomorphism
of $\HC_n$-modules:
\begin{equation} \label{eq:ind=tensor}
\Cl_n \otimes M \cong \ind^{\HC_n}_{\C \mf S_n} M.
\end{equation}

Following \cite{WW2}, we define a functor for $n \ge 0$
\begin{align*}
\Phi_n:& \mf S_n\text{-}\mf{mod}  \longrightarrow \HC_n\text{-}\mf{mod} \\
\Phi_n & (M) ={\rm ind}^{\HC_n}_{\C \mf S_n} M.
\end{align*}
Such a sequence  $\{\Phi_n\}$ induces a $\Z$-linear map on the
Grothendieck group level:
$$
\Phi: R \longrightarrow  R^-,
$$
by letting $\Phi([M]) = [\Phi_n(M)]$ for $M \in \mf
S_n\text{-}\mf{mod}$.

Recall that $R$ carries a natural Hopf algebra structure with
multiplication given by induction and comultiplication given by
restriction \cite{Ze}. In the same fashion, we can define a Hopf
algebra structure on $R^-$ by induction and restriction. On the
other hand, $\La_\Q \cong \Q[p_1, p_2, p_3, \ldots]$ is naturally a
Hopf algebra, where each $p_r$ is a primitive element, and $\Ga_\Q
\cong \Q[p_1, p_3, p_5, \ldots]$ is naturally a Hopf subalgebra of
$\La_\Q$. The characteristic map $\ch: R_\Q \rightarrow \La_\Q$ is
an isomorphism of Hopf algebras (cf. \cite{Ze}). A similar argument
easily shows that the map $\ch^-: R^-_\Q \rightarrow \Ga_\Q$
is an isomorphism of Hopf algebras.

\begin{proposition} \cite{WW2} \label{prop:commute}
The map $\Phi: R_\Q \rightarrow R^-_\Q$ is a homomorphism of Hopf
algebras. Moreover, we have the following  commutative diagram of
Hopf algebras:
\begin{eqnarray}  \label{eq:CD}
\begin{CD}
 R_\Q @>\Phi>> R^-_\Q  \\
 @V\text{ch}V{\cong}V @V\ch^-V{\cong}V \\
 \La_\Q  @>\varphi>> \Ga_\Q
  \end{CD}
\end{eqnarray}
\end{proposition}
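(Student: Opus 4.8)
The plan is to verify that the square \eqref{eq:CD} commutes and then deduce that $\Phi$ is a Hopf algebra map from the fact that the other three arrows are. Since $\mathrm{ch}$ and $\mathrm{ch}^-$ are already known to be isomorphisms of Hopf algebras (by Theorem~\ref{th:2iso} and the remarks preceding this proposition), and $\varphi$ is a ring homomorphism by construction, commutativity of the diagram will force $\Phi = (\mathrm{ch}^-)^{-1} \circ \varphi \circ \mathrm{ch}$, hence $\Phi$ is automatically a homomorphism; one then checks it is compatible with comultiplication as well, again by transport along the vertical isomorphisms, so it is a Hopf algebra homomorphism. Thus the real content is the commutativity $\mathrm{ch}^- \circ \Phi = \varphi \circ \mathrm{ch}$.

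To prove commutativity, I would argue on a convenient set of algebra generators, or even just on the permutation modules, using that everything in sight is a ring homomorphism. For $\mu \in \mc{P}_n$ let $N^\mu = \mathrm{ind}^{\C\mf S_n}_{\C\mf S_\mu}\mathbf 1_n$ be the usual permutation $\mf S_n$-module; these span $R$. By the transitivity of induction and the isomorphism \eqref{eq:ind=tensor}, $\Phi_n(N^\mu) = \mathrm{ind}^{\HC_n}_{\C\mf S_n}\mathrm{ind}^{\C\mf S_n}_{\C\mf S_\mu}\mathbf 1_n \cong \mathrm{ind}^{\HC_n}_{\C\mf S_\mu}\mathbf 1_n = M^\mu$, the spin permutation module of Section~\ref{subsec:charmap}. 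Now apply $\mathrm{ch}^-$: by \eqref{eq:spinchpermutation} we get $\mathrm{ch}^-(\Phi(N^\mu)) = q_\mu$. On the other side, $\mathrm{ch}(N^\mu) = h_\mu$ by \eqref{eq:chpermutation}, and then $\varphi(h_\mu) = q_\mu$ by \eqref{eq:hq}. Hence $\mathrm{ch}^-\circ\Phi$ and $\varphi\circ\mathrm{ch}$ agree on the spanning set $\{[N^\mu]\}$, and since all four maps are $\Q$-linear (and $\mathrm{ch}$, $\mathrm{ch}^-$ are ring homomorphisms, $\varphi$ is a ring homomorphism), the diagram commutes. Alternatively, one can run the same computation on the single-part modules: $\mathrm{ch}(\mathbf 1_n) = h_n$, $\Phi_n(\mathbf 1_n) = \Cl_n$ is the basic spin module with $\mathrm{ch}^-(\Cl_n) = q_n$, and $\varphi(h_n) = q_n$ by \eqref{eq:HQ}; since the $h_n$ generate $\La_\Q$ and $\Phi$, $\mathrm{ch}$, $\mathrm{ch}^-$, $\varphi$ are all ring homomorphisms, commutativity follows on all of $R_\Q$.

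The one point that needs genuine care — and which I expect to be the main obstacle — is that $\Phi$ itself is \emph{a priori} only a $\Z$-linear map on Grothendieck groups; one must know it is multiplicative before the generator argument is legitimate. This is exactly the statement that induction from $\C\mf S_n$ to $\HC_n$ is compatible with the product structures on $R$ and $R^-$, i.e.\ that $\Phi_{m+n}(\mathrm{ind}^{\mf S_{m+n}}_{\mf S_m\times\mf S_n}(M\boxtimes N)) \cong \HC_{m+n}\otimes_{\HC_{m,n}}(\Phi_m(M)\otimes\Phi_n(N))$ as $\HC_{m+n}$-modules; this follows from transitivity of induction together with the fact that $\HC_{m,n}$ is generated by $\Cl_{m+n}$ and $\mf S_m\times\mf S_n$ (so that inducing up in two stages through $\C(\mf S_m\times\mf S_n) \subseteq \HC_{m,n} \subseteq \HC_{m+n}$ matches the composite), using \eqref{eq:ind=tensor} at the intermediate stage to identify $\Cl_{m+n}\otimes(M\otimes N)$ with the relevant induced module. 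Once multiplicativity of $\Phi$ is in hand the rest is the short computation above; the comultiplication compatibility then also drops out, since restriction commutes with the functors involved in the analogous way, or simply because $\Phi = (\mathrm{ch}^-)^{-1}\circ\varphi\circ\mathrm{ch}$ is a composite of coalgebra maps.
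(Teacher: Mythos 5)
Your proposal is correct and takes essentially the same route as the paper: verify $\ch^-\circ\Phi=\varphi\circ\ch$ on the $\Q$-basis $\{[N^\mu]\}_{\mu\in\mc P_n}$ of permutation characters via $\Phi(N^\mu)\cong M^\mu$, \eqref{eq:chpermutation}, \eqref{eq:hq} and \eqref{eq:spinchpermutation}, then transport the Hopf structure along the vertical isomorphisms to conclude $\Phi$ is a Hopf-algebra homomorphism. One small remark: the worry in your final paragraph about needing multiplicativity of $\Phi$ before the ``generator argument'' is unnecessary for your main line of reasoning, since the permutation characters form a $\Q$-\emph{basis} (not merely algebra generators) of $R_\Q$, and all four maps are $\Q$-linear; thus commutativity on the basis already gives commutativity of the square, and multiplicativity of $\Phi$ falls out afterwards exactly as you observe via $\Phi=(\ch^-)^{-1}\circ\varphi\circ\ch$. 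The direct verification of multiplicativity via two-step induction through $\HC_{m,n}$ that you sketch is correct but redundant; it would only be needed if one tried to argue from the ring generators $\mathbf 1_n$ rather than the full basis.
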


\begin{proof}Using~(\ref{eq:chpermutation})~and~(\ref{eq:hq}) we have
$$
\varphi \big({\rm ch}({\rm ind}^{\C \mf S_n}_{\C \mf
S_{\mu}}\textbf{1}_n) \big)=q_{\mu}.
$$
On the other hand, it follows by~\eqref{eq:spinchpermutation} that
$$
{\rm ch}^-\big(\Phi({\rm ind}^{\C \mf S_n}_{\C \mf
S_{\mu}}\textbf{1}_n) \big) ={\rm ch}^-({\rm ind}^{\HC_n}_{\C \mf
S_{\mu}}{\bf 1}_n)=q_{\mu}.
$$
This establishes the commutative diagram on the level of linear
maps, since  $R_n$ has a basis given by the characters of the
permutation modules ${\rm ind}^{\C \mf S_n}_{\C \mf S_{\mu}}{\bf
1}_n$ for $\mu\in\mc P_n$.

It can be verified easily that $\varphi: \La_\Q \rightarrow \Ga_\Q$
is a homomorphism of Hopf algebras. Since both $\ch$ and $\ch^-$ are
isomorphisms of Hopf algebras, it follows from the commutativity of
\eqref{eq:CD} that $\Phi: R_\Q \rightarrow R^-_\Q$ is a homomorphism
of Hopf algebras.
\end{proof}
We shall use the commutation diagram \eqref{eq:CD} as a bridge to
discuss spin generalizations of some known constructions in the
representation theory of symmetric groups, such as the coinvariant
algebras, Kostka polynomials, etc.

\subsection{The coinvariant algebra for $\mf S_n$}

The symmetric group $\mf S_n$ acts on $V=\C^n$ and then on the
symmetric algebra $S^*V,$ which is identified with $\C[x_1,\ldots,
x_n]$ naturally. It is well known that the algebra of $\mf
S_n$-invariants on $S^*V$, or equivalently $\C[x_1,\ldots,x_n]^{\mf
S_n}$, is a polynomial algebra in $e_1,e_2, \ldots, e_n$, where $e_i
=e_i[x_1,\ldots,x_n]$ denotes the $i$th elementary symmetric
polynomial.

For a partition $\la=(\la_1, \la_2, \ldots)$ of $n$, denote
\begin{eqnarray}   \label{n lambda}
n(\lambda)=\sum_{i\geq 1}(i-1)\lambda_i.
\end{eqnarray}
We also denote by  $h_{ij} =\la_i +\la_j' -i-j+1$ the {\em hook
length} and $c_{ij}=j-i$ the {\em content} associated to a cell
$(i,j)$ in the Young diagram of $\la$.

\begin{example}
For $\la =(4,3,1)$, the hook lengths are listed in the corresponding
cells as follows:
$$
\young(6431,421,1)
$$
In this case, $n(\la) =5.$
\end{example}

Denote by $t^\bullet =(1,t,t^2,\ldots)$ for a formal variable $t$.
We have the following principal specialization of the $r$th
power-sum:
$$
p_r(t^\bullet) =\frac1{1-t^r}.
$$
The following well-known formula (cf. \cite[I, \S 3, 2]{Mac}) for
the principal specialization of $s_\la$ can be proved in a multiple of
ways:
\begin{equation}  \label{eq:Schurspec}
s_\la(t^\bullet)  =\frac{t^{n(\la)}} {\prod_{(i,j)\in
\la}(1-t^{h_{ij}})}.
\end{equation}

Write formally
$$
S_t V = \sum_{j \ge 0} t^j (S^jV).
$$
Consider the graded multiplicity of a given Specht module $S^\la$
for a partition $\la$ of $n$ in the graded algebra $S^*V$, which is
by definition
$$
f_\la (t):
= \dim \Hom_{\mf S_n} (S^\la, S_tV).
$$
The {\em coinvariant algebra} of $\mf S_n$ is defined to be
$$
(S^*V)_{\mf S_n} =S^*V/ I,
$$
where $I$ denotes the ideal generated by $e_1, \ldots, e_n$. By a
classical theorem of Chevalley (cf. \cite{Ka}), we have an
isomorphism of $\mf S_n$-modules:
\begin{eqnarray}  \label{eq:coinv=}
S^*V \cong (S^*V)_{\mf S_n} \otimes (S^*V)^{\mf S_n}.
\end{eqnarray}

Define the polynomial
$$
f^\la (t):
= \dim \Hom_{\mf S_n} (S^\la, (S_tV)_{\mf S_n}).
$$
Closed formulas for $f_\la (t)$ and $f^\la (t)$ in various forms
have been well known (cf. Steinberg~\cite{S}, Lusztig \cite{Lu1},
Kirillov \cite{Ki}). Following Lusztig, $f^\la(t)$ is called the
{\em fake degree} in connection with Hecke algebras and finite
groups of Lie type.  We will skip a proof of Theorem~\ref{th:grmult}
below, as it can be read off by specializing $s=0$ in the proof of
Theorem~\ref{thm:KPak}. Thanks to \eqref{eq:coinv=}, the formula
\eqref{eq:coinv} is equivalent to \eqref{eq:multSV}.

\begin{theorem}  \label{th:grmult}
The following formulas for the graded multiplicities hold:
\begin{align}
f_\la (t) &=\frac{t^{n(\la)}} {\prod_{(i,j)\in \la}(1-t^{h_{ij}})},
 \label{eq:multSV} \\
f^\la (t)&=\frac{t^{n(\la)}(1-t)(1-t^2)\ldots (1-t^n)}
{\prod_{(i,j)\in \la}(1-t^{h_{ij}})}.  \label{eq:coinv}
\end{align}
\end{theorem}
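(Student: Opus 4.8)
The plan is to reduce the theorem to the single assertion $f_\la(t)=s_\la(t^\bullet)$ and then quote the classical principal specialization \eqref{eq:Schurspec}; the second formula \eqref{eq:coinv} will follow formally. For that reduction, observe that Chevalley's decomposition \eqref{eq:coinv=} is an isomorphism of \emph{graded} $\mf S_n$-modules in which $\mf S_n$ acts trivially on the factor $(S^*V)^{\mf S_n}=\C[e_1,\dots,e_n]$, a free polynomial algebra on generators in degrees $1,2,\dots,n$; its Poincar\'e series is therefore $\prod_{i=1}^n(1-t^i)^{-1}$. Applying $\dim\Hom_{\mf S_n}(S^\la,-)$ to \eqref{eq:coinv=} gives $f_\la(t)=f^\la(t)\prod_{i=1}^n(1-t^i)^{-1}$, so \eqref{eq:coinv} is equivalent to \eqref{eq:multSV}, and it remains to prove $f_\la(t)=s_\la(t^\bullet)$.

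To do this, I would combine a Molien-type argument with the characteristic map. An elementary cycle-by-cycle computation shows that a permutation $\sigma\in\mf S_n$ of cycle type $\mu$, acting on $S_tV=\C[x_1,\dots,x_n]$ (graded by polynomial degree, with $\mf S_n$ permuting the $x_i$), has graded trace $\prod_k(1-t^{\mu_k})^{-1}=p_\mu(t^\bullet)$, where $t^\bullet=(1,t,t^2,\dots)$. Hence, by the projection-onto-isotypic-component formula,
\[
f_\la(t)=\dim\Hom_{\mf S_n}\big(S^\la,S_tV\big)=\sum_{\mu\in\mc P_n}z_\mu^{-1}\,\chi^\la_\mu\,p_\mu(t^\bullet).
\]
On the other hand, the defining formula \eqref{charmap} for the characteristic map reads $s_\la=\ch(\chi^\la)=\sum_{\mu\in\mc P_n}z_\mu^{-1}\chi^\la_\mu\,p_\mu$ as a symmetric function. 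Evaluating this symmetric-function identity at the specialized alphabet $x=t^\bullet$ (legitimate since each $p_\mu(t^\bullet)$ is a well-defined power series in $t$) yields exactly $f_\la(t)=s_\la(t^\bullet)$. If one prefers a more structural route, the same conclusion follows by taking the degree-$n$ part of the Cauchy identity \eqref{eq:Cauchy}, computing $\sum_d t^d\ch(S^dV)=\sum_{\la\in\mc P_n}s_\la(t^\bullet)s_\la$, and pairing with $s_\la$ using that $\ch$ is an isometry.

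Finally, \eqref{eq:Schurspec} rewrites $s_\la(t^\bullet)$ as $t^{n(\la)}/\prod_{(i,j)\in\la}(1-t^{h_{ij}})$, which is \eqref{eq:multSV}, and then \eqref{eq:coinv} follows from the first paragraph. I do not expect a genuine obstacle: the only mildly delicate points are the evaluation of the graded trace of a permutation on $S_tV$ and the legitimacy of specializing a symmetric-function identity at $t^\bullet$, both routine. (This is precisely the $s=0$ case of the proof of Theorem~\ref{thm:KPak}, where the same strategy is carried out with super Schur functions $s_\la(x/y)$ and a two-variable specialization; the genuinely new content there is the exterior-algebra direction, which is killed here.)
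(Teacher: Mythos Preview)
Your proposal is correct and is essentially the same as the paper's own approach: the paper explicitly says the proof is obtained by specializing $s=0$ in the proof of Theorem~\ref{thm:KPak}, and you have written out exactly that specialization (graded trace $p_\mu(t^\bullet)$ on $S_tV$, then the characteristic map and Cauchy identity to identify $f_\la(t)=s_\la(t^\bullet)$, then \eqref{eq:Schurspec}). You even note this parenthetically yourself; the reduction of \eqref{eq:coinv} to \eqref{eq:multSV} via \eqref{eq:coinv=} likewise matches the paper's remark preceding the theorem.
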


 Note that the dimension of the Specht module $S^\la$ is given by the
hook formula
$$f^\la(1) =\frac{n!}{ \prod_{(i,j)\in \la} h_{ij}}.$$
Setting $t \mapsto 1$ in \eqref{eq:coinv} confirms that the
coinvariant algebra $(S^*V)_{\mf S_n}$ is a regular representation
of $\mf S_n$.

\subsection{The graded multiplicity in $S^*V\otimes\wedge^*V$
and $S^*V\otimes S^*V$}

Recall that $x=\{x_1,x_2,\ldots\}$ and $y=\{y_1, y_2,\ldots\}$ are
two independent sets of variables. Recall a well-known formula
relating Schur and skew Schur functions:
$s_{\la}(x,y)=\sum_{\rho\subseteq\la}s_{\rho}(x)s_{\la/\rho}(y).$
For $\la\in\mc P$, the {\em super Schur function} (also known as
{\em hook Schur function}) $hs_{\la}(x;y)$ is defined as
\begin{equation}\label{eq:hook Schur}
hs_{\la}(x;y)=\sum_{\rho\subseteq\la}s_{\rho}(x)s_{\la'/\rho'}(y).
\end{equation}
In other words, $hs_{\la}(x;y) =\omega_y (s_\la(x,y))$, where
$\omega_y$ is the standard involution on the ring of symmetric
functions in $y$. We refer to \cite[Appendix~A]{CW} for more detail.

Since $\omega_y(p_r(y))=(-1)^{r-1}p_r(y)$, $ p_r(x;y):=
\omega_y(p_r(x,y))$ for $r\geq 1$ is given by
$$
p_r(x;y) =\sum_i x_i^r - \sum_j (-y_j)^r.
$$
Applying $\omega_y$ to the Cauchy identity \eqref{eq:Cauchy} gives
us
\begin{equation}\label{eq:hookCauchy}
\sum_{\la\in\mc P}s_{\la}(z)hs_{\la}(x;y)
=\frac{\prod_{j, k}(1+y_jz_k)}{\prod_{i, k}(1-x_iz_k)}.
\end{equation}
Let $a, b$ be variables. The formula in \cite[Chapter I, \S 3,
3]{Mac} can be interpreted as the specialization of $hs_{\la}(x;y)$
at $x=at^{\bullet}$ and $y=bt^{\bullet}$:
\begin{equation}\label{eq:MacEx3.3}
hs_{\la}(at^{\bullet};bt^{\bullet})
=t^{n(\la)}\prod_{(i,j)\in\la}\frac{a+bt^{c_{ij}}}{1-t^{h_{ij}}}.
\end{equation}

The $\mf S_n$-action on $V=\C^n$ induces a natural $\mf S_n$-action on the exterior algebra
$$
\wedge^{*}V
=\bigoplus_{j=0}^n \wedge^{j}V.
$$
This gives rise to a $\Z_+ \times \Z_+$ bi-graded $\C \mf S_n$-module structure on
\begin{align*}
S^*V\otimes\wedge^*V=\bigoplus_{i\geq 0, 0\leq
j\leq n}S^iV\otimes\wedge^jV.
\end{align*}
Let $s$ be a variable and write formally
$$
\wedge_sV=\sum^n_{j= 0 }s^j (\wedge^jV).
$$
Let $\widehat{f}_{\la}(t,s)$ be the bi-graded multiplicity of the
Specht module $S^{\la}$ for $\la\in \mc{P}_n$ in $S^*V\otimes
\wedge^*V$, which is by definition
$$
\widehat{f}_{\la}(t,s)=\dim\Hom_{\mf S_n}(S^{\la}, S_tV\otimes\wedge_sV).
$$

\begin{theorem} \label{thm:KPak}
Suppose $\la\in\mc P_n$. Then
\begin{enumerate}
\item
$\widehat{f}_{\la}(t,s) =hs_\la(t^\bullet;st^{\bullet})$.

\item
$\widehat{f}_{\la}(t,s)
=\frac{t^{n(\la)}\prod_{(i,j)\in\la}(1+st^{c_{ij}})}{\prod_{(i,j)\in\la}(1-t^{h_{ij}})}
=\frac{\prod_{(i,j)\in\la}(t^{i-1}+st^{j-1})}{{\prod_{(i,j)\in\la}(1-t^{h_{ij}})}}.$
\end{enumerate}
\end{theorem}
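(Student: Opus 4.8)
The plan is to reduce the statement to a Schur-duality--type character computation, exactly parallel to the derivation of the classical character formula $\ch L(\la) = s_\la$ given earlier, and then to invoke the known specialization formula \eqref{eq:MacEx3.3} for the super Schur function. For part (1), the key observation is that $S^*V \otimes \wedge^* V$ carries both an $\mf S_n$-action and a commuting action of $\mathfrak{gl}(N) \times \mathfrak{gl}(M)$ for auxiliary spaces (here one wants to view $S^*V$ as built from one set of variables and $\wedge^* V$ from another, the classical GL--GL duality on $S(\C^N \otimes \C^n)$ and $\wedge(\C^M \otimes \C^n)$). First I would write $S_t V \otimes \wedge_s V$ as the bigraded piece of $S^*(\C^n) \otimes \wedge^*(\C^n)$, apply the Frobenius characteristic map together with the trace operator recording the $t$- and $s$-gradings, and identify the resulting symmetric-function identity. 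Concretely, summing over $n$ and using \eqref{eq:ind=tensor}-style reasoning, $\sum_n \ch\big( S_tV \otimes \wedge_s V \big)$ should equal $\prod_{i}\frac{1}{1-t^i z}\cdot\prod_{j}(1+s t^j z)$ read off from the bases of $S^iV$ and $\wedge^j V$, where $z$ is the characteristic-map variable; comparing with the super-Cauchy identity \eqref{eq:hookCauchy} evaluated at $x = t^\bullet$, $y = s t^\bullet$ and using the linear independence of the $s_\la(z)$'s yields $\widehat f_\la(t,s) = hs_\la(t^\bullet; s t^\bullet)$.

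For part (2), once (1) is established the two closed-form expressions are immediate: the first equality is precisely the specialization \eqref{eq:MacEx3.3} with $a = 1$, $b = s$, and the second is the elementary rewriting $t^{n(\la)}\prod_{(i,j)\in\la}(1 + s t^{c_{ij}}) = \prod_{(i,j)\in\la} t^{i-1}(1 + s t^{j-i}) = \prod_{(i,j)\in\la}(t^{i-1} + s t^{j-1})$, using $n(\la) = \sum_{i}(i-1)\la_i = \sum_{(i,j)\in\la}(i-1)$ and $c_{ij} = j - i$. So part (2) requires essentially no new work beyond bookkeeping.

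The main obstacle is making the ``apply $\ch$ and the trace simultaneously'' step in part (1) fully rigorous, i.e.\ correctly accounting for the $\mf S_n$-module structure of each bihomogeneous component $S^iV \otimes \wedge^j V$. The cleanest route is to note that $S^iV$, as an $\mf S_n$-module, is a sum of permutation modules $\ind^{\mf S_n}_{\mf S_\mu}\mathbf 1$ over compositions $\mu$ (monomials of degree $i$ give the basis, with $\mf S_n$ permuting coordinates), while $\wedge^j V$ is the sign-twist of the permutation module on $j$-subsets, i.e.\ $\ind^{\mf S_n}_{\mf S_{(1^j, n-j)}}(\sgn_j \boxtimes \mathbf 1_{n-j})$; tensoring and applying \eqref{eq:chpermutation} (plus $\ch(\sgn_j) = e_j$ and the multiplicativity of $\ch$) turns the bigraded sum into $\sum_{\mu,\,(1^j)} m_\mu(t^\bullet)\, e_j(s t^\bullet)\, h_\mu(z)\, (\text{skew factor})$, which one matches against $\omega_y$ applied to the Cauchy identity. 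An alternative, perhaps slicker, is to observe that $S^*V \otimes \wedge^* V \cong \ind$ from a parabolic and compute the graded character directly, but either way the crux is a careful identification of symmetric-function sides; after that, linear independence of $\{s_\la(z)\}$ closes the argument. I would also remark that specializing $s = 0$ recovers Theorem~\ref{th:grmult}, which is the sanity check the authors allude to.
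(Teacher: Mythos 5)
Your overall strategy for part (1)—compute $\ch(S_tV\otimes\wedge_sV)$ as a symmetric function in $z$, match it against the super-Cauchy identity \eqref{eq:hookCauchy} at $x=t^\bullet$, $y=st^\bullet$, and conclude by linear independence of the $s_\lambda(z)$'s—is exactly the paper's, and part (2) is handled identically via \eqref{eq:MacEx3.3}. Your opening-paragraph identity is essentially right (the ``summing over $n$'' device is the same bookkeeping as the paper's auxiliary variable $u$ with a $[u^n]$ extraction), though your single variable $z$ should of course be the full alphabet $z_1,z_2,\ldots$ in order to read the output of $\ch$ as a symmetric function. The GL--GL duality detour you gesture at is not needed and plays no role.

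Where your proposal goes astray is in the second paragraph, where you try to rigorize the character computation by decomposing $S^iV$ into permutation modules and $\wedge^jV$ into a sign-twisted permutation module, then ``tensoring and applying \eqref{eq:chpermutation} plus multiplicativity of $\ch$.'' Multiplicativity of $\ch$ applies to the \emph{induction product} $\ind^{\mf S_{m+n}}_{\mf S_m\times\mf S_n}(M\otimes N)$; it does \emph{not} apply to the internal (diagonal) tensor product of two $\mf S_n$-modules, which is what $S^iV\otimes\wedge^jV$ is here. That internal product corresponds to the Kronecker product of symmetric functions, which is not $\ch(M)\ch(N)$, and your ``(skew factor)'' placeholder signals that you felt but did not resolve this gap. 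The correct observation—which removes the obstacle you flagged—is simply that for a single group element $\sigma$ the trace is multiplicative on internal tensor products: $\tr\sigma|_{S_tV\otimes\wedge_sV}=(\tr\sigma|_{S_tV})(\tr\sigma|_{\wedge_sV})$. The paper computes each factor by listing the monomials (resp.\ exterior monomials, up to sign) fixed by a $\sigma$ of cycle type $\mu$, obtaining
$$
\tr\sigma|_{S_tV\otimes\wedge_sV}
=\frac{\prod_{i}\bigl(1-(-s)^{\mu_i}\bigr)}{\prod_{i}\bigl(1-t^{\mu_i}\bigr)},
$$
and then applies $\ch$ termwise, recognizes the resulting sum as the $u^n$-coefficient of $\prod_{j\ge0}\prod_i\frac{1+ust^jz_i}{1-ut^jz_i}$, and invokes \eqref{eq:hookCauchy}. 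So no permutation-module decomposition is required, and the route you labeled ``cleanest'' is in fact the one that does not go through as written.
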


Formula (2) above in the second expression for
$\widehat{f}_{\la}(t,s)$ was originally established with a bijective
proof by Kirillov-Pak \cite{KP}, with $(t^i+st^j)$ being corrected
as $(t^{i-1}+st^{j-1})$ above. Our proof below is different, making
clear the connection with the specialization of super Schur
functions.

\begin{proof}
It suffices to prove (1), as (2) follows  from \eqref{eq:MacEx3.3}
and (1).

By the definition of $\widehat{f}_{\la}(t,s)$ and the characteristic
map, we have
\begin{equation}\label{eq:grSVwedgeV}
\ch (S_tV\otimes \wedge_sV)=\widehat{f}_{\la}(t,s)s_{\la}(z).
\end{equation}

Take $\sigma =(1,2, \ldots, \mu_1)(\mu_1+1, \ldots, \mu_1+\mu_2)
\cdots$ in $\mf S_n$ of type $\mu =(\mu_1, \mu_2, \ldots, \mu_\ell)$
with $\ell=\ell(\mu)$. Note that $\sigma$ permutes the monomial
basis for $S^*V$, and the monomials fixed by $\sigma$ are of the
form
$$
(x_1x_2\ldots x_{\mu_1})^{a_1} (x_{\mu_1+1} \ldots
x_{\mu_1+\mu_2})^{a_2} \ldots  (x_{\mu_1+\ldots +\mu_{\ell-1}+1}
\ldots x_n)^{a_\ell},
$$
where $a_1\ldots, a_\ell \in \Z_+$. Let us denote by $dx_1 \ldots,
dx_n$ the generators for $\wedge^*V$. Similarly, the exterior
monomials fixed by $\sigma$ up to a sign are of the form
$$
(dx_1dx_2\ldots dx_{\mu_1})^{b_1} (dx_{\mu_1+1} \ldots
dx_{\mu_1+\mu_2})^{b_2} \ldots  (dx_{\mu_1+\ldots +\mu_{\ell-1}+1}
\ldots dx_n)^{b_\ell},
$$
where $b_1\ldots, b_\ell \in \{0,1\}$. The sign here is
$(-1)^{\sum_i b_i(\mu_i-1)}$.

From these we deduce that
\begin{align*}
\tr \sigma |_{S_tV\otimes \wedge_sV}
&= \sum_{a_1,\ldots, a_\ell\geq0,(b_1,\ldots, b_\ell)\in\Z_2^n }
t^{\sum_{i=1}^\ell a_i\mu_i}s^{\sum_{i=1}^\ell b_i\mu_i}(-1)^{\sum_i b_i(\mu_i-1)}\\
&=\frac{(1-(-s)^{\mu_1})(1-(-s)^{\mu_2})\ldots
(1-(-s)^{\mu_\ell})}{(1-t^{\mu_1})(1-t^{\mu_2})\ldots
(1-t^{\mu_\ell})}.
\end{align*}
We shall denote $[u^n]g(u)$ the coefficient of $u^n$ in a power
series expansion of $g(u)$. Applying the characteristic map $\ch$,
we obtain that
\begin{align}  \label{eq:chSVwedgeV}
&\ch (S_tV\otimes \wedge_sV) \\
&= \sum_{\mu \in\mc P_n} z_\mu^{-1}
\frac{(1-(-s)^{\mu_1})(1-(-s)^{\mu_2})\ldots
(1-(-s)^{\mu_\ell})}{(1-t^{\mu_1})(1-t^{\mu_2})\ldots
(1-t^{\mu_\ell})} p_\mu
  \notag\\
&= [u^n]  \sum_{\mu \in \mc P} z_\mu^{-1} u^{|\mu|}
p_\mu(t^\bullet;st^{\bullet}) p_\mu
  \notag \\
&= [u^n]  \prod_{j \ge 0} \prod_i \frac{1+ust^jz_i}{1- ut^j z_i}
  \notag \\
&= \sum_{\la \in\mc{P}_n} hs_\la(t^\bullet;st^{\bullet}) s_\la(z),
\notag
\end{align}
where the last equation used the Cauchy
identity~\eqref{eq:hookCauchy}. By comparing \eqref{eq:grSVwedgeV}
and \eqref{eq:chSVwedgeV}, we have proved (1).
\end{proof}

We can also consider the bi-graded multiplicity of Specht modules
$S^{\la}$ for $\la\in\mc P_n$ in the $\C\mf S_n$-module $S^*V\otimes S^*V$,
which by definition is
$$
\widetilde{f}_{\la}(t,s)=\dim\Hom_{\mf S_n}(S^{\la}, S_tV\otimes S_sV).
$$
\begin{theorem}  \cite{BL} \label{thm:grSV2}
We have $\widetilde{f}_{\la}(t,s) =s_{\la}(t^{\bullet}s^{\bullet}),$
for $\la\in\mc P$, where $t^{\bullet}s^{\bullet}$ indicates the
variables $\{t^js^k \mid j, k\geq 0\}.$
\end{theorem}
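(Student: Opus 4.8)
The plan is to mimic the proof of Theorem~\ref{thm:KPak}, replacing the exterior factor $\wedge_s V$ by a second copy $S_s V$ of the symmetric algebra; since the permutation action on $S^*V$ carries no signs, the argument is in fact simpler. By the definition of $\widetilde f_\la(t,s)$ together with the basic properties of the characteristic map $\ch$ (in particular $\ch(\chi^\la)=s_\la$ and the semisimplicity of $\mf S_n$), one has
\[
\ch\big(S_tV\otimes S_sV\big)=\sum_{\la\in\mc P_n}\widetilde f_\la(t,s)\,s_\la(z),
\]
so it suffices to evaluate the left-hand side independently and match coefficients.

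First I would compute the bigraded trace of a permutation $\sigma\in\mf S_n$ of cycle type $\mu=(\mu_1,\dots,\mu_\ell)$ acting diagonally on $S_tV\otimes S_sV$. Since $\sigma$ permutes the monomial basis of $S^*V=\C[x_1,\dots,x_n]$, the monomials fixed by $\sigma$ are exactly the products over the cycles of $\sigma$ of arbitrary powers of the corresponding cycle-monomial $x_{i_1}\cdots x_{i_{\mu_a}}$, and likewise for the second factor. Hence
\[
\tr \sigma |_{S_tV\otimes S_sV}=\prod_{a=1}^{\ell}\frac{1}{(1-t^{\mu_a})(1-s^{\mu_a})}=\prod_{a=1}^{\ell}p_{\mu_a}(t^\bullet s^\bullet)=p_\mu(t^\bullet s^\bullet),
\]
using $p_r(t^\bullet s^\bullet)=\sum_{j,k\ge 0}(t^js^k)^r=\tfrac{1}{(1-t^r)(1-s^r)}$. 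Applying $\ch$ and then the Cauchy identity \eqref{eq:Cauchy} in power-sum form, with one alphabet specialized to $t^\bullet s^\bullet=\{t^js^k\mid j,k\ge 0\}$, gives
\[
\ch\big(S_tV\otimes S_sV\big)=\sum_{\mu\in\mc P_n}z_\mu^{-1}\,p_\mu(t^\bullet s^\bullet)\,p_\mu=\sum_{\la\in\mc P_n}s_\la(t^\bullet s^\bullet)\,s_\la(z).
\]
Comparing the two expressions for $\ch(S_tV\otimes S_sV)$ and invoking the linear independence of the $s_\la(z)$ yields $\widetilde f_\la(t,s)=s_\la(t^\bullet s^\bullet)$ for every $\la\vdash n$; summing over $n$ (or, equivalently, extracting the coefficient of $u^n$ from $\sum_\mu z_\mu^{-1}u^{|\mu|}p_\mu(t^\bullet s^\bullet)p_\mu=\prod_{j,k\ge 0}\prod_i(1-ut^js^kz_i)^{-1}$, exactly as in the proof of Theorem~\ref{thm:KPak}) gives the statement for all $\la\in\mc P$.

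I do not expect a genuine obstacle here: unlike the mixed symmetric/exterior situation of Theorem~\ref{thm:KPak}, there are no signs to track, so the trace computation is immediate. The only points deserving a line of care are that the fixed-monomial count in each symmetric factor depends only on the cycle type of $\sigma$ and not on $n=\dim V$ (which is precisely what lets the trace factor through a power-sum specialization), and that $t^\bullet s^\bullet$ must be read as the countably infinite alphabet $\{t^js^k\}$, so that $s_\la(t^\bullet s^\bullet)$ is a well-defined formal power series in $t$ and $s$ for every partition $\la$.
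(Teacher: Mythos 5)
Your proof is correct and follows essentially the same route as the paper: compute the trace of a cycle-type-$\mu$ permutation on $S_tV\otimes S_sV$ to obtain the power-sum specialization $p_\mu(t^\bullet s^\bullet)$, push through the characteristic map, apply the Cauchy identity, and match coefficients against $\sum_\la \widetilde f_\la(t,s)s_\la(z)$. The only cosmetic difference is that you write the intermediate step directly in terms of $p_\mu(t^\bullet s^\bullet)$ rather than the paper's explicit product of $(1-t^{\mu_a})^{-1}(1-s^{\mu_a})^{-1}$ factors before extracting the $u^n$ coefficient, which is equivalent.
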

\begin{proof}
By the definition of $\widetilde{f}_{\la}(t,s)$, we have
\begin{equation}\label{eq:grSV2}
\ch (S_tV\otimes S_sV)=\widetilde{f}_{\la}(t,s)s_{\la}(z).
\end{equation}
Arguing similarly  as in the proof of Theorem~\ref{thm:KPak}, one
deduces that
\begin{align}  \label{eq:chSVSV2}
&\ch (S_tV\otimes S_sV) \\
&= \sum_{\mu \in \mc{P}_n} z_\mu^{-1} \frac1{(1-t^{\mu_1})
(1-t^{\mu_2})\ldots
(1-t^{\mu_\ell})}\cdot\frac1{(1-s^{\mu_1})(1-s^{\mu_2})\ldots
(1-s^{\mu_\ell})} p_\mu
  \notag\\
&= [u^n]  \sum_{\mu \in \mc P} z_\mu^{-1} u^{|\mu|} p_\mu(t^\bullet s^\bullet) p_\mu
  \notag \\
&= [u^n]  \prod_{j,k \ge 0} \prod_i \frac1{1- ut^js^k z_i}
  \notag \\
&= \sum_{\la \in\mc{P}_n} s_\la(t^\bullet s^{\bullet}) s_\la(z),
\notag
\end{align}
where the last equation used the Cauchy identity \eqref{eq:Cauchy}.
The theorem is proved by comparing \eqref{eq:grSV2} and
\eqref{eq:chSVSV2}.
\end{proof}

\begin{remark}
By ~\eqref{eq:coinv=}
and Theorem~\ref{thm:grSV2}, the graded multiplicity of $S^{\la}$
for $\la\in\mc P_n$ in the  $\mf S_n$-module $(S^*V)_{\mf
S_n}\otimes (S^*V)_{\mf S_n}$ is $\prod_{r=1}^n(1-t^r)(1-s^r)
s_{\la}(t^{\bullet}s^{\bullet}).$ This recovers Bergeron-Lamontagne
\cite [Theorem 6.1 or (6.4)]{BL}.
\end{remark}
\subsection{The spin coinvariant algebra for $\HC_n$}\label{sub:spininv}

Suppose that the main diagonal of the Young diagram $\la$ contains
$r$ cells. Let $\alpha_i=\lambda_i-i$ be the number of cells in
the $i$th row of $\lambda$ strictly to the right of $(i,i)$, and
let $\beta_i=\lambda_i'-i$ be the number of cells in the $i$th
column of $\lambda$ strictly below $(i,i)$, for $1\leq i\leq r$.
We have $\alpha_1>\alpha_2>\cdots>\alpha_r\geq0$ and
$\beta_1>\beta_2>\cdots>\beta_r\geq0$. Then the Frobenius notation
for a partition is
$\lambda=(\alpha_1,\ldots,\alpha_r|\beta_1,\ldots,\beta_r)$. For
example, if $\lambda=(5,4,3,1)$ whose corresponding Young diagram is
$$
\la =\young(\,\,\,\,\,,\,\,\,\,,\,\,\,,\,)
$$
then $\alpha =(4,2,0),
\beta=(3,1,0)$ and hence $\lambda=(4,2,0|3,1,0)$ in Frobenius
notation.

Suppose that $\xi$ is a strict partition of $n$. Let $\xi^*$ be the
associated shifted diagram, that is,
$$
\xi^*=\{(i,j)~|~1\leq i\leq l(\lambda), i\leq j\leq\lambda_i+i-1
\}
$$
which is obtained from the ordinary Young diagram by shifting the
$k$th row to the right by $k-1$ squares, for each $k$. Denoting
$\ell(\xi)=\ell$, we define the {\em double partition}
$\widetilde{\xi}$ to be $\widetilde{\xi}=(\xi_1,\ldots,\xi_\ell|
\xi_1-1,\xi_2-1,\ldots,\xi_\ell-1)$ in Frobenius notation. Clearly,
the shifted diagram $\xi^*$ coincides with the part of
$\widetilde{\xi}$ that lies above the main diagonal. For each cell
$(i,j)\in \xi^*$, denote by $h^*_{ij}$ the associated hook length in
the Young diagram $\widetilde{\xi}$, and set the content
$c_{ij}=j-i$.

\begin{example}
Let $\xi = (4, 3, 1)$. The corresponding shifted diagram $\xi^*$ and
double diagram $\widetilde\xi$ are
$$
\xi^*=\young(\,\,\,\,,:\,\,,::\,)
\qquad \qquad
\widetilde{\xi}=\young(\,\,\,\,\,,\,\,\,\,\,,\,\,\,\,,\,\,)
$$
The contents of $\xi$ are listed in the corresponding cell of $\xi^*$ as follows:
$$
\young(0123,:012,::0)
$$
The shifted hook lengths for each cell in $\xi^*$ are  the
usual hook lengths for the corresponding cell in $\xi^*$, as
part of the double diagram $\widetilde \xi$, as follows:
$$
\young(\,7542,\,\,431,\,\,\,1,\,\,)
\qquad \qquad \young(7542,:431,::1)
$$
\end{example}

Since $(S^*V)_{\mf S_n}$ is a regular representation of $\mf S_n$,
$\Cl_n \otimes (S^*V)_{\mf S_n}$ is a regular representation of
$\HC_n$ by \eqref{eq:ind=tensor}. Denote by
\begin{align*}
d_\xi (t) &= \dim \Hom_{\HC_n} (D^\xi, \Cl_n \otimes S_tV), \\
d^\xi (t) &= \dim \Hom_{\HC_n} (D^\xi, \Cl_n \otimes (S_tV)_{\mf
S_n}).
\end{align*}
The polynomial $d^\xi(t)$ will be referred to as the {\em spin fake
degree} of the simple $\HC_n$-module $D^\xi$, and it specializes to
the degree of $D^\xi$ as $t$ goes to $1$. Note
$d^\xi(t)=d_\xi(t)\prod_{r=1}^n(1-t^r).$

\begin{theorem} \cite{WW1}  \label{th:S(V)}
Let $\xi$ be a strict partition of $n$. Then
\begin{enumerate}
\item
$d_\xi (t) = 2^{-\frac{\ell(\xi)-\delta(\xi)}{2}} Q_\xi(t^\bullet)$.

\item
$d^\xi (t) = 2^{-\frac{\ell(\xi)-\delta(\xi)}{2}} t^{n(\xi)}
\frac{\prod_{r=1}^n(1-t^r)\prod_{(i,j)\in \xi^*}(1+t^{c_{ij}})}
{\prod_{(i,j)\in \xi^*}(1-t^{h^*_{ij}})}.$
\end{enumerate}
\end{theorem}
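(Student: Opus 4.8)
\emph{The plan.} The plan is to establish (1) via the spin characteristic map, transporting the classical graded-multiplicity computation across the commutative diagram \eqref{eq:CD}; then to deduce (2) from (1) and Chevalley's factorization \eqref{eq:coinv=}, which reduces (2) to the shifted hook formula $Q_\xi(t^\bullet)=t^{n(\xi)}\prod_{(i,j)\in\xi^*}(1+t^{c_{ij}})\big/\prod_{(i,j)\in\xi^*}(1-t^{h^*_{ij}})$; and finally to give a third proof of that formula by passing to the double partition $\widetilde\xi$ and the known principal specialization of super Schur functions.

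\emph{Proof of (1).} By \eqref{eq:ind=tensor}, $\Cl_n\otimes S_tV\cong\ind^{\HC_n}_{\C\mf S_n}S_tV=\Phi_n(S_tV)$. A permutation of cycle type $\mu$ fixes precisely the monomials that are powers of the products over its cycles, so $\tr\,\sigma|_{S_tV}=\prod_i(1-t^{\mu_i})^{-1}=p_\mu(t^\bullet)$, whence $\ch([S_tV])=\sum_{\mu\in\mc P_n}z_\mu^{-1}p_\mu(t^\bullet)p_\mu$. Since $\varphi$ (recall \eqref{eqn:mapvarphi}) annihilates $p_\mu$ unless $\mu\in\mc{OP}_n$, in which case $\varphi(p_\mu)=2^{\ell(\mu)}p_\mu$, the commutativity of \eqref{eq:CD} gives
\[
\ch^-([\Cl_n\otimes S_tV])=\varphi\big(\ch([S_tV])\big)=\sum_{\mu\in\mc{OP}_n}2^{\ell(\mu)}z_\mu^{-1}p_\mu(t^\bullet)\,p_\mu.
\]
Comparing \eqref{eq:pp} with the Cauchy identity of Theorem~\ref{qschur:aux12}, each specialized at one alphabet equal to $t^\bullet$ and restricted to total degree $n$ in the other, rewrites this as $\sum_{\xi\in\mc{SP}_n}2^{-\ell(\xi)}Q_\xi(t^\bullet)\,Q_\xi$. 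Since $\ch^-$ is an isometry (Theorem~\ref{th:2iso}), \eqref{eq:irredch} together with \eqref{eq:bilformGamma} and $\langle Q_\xi,Q_\la\rangle=2^{\ell(\xi)}\delta_{\xi\la}$ yields
\[
d_\xi(t)=\big\langle\zeta^\xi,\,[\Cl_n\otimes S_tV]\big\rangle=\Big\langle 2^{-\frac{\ell(\xi)-\delta(\xi)}{2}}Q_\xi,\ \sum_\la 2^{-\ell(\la)}Q_\la(t^\bullet)Q_\la\Big\rangle=2^{-\frac{\ell(\xi)-\delta(\xi)}{2}}Q_\xi(t^\bullet),
\]
proving (1).

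\emph{Reduction of (2) and a third proof of the shifted hook formula.} Chevalley's isomorphism \eqref{eq:coinv=} reads $S_tV\cong(S_tV)_{\mf S_n}\otimes(S_tV)^{\mf S_n}$ as graded $\mf S_n$-modules, where $(S^*V)^{\mf S_n}=\C[e_1,\dots,e_n]$ carries the trivial action and has graded dimension $\prod_{r=1}^n(1-t^r)^{-1}$. Tensoring with $\Cl_n$ gives $\Cl_n\otimes S_tV\cong(\Cl_n\otimes(S_tV)_{\mf S_n})\otimes(S_tV)^{\mf S_n}$ as graded $\HC_n$-modules, hence $d^\xi(t)=d_\xi(t)\prod_{r=1}^n(1-t^r)$, and by (1) the claim (2) is equivalent to the shifted hook formula. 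To prove the latter I would use the identity $Q_\xi(z)^2=2^{\ell(\xi)}\,hs_{\widetilde\xi}(z;z)$ for the super (hook) Schur function of the double partition $\widetilde\xi$ (extracted from the Cauchy identities \eqref{eq:hookCauchy} and Theorem~\ref{qschur:aux12}, or from Jacobi--Trudi / pfaffian expansions), and then specialize $z=t^\bullet$ and invoke the known evaluation \eqref{eq:MacEx3.3}:
\[
Q_\xi(t^\bullet)^2=2^{\ell(\xi)}\,hs_{\widetilde\xi}(t^\bullet;t^\bullet)=2^{\ell(\xi)}\,t^{n(\widetilde\xi)}\prod_{(i,j)\in\widetilde\xi}\frac{1+t^{c_{ij}}}{1-t^{h_{ij}}}.
\]
It then remains to take the square root: the $\ell(\xi)$ diagonal cells of $\widetilde\xi$ each contribute $1+t^0=2$ and supply the factor $2^{\ell(\xi)}$, while the cells strictly above and strictly below the diagonal of $\widetilde\xi$ pair off so that the product over $\widetilde\xi$ becomes exactly the square of the product over $\xi^*$, once one matches the hook lengths of $\widetilde\xi$ with the shifted hook lengths $h^*_{ij}$ of $\xi^*$ and checks the exponent identity $n(\widetilde\xi)+\sum_{c_{ij}<0}c_{ij}=2\,n(\xi)$. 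I expect this final combinatorial matching --- the ``hook-length doubling'' relating $\widetilde\xi$ to its shifted diagram $\xi^*$ --- to be the main obstacle; deriving $Q_\xi^2=2^{\ell(\xi)}hs_{\widetilde\xi}(z;z)$ and applying \eqref{eq:MacEx3.3} are comparatively routine.
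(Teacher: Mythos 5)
Your proposal is correct and follows essentially the same route as the paper. For (1) you route the computation through $\varphi$ via the commutative diagram of Proposition~\ref{prop:commute} and then extract $d_\xi(t)$ by an inner-product / isometry argument, while the paper directly evaluates the spin character $\tr\,\sigma|_{\Cl_n\otimes S_tV}=2^{\ell(\mu)}\prod_i(1-t^{\mu_i})^{-1}$ and compares coefficients of $Q_\xi(z)$; these are the same computation dressed differently, and both are fine. For (2) you correctly reduce to the shifted hook formula for $Q_\xi(t^\bullet)$ and sketch the same strategy the paper uses (pass to $S_{\widetilde\xi}=hs_{\widetilde\xi}(x;x)$, specialize via \eqref{eq:MacEx3.3}, take the square root). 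One adjustment to your assessment of where the difficulty lies: the identity $Q_\xi^2=2^{\ell(\xi)}\,hs_{\widetilde\xi}(z;z)$ is \emph{not} a formal consequence of the two Cauchy identities alone --- comparing $\sum_\la s_\la(z)hs_\la(x;x)$ with $\sum_\xi 2^{-\ell(\xi)}Q_\xi(x)Q_\xi(z)$ does not isolate the $\widetilde\xi$-term without extra input. The paper obtains it by applying $\varphi$ to $s_{\widetilde\xi}$ and invoking Young's theorem $\varphi(s_{\widetilde\xi})=2^{-\ell(\xi)}Q_\xi^2$ (cited as \cite{You}), together with $\varphi(s_\la)=S_\la$ and $S_\la=hs_\la(x;x)$. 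Conversely, the ``hook-length doubling'' that you flag as the main obstacle is exactly the elementary (if fiddly) combinatorics carried out in \eqref{dcontent}--\eqref{eq:thij}: each diagonal hook of $\widetilde\xi$ contributes $(1+t^{\xi_r})/2$ times the square of the content factors from $\xi^*$, and the hook lengths of $\widetilde\xi$ factor as $\prod_{\xi^*}(1-t^{h^*_{ij}})^2\prod_i(1+t^{\xi_i})$, which makes the square root clean. So the external ingredient is Young's theorem; once you have it, the rest is mechanical.
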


\begin{proof}
Let us first prove (1). By Lemma~\ref{lem:basicSpinChar}, the value
of the character $\xi^n$ of the  basic spin $\HC_n$-module at an
element $\sigma \in \mf S_n$ of cycle type $\mu \in \mc{OP}_n$ is
$\xi^n_\mu =2^{\ell(\mu)}.$ When combining with the computation in
the proof of Theorem~\ref{thm:KPak}, we have
$$
\tr \sigma |_{\Cl_n \otimes S_tV}
=\frac{2^{\ell(\mu)}}{(1-t^{\mu_1})(1-t^{\mu_2})\ldots
(1-t^{\mu_\ell})}.
$$

Applying the characteristic map $\ch^-: R^- \rightarrow \Gamma_\Q$,
we obtain that
\begin{align}  \label{eq:chClSV2}
\ch^- (\Cl_n \otimes S_tV) &= \sum_{\mu \in \mc{OP}_n} z_\mu^{-1}
\frac{2^{\ell(\mu)}}{(1-t^{\mu_1})(1-t^{\mu_2})\ldots
(1-t^{\mu_\ell})} p_\mu
  \\
&= [u^n]  \sum_{\mu \in \mc{OP}} {2^{\ell(\mu)}} z_\mu^{-1}
u^{|\mu|} p_\mu(t^\bullet) p_\mu
  \notag \\
&= [u^n]  \prod_{m \ge 0} \prod_i \frac{1+ ut^m z_i}{1- ut^m z_i}
  \notag \\
&= \sum_{\la \in \mc{SP}_n} 2^{-\ell(\xi)} Q_\xi(t^\bullet)
Q_\xi(z), \notag
\end{align}
where the last two equations used \eqref{eq:pp} and the Cauchy
identity from Theorem~\ref{qschur:aux12}. It also follows by
\eqref{eq:irredch} and the definition of $d_\xi(t)$ that
$$
\ch^- (\Cl_n \otimes S_tV) =\sum_{\xi \in \mc{SP}_n}
2^{-\frac{\ell(\xi)+\delta(\xi)}{2}} d_\xi(t) Q_\xi(z).
$$

Comparing these two different expressions for $\ch^- (\Cl_n \otimes
S_tV)$ and noting the linear independence of $Q_\xi(z)$, we have
proved (1). Part (2) follows by (1) and applying
Theorem~\ref{th:Qspec} below.
\end{proof}

\begin{theorem} \label{th:Qspec}
The following holds for any $\xi\in\mc{SP}$:
$$
Q_\xi(t^\bullet)  =t^{n(\xi)}\prod_{(i,j)\in\xi^*}\frac{1+t^{c_{ij}}}{1-t^{h^*_{ij}}}
=\prod_{(i,j)\in\xi^*}\frac{t^{i-1}+t^{j-1}}{1-t^{h^*_{ij}}}.
$$
\end{theorem}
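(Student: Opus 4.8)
The plan is to obtain $Q_\xi(t^\bullet)$ by squaring it: I relate $Q_\xi(x)^2$ to the super Schur function attached to the double partition $\widetilde\xi$, specialize using \eqref{eq:MacEx3.3}, and then reorganize the resulting product over the Young diagram $\widetilde\xi$ into the square of a product over the shifted diagram $\xi^*$. The key input is the identity
\[
Q_\xi(x)^2 = 2^{\ell(\xi)}\, hs_{\widetilde\xi}(x;x),
\]
where $hs_{\widetilde\xi}(x;x)$ is obtained from the super Schur function $hs_{\widetilde\xi}(x;y)$ by setting $y_i:=x_i$. This is a classical identity; for a proof one notes that $p_r(x;x)=(1-(-1)^r)p_r(x)$ coincides with $\varphi(p_r)(x)$ for the map $\varphi$ of \eqref{eqn:mapvarphi}, so that $hs_\lambda(x;x)=\varphi(s_\lambda)(x)$ for every $\lambda$, and then combines the Giambelli (Frobenius) determinant for $s_{\widetilde\xi}$ with the pfaffian relation $\det\!\big(Q_{(\xi_i,\xi_j)}\big)=\mathrm{Pf}\!\big(Q_{(\xi_i,\xi_j)}\big)^2=Q_\xi^2$ for the skew-symmetric matrix in the pfaffian definition of $Q_\xi$ (adjoin $\xi_{\ell+1}=0$ when $\ell(\xi)$ is odd) to get $\varphi(s_{\widetilde\xi})=2^{-\ell(\xi)}Q_\xi^2$ (cf.\ \cite[III, \S 8]{Mac}, \cite[Appendix~A]{CW}).

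Setting $x=t^\bullet$ and applying \eqref{eq:MacEx3.3} to $\lambda=\widetilde\xi$ with $a=b=1$,
\[
Q_\xi(t^\bullet)^2 = 2^{\ell(\xi)}\,t^{\,n(\widetilde\xi)}\prod_{(i,j)\in\widetilde\xi}\frac{1+t^{c_{ij}}}{1-t^{h_{ij}}},
\]
and it remains to identify the right-hand side with $\big(t^{\,n(\xi)}\prod_{(i,j)\in\xi^*}(1+t^{c_{ij}})/(1-t^{h^*_{ij}})\big)^2$. I would split the cells of $\widetilde\xi$ into those strictly above, on, and strictly below the main diagonal. The strictly-above cells are precisely the image of $\xi^*$ under $(i,j)\mapsto(i,j+1)$; under this map the hook length is $h^*_{ij}$ (by the very definition of $h^*_{ij}$) and the content becomes $c_{ij}+1$. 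The $\ell(\xi)$ diagonal cells of $\widetilde\xi$ have content $0$ and hook length $2\xi_i$. The strictly-below cells correspond, via transposition, to the strictly-above cells of $\widetilde\xi'$; there the contents are negative, and rewriting $1+t^{c}=t^{c}(1+t^{-c})$ simultaneously repairs the gap between $n(\widetilde\xi)$ and $2n(\xi)$ and folds the contributions of the lower half back onto those of $\xi^*$. Tracking multiplicities by content — $\widetilde\xi$ has $\xi'_c$ cells of content $c\ge 1$, $\xi'_1=\ell(\xi)$ cells of content $0$, and $\xi'_{c+1}$ cells of content $-c$ — makes the bookkeeping routine and produces the claimed square.

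Finally, $Q_\xi(t^\bullet)$ and $t^{n(\xi)}\prod_{(i,j)\in\xi^*}(1+t^{c_{ij}})/(1-t^{h^*_{ij}})$ are nonzero rational functions of $t$ which are positive for $0<t<1$ — the former because $Q_\xi=2^{\ell(\xi)}P_\xi$ has nonnegative coefficients in the monomial symmetric functions — so equality of their squares forces them to be equal, giving the first form; the second form then follows by pulling $t^{i-1}$ out of each factor, using $c_{ij}=j-i$ and $\sum_{(i,j)\in\xi^*}(i-1)=n(\xi)$. The main obstacle is the combinatorics in the middle step, i.e.\ the hook-length identity relating the full hook lengths of $\widetilde\xi$ to the shifted hook lengths $h^*_{ij}$ of $\xi^*$ (equivalently, the claim that the diagonal and below-diagonal hooks of $\widetilde\xi$ recombine correctly); once the bridge identity of Step~1 is granted, the rest is formal.
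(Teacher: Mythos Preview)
Your approach is essentially the paper's: both prove $Q_\xi^2 = 2^{\ell(\xi)}\,hs_{\widetilde\xi}(x;x)$ via $\varphi(s_{\widetilde\xi})=2^{-\ell(\xi)}Q_\xi^2$, specialize through \eqref{eq:MacEx3.3}, and then reorganize the product over $\widetilde\xi$ into the square of a product over $\xi^*$. The paper carries out the hook-length step you flag as the obstacle by invoking the explicit list of hook lengths in each row and column of $\widetilde\xi$ from \cite[III, \S 8, 12]{Mac}, yielding $\prod_{(i,j)\in\widetilde\xi}(1-t^{h_{ij}}) = \prod_{(i,j)\in\xi^*}(1-t^{h^*_{ij}})^2\prod_{i}(1+t^{\xi_i})$; the extra factor $\prod_i(1+t^{\xi_i})$ then cancels against the matching factor that appears when rewriting the numerator (the paper does this via a decomposition of $\widetilde\xi$ into hooks $H_r$ around each diagonal cell, which is equivalent to your above/on/below-diagonal split).
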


Theorem~\ref{th:Qspec} in a different form was proved by Rosengren
\cite{Ro} using formal Schur function identities, and in the current form was proved in
\cite[Section 2]{WW1} by setting up a bijection between marked
shifted tableaux and new combinatorial objects called colored
shifted tableaux. The following new proof follows an approach
suggested by a referee of \cite{WW1}.

\begin{proof}
Recall the homomorphism $\varphi:\La\rightarrow\Ga$  from
\eqref{eqn:mapvarphi}. For $\la\in\mc P$, let $S_{\la} \in\Gamma$ be
the determinant (cf. \cite[III, \S8, 7(a)]{Mac})
$$
S_{\la}
={\rm det}(q_{\la_i-i+j}).
$$
It follows by  the Jacobi-Trudi identity for $s_\la$ and \eqref{eq:hq} that
\begin{equation}\label{eq:imgSchur}
\varphi(s_{\la})=S_{\la}.
\end{equation}
Applying $\varphi$ to the Cauchy identity \eqref{eq:Cauchy}
 and using \eqref{eq:HQ} with $t =z_i$, we obtain that
\begin{equation}\label{eqn:Cauchy2}
\prod_{i, j\geq 1}\frac{1+x_iz_j}{1-x_iz_j}=\sum_{\la\in\mc P}s_{\la}(z)S_{\la}(x).
\end{equation}
This together with \eqref{eq:hookCauchy} implies that
\begin{equation}\label{eq:Shook}
S_{\la}(x)=hs_{\la}(x;x).
\end{equation}

Recall the definition of the double diagram $\widetilde{\xi}$ from
Section \ref{sub:spininv}. It follows from \cite[Theorem 3]{You}
(cf. \cite[III, \S 8, 10]{Mac}) that
$$
\varphi(s_{ \widetilde{\xi}})=2^{-\ell(\xi)}Q^2_{\xi}, \quad \forall
\xi\in\mc{SP}_n,
$$
and hence by \eqref{eq:imgSchur} we have
\begin{equation}\label{QsqS}
Q^2_{\xi}=2^{\ell(\xi)}S_{\widetilde{\xi}}, \quad \forall
\xi\in\mc{SP}_n.
\end{equation}
By \eqref{eq:MacEx3.3}  and \eqref{eq:Shook}, we have
\begin{equation}\label{spec.hstt}
S_{\widetilde{\xi}}(t^{\bullet})
=t^{n(\widetilde{\xi})}\prod_{(i,j)\in\widetilde{\xi}}\frac{1+t^{c_{ij}}}{1-t^{h_{ij}}}
=\prod_{(i,j)\in\widetilde{\xi}}\frac{t^{i-1}+t^{j-1}}{1-t^{h_{ij}}}.
\end{equation}

Let $\ell=\ell(\xi)$. Denote by $H_r$ the $r$th hook which consists
of the cells below or to the right of a given cell $(r,r)$ on the
diagonal of $\widetilde{\xi}$ (including $(r,r)$). For a fixed $r$,
we have
\begin{align*}
\prod_{(i,j)\in H_r}(t^{i-1}+t^{j-1})
&=\frac{(t^{r-1}+t^{\xi_r+r-1})}{t^{r-1}+t^{r-1}}\prod_{r\leq j\leq\xi_r+r-1}(t^{r-1}+t^{j-1})^2\\
&=\frac{1+t^{\xi_r}}{2}\prod_{(r,j)\in\xi^*}(t^{r-1}+t^{j-1})^2.
\end{align*}
Hence,
\begin{align}
\prod_{(i,j)\in\widetilde{\xi}}(t^{i-1}+t^{j-1})
&=\prod_{1\leq r\leq\ell}\prod_{(i,j)\in H_r}(t^{i-1}+t^{j-1}) \label{dcontent}  \\
&=2^{-\ell}\prod^{\ell}_{r=1}(1+t^{\xi_r})\prod_{(i,j)\in\xi^*}(t^{i-1}+t^{j-1})^2.
  \notag
\end{align}

On the other hand, for a fixed $i$,  the hook lengths $h_{ij}$ for
$(i,j)\in\widetilde{\xi}$  with $j>i$ are exactly the hook lengths
$h^*_{ij}$ for $(i,j)\in\xi^*$, which are
$1,2,\ldots,\xi_i,\xi_i+\xi_{i+1},\xi_i+\xi_{i+2},\ldots,\xi_{i}+\xi_{\ell}$
with exception
$\xi_i-\xi_{i+1},\xi_i-\xi_{i+2},\ldots,\xi_{i}-\xi_{\ell}$ (cf.
\cite[III, \S 8, 12]{Mac}). Meanwhile, one can deduce that the hook
lengths $h_{ki}$ for $(k,i)\in\widetilde{\xi}$ with $k\geq i$ for a
given $i$ are $1,2,\ldots,\xi_i-1, 2\xi_i,
\xi_i+\xi_{i+1},\xi_i+\xi_{i+2},\ldots,\xi_{i}+\xi_{\ell}$ with
exception
$\xi_i-\xi_{i+1},\xi_i-\xi_{i+2},\ldots,\xi_{i}-\xi_{\ell}$. This
means
\begin{equation}  \label{eq:thij}
\prod_{(i,j)\in\widetilde{\xi}}(1-t^{h_{ij}})
=\prod_{(i,j)\in\xi^*}(1-t^{h^*_{ij}})^2\prod^{\ell}_{i=1}\frac{1-t^{2\xi_i}}{1-t^{\xi_i}}
=\prod_{(i,j)\in\xi^*}(1-t^{h^*_{ij}})^2\prod^{\ell}_{i=1}(1+t^{\xi_i}).
\end{equation}

Now the theorem follows from \eqref{QsqS}, \eqref{spec.hstt},
\eqref{dcontent}, and \eqref{eq:thij}.
\end{proof}

\begin{remark}
The formulas in Theorem \ref{th:S(V)} appear to differ by a factor
$2^{\delta(\xi)}$ from \cite[Theorem~A]{WW1} because of a different
formulation due to the type $\texttt Q$ phenomenon.
\end{remark}

\subsection{The graded multiplicity in $\Cl_n\otimes S^*V\otimes\wedge^*V$
and $\Cl_n\otimes S^*V\otimes S^*V$} Similarly, we can consider the
multiplicity of $D^{\xi}$ for $\xi\in\mc{SP}_n$ in the bi-graded
$\HC_n$-modules $\Cl_n\otimes S^*V\otimes\wedge^*V$ and
$\Cl_n\otimes S^*V\otimes S^*V$, and let
\begin{align*}
\widehat{d}_{\xi}(t,s)&=\dim\Hom_{\HC_n}(D^{\xi}, \Cl_n\otimes S_tV\otimes\wedge_sV),\\
\widetilde{d}_{\xi}(t,s)&=\dim\Hom_{\HC_n}(D^{\xi}, \Cl_n\otimes S_tV\otimes S_sV).
\end{align*}

\begin{theorem}
Suppose $\xi\in\mc{SP}_n$. Then
\begin{enumerate}
\item $\widehat{d}_{\xi}(t,s)=2^{-\frac{\ell(\la)-\delta(\la)}{2}}Q_{\xi}(t^{\bullet},st^{\bullet})$.
\item $\widetilde{d}_{\xi}(t,s)=2^{-\frac{\ell(\la)-\delta(\la)}{2}}Q_{\xi}(t^{\bullet}s^{\bullet})$.
\end{enumerate}
\end{theorem}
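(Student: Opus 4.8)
The plan is to run the argument from the proof of Theorem~\ref{th:S(V)}(1) twice over, once for each of the two bi-graded modules, recycling the trace computations already carried out in the proofs of Theorems~\ref{thm:KPak} and \ref{thm:grSV2}. In outline: compute the character of the module at a permutation of odd cycle type, apply the spin characteristic map $\ch^-$, identify the answer as a coefficient in an infinite product via \eqref{eq:pp}, re-expand that product by the Cauchy identity for Schur $Q$-functions (Theorem~\ref{qschur:aux12}), and finally compare with the expansion of $\ch^-$ of the module in terms of the irreducible characters $\zeta^\xi$, using linear independence of the $Q_\xi(z)$.

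First I would compute the traces. For $\sigma\in\mf S_n$ of cycle type $\mu\in\mc{OP}_n$, Lemma~\ref{lem:basicSpinChar} gives $\tr\sigma|_{\Cl_n}=2^{\ell(\mu)}$; multiplying by the traces on $S_tV\otimes\wedge_sV$ and on $S_tV\otimes S_sV$ from the proofs of Theorems~\ref{thm:KPak} and \ref{thm:grSV2}, and noting that the exterior-algebra sign $(-1)^{\sum_i b_i(\mu_i-1)}$ equals $+1$ because every part $\mu_i$ is odd, one obtains
$$
\tr\sigma|_{\Cl_n\otimes S_tV\otimes\wedge_sV}=2^{\ell(\mu)}\prod_i\frac{1+s^{\mu_i}}{1-t^{\mu_i}},\qquad
\tr\sigma|_{\Cl_n\otimes S_tV\otimes S_sV}=2^{\ell(\mu)}\prod_i\frac1{(1-t^{\mu_i})(1-s^{\mu_i})}.
$$
Since a spin character vanishes off $\mc{OP}_n$, applying $\ch^-$ and using \eqref{eq:pp} exactly as in the derivation of \eqref{eq:chClSV2} yields
$$
\ch^-(\Cl_n\otimes S_tV\otimes\wedge_sV)=[u^n]\prod_{m\ge0}\prod_i\frac{(1+ut^mz_i)(1+ust^mz_i)}{(1-ut^mz_i)(1-ust^mz_i)},\quad
\ch^-(\Cl_n\otimes S_tV\otimes S_sV)=[u^n]\prod_{j,k\ge0}\prod_i\frac{1+ut^js^kz_i}{1-ut^js^kz_i},
$$
where $[u^n]$ denotes the coefficient of $u^n$; here one uses that for odd $r$ the power sum $p_r$ evaluated at the union of the variable sequences $t^\bullet$ and $st^\bullet$ is $\tfrac1{1-t^r}+\tfrac{s^r}{1-t^r}=\tfrac{1+s^r}{1-t^r}$, and evaluated at $t^\bullet s^\bullet=\{t^js^k\}$ is $\tfrac1{(1-t^r)(1-s^r)}$.

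Now the Cauchy identity of Theorem~\ref{qschur:aux12}, together with the homogeneity of $Q_\xi$ (of degree $n$ for $\xi\in\mc{SP}_n$, which absorbs the auxiliary variable $u$), rewrites the two right-hand sides as $\sum_{\xi\in\mc{SP}_n}2^{-\ell(\xi)}Q_\xi(t^\bullet,st^\bullet)Q_\xi(z)$ and $\sum_{\xi\in\mc{SP}_n}2^{-\ell(\xi)}Q_\xi(t^\bullet s^\bullet)Q_\xi(z)$, where $Q_\xi(t^\bullet,st^\bullet)$ is $Q_\xi$ evaluated at the union of the two variable sequences. On the other hand, by Super Schur's Lemma~\ref{lem:superSchur} the multiplicity of $D^\xi$ in a module $M$ equals $2^{-\delta(\xi)}\dim\Hom_{\HC_n}(D^\xi,M)$ (since $\dim\Hom_{\HC_n}(D^\xi,D^\xi)=2^{\delta(\xi)}$, as $D^\xi$ is of type $\texttt M$ iff $\ell(\xi)$ is even by Corollary~\ref{cor:number simple}), so by \eqref{eq:irredch} one gets $\ch^-(\Cl_n\otimes S_tV\otimes\wedge_sV)=\sum_{\xi\in\mc{SP}_n}2^{-\frac{\ell(\xi)+\delta(\xi)}{2}}\widehat d_\xi(t,s)Q_\xi(z)$, and similarly for $\widetilde d_\xi$, just as in the proof of Theorem~\ref{th:S(V)}. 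Comparing coefficients of the (linearly independent) $Q_\xi(z)$ gives $\widehat d_\xi(t,s)=2^{-\frac{\ell(\xi)-\delta(\xi)}{2}}Q_\xi(t^\bullet,st^\bullet)$ and $\widetilde d_\xi(t,s)=2^{-\frac{\ell(\xi)-\delta(\xi)}{2}}Q_\xi(t^\bullet s^\bullet)$. The delicate points are bookkeeping rather than conceptual: tracking the powers of $2$ through the type-$\texttt M$/type-$\texttt Q$ dichotomy via $\delta(\xi)$, and the observation that the exterior-algebra sign is trivial on odd cycle types — which is exactly why $\widehat d_\xi$ is a specialization of the honest Schur $Q$-function rather than of a ``super'' $Q$-function.
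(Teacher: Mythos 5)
Your proposal is correct and follows essentially the same route as the paper: compute the spin character at a permutation of odd cycle type using Lemma~\ref{lem:basicSpinChar} together with the trace computations already done in the proofs of Theorems~\ref{thm:KPak} and \ref{thm:grSV2}, apply $\ch^-$, pass to a generating product via \eqref{eq:pp}, re-expand by the Cauchy identity of Theorem~\ref{qschur:aux12}, and compare with the expansion of $\ch^-$ in terms of the $\zeta^\xi$ using \eqref{eq:irredch} and linear independence of the $Q_\xi(z)$. Your additional remarks — that the exterior-algebra sign is trivial on odd cycle types, the explicit evaluations $p_r(t^\bullet;st^\bullet)=\frac{1+s^r}{1-t^r}$ and $p_r(t^\bullet s^\bullet)=\frac{1}{(1-t^r)(1-s^r)}$ for odd $r$, and the Super Schur's Lemma bookkeeping of the $2^{\delta(\xi)}$ factor — merely make explicit details the paper leaves implicit.
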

Part~(1) here is \cite[Theorem C]{WW1} with a different proof, while
(2) is new.

\begin{proof}
By Lemma~\ref{lem:basicSpinChar} and the computation at the
beginning of the proof of Theorem~\ref{thm:KPak}, we have
$$
\tr \sigma |_{\Cl_n\otimes S_tV\otimes \wedge_sV}
=2^{\ell(\mu)}\cdot\frac{(1-(-s)^{\mu_1})(1-(-s)^{\mu_2})\ldots
(1-(-s)^{\mu_\ell})}{(1-t^{\mu_1})(1-t^{\mu_2})\ldots
(1-t^{\mu_\ell})},
$$
for any $\sigma\in\mf S_n$ of cycle type
$\mu=(\mu_1,\mu_2,\ldots)\in \mc{OP}_n$. Applying the characteristic
map $\ch^-: R^- \rightarrow \Gamma_\Q$, we obtain that
\begin{align}  \label{eq:chClSVwedgeV}
\ch^- &(\Cl_n\otimes S_tV\otimes \wedge_sV)
 \\
&= \sum_{\mu \in
\mc{OP}_n} z_\mu^{-1}
\frac{2^{\ell(\mu)}(1-(-s)^{\mu_1})(1-(-s)^{\mu_2})\ldots
(1-(-s)^{\mu_\ell})}{(1-t^{\mu_1})(1-t^{\mu_2})\ldots
(1-t^{\mu_\ell})} p_\mu
   \notag  \\
&= [u^n]  \sum_{\mu \in \mc{OP}} {2^{\ell(\mu)}} z_\mu^{-1}
u^{|\mu|} p_\mu(t^\bullet;st^{\bullet})p_\mu
  \notag \\
&= [u^n]  \prod_{j \ge 0} \prod_i \frac{1+ ut^j z_i}{1- ut^j z_i}\frac{1+ust^jz_i}{1-ust^jz_i}
  \notag \\
&= \sum_{\la \in \mc{SP}_n} 2^{-\ell(\xi)} Q_\xi(t^\bullet, st^{\bullet}) Q_\xi(z),
\notag
\end{align}
where the last two equalities used \eqref{eq:pp} and Cauchy identity
from Theorem~\ref{qschur:aux12}. It follows by \eqref{eq:irredch}
and the definition of $\widehat{d}_\xi(t,s)$ that
$$
\ch^- (\Cl_n\otimes S_tV\otimes \wedge_sV)
 =\sum_{\xi \in \mc{SP}_n} 2^{-\frac{\ell(\xi)+\delta(\xi)}{2}} \widehat{d}_\xi(t,s) Q_\xi(z).
$$
Comparing these two different expressions for $\ch^- (\Cl_n\otimes
S_tV\otimes \wedge_sV)$ and noting the linear independence of the
$Q_\xi(z)$'s, we prove (1).

Using a similar argument, one can verify  (2)  with the calculation
of the character values of $S^*V\otimes S^*V$ in the proof of
Theorem~\ref{thm:grSV2} at hand.
\end{proof}

\begin{remark}
It will be  interesting to find closed formulas for $s_\la(t^\bullet
s^\bullet)$ and $Q_\xi(t^\bullet, st^\bullet)$.
\end{remark}

\section{Spin Kostka polynomials}\label{sec:spinKostka}

In this section, following our very recent work \cite{WW2} we
introduce the spin Kostka polynomials, and show that the spin Kostka
polynomials enjoy favorable properties parallel to the  Kostka
polynomials. Two interpretations of the spin Kostka polynomials in
terms of graded multiplicities in the representation theory of
Hecke-Clifford algebras and $q$-weight multiplicity for the queer
Lie superalgebras are presented.

\subsection{The  ubiquitous Kostka polynomials}
 For $\la, \mu\in\mc P$, let $K_{\la\mu}$ be the Kostka number
which counts the number of semistandard tableaux of shape $\la$ and
weight $\mu$. We write $|\la|=n$ for $\la \in \mc P_n$. The
dominance order on $\mc P$ is defined by letting
$$
\la \geq \mu \Leftrightarrow |\la| =|\mu| \text{ and } \la_1+\ldots
+\la_i \geq \mu_1 +\ldots +\mu_i, \forall i \geq 1.
$$

Let $\la, \mu\in\mc P$.  The Kostka(-Foulkes) polynomial
$K_{\la\mu}(t)$ is defined  by
\begin{align}\label{eqn:Kostka}
s_{\la}(x)=\sum_{\mu}K_{\la\mu}(t)P_{\mu}(x;t),
\end{align}
where $P_{\mu}(x;t)$ are the Hall-Littlewood functions (cf.
\cite[III, \S2]{Mac}). The following is a summary of some main
properties of the Kostka polynomials.

\begin{theorem}(cf. \cite[III, \S6]{Mac}) \label{thm:Kostka}
Suppose  $\la,\mu\in\mc P_n$. Then the Kostka polynomials
$K_{\la\mu}(t)$ satisfy the following properties:
\begin{enumerate}
\item $K_{\la\mu}(t)=0$ unless $\la\geq\mu$;  $K_{\la\la}(t)=1$.

\item The degree of $K_{\la\mu}(t)$ is $n(\mu)-n(\la)$.

\item $K_{\la\mu}(t)$ is a polynomial with non-negative integer coefficients.

\item $K_{\la\mu}(1)=K_{\la\mu}$.

\item $K_{(n)\mu}(t)= t^{n(\mu)}$.

\item
$\ds K_{\la(1^n)}=\frac{t^{n(\la')}(1-t)(1-t^2)\cdots
(1-t^n)}{\prod_{(i,j)\in\la}(1-t^{h_{ij}})}$.
\end{enumerate}
\end{theorem}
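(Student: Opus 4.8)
The plan is to read off all six statements from transition matrices in the ring $\La$ of symmetric functions, taking as input the standard facts about Hall--Littlewood functions (see \cite[III]{Mac}): the expansion $P_\mu(x;t)=m_\mu+\sum_{\nu<\mu}u_{\mu\nu}(t)\,m_\nu$ is unitriangular for the dominance order, the Schur expansion $s_\la=m_\la+\sum_{\mu<\la}K_{\la\mu}\,m_\mu$ is unitriangular with $K_{\la\mu}$ the ordinary Kostka number, and $P_\mu(x;1)=m_\mu$. First I would invert the unitriangular matrix $\big(u_{\mu\nu}(t)\big)$ and substitute into the Schur expansion to get $s_\la=P_\la(x;t)+\sum_{\mu<\la}(\ast)\,P_\mu(x;t)$; comparing with \eqref{eqn:Kostka} gives (1), namely $K_{\la\mu}(t)=0$ unless $\la\ge\mu$ and $K_{\la\la}(t)=1$. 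Putting $t=1$ in \eqref{eqn:Kostka} and using $P_\mu(x;1)=m_\mu$ gives $s_\la=\sum_\mu K_{\la\mu}(1)\,m_\mu$, and comparison with the monomial expansion of $s_\la$ yields (4).

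For (6) the plan is to specialize $\mu=(1^n)$. Since no partition lies strictly below $(1^n)$ in dominance, $P_{(1^n)}(x;t)=m_{(1^n)}=e_n$, independent of $t$, so the modified Hall--Littlewood function $Q'_{(1^n)}$ (characterized by $Q'_\mu=\sum_\la K_{\la\mu}(t)\,s_\la$) satisfies $Q'_{(1^n)}(x;t)=\big(\prod_{j=1}^n(1-t^j)\big)\,e_n[X/(1-t)]$ in plethystic notation. The Schur expansion $e_n[X/(1-t)]=\sum_{\la\in\mc P_n}t^{n(\la')}\big(\prod_{(i,j)\in\la}(1-t^{h_{ij}})\big)^{-1}s_\la$ follows from the dual Cauchy identity together with \eqref{eq:Schurspec} applied to $\la'$ (the hook lengths of $\la$ and $\la'$ agree); reading off the coefficient of $s_\la$ in $Q'_{(1^n)}$ gives the closed formula in (6). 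Equivalently, one recognizes $K_{\la(1^n)}(t)$ as the fake degree $f^{\la'}(t)$ of Theorem~\ref{th:grmult}.

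The remaining items (2), (3), (5) all hinge on the positivity in (3), which I expect to be the real obstacle. The plan is to invoke the Lascoux--Sch\"utzenberger charge theorem \cite{LS}, which gives
$$
K_{\la\mu}(t)=\sum_{T}t^{\,\text{charge}(T)},
$$
the sum being over semistandard Young tableaux $T$ of shape $\la$ and weight $\mu$; integrality and non-negativity in (3) are then immediate. Granting this, (5) follows at once: for $\la=(n)$ there is a single such tableau, the row $1^{\mu_1}2^{\mu_2}\cdots$, so $K_{(n)\mu}(t)$ is a single power of $t$, and (2) identifies that power as $n(\mu)$. For general $\la$, part (2) asserts that the top degree of $K_{\la\mu}(t)$ is $n(\mu)-n(\la)$; with the charge formula in hand this is the statement that the maximal charge over the tableau set equals $n(\mu)-n(\la)$, a standard fact (\cite[III, \S6]{Mac}, \cite{LS}). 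I would not reprove the charge theorem here, nor the equivalent geometric statement that $K_{\la\mu}(t)$ is a Poincar\'e polynomial for the cohomology of a Springer fiber \cite{GP} (or Lusztig's interpretation as a $q$-analogue of a weight multiplicity); that is the deep ingredient, consistent with the ``cf.'' attribution in the statement, and once it is taken as a black box the other five items are routine bookkeeping.
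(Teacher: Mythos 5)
Your proposal is correct, but note that the paper itself gives no proof of this theorem: the label ``(cf.\ \cite[III, \S6]{Mac})'' indicates that the statement is simply being quoted from Macdonald, and the six items appear there essentially verbatim. Your reconstruction is faithful to that source: the unitriangularity argument for (1) and (4), the identification $P_{(1^n)}=e_n$ together with the plethystic identity $Q'_\mu = Q_\mu[X/(1-t)]$ and the dual Cauchy/principal-specialization computation of $e_n[X/(1-t)]$ for (6), and the Lascoux--Sch\"utzenberger charge theorem as the one nontrivial input yielding (3), with (2) and (5) as corollaries of the charge description. Since the paper defers entirely to the reference, there is no alternative ``paper proof'' to compare against; your outline is the standard one and is sound.
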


Let $\mc{B}$ be the flag variety for the general linear group
$GL_n(\C)$. For a partition $\mu$ of $n$,  the Springer fiber
$\mc{B}_{\mu}$ is the subvariety of $\mc B$ consisting of flags
preserved by the Jordan canonical form $J_{\mu}$ of shape $\mu$.
According to the Springer theory, the cohomology group
$H^{\bullet}(\mc{B}_{\mu})$ of $\mc{B}_{\mu}$ with complex
coefficient affords a graded representation of  $\mf S_n$ (which is
the Weyl group of $GL_n(\C)$). Define $C_{\la\mu}(t)$ to be the
graded multiplicity
\begin{align}\label{eqn:mult.C}
C_{\la\mu}(t)=\sum_{i\geq0}t^i~\Hom_{\mf S_n}(S^{\la},
H^{2i}(\mc{B}_{\mu})).
\end{align}

\begin{theorem}(cf. \cite[III, 7, Ex.~8]{Mac}, \cite[(5.7)]{GP})
\label{thm:Kostka gr.mult}
The following holds for $\la,\mu\in\mc P$:
$$
K_{\la\mu}(t)=C_{\la\mu}(t^{-1})t^{n(\mu)}.
$$
\end{theorem}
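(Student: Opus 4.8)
The plan is to reduce the statement to the known description of the graded $\mf S_n$-character of the Springer fiber cohomology in terms of Hall--Littlewood functions, and then to isolate the multiplicity of each simple $\mf S_n$-module by exploiting the fact, recorded after \eqref{charmap}, that the characteristic map $\ch$ is an isometry carrying $\chi^\la$ to $s_\la$.

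First I would recall, following Springer's construction, that $H^\bullet(\mc B_\mu)=\bigoplus_{i\ge 0}H^{2i}(\mc B_\mu)$ is a graded $\mf S_n$-module concentrated in even cohomological degrees with $H^{2i}(\mc B_\mu)=0$ for $i>n(\mu)$, and I would form its graded characteristic
$$
\ch_t\big(H^\bullet(\mc B_\mu)\big):=\sum_{i\ge 0}t^i\,\ch\big(H^{2i}(\mc B_\mu)\big)\in\La_\Q[t].
$$
The essential input, which I would quote from \cite{GP} (see also \cite[III, \S7, Ex.~8]{Mac}), is the identity
$$
\ch_t\big(H^\bullet(\mc B_\mu)\big)=t^{n(\mu)}\sum_{\la}K_{\la\mu}(t^{-1})\,s_\la(x);
$$
that is, up to the shift by $t^{n(\mu)}$ and the substitution $t\mapsto t^{-1}$, the graded characteristic of $H^\bullet(\mc B_\mu)$ is the modified Hall--Littlewood function $\sum_\la K_{\la\mu}(t)s_\la(x)$ attached to $\mu$ through the Kostka--Foulkes polynomials in \eqref{eqn:Kostka}.

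Granting this, the remainder is formal. Because $\mf S_n$ is semisimple over $\C$ and $\ch$ is an isometry with $\ch(\chi^\la)=s_\la$, for each $i$ the multiplicity $\dim\Hom_{\mf S_n}\big(S^\la,H^{2i}(\mc B_\mu)\big)$ equals the coefficient of $s_\la$ in $\ch\big(H^{2i}(\mc B_\mu)\big)$. Multiplying by $t^i$ and summing over $i$, the definition \eqref{eqn:mult.C} of $C_{\la\mu}(t)$ gives
$$
C_{\la\mu}(t)=\big\langle\,\ch_t\big(H^\bullet(\mc B_\mu)\big),\,s_\la\,\big\rangle=t^{n(\mu)}K_{\la\mu}(t^{-1}),
$$
and replacing $t$ by $t^{-1}$ and multiplying through by $t^{n(\mu)}$ yields $K_{\la\mu}(t)=C_{\la\mu}(t^{-1})\,t^{n(\mu)}$.

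The only real obstacle is the geometric input of the second paragraph: the identification of the graded $\mf S_n$-character of $H^\bullet(\mc B_\mu)$ with the reversed and shifted Hall--Littlewood polynomial. One route is the Hotta--Springer analysis of the Springer correspondence combined with Green's formula for the Green polynomials; a purely algebraic alternative is to replace $H^\bullet(\mc B_\mu)$ by the De~Concini--Procesi/Garsia--Procesi ring $R_\mu=\C[x_1,\ldots,x_n]/I_\mu$ cut out by the Tanisaki ideal and to run an induction on $n$ in which restriction from $\mf S_n$ to $\mf S_{n-1}$, matching the removal of a corner cell of $\mu$, mirrors the Morris-type recurrence satisfied by $\sum_\la K_{\la\mu}(t)s_\la$, with base cases $\mu=(n)$ and $\mu=(1^n)$ being immediate; for these notes it suffices to cite \cite{GP} and \cite{Mac}. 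As a consistency check, for $\mu=(1^n)$ the fiber $\mc B_\mu$ is the full flag variety, $C_{\la(1^n)}(t)$ is the fake degree $f^\la(t)$ of \eqref{eq:coinv}, and the asserted identity collapses to Theorem~\ref{thm:Kostka}(6) together with the classical hook–content identity $\sum_{(i,j)\in\la}h_{ij}=n+n(\la)+n(\la')$.
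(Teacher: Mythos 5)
Your derivation is correct, and it follows the same route the paper implicitly relies on: the statement in question is presented in the paper with citations to Macdonald and Garsia--Procesi but without a proof, so the task is precisely to reduce it to the known graded characteristic formula for $H^\bullet(\mc B_\mu)$. You quote the right geometric input $\ch_t\bigl(H^\bullet(\mc B_\mu)\bigr)=t^{n(\mu)}\sum_\la K_{\la\mu}(t^{-1})s_\la(x)$ (which is the content of Garsia--Procesi (5.7) and Macdonald III.7, Ex.~8), extract the $s_\la$-coefficient using that $\ch$ is an isometry with $\ch(\chi^\la)=s_\la$, and the inversion $t\mapsto t^{-1}$ combined with multiplication by $t^{n(\mu)}$ then gives the stated identity. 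The consistency check at $\mu=(1^n)$, matching $C_{\la(1^n)}(t)$ with the fake degree $f^\la(t)$ of \eqref{eq:coinv} and reconciling with Theorem~\ref{thm:Kostka}(6) via $\sum_{(i,j)\in\la}h_{ij}=n+n(\la)+n(\la')$, is a nice confirmation and works out. The only caution worth flagging is that the geometric input you cite is exactly the substantive content of the theorem, so the argument is a reduction rather than an independent proof; this is appropriate here since the paper itself only quotes the result, but you should be clear which normalization of the modified Hall--Littlewood polynomial the cited references use, as conventions for $Q'_\mu$ and the sign/degree of the cohomological grading vary between sources.
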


Denote by $\{\epsilon_1,\ldots, \epsilon_n\}$ the basis dual to the
standard basis $\{E_{ii}~|~1\leq i\leq n\}$ in the standard Cartan
subalgebra of $\mf{gl}(n)$. For $\la, \mu\in\mc P$ with
$\ell(\la)\leq n$ and $\ell(\mu)\leq n$, define the {\em $q$-weight
multiplicity} of weight $\mu$ in an irreducible $\mf{gl}(n)$-module
$L(\la)$ to be
$$
m^{\la}_{\mu}(t)=[e^{\mu}]
\frac{\prod_{\al>0}(1-e^{-\al})}{\prod_{\al>0}(1-te^{-\al})}~{\rm
ch}L(\la),
$$
where the product $\prod_{\al>0}$ is over all positive roots $\{
\epsilon_i -\epsilon_j \mid 1 \le i < j \le n \}$ for $\mf{gl}(n)$
and $[e^{\mu}]f(e^{\epsilon_1},\ldots, e^{\epsilon_n})$ denotes the
coefficient of the monomial $e^{\mu}$ in a formal series
$f(e^{\epsilon_1},\ldots, e^{\epsilon_n})$. A  conjecture of Lusztig
proved by Sato \cite{Ka, Lu2} states that
\begin{equation}\label{eqn:q-weight}
K_{\la\mu}(t)=m^{\la}_{\mu}(t).
\end{equation}
Let $e$ be a regular nilpotent element in the Lie algebra
$\mf{gl}(n)$. For each $\mu\in\mc P$ with $\ell(\mu)\leq n$, define
the Brylinski-Kostant filtration $\{J_{e}^k(L(\la)_{\mu})\}_{k\ge
0}$ on the $\mu$-weight space $L(\la)_{\mu}$ with
\begin{align*}
J_{e}^k(L(\la)_{\mu})=\{v\in L(\la)_{\mu}~|~e^{k+1}v=0\}.
\end{align*}
Define a polynomial $\gamma_{\la\mu}(t)$  by letting
\begin{align*}
\gamma_{\la\mu}(t)=\sum_{k\geq 0}\Big(\dim
J_{e}^k(L(\la)_{\mu})/J_{e}^{k-1}(L(\la)_{\mu})\Big)t^k.
\end{align*}

The following theorem is due to R.~Brylinski (see
\cite[Theorem~3.4]{Br} and (\ref{eqn:q-weight})).

\begin{theorem}\label{thm:jump}
Suppose $\la, \mu\in\mc P$ with $\ell(\la)\leq n$ and $\ell(\mu)\leq
n$. Then we have
$$
K_{\la\mu}(t)=\gamma_{\la\mu}(t).
$$
\end{theorem}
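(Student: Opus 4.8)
The plan is to reduce the statement, via the Lusztig--Kato identity (\ref{eqn:q-weight}) which we take as given, to the equality $m^{\la}_{\mu}(t)=\gamma_{\la\mu}(t)$. This is exactly R.~Brylinski's theorem \cite[Theorem~3.4]{Br}, and it applies in our situation because every $\mu\in\mc P$ is a dominant weight for $\mf{gl}(n)$ (Brylinski's original argument requires $\mu$ dominant). So the real task is to explain how the $q$-weight multiplicity of $\mu$ in $L(\la)$ is computed by the jumps of the Brylinski--Kostant filtration.

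First I would fix a principal $\mathfrak{sl}_2$-triple $(e,h,f)$ in $\mf{gl}(n)$ with $e$ a regular nilpotent; since all regular nilpotents are conjugate, $\gamma_{\la\mu}(t)$ is independent of the choice. Next I would rewrite $m^{\la}_{\mu}(t)$ through the $t$-analogue $\mc P_t$ of Kostant's partition function: expanding $\prod_{\al>0}(1-te^{-\al})^{-1}=\sum_{\beta}\mc P_t(\beta)e^{-\beta}$ and combining with Weyl's character formula yields
$$
m^{\la}_{\mu}(t)=\sum_{w\in W}(-1)^{\ell(w)}\,\mc P_t\big(w(\la+\rho)-\mu-\rho\big).
$$
The core of the proof is to show that the filtration $J^{k}_{e}(L(\la)_{\mu})=\ker(e^{k+1}|_{L(\la)_{\mu}})$ has associated graded whose bigraded Hilbert series (by the weight $\mu$ and by the index $k$, with $t$ recording $k$) equals this expression. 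On the level of intuition, decomposing $L(\la)$ into $\mathfrak{sl}_2$-strings shows that $e$ shifts each string toward its highest vector, so $\dim(J^{k}_{e}/J^{k-1}_{e})$ in weight $\mu$ counts strings whose vector at $h$-eigenvalue $\langle\mu,h\rangle$ sits $k$ steps below the top and happens to lie in the weight space $L(\la)_{\mu}$. Converting this count into the partition-function formula is where Brylinski's geometric machinery enters: realize $L(\la)=H^{0}(G/B,\mc L_{\la})$ by Borel--Weil, interpret the powers-of-$e$ filtration as an order-of-vanishing filtration along the Grothendieck--Springer resolution of the nilpotent cone, apply a cohomology vanishing theorem to identify the associated graded with global sections of line bundles on $T^{*}(G/B)$, and evaluate the resulting bigraded Euler characteristic by localization. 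Combining with (\ref{eqn:q-weight}) then gives $\gamma_{\la\mu}(t)=m^{\la}_{\mu}(t)=K_{\la\mu}(t)$.

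The main obstacle is this last comparison: matching the algebraically defined powers-of-$e$ filtration with a geometric filtration to which a vanishing theorem applies is the substantive content of \cite{Br}, and in a lecture-note treatment I would import it rather than reprove it. Everything else is routine bookkeeping: the filtration stabilizes because $e$ is nilpotent, so $\gamma_{\la\mu}(t)$ is indeed a polynomial in $t$; and $\gamma_{\la\mu}(1)=\sum_{k}\dim(J^{k}_{e}/J^{k-1}_{e})=\dim L(\la)_{\mu}=K_{\la\mu}$, which is consistent with parts (1) and (4) of Theorem~\ref{thm:Kostka} and provides a useful sanity check, while the degree bound predicted by Theorem~\ref{thm:Kostka}(2) should fall out of a closer analysis of the principal grading on $L(\la)_{\mu}$.
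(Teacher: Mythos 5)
Your reduction is exactly what the paper does: Theorem~\ref{thm:jump} is obtained by combining the Lusztig--Kato identity \eqref{eqn:q-weight} $K_{\la\mu}(t)=m^{\la}_{\mu}(t)$ with Brylinski's theorem $m^{\la}_{\mu}(t)=\gamma_{\la\mu}(t)$ from \cite[Theorem~3.4]{Br}, both of which the paper also imports without proof. The additional sketch you give of Brylinski's geometric argument is extra background rather than a different route, and you rightly conclude that in this lecture-note setting the result is cited rather than reproved.
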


\subsection{The spin Kostka polynomials}

 Denote by $\mathbf{P}'$ the ordered alphabet
$\{1'<1<2'<2<3'<3\cdots\}$. The symbols $1',2',3',\ldots$ are said
to be marked, and we shall denote by $|a|$ the unmarked version of
any $a\in\mathbf{P}'$; that is, $|k'| =|k| =k$ for each $k \in
\N$. For a strict partition $\xi$, a {\it marked shifted tableau}
$T$ of shape $\xi$, or a  {\it marked shifted $\xi$-tableau}
$T$, is an assignment
$T:\xi^*\rightarrow\mathbf{P}'$ satisfying:
\begin{itemize}
\item[(M1)] The letters are weakly increasing along each row and
column.

\item[(M2)] The letters $\{1,2,3,\ldots\}$ are strictly increasing
along each column.

\item[(M3)] The letters $\{1',2',3',\ldots\}$ are strictly
increasing along each row.   \label{M3}
\end{itemize}

For a marked shifted tableau $T$ of shape $\xi$, let $\alpha_k$ be
the number of cells $(i,j)\in \xi^*$ such that $|T(i,j)|=k$ for
$k\geq 1$. The sequence $(\alpha_1,\alpha_2,\alpha_3,\ldots)$ is
called the {\em weight} of $T$. The Schur $Q$-function  associated
to $\xi$ can be interpreted as (see \cite{Sag, St, Mac})
$$
Q_{\xi}(x)=\sum_{T}x^{T},
$$
where the summation is taken over all marked shifted tableaux of
shape $\xi$, and
$x^T=x_1^{\alpha_1}x_2^{\alpha_2}x_3^{\alpha_3}\cdots$ if $T$ has
weight $(\alpha_1,\alpha_2,\alpha_3,\ldots)$. Set
$$
K^-_{\xi\mu}=\# \{T~|~T \text{ is a marked shifted tableau of shape
}\xi\text{ and weight }\mu\}.
$$
Then we have
\begin{equation}\label{eqn:Qmonomial}
Q_{\xi}(x)=\sum_{\mu}K^-_{\xi\mu}m_{\mu}(x),
\end{equation}
where $K_{\xi\mu}^-$ is related to $\widehat{K}_{\xi\mu}$ appearing
in Theorem~\ref{qschur:aux9} by
$$
K_{\xi\mu}^-=2^{\ell(\xi)}\widehat{K}_{\xi\mu}.
$$

\begin{definition}   \cite{WW2}
The {\em spin Kostka polynomials} $K^-_{\xi\mu}(t)$ for $\xi\in\mc
{SP}$ and $\mu\in\mc P$ are given by
\begin{align}\label{eq:spin Kostka}
Q_{\xi}(x)=\sum_{\mu}K^-_{\xi\mu}(t)P_{\mu}(x;t).
\end{align}
\end{definition}

For $\xi\in\mc{SP}$, write
\begin{equation}\label{eqn:QSchur}
Q_{\xi}(x)=\sum_{\la\in\mc P}b_{\xi\la}s_{\la}(x),
\end{equation}
for some suitable structure constants $b_{\xi\la}$.

\begin{proposition} \label{prop:relate}
The following holds for $\xi\in\mc{SP}$ and $\mu\in\mc P$:
$$
K^-_{\xi\mu}(t)=\sum_{\la\in\mc P}b_{\xi\la}K_{\la\mu}(t).
$$
\end{proposition}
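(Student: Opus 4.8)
The statement is a direct consequence of chaining the two defining expansions, so the plan is purely formal manipulation combined with the linear independence of Hall--Littlewood functions. First I would start from \eqref{eqn:QSchur}, namely $Q_\xi(x) = \sum_{\la \in \mc P} b_{\xi\la} s_\la(x)$; since $Q_\xi$ is homogeneous of degree $n = |\xi|$ and the $s_\la$ are homogeneous, only $\la \in \mc P_n$ contribute, so this is a finite sum. Next I would substitute the definition \eqref{eqn:Kostka} of the Kostka--Foulkes polynomials, $s_\la(x) = \sum_{\mu} K_{\la\mu}(t) P_\mu(x;t)$, into each term, obtaining
$$
Q_\xi(x) = \sum_{\la \in \mc P} b_{\xi\la} \sum_{\mu} K_{\la\mu}(t) P_\mu(x;t) = \sum_{\mu} \Big( \sum_{\la \in \mc P} b_{\xi\la} K_{\la\mu}(t) \Big) P_\mu(x;t),
$$
where interchanging the order of summation is justified because all sums involved are finite (only partitions of $n$ occur).

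Then I would compare this with the defining equation \eqref{eq:spin Kostka} of the spin Kostka polynomials, $Q_\xi(x) = \sum_{\mu} K^-_{\xi\mu}(t) P_\mu(x;t)$. Since the Hall--Littlewood functions $\{P_\mu(x;t) \mid \mu \in \mc P\}$ form a basis of $\La \otimes_\Z \Q(t)$ (indeed a $\Z[t]$-basis of $\La[t]$; cf.\ \cite[III, \S2]{Mac}), the coefficients of $P_\mu(x;t)$ on the two sides must agree, which yields
$$
K^-_{\xi\mu}(t) = \sum_{\la \in \mc P} b_{\xi\la} K_{\la\mu}(t),
$$
as claimed. One may add that on the right-hand side only $\la \in \mc P_n$ with $\la \ge \mu$ contribute, by Theorem~\ref{thm:Kostka}(1), so the sum is genuinely finite.

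\textbf{Main obstacle.} There is essentially no difficulty here: the only point requiring a word of care is the legitimacy of comparing coefficients, i.e.\ recording that the $P_\mu(x;t)$ are linearly independent over $\Q(t)$, and noting that homogeneity keeps every sum finite so that no convergence or reordering issues arise.
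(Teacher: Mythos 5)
Your proof is correct and follows exactly the same route as the paper: expand $Q_\xi$ in Schur functions via \eqref{eqn:QSchur}, substitute the Kostka expansion \eqref{eqn:Kostka} of each $s_\la$ in Hall--Littlewood functions, and compare coefficients using the linear independence of the $P_\mu(x;t)$. The extra remarks about finiteness and homogeneity are sound but not needed beyond what the paper says.
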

\begin{proof}
By (\ref{eqn:Kostka}) and (\ref{eqn:QSchur}), one can deduce that
$$
\sum_{\mu}K^-_{\xi\mu}(t)P_{\mu}(x;t) = \sum_{\la, \mu \in\mc
P_n}b_{\xi\la}K_{\la\mu}(t)P_{\mu}(x;t).
$$
The proposition now follows from the fact that the Hall-Littlewood
functions $P_{\mu}(x;t)$ are linearly independent in $\Z[t]\otimes_{\Z}\La$.
\end{proof}
The usual Kostka polynomial satisfies  that  $K_{\la\mu} (0)
=\delta_{\la\mu}$. It follows from Proposition~\ref{prop:relate}
that
$$
K^-_{\xi\mu} (0) =b_{\xi\mu}.
$$

For $\xi\in\mc{SP}, \la\in\mc P$, set
\begin{equation}   \label{eq:gb}
g_{\xi\la}=2^{-\ell(\xi)}b_{\xi\la}.
\end{equation}

Up to some $2$-power, $g_{\xi\la}$ has the following
interpretation of branching coefficient for the restriction of a
$\qn$-module $V(\la)$ to $\mf{gl}(n)$.

\begin{lemma}\label{lem:res.Vxi}
As  a $\mathfrak{gl}(n)$-module,  $V(\xi)$ can be decomposed as
$$
V(\xi) \cong\bigoplus_{\la\in\mc{P}, \ell(\la)\leq n}
2^{\frac{\ell(\xi) +\delta(\xi)}2} g_{\xi\la} L(\la).
$$
\end{lemma}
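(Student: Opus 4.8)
The plan is to prove the lemma by comparing $\mf{gl}(n)$-characters. First I would note that when $\mf{q}(n)$ is restricted to its even subalgebra $\ev{\mf{q}(n)}\cong\mf{gl}(n)$, the even Cartan subalgebra $\ev\h=\bigoplus_i\C H_i$ is carried onto the standard Cartan subalgebra $\bigoplus_i\C E_{ii}$ of $\mf{gl}(n)$; hence the weight-space decomposition of any finite-dimensional $\mf{q}(n)$-module is unchanged upon restriction, and the $\mf{gl}(n)$-character of $V(\xi)$ coincides with $\ch V(\xi)$. By Theorem~\ref{thm:charqueer} this equals $2^{-\frac{\ell(\xi)-\delta(\xi)}{2}}Q_\xi(x_1,\ldots,x_n)$.

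Next I would argue that $V(\xi)|_{\mf{gl}(n)}$ is completely reducible, so that it is determined up to isomorphism by its character. Indeed, $\C^{n|n}$ restricted to $\ev{\mf{q}(n)}$ is isomorphic to $\C^n\oplus\C^n$, so $(\C^{n|n})^{\otimes d}$ restricts to a direct sum of copies of the $\mf{gl}(n)$-module $(\C^n)^{\otimes d}$, which is semisimple by the classical Schur duality. By Theorem~\ref{th:QnDecomp}, $V(\xi)$ occurs as a direct summand of $(\C^{n|n})^{\otimes|\xi|}$ as a $\mf{q}(n)$-module, hence also as a $\mf{gl}(n)$-module, and is therefore semisimple.

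Finally I would expand the character. Using the definition \eqref{eqn:QSchur} of the $b_{\xi\la}$, the vanishing $s_\la(x_1,\ldots,x_n)=0$ whenever $\ell(\la)>n$, and the classical formula $\ch L(\la)=s_\la(x_1,\ldots,x_n)$, one gets
\[
\ch V(\xi)=\sum_{\la\in\mc P,\ \ell(\la)\le n}2^{-\frac{\ell(\xi)-\delta(\xi)}{2}}\,b_{\xi\la}\,\ch L(\la).
\]
Since the characters of the $L(\la)$ with $\ell(\la)\le n$ are linearly independent, this forces $V(\xi)|_{\mf{gl}(n)}\cong\bigoplus_{\ell(\la)\le n}2^{-\frac{\ell(\xi)-\delta(\xi)}{2}}b_{\xi\la}\,L(\la)$, and in particular each coefficient is a non-negative integer. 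Substituting $b_{\xi\la}=2^{\ell(\xi)}g_{\xi\la}$ from \eqref{eq:gb} and simplifying the exponent, $-\tfrac{\ell(\xi)-\delta(\xi)}{2}+\ell(\xi)=\tfrac{\ell(\xi)+\delta(\xi)}{2}$, yields the asserted decomposition. I do not expect a genuine obstacle here; the only points requiring a little care are the complete reducibility of the restriction (needed so that equality of characters upgrades to an isomorphism of modules) and the bookkeeping with the powers of $2$.
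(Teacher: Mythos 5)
Your proof takes essentially the same approach as the paper: verify the decomposition at the level of $\mf{gl}(n)$-characters, combining Theorem~\ref{thm:charqueer}, the expansion $Q_\xi=\sum_\la b_{\xi\la}s_\la$ from \eqref{eqn:QSchur}, and $b_{\xi\la}=2^{\ell(\xi)}g_{\xi\la}$ from \eqref{eq:gb}, using that $\ch L(\la)=s_\la$. The paper states this as a one-liner (``it suffices to verify on the character level''), while you have usefully made explicit why the character identity upgrades to a module isomorphism, namely the complete reducibility of $V(\xi)|_{\mf{gl}(n)}$, which you correctly deduce from $V(\xi)$ being a direct summand of $(\C^{n|n})^{\otimes|\xi|}$; this could equally be seen from Weyl's theorem for $\mf{sl}(n)$ together with the semisimple action of the center $\sum_i H_i$ coming from the weight space decomposition.
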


\begin{proof}
It suffices to verify on the character level. The corresponding
character identity indeed follows from   (\ref{eqn:QSchur}),
\eqref{eq:gb}  and Theorem~ \ref{thm:charqueer}, as the character of
$L(\la)$  is given by the Schur function $s_{\la}$.
\end{proof}

\begin{lemma}\cite[Theorem 9.3]{St} \cite[III, (8.17)]{Mac}
    \label{lem:gxila}
The following holds for $\xi\in\mc{SP}, \la\in\mc P$:
\begin{align}
g_{\xi\la}\in \Z_+;\quad
g_{\xi\la}=0\text{ unless }\xi\geq\la;\quad
g_{\xi\xi}=1.
\end{align}
\end{lemma}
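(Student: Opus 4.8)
The final statement is Lemma~\ref{lem:gxila}: for $\xi \in \mc{SP}$ and $\la \in \mc P$, the coefficients $g_{\xi\la}$ defined by $g_{\xi\la} = 2^{-\ell(\xi)} b_{\xi\la}$ (where $Q_\xi = \sum_\la b_{\xi\la} s_\la$) satisfy $g_{\xi\la} \in \Z_+$; $g_{\xi\la} = 0$ unless $\xi \geq \la$; and $g_{\xi\xi} = 1$.

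The plan is to derive all three assertions from facts already established, rather than to reconstruct Stembridge's shifted Littlewood--Richardson rule from scratch. First I would prove the integrality $g_{\xi\la}\in\Z$. By \eqref{eqn:Qmonomial} and the relation $K^-_{\xi\mu}=2^{\ell(\xi)}\widehat K_{\xi\mu}$ recorded just before \eqref{eq:spin Kostka}, with $\widehat K_{\xi\mu}\in\Z$ by Theorem~\ref{qschur:aux9}, every coefficient of $Q_\xi$ in the monomial basis $\{m_\mu\}$ is divisible by $2^{\ell(\xi)}$. The transition matrix $s_\la=\sum_\mu K_{\la\mu}m_\mu$ is unitriangular with nonnegative integer entries for the dominance order ($K_{\la\la}=1$ and $K_{\la\mu}=0$ unless $\la\ge\mu$, cf. Theorem~\ref{thm:Kostka}), so its inverse $m_\mu=\sum_\la(K^{-1})_{\mu\la}s_\la$ has integer entries; hence $b_{\xi\la}=\sum_\mu K^-_{\xi\mu}(K^{-1})_{\mu\la}\in 2^{\ell(\xi)}\Z$ and $g_{\xi\la}=2^{-\ell(\xi)}b_{\xi\la}\in\Z$.

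For the sign and the support, I would read off everything from the branching rule of Lemma~\ref{lem:res.Vxi}. Fix $n\ge\ell(\la)$, which is harmless since $g_{\xi\la}$ does not depend on $n$. The finite-dimensional $\mf{gl}(n)$-module $V(\xi)$ decomposes into irreducibles $L(\la)$ with some nonnegative integer multiplicities; by Theorem~\ref{thm:charqueer} together with \eqref{eqn:QSchur} and \eqref{eq:gb} these multiplicities must equal $2^{(\ell(\xi)+\delta(\xi))/2}g_{\xi\la}$, so $g_{\xi\la}\ge 0$, and with the previous step $g_{\xi\la}\in\Z_+$. Moreover $V(\xi)$ is the irreducible $\qn$-module of highest weight $\xi$, so all of its $\ev\h$-weights lie $\le\xi$ in the dominance order; hence every irreducible $\mf{gl}(n)$-constituent $L(\la)$ satisfies $\la\le\xi$, i.e. $g_{\xi\la}=0$ unless $\xi\ge\la$. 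Finally, computing the dimension of the $\xi$-weight space of $V(\xi)$ in two ways: on the $\qn$-side it is the highest weight space, of dimension $2^{(\ell(\xi)+\delta(\xi))/2}$, while on the $\mf{gl}(n)$-side it is $\sum_\la 2^{(\ell(\xi)+\delta(\xi))/2}g_{\xi\la}\dim L(\la)_\xi$; since $\dim L(\la)_\xi\ne 0$ forces $\la\ge\xi$ while $g_{\xi\la}\ne 0$ forces $\la\le\xi$, only $\la=\xi$ contributes, and as $\dim L(\xi)_\xi=1$ we get $g_{\xi\xi}=1$.

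The step that carries real content is the divisibility of every monomial coefficient of $Q_\xi$ by $2^{\ell(\xi)}$, equivalently the identity $K^-_{\xi\mu}=2^{\ell(\xi)}\widehat K_{\xi\mu}$, reflecting one binary choice per nonempty row of $\xi$ in a marked shifted tableau; this is the point I would regard as the main obstacle, though it is standard. Alternatively, all three properties are subsumed in \cite[Theorem~9.3]{St} and \cite[III,~(8.17)]{Mac}, where $g_{\xi\la}$ is moreover given an explicit combinatorial description by a shifted Littlewood--Richardson rule, so one may simply invoke those references.
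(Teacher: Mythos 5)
Your proof is correct and runs along essentially the same lines as the paper's: the nonnegativity, the support condition $g_{\xi\la}=0$ unless $\xi\ge\la$, and $g_{\xi\xi}=1$ are all read off from the $\mf{gl}(n)$-branching rule of Lemma~\ref{lem:res.Vxi} exactly as in the printed argument. The one genuine variation is the integrality step: the paper simply cites Macdonald (\cite[III, (8.17)]{Mac}) for the fact that $2^{-\ell(\xi)}Q_\xi\in\La$ and then reads integrality off the Schur expansion, whereas you derive the same $\La$-integrality internally from the identity $K^-_{\xi\mu}=2^{\ell(\xi)}\widehat K_{\xi\mu}$ with $\widehat K_{\xi\mu}\in\Z$ (Theorem~\ref{qschur:aux9}) combined with the unitriangularity of the Kostka matrix. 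This makes the integrality a little more self-contained, at the price of leaning on the $K^-$--$\widehat K$ identity, which the paper records but does not itself prove (it follows from the Cauchy identity of Theorem~\ref{qschur:aux12}). One cosmetic slip: when fixing $n$ you should take $n\ge\max(\ell(\xi),\ell(\la))$, not merely $n\ge\ell(\la)$, so that both $V(\xi)$ and $L(\la)$ exist; this does not affect the argument.
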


Stembridge \cite{St} proved Lemma~\ref{lem:gxila} by providing a
combinatorial formula for $g_{\xi\la}$ in terms of marked shifted
tableaux. We give a representation theoretic proof below.

\begin{proof}
It follows by Lemma~\ref{lem:res.Vxi} that $g_{\xi\la} \ge 0$, and
moreover, $g_{\xi\la} =0$ unless $\xi \ge \la$ (the dominance order
for compositions coincide with the dominance order of weights for
$\qn$). The highest weight space for the $\qn$-module $V(\xi)$
is $W_\xi$, which has dimension $2^{\frac{\ell(\xi)
+\delta(\xi)}2}$. Hence, $g_{\xi\xi} =1$, by Lemma~\ref{lem:res.Vxi}
again.

By  Theorem~ \ref{thm:charqueer}, $2^{-\frac{\ell(\xi)
+\delta(\xi)}2} \ch V(\xi) =2^{ -\ell(\xi)} Q(x_1, \ldots, x_n)$,
which is known to lie in $\La$, cf. \cite{Mac} (this fact can also
be seen directly from representation theory of $\qn$). Hence, $2^{
-\ell(\xi)} Q(x_1, \ldots, x_n)$ is a $\Z$-linear combination of
Schur polynomials $s_\la$. Combining with Lemma~\ref{lem:res.Vxi},
this proves that $g_{\xi\la} \in \Z$.
\end{proof}

The following is a spin counterpart of the properties of Kostka
polynomials listed in Theorem~\ref{thm:Kostka}.

\begin{theorem}  \cite{WW2} \label{thm:spinKostka}
The spin Kostka polynomials $K_{\xi\mu}^-(t)$ for $\xi\in\mc {SP}_n,
\mu\in\mc P_n$ satisfy the following properties:
\begin{enumerate}
\item $K_{\xi\mu}^-(t)=0$ unless $\xi\geq\mu$; $K^-_{\xi\xi}(t)=2^{\ell(\xi)}$.

\item The degree of the polynomial $K_{\xi\mu}^-(t)$ is $n(\mu)-n(\xi)$.

\item
$2^{-\ell(\xi)}K_{\xi\mu}^-(t)$ is a polynomial with non-negative
integer coefficients.

\item
$K_{\xi\mu}^-(1)=K^-_{\xi\mu}$;\quad
$K^-_{\xi\mu}(-1)=2^{\ell(\xi)}\delta_{\xi\mu}$.

\item $K^-_{(n)\mu}(t)=t^{n(\mu)}\prod^{\ell(\mu)}_{i=1}(1+t^{1-i}).$

\item
$K^-_{\xi(1^n)}(t)=\ds\frac{t^{n(\xi)}(1-t)(1-t^{2})\cdots(1-t^{n})\prod_{(i,j)\in
\xi^*}(1+t^{c_{ij}})} {\prod_{(i,j)\in \xi^*}(1-t^{h^*_{ij}})}.$

\end{enumerate}

\end{theorem}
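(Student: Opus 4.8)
The plan is to base the whole proof on Proposition~\ref{prop:relate}, which writes $K^-_{\xi\mu}(t)=\sum_{\la}b_{\xi\la}K_{\la\mu}(t)=2^{\ell(\xi)}\sum_{\la}g_{\xi\la}K_{\la\mu}(t)$ by \eqref{eq:gb}, and then to feed in the known properties of the Kostka--Foulkes polynomials $K_{\la\mu}(t)$ (Theorem~\ref{thm:Kostka}) and the triangularity and positivity of the branching coefficients $g_{\xi\la}$ (Lemma~\ref{lem:gxila}). Items (1)--(3) are then immediate. Since $g_{\xi\la}=0$ unless $\xi\ge\la$, $g_{\xi\xi}=1$, and $K_{\la\mu}(t)=0$ unless $\la\ge\mu$, the sum is supported on $\xi\ge\la\ge\mu$, which gives (1), with $K^-_{\xi\xi}(t)=2^{\ell(\xi)}g_{\xi\xi}K_{\xi\xi}(t)=2^{\ell(\xi)}$. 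For (3), $g_{\xi\la}\in\Z_{\ge0}$ and $K_{\la\mu}(t)\in\Z_{\ge0}[t]$ force $2^{-\ell(\xi)}K^-_{\xi\mu}(t)=\sum_\la g_{\xi\la}K_{\la\mu}(t)\in\Z_{\ge0}[t]$. For (2), $\deg K_{\la\mu}(t)=n(\mu)-n(\la)$, and since $n(\cdot)$ is strictly decreasing along the dominance order, every $\la$ in the sum has $n(\la)\ge n(\xi)$ with equality only at $\la=\xi$; hence the $\la=\xi$ term alone contributes the top degree $n(\mu)-n(\xi)$, with a strictly positive (so non-cancelling) leading coefficient.

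For (4), the value at $t=1$: comparing \eqref{eqn:Qmonomial} with $Q_\xi=\sum_\la b_{\xi\la}s_\la=\sum_\mu\left(\sum_\la b_{\xi\la}K_{\la\mu}\right)m_\mu$ (using \eqref{eqn:QSchur} and $s_\la=\sum_\mu K_{\la\mu}m_\mu$) yields $K^-_{\xi\mu}=\sum_\la b_{\xi\la}K_{\la\mu}$, which equals $\sum_\la b_{\xi\la}K_{\la\mu}(1)=K^-_{\xi\mu}(1)$ by Theorem~\ref{thm:Kostka}(4) and Proposition~\ref{prop:relate}. The value at $t=-1$: I would use the Hall--Littlewood orthogonality $K^-_{\xi\mu}(t)=\langle Q_\xi,\,b_\mu(t)P_\mu(\cdot;t)\rangle_t$, coming from \eqref{eq:spin Kostka} and the duality of $P_\mu(\cdot;t)$ with $Q_\mu(\cdot;t)=b_\mu(t)P_\mu(\cdot;t)$, and specialize at $t=-1$. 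If $\mu$ has a repeated part then $b_\mu(-1)=0$, so $K^-_{\xi\mu}(-1)=0$; if $\mu$ is strict then $b_\mu(-1)P_\mu(\cdot;-1)$ is the Schur $Q$-function $Q_\mu$, and because $Q_\xi,Q_\mu$ lie in $\Gamma$ and involve only odd power sums—on which $\langle\cdot,\cdot\rangle_{-1}$ agrees with the pairing \eqref{eq:bilformGamma}—we get $K^-_{\xi\mu}(-1)=\langle Q_\xi,Q_\mu\rangle=2^{\ell(\xi)}\delta_{\xi\mu}$ by Theorem~\ref{qschur:aux12}.

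For (6), Proposition~\ref{prop:relate} and Theorem~\ref{thm:Kostka}(6) give $K^-_{\xi(1^n)}(t)=(1-t)(1-t^2)\cdots(1-t^n)\sum_\la b_{\xi\la}\,\frac{t^{n(\la')}}{\prod_{(i,j)\in\la}(1-t^{h_{ij}})}$; the summand equals $s_{\la'}(t^\bullet)$ by \eqref{eq:Schurspec} since hook lengths are transpose-invariant. The standard involution $\omega$ fixes each $q_r$—because $\prod_i\frac{1+tx_i}{1-tx_i}=\prod_i(1+tx_i)\cdot\prod_i(1-tx_i)^{-1}$ (see \eqref{gen.fun.qr}) is invariant under swapping $e$'s and $h$'s—hence $\omega$ is the identity on $\Gamma=\Z[q_1,q_2,\ldots]$, so $\sum_\la b_{\xi\la}s_{\la'}=\omega(Q_\xi)=Q_\xi$ and therefore $K^-_{\xi(1^n)}(t)=(1-t)\cdots(1-t^n)\,Q_\xi(t^\bullet)$; Theorem~\ref{th:Qspec} finishes (6). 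For (5), I would combine Proposition~\ref{prop:relate} with the expansion $Q_{(n)}=q_n=2\sum_{k=0}^{n-1}s_{(n-k,1^k)}$, obtained from $\prod_i\frac{1+tx_i}{1-tx_i}=\left(\sum_r e_rt^r\right)\left(\sum_s h_st^s\right)$ together with the Pieri rule, so that $K^-_{(n)\mu}(t)=2\sum_{k=0}^{n-1}K_{(n-k,1^k)\mu}(t)$; inserting the classical closed formula for Kostka--Foulkes polynomials of hook shape and summing over $k$ via the $q$-binomial theorem produces $t^{n(\mu)}\prod_{i=1}^{\ell(\mu)}(1+t^{1-i})$.

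The main obstacle is item (5): unlike (1)--(4) and (6), it is not a purely formal consequence of Proposition~\ref{prop:relate} and Theorem~\ref{thm:Kostka}, and one must import the explicit Kostka--Foulkes polynomials of hook shape and perform the $q$-binomial summation. The $t=-1$ evaluation in (4) is the second delicate point, as it depends on the vanishing of $b_\mu(-1)P_\mu(\cdot;-1)$ for non-strict $\mu$ and on the Hall--Littlewood pairing degenerating precisely to the Schur $Q$-pairing when restricted to $\Gamma$.
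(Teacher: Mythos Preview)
Your treatment of (1)--(3) and of (4) at $t=1$ matches the paper's. The differences lie in (4) at $t=-1$, in (5), and in (6).

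For (4) at $t=-1$ your pairing argument is correct (the restriction to $Q_\xi\in\Gamma$ means only odd power sums enter, so $\langle\cdot,\cdot\rangle_{-1}$ is nonsingular there), but the paper does it in one line: since $Q_\xi=2^{\ell(\xi)}P_\xi(x;-1)$ and $\{P_\mu(x;-1)\mid\mu\in\mc P\}$ is a basis of $\La$, specializing the defining expansion \eqref{eq:spin Kostka} at $t=-1$ and comparing coefficients gives $K^-_{\xi\mu}(-1)=2^{\ell(\xi)}\delta_{\xi\mu}$ immediately.

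For (5) the paper again bypasses your obstacle entirely. Instead of expanding $q_n$ in hook Schur functions and summing hook-shape Kostka--Foulkes polynomials, it invokes a generating-function identity of Macdonald,
\[
\prod_{i\ge1}\frac{1+x_i}{1-x_i}=\sum_\mu t^{n(\mu)}\prod_{j=1}^{\ell(\mu)}(1+t^{1-j})\,P_\mu(x;t),
\]
and reads off the degree-$n$ part; since the left side is $\sum_n q_n=\sum_n Q_{(n)}$, the coefficient of $P_\mu$ is $K^-_{(n)\mu}(t)$. Your hook-shape route can be pushed through, but it imports a nontrivial Kostka--Foulkes formula and a summation that the paper's approach avoids.

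For (6) your argument via $\omega|_\Gamma=\mathrm{id}$ is correct and genuinely more elementary than the paper's. The paper postpones (6) until after Theorem~\ref{thm:spingrmult}, deriving it from the Springer-fiber interpretation $K^-_{\xi\mu}(t)=2^{(\ell(\xi)-\delta(\xi))/2}C^-_{\xi\mu}(t^{-1})t^{n(\mu)}$ combined with the spin fake degree formula of Theorem~\ref{th:S(V)}; your route needs only Proposition~\ref{prop:relate}, Theorem~\ref{thm:Kostka}(6), the principal specialization \eqref{eq:Schurspec}, and Theorem~\ref{th:Qspec}, so it stays purely within symmetric-function identities.
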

\begin{proof}
Combining Theorem~\ref{thm:Kostka}(1)-(3),  Lemma \ref{lem:gxila}
and Proposition~\ref{prop:relate}, we can easily verify that the
spin Kostka polynomial $K_{\xi\mu}^-(t) $ must satisfy the
properties (1)-(3) in the theorem.
It is known that $P_{\mu}(x;1)=m_{\mu}$ and hence
by~(\ref{eqn:Qmonomial}) we have $K^-_{\xi\mu}(1)=K^-_{\xi\mu}$.
Also, $ Q_{\xi}=2^{\ell(\xi)}P_{\xi}(x;-1)$, and
$\{P_{\mu}(x;-1)\mid\mu\in\mc P\}$ forms a basis for $\La$ (see
\cite[p.253]{Mac}). Hence (4) is proved.

By \cite[III, $\S$3, Example 1(3)]{Mac} we have
\begin{align}\label{eq:sumHallP}
\prod_{i\geq 1}\frac{1+x_i}{1-x_i}=
\sum_{\mu}t^{n(\mu)}\prod^{\ell(\mu)}_{j=1}(1+t^{1-j})P_{\mu}(x;t).
\end{align}
Comparing the degree $n$ terms of
 (\ref{eq:sumHallP})~and~(\ref{gen.fun.qr}), we obtain that
\begin{align*}
Q_{(n)}(x)=q_n(x)=\sum_{\mu\in\mc P_n}t^{n(\mu)}\prod^{\ell(\mu)}_{j=1}(1+t^{1-j})P_{\mu}(x;t).
\end{align*}
Hence (5) is proved.

Part (6) actually follows from Theorem~\ref{th:S(V)} and Theorem~\ref{thm:spingrmult} in
Section~\ref{subsec:spinKostkaRepresent} below,
and let us postpone its proof after completing  the
proof of Theorem~\ref{thm:spingrmult}.
\end{proof}

\subsection{Spin Kostka polynomials and graded multiplicity}\label{subsec:spinKostkaRepresent}
Recall the characteristic map $\ch$ and $\ch^-$
from~(\ref{charmap})~and~(\ref{spincharmap}). Note that   ${\rm
ch}^-$ is related to ${\rm ch}$ as follows:
\begin{equation}\label{eqn:spin.ch}
{\rm ch}^-(\zeta)={\rm ch} \big({\rm res}^{\HC_n}_{\C \mf S_n}\zeta
\big), \quad \text{ for } \zeta\in R^-_n.
\end{equation}
Recall that the $\mf S_n$-module $S^{\la}$ and $\HC_n$-module
$D^{\xi}$ have characters given by $\chi^{\la}$ and $\zeta^{\xi}$,
respectively. Up to some $2$-power as in Lemma~\ref{lem:res.Vxi},
$g_{\xi\la}$ has another representation theoretic interpretation.
\begin{lemma}\label{lem:ind.Specht}
Suppose $\xi\in\mc{SP}_n, \la\in\mc{P}_n$. The following holds:
$$
\dim{\rm Hom}_{\HC_n}(D^{\xi},{\rm ind}^{\HC_n}_{\C\mf S_n}S^{\la} )
= 2^{\frac{\ell(\xi) +\delta(\xi)}2} g_{\xi\la}.
$$

\end{lemma}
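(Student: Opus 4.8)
The plan is to compute the dimension of the Hom-space using Frobenius reciprocity for the adjunction between induction and restriction, converting everything into a pairing of characters that can be evaluated via the characteristic maps $\ch$ and $\ch^-$. First I would invoke Frobenius reciprocity (in the super setting, which is valid here since $\C\mf S_n$ is an even subalgebra of $\HC_n$): there is a natural isomorphism
$$
\Hom_{\HC_n}\big(D^\xi, \ind^{\HC_n}_{\C\mf S_n} S^\la\big) \cong \Hom_{\C\mf S_n}\big(\res^{\HC_n}_{\C\mf S_n} D^\xi, S^\la\big).
$$
So the left side equals $\dim\Hom_{\mf S_n}(\res D^\xi, S^\la)$, which I would express as the inner product of ordinary $\mf S_n$-characters $\langle \res\zeta^\xi, \chi^\la\rangle$, up to the usual subtlety that $S^\la$ is absolutely irreducible so the Hom-space dimension equals the multiplicity of $\chi^\la$ in $\res\zeta^\xi$ as ungraded characters.

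Next I would translate the multiplicity into symmetric functions. Since $\ch$ is an isometry $R_\Q \to \La_\Q$ with $\ch(\chi^\la) = s_\la$, and by \eqref{eqn:spin.ch} we have $\ch^-(\zeta^\xi) = \ch(\res^{\HC_n}_{\C\mf S_n}\zeta^\xi)$, the multiplicity of $\chi^\la$ in $\res\zeta^\xi$ equals the coefficient of $s_\la$ in the Schur-function expansion of $\ch^-(\zeta^\xi)$. By \eqref{eq:irredch}, $\ch^-(\zeta^\xi) = 2^{-\frac{\ell(\xi)-\delta(\xi)}{2}} Q_\xi$, and by definition \eqref{eqn:QSchur} of the structure constants, $Q_\xi = \sum_{\la} b_{\xi\la} s_\la$. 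Hence the coefficient of $s_\la$ in $\ch^-(\zeta^\xi)$ is $2^{-\frac{\ell(\xi)-\delta(\xi)}{2}} b_{\xi\la}$, and using the definition \eqref{eq:gb} that $g_{\xi\la} = 2^{-\ell(\xi)} b_{\xi\la}$, this equals $2^{-\frac{\ell(\xi)-\delta(\xi)}{2}}\cdot 2^{\ell(\xi)} g_{\xi\la} = 2^{\frac{\ell(\xi)+\delta(\xi)}{2}} g_{\xi\la}$, which is exactly the claimed formula.

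The main obstacle I anticipate is being careful about the type $\texttt Q$ phenomenon and the graded-versus-ungraded bookkeeping: $\Hom_{\HC_n}$ in the statement allows morphisms of degree one (as declared in Section~\ref{sec:superalg}), and $D^\xi$ may be of type $\texttt Q$, so one must check that the super-Frobenius-reciprocity isomorphism correctly accounts for the factor of $2$ hidden in $2^{\delta(\xi)/2}$ — precisely, that $\dim\Hom_{\C\mf S_n}(\res D^\xi, S^\la)$ (with degree-one morphisms allowed, though $S^\la$ is concentrated in degree zero as an ungraded module) computes the honest multiplicity of $S^\la$ in the ungraded restriction of $D^\xi$. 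Since $S^\la$ is absolutely simple of type $\texttt M$, Super Schur's Lemma~\ref{lem:superSchur} guarantees no extra factor arises on the $\mf S_n$-side, so the only $2$-powers are those coming from $\ch^-(\zeta^\xi)$ itself, and these are already consistent with the stated answer. Once this is verified, the computation is a direct chain of substitutions using \eqref{eqn:spin.ch}, \eqref{eq:irredch}, \eqref{eqn:QSchur}, and \eqref{eq:gb}, together with the fact that $\ch$ is an isometry sending $\chi^\la$ to $s_\la$.
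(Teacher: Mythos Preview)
Your proposal is correct and follows essentially the same route as the paper: Frobenius reciprocity, then the identification $\ch^-(\zeta^\xi)=\ch(\res \zeta^\xi)$ from \eqref{eqn:spin.ch}, then the formula \eqref{eq:irredch} and the expansion \eqref{eqn:QSchur} together with \eqref{eq:gb}. The only cosmetic difference is that the paper first swaps the two arguments of $\Hom_{\HC_n}$ (using semisimplicity) and then applies the left-adjoint form of Frobenius reciprocity, whereas you invoke the right-adjoint form directly; since $\HC_n$ is free of finite rank over $\C\mf S_n$ these are equivalent, so the arguments are the same.
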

\begin{proof}
Since the $\HC_n$-module ${\rm ind}^{\HC_n}_{\C \mf S_n}S^{\la}$ is
semisimple, we have
\begin{align*}
\dim\Hom_{\HC_n}(D^{\xi},{\rm ind}^{\HC_n}_{\C\mf S_n}S^{\la} )
=&\dim\Hom_{\HC_n}({\rm ind}^{\HC_n}_{\C \mf S_n}S^{\la}, D^{\xi})\\
=&\dim\Hom_{\C\mf S_n}(S^{\la}, {\rm res}^{\HC_n}_{\C\mf S_n}D^{\xi})\\
=&(s_{\la},{\rm ch}({\rm res}^{\HC_n}_{\C\mf S_n}D^{\xi}))\\
=&(s_{\la},{\rm ch}^-(D^{\xi}))\\
=&( s_{\la},2^{-\frac{\ell(\xi)-\delta(\xi)}{2}}Q_{\xi}(x))\\
=&2^{\frac{\ell(\xi) +\delta(\xi)}2} g_{\xi\la},
\end{align*}
where the second equation uses the Frobenius reciprocity, the third
equation uses  the fact that ${\rm ch}$ is an isometry,  the fourth,
fifth and sixth  equations follow from~(\ref{eqn:spin.ch}),
\eqref{eq:irredch} and  \eqref{eqn:QSchur}, respectively.
\end{proof}

For $\mu\in\mc{P}_n$ and $\xi\in \mc{SP}_n$, recalling
\eqref{eqn:mult.C}, we define a polynomial $C^-_{\xi\mu}(t)$ as a
graded multiplicity of the graded $\HC_n$-module ${\rm
ind}^{\HC_n}_{\C\mf  S_n} H^{\bullet}(\mc{B}_{\mu}) \cong \Cl_n \otimes
H^{\bullet}(\mc{B}_{\mu})$:
\begin{align}\label{eqn:mult.C-}
C^-_{\xi\mu}(t) :=\sum_{i\geq 0}t^i \Big(\dim\Hom_{\HC_n}(D^{\xi},
\Cl_n\otimes H^{2i}(\mc{B}_{\mu}))\Big).
\end{align}

\begin{theorem} \cite{WW2}   \label{thm:spingrmult}
Suppose $\xi\in\mc{SP}_n, \mu\in\mc P_n$. Then we have
$$
 K^-_{\xi\mu}(t) =
2^{\frac{\ell(\xi)-\delta(\xi)}{2}}  C^-_{\xi\mu}(t^{-1})t^{n(\mu)}.
$$

\end{theorem}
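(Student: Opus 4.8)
The plan is to reduce this geometric statement for $\HC_n$ to the classical geometric interpretation of Kostka polynomials recorded in Theorem~\ref{thm:Kostka gr.mult}, using the induction functor ${\rm ind}^{\HC_n}_{\C\mf S_n}$ as a bridge and the combinatorial identity of Proposition~\ref{prop:relate} to close the loop. First I would rewrite the spin graded multiplicity $C^-_{\xi\mu}(t)$ from \eqref{eqn:mult.C-} in terms of the classical graded multiplicities $C_{\la\mu}(t)$ from \eqref{eqn:mult.C}. Decomposing the $\mf S_n$-module $H^{2i}(\mc B_\mu)\cong\bigoplus_{\la\in\mc P_n} m_{\la,i}\,S^\la$ with $\sum_{i\ge0} m_{\la,i}t^i = C_{\la\mu}(t)$, and using that ${\rm ind}^{\HC_n}_{\C\mf S_n}$ is additive together with the isomorphism $\Cl_n\otimes H^{2i}(\mc B_\mu)\cong {\rm ind}^{\HC_n}_{\C\mf S_n} H^{2i}(\mc B_\mu)$ from \eqref{eq:ind=tensor}, Lemma~\ref{lem:ind.Specht} yields
\[
\dim\Hom_{\HC_n}\bigl(D^\xi,\Cl_n\otimes H^{2i}(\mc B_\mu)\bigr)=2^{\frac{\ell(\xi)+\delta(\xi)}{2}}\sum_{\la}m_{\la,i}\,g_{\xi\la}.
\]
Multiplying by $t^i$ and summing over $i\ge 0$ then gives
\[
C^-_{\xi\mu}(t)=2^{\frac{\ell(\xi)+\delta(\xi)}{2}}\sum_{\la\in\mc P_n}g_{\xi\la}\,C_{\la\mu}(t).
\]

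Next I would substitute $t\mapsto t^{-1}$, multiply by $t^{n(\mu)}$, and invoke Theorem~\ref{thm:Kostka gr.mult} in the form $C_{\la\mu}(t^{-1})t^{n(\mu)}=K_{\la\mu}(t)$ for each $\la$, obtaining
\[
2^{\frac{\ell(\xi)-\delta(\xi)}{2}}C^-_{\xi\mu}(t^{-1})t^{n(\mu)}=2^{\ell(\xi)}\sum_{\la\in\mc P_n}g_{\xi\la}\,K_{\la\mu}(t),
\]
where the exponents combine as $\tfrac{\ell(\xi)-\delta(\xi)}{2}+\tfrac{\ell(\xi)+\delta(\xi)}{2}=\ell(\xi)$. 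Finally, using the relation $g_{\xi\la}=2^{-\ell(\xi)}b_{\xi\la}$ from \eqref{eq:gb} to absorb the factor $2^{\ell(\xi)}$, the right-hand side becomes $\sum_{\la\in\mc P_n}b_{\xi\la}\,K_{\la\mu}(t)$, which equals $K^-_{\xi\mu}(t)$ by Proposition~\ref{prop:relate}. This proves the theorem.

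The only point requiring care is the first step: one must check that the decomposition of $H^\bullet(\mc B_\mu)$ into Specht modules respects the cohomological grading, so that the $t$-bookkeeping is legitimate, and that the functor ${\rm ind}^{\HC_n}_{\C\mf S_n}$ (equivalently $\Cl_n\otimes(-)$) is compatible with that grading. Both are immediate, since induction is an exact additive functor and $H^\bullet(\mc B_\mu)$ carries its grading as a graded $\mf S_n$-module by construction. Beyond this, the argument is a formal manipulation of identities already in hand, so I anticipate no genuine obstacle; the substantive content has been front-loaded into Lemma~\ref{lem:ind.Specht} and the classical Theorem~\ref{thm:Kostka gr.mult}.
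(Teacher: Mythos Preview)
Your proof is correct and follows essentially the same approach as the paper: both arguments express $C^-_{\xi\mu}(t)$ as a linear combination $\sum_\la b_{\xi\la}C_{\la\mu}(t)$ (up to a power of $2$) via Lemma~\ref{lem:ind.Specht} and the decomposition of $H^\bullet(\mc B_\mu)$ into Specht modules, then invoke Theorem~\ref{thm:Kostka gr.mult} and Proposition~\ref{prop:relate} to finish. The only difference is cosmetic---the paper computes both sides separately and compares, while you manipulate one side into the other---but the ingredients and logic are identical.
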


\begin{proof}
By Proposition~\ref{prop:relate} and~ Theorem~\ref{thm:Kostka
gr.mult}, we obtain that
\begin{align*}
K^-_{\xi\mu}(t)
&=\sum_{\la\in\mc P_n}b_{\xi\la}K_{\la\mu}(t)
=\sum_{\la\in\mc P_n}b_{\xi\la}C_{\la\mu}(t^{-1})t^{n(\mu)}.
\end{align*}
On the other hand,  we have by Lemma \ref{lem:ind.Specht} that
\begin{align*}
C^-_{\xi\mu}(t) &=\sum_{i\geq 0}t^i \left(\dim\Hom_{\HC_n}
\big(D^{\xi}, {\rm ind}^{\HC_n}_{\C\mf S_n}
H^{2i}(\mc{B}_{\mu})\big) \right)
 \\
 &=\sum_{\la} C_{\la\mu}(t) \dim\Hom_{\HC_n}(D^{\xi},
{\rm ind}^{\HC_n}_{\C\mf S_n} S^\la)
 \\
&=2^{-\frac{\ell(\xi)-\delta(\xi)}{2}} \sum_{\la\in\mc  P_n}   b_{\xi\la}C_{\la\mu}(t).
\end{align*}
Now the theorem follows by comparing the above two identities.
\end{proof}

With Theorem~\ref{thm:spingrmult} at hand, we can complete the proof
of Theorem~\ref{thm:spinKostka}(6).
\begin{proof}[Proof of Theorem \ref{thm:spinKostka}(6)]
Suppose $\xi\in\mc{SP}_n$. Observe that $\mc B_{(1^n)}=\mc B$ and it
is well known that $ H^{\bullet}(\mc{B})$ is isomorphic to the
coinvariant algebra of the symmetric group $\mf S_n$. Hence by
Theorem~\ref{th:S(V)} we obtain that

$$
C^-_{\xi(1^n)}(t)
=d^{\xi}(t)
=2^{-\frac{\ell(\xi)-\delta(\xi)}{2}}
\frac{t^{n(\xi)}(1-t)(1-t^{2})\cdots(1-t^{n})\prod_{(i,j)\in
\xi^*}(1+t^{c_{ij}})} {\prod_{(i,j)\in \xi^*}(1-t^{h^*_{ij}})},
$$ where $\xi^*$ is the shifted diagram associated to $\xi$,
$c_{ij}, h^*_{ij}$ are contents and shifted hook lengths for a cell
$(i,j) \in \xi$. This together with Theorem~\ref{thm:spingrmult}
gives us
\begin{align*}
K_{\xi(1^n)}^-(t)
&=\frac{t^{\frac{n(n-1)}{2}-n(\xi)}(1-t^{-1})(1-t^{-2})\cdots(1-t^{-n})\prod_{(i,j)\in
\xi^*}(1+t^{-c_{ij}})} {\prod_{(i,j)\in \xi^*}(1-t^{-h^*_{ij}})}\\
&=\frac{t^{-n-n(\xi)+\sum_{(i,j)\in \xi^*}h^*_{ij}}(1-t)(1-t^2)\cdots(1-t^n)\prod_{(i,j)\in
\xi^*}(1+t^{c_{ij}})} {t^{\sum_{(i,j)\in \xi^*}c_{ij}}\prod_{(i,j)\in \xi^*}(1-t^{h^*_{ij}})}\\
&=\frac{t^{n(\xi)}(1-t)(1-t^2)\cdots(1-t^n)\prod_{(i,j)\in
\xi^*}(1+t^{c_{ij}})} {\prod_{(i,j)\in \xi^*}(1-t^{h^*_{ij}})},
\end{align*}
where the last equality can be derived by noting that the contents
$c_{ij}$ are $0,1,\ldots,\xi_i-1$ and the fact (cf. \cite[III, \S
8, Example 12]{Mac}) that in the $i$th row of $\xi^*$, the hook
lengths $h^*_{ij}$ for $i\leq j\leq\xi_i+i-1$ are
$1,2,\ldots,\xi_i,\xi_i+\xi_{i+1},\xi_i+\xi_{i+2},\ldots,\xi_{i}+\xi_{\ell}$
with exception
$\xi_i-\xi_{i+1},\xi_i-\xi_{i+2},\ldots,\xi_{i}-\xi_{\ell}$.
\end{proof}

\subsection{Spin Kostka polynomials and $q$-weight multiplicity}
Observe that there is a natural isomorphism $\ev{\qn} \cong
\mf{gl}(n)$. Regarding a regular nilpotent element $e$ in
$\mf{gl}(n)$ as an even element in $\qn$,  for $\xi\in\mc{SP},
\mu\in\mc{P}$ with $\ell(\xi)\leq n, \ell(\mu)\leq n$, we define a
{\em Brylinski-Kostant filtration} $\{J_{e}^k
\big(V(\xi)_{\mu}\big)\}_{k\ge 0}$ on the $\mu$-weight space
$V(\xi)_{\mu}$ of the irreducible $\qn$-module $V(\xi)$, where
\begin{align*}
J_{e}^k(V(\xi)_{\mu}) :=\{v\in V(\xi)_{\mu} \mid e^{k+1}v=0\}.
\end{align*}
Define a polynomial  $\gamma^-_{\xi\mu}(t)$ by letting
\begin{align*}
\gamma^-_{\xi\mu}(t)=\sum_{k\geq 0}\Big(\dim
J_{e}^k(V(\xi)_{\mu})/J_{e}^{k-1}(V(\xi)_{\mu})\Big)t^k.
\end{align*}

We are ready to establish the Lie theoretic interpretation of spin
Kostka polynomials.

\begin{theorem} \cite{WW2}
Suppose $\xi\in\mc{SP}, \mu\in\mc{P}$ with $\ell(\xi)\leq n, \ell(\mu)\leq n$.
Then we have
$$
 K^-_{\xi\mu}(t) =
2^{\frac{\ell(\xi)-\delta(\xi)}{2}} \gamma^-_{\xi\mu}(t).
$$
\end{theorem}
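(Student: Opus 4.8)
The plan is to reduce the spin statement to the classical Brylinski theorem (Theorem~\ref{thm:jump}) by exploiting the branching of the $\qn$-module $V(\xi)$ to $\mf{gl}(n)$ established in Lemma~\ref{lem:res.Vxi}, together with the relation between $K^-_{\xi\mu}(t)$ and $K_{\la\mu}(t)$ from Proposition~\ref{prop:relate}. The key observation is that the regular nilpotent $e$ lives in $\ev\qn \cong \mf{gl}(n)$, so the Brylinski-Kostant filtration $\{J_e^k(V(\xi)_\mu)\}$ of the $\qn$-module weight space depends only on the underlying $\mf{gl}(n)$-module structure of $V(\xi)$; no genuinely new superalgebra input is needed for the filtration itself.

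First I would record that, as a $\mf{gl}(n)$-module, Lemma~\ref{lem:res.Vxi} gives
$$
V(\xi) \cong \bigoplus_{\la\in\mc P,\,\ell(\la)\le n} 2^{\frac{\ell(\xi)+\delta(\xi)}2}\, g_{\xi\la}\, L(\la).
$$
Since $e$ acts diagonally with respect to this direct sum decomposition and the weight space decomposition is compatible with it, the Brylinski-Kostant filtration of $V(\xi)_\mu$ is the direct sum of $2^{\frac{\ell(\xi)+\delta(\xi)}2} g_{\xi\la}$ copies of the corresponding filtration of $L(\la)_\mu$, for each $\la$. Taking associated graded pieces and forming the generating polynomial, this yields
$$
\gamma^-_{\xi\mu}(t) = 2^{\frac{\ell(\xi)+\delta(\xi)}2}\sum_{\la\in\mc P,\,\ell(\la)\le n} g_{\xi\la}\,\gamma_{\la\mu}(t).
$$
Next I would invoke Brylinski's theorem (Theorem~\ref{thm:jump}), which says $\gamma_{\la\mu}(t)=K_{\la\mu}(t)$, to rewrite this as $\gamma^-_{\xi\mu}(t) = 2^{\frac{\ell(\xi)+\delta(\xi)}2}\sum_{\la} g_{\xi\la} K_{\la\mu}(t)$. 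Finally, recalling $b_{\xi\la}=2^{\ell(\xi)}g_{\xi\la}$ from \eqref{eq:gb} and Proposition~\ref{prop:relate} (which gives $K^-_{\xi\mu}(t)=\sum_\la b_{\xi\la}K_{\la\mu}(t)$), one obtains
$$
\gamma^-_{\xi\mu}(t) = 2^{\frac{\ell(\xi)+\delta(\xi)}2 - \ell(\xi)}\,K^-_{\xi\mu}(t) = 2^{-\frac{\ell(\xi)-\delta(\xi)}2}\,K^-_{\xi\mu}(t),
$$
which is exactly the claimed identity after rearranging.

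The main obstacle I anticipate is justifying carefully that the Brylinski-Kostant filtration genuinely respects the $\mf{gl}(n)$-isotypic decomposition of $V(\xi)$ — i.e., that restricting to $\ev\qn$ and then decomposing commutes with taking $\ker e^{k+1}$ on each weight space. This is essentially formal since $e\in\ev\qn$ acts within each $\mf{gl}(n)$-submodule $L(\la)$ and preserves weight spaces, but one should state it cleanly; the only subtlety is making sure the multiplicity spaces (which carry the odd part of the $\qn$-action) do not interfere, and they do not, precisely because $e$ is even and the decomposition in Lemma~\ref{lem:res.Vxi} is as $\mf{gl}(n)$-modules. Everything else is bookkeeping with the $2$-power factors, which match up exactly as above.
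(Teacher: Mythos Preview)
Your proposal is correct and follows essentially the same argument as the paper: both use that $e\in\ev\qn\cong\mf{gl}(n)$ so the Brylinski--Kostant filtration respects the decomposition of Lemma~\ref{lem:res.Vxi}, then apply Theorem~\ref{thm:jump} and Proposition~\ref{prop:relate} together with $b_{\xi\la}=2^{\ell(\xi)}g_{\xi\la}$ to match the $2$-powers. The paper's proof is slightly terser but the logical structure is identical.
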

\begin{proof}
The Brylinski-Kostant filtration is defined via a regular nilpotent
element in $\mf{gl}(n) \cong \qn_{\bar{0}}$, and thus it is
compatible with the decomposition in Lemma~ \ref{lem:res.Vxi}.
Hence, we have $ J_{e}^k \big(V(\xi)_{\mu}\big) \cong
\oplus_{\la}2^{\frac{\ell(\xi) +\delta(\xi)}2} g_{\xi\la}J^k_{e}
\big(L(\la)_{\mu} \big). $ It follows by the definitions of the
polynomials $\gamma^-_{\xi\mu}(t)$ and $\gamma_{\la\mu}(t)$ that
$$
\gamma^-_{\xi\mu}(t)
=\sum_{\la}2^{\frac{\ell(\xi) +\delta(\xi)}2}g_{\xi\la}\gamma_{\la\mu}(t).
$$
Then by Theorem \ref{thm:jump} we obtain that
$$
\gamma^-_{\xi\mu}(t)
=\sum_{\la}2^{\frac{\ell(\xi) +\delta(\xi)}2}g_{\xi\la}K_{\la\mu}(t)
=\sum_{\la}2^{-\frac{\ell(\xi) -\delta(\xi)}2}b_{\xi\la}K_{\la\mu}(t).
$$
This together with Proposition~\ref{prop:relate} proves
the theorem.
\end{proof}

\begin{remark}
We can define spin Hall-Littlewood functions $H^-_{\mu}(x;t)$ via
the spin Kostka polynomials as well as spin Macdonald polynomials
$H^-_\mu (x;q,t)$ and the spin $q,t$-Kostka polynomials
$K^-_{\xi\mu} (q,t)$.  The use of $\Phi$ and $\varphi$ makes such a
$q,t$-generalization possible (see \cite{WW2} for details). There is
also a completely different vertex operator approach developed by
Tudose and Zabrocki \cite{TZ} toward a different version of  spin Kostka polynomials and
spin Hall-Littlewood functions, which did not seem to admit
representation theoretic interpretation.
\end{remark}

\section{The seminormal form construction}\label{sec:seminormal}

In this section, we formulate the seminormal form for the
irreducible $\HC_n$-modules, analogous to Young's seminormal form
for the irreducible $\C\mf S_n$-modules. Following the independent
works of \cite{HKS} and \cite{Wan} (which was built on the earlier
work of Nazarov \cite{Naz}), we first work on the generality of
affine Hecke-Clifford algebras, and then specialize to the (finite)
Hecke-Clifford algebras to give an explicit construction of Young's
seminormal form for the irreducible $\HC_n$-modules.

\subsection{Jucys-Murphy elements and Young's seminormal form for $\mf S_n$}

The Jucys-Murphy elements in the group algebra of the symmetric
group $\mf S_n$ are defined by
\begin{equation}\label{eq:JM}
L_k=\sum_{1\leq j<k}(j,k),
\end{equation}
where $(j,k)$ is the transposition between $j$ and $k$. Observe that
$L_k$ is the difference between the sum of all transpositions in
$\mf S_k$ and the sum of all transpositions in $\mf S_{k-1}$. Hence
the Jucys-Murphy elements $L_1,\ldots, L_n$ commute and act
semisimply on irreducible $\C\mf S_n$-modules.

The Gelfand-Zetlin subalgebra  $\mc A_n$ of $\C\mf S_n$ is defined
to be the subalgebra consisting of the diagonal matrices in the
Wedderburn decomposition of $\C\mf S_n$. It is not difficult to show
by induction on $n$ (see \cite[Corollary 4.1]{OV} and
\cite[Lemma~2.1.4]{Kle}) that $\mc A_n$ is  generated by the centers
of the subalgebras $\C\mf S_1,\C\mf S_2,\ldots,\C\mf S_n$, and that
it is also generated by the Jucys-Murphy elements $L_1,\ldots, L_n$.

The moral is that the subalgebra $\mc A_n$ of $\C\mf S_n$ plays a role of a Cartan
subalgebra of a semsimple Lie algebra. Every irreducible $\C\mf S_n$-module $V$
can be decomposed as
$$
V=\bigoplus_{\uli=(i_1,\ldots,i_n)\in\C^n}V_{\uli},
$$
where $V_{\uli}=\{v\in V\mid L_kv=i_kv, 1\leq k\leq n\}$ is the
simultaneous eigenspace of $L_1,\ldots, L_n$ with eigenvalues
$i_1,\ldots,i_n$.  By the description of $\mc A_n$ above, we have
either $V_{\uli}=0$ or $\dim V_{\uli}=1$. If $V_{\uli}\neq 0$, we say
that $\uli$ is a weight of $V$ and $V_{\uli}$  is the $\uli$-weight
space of $V$, and we fix a nonzero vector $v_{\uli}\in V_{\uli}$.

Suppose $\la\in\mc P_n$ and $T$ is a standard tableau of shape
$\la$. Define its content sequence
$c(T)=(c(T_1),\ldots,c(T_n))\in\C^n$ by letting $c(T_k)$ be the
content of the cell occupied by $k$ in $T$ for $1\leq k\leq n$. By
analyzing the structures of weights, we can show that the sequences
$c(T)$  for standard tableaux $T$ with $n$ cells are exactly all the
weights  for irreducible $\mf S_n$-modules. Now we are ready to
formulate the Young's seminormal form for irreducible $\C\mf
S_n$-modules. For $\la\in\mc P_n$, define $V^{\la}=\sum_T\C v_T$,
where the summation is taken over standard tableaux of shape $\la$.
For $1\leq k\leq n-1$,  define
\begin{equation}\label{seminormalsymm}
s_kv_T=\big(c(T_{k+1})-c(T_k)\big)^{-1}v_T+\sqrt{1-
\big(c(T_{k+1})-c(T_k)\big)^{-2}} v_{s_kT},
\end{equation}
where $s_kT$ indicates the standard tableau obtained by switching
$k$ and $k+1$ in $T$ and $v_{s_kT}=0$ if $s_kT$ is not standard. In
this way Okounkov and Vershik \cite{OV} established the following.

\begin{theorem}[Young's seminormal form]\label{thm:seminormalsymm}
For $\la\in \mc{P}_n$, $V^{\la}$ affords an irreducible $\mf
S_n$-module given by \eqref{seminormalsymm}. Moreover, $\{V^{\la}
\mid  \la\in\mc P_n\}$ forms a complete set of non-isomorphic
irreducible $\mf S_n$-modules.
\end{theorem}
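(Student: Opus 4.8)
The plan is to follow the Okounkov--Vershik strategy of \cite{OV}, treating the Gelfand--Zetlin subalgebra $\mc A_n$ and its generators $L_1,\dots,L_n$ as a substitute for a Cartan subalgebra. First I would record the elementary relations $s_k^2=1$, $s_kL_j=L_js_k$ for $j\neq k,k+1$, and $s_kL_k=L_{k+1}s_k-1$ (hence $s_k(L_k-L_{k+1})=-(L_k-L_{k+1})s_k-2$), together with the quoted fact that $\mc A_n$ is precisely the algebra of diagonal matrices in the Wedderburn decomposition of $\C\mf S_n$, so that $\mc A_n$ is a maximal commutative subalgebra and its joint eigenspaces on any irreducible $\C\mf S_n$-module are at most one-dimensional. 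This gives, for each irreducible $V$, the weight decomposition $V=\bigoplus_{\uli}V_{\uli}$ with $\dim V_{\uli}\le 1$, and lets me introduce the spectrum $\mathrm{Spec}(n)\subseteq\C^n$ of all weights that occur, together with the equivalence relation declaring $\uli\sim\uli'$ when $v_{\uli}$ and $v_{\uli'}$ lie in the same irreducible constituent.

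The technical heart is a rank-one analysis: for fixed $k$, examine the subspace spanned by $v_{\uli}$ and $s_kv_{\uli}$ inside an irreducible $V$. From the commutation relations, $s_kv_{\uli}$ equals a multiple of $v_{\uli}$ plus a vector of weight $s_k\uli=(\dots,i_{k+1},i_k,\dots)$. Applying $s_k(L_k-L_{k+1})=-(L_k-L_{k+1})s_k-2$ to $v_{\uli}$ shows $i_k\neq i_{k+1}$; a similar argument shows that if $i_{k+1}=i_k\pm1$ then $s_k\uli\notin\mathrm{Spec}(n)$, so $s_kv_{\uli}\in\C v_{\uli}$ (and in fact $s_kv_{\uli}=\pm v_{\uli}$). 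When $|i_{k+1}-i_k|\ge 2$ one checks $s_k\uli\in\mathrm{Spec}(n)$ with $s_k\uli\sim\uli$, and on the $2$-dimensional span of $v_{\uli},v_{s_k\uli}$ the operator $s_k$ acts by a matrix which is pinned down, up to rescaling the second basis vector, by $s_k^2=1$ and $s_kL_k=L_{k+1}s_k-1$; normalizing $v_{s_k\uli}$ appropriately produces exactly formula \eqref{seminormalsymm}.

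The combinatorial identification is then carried out by induction on $n$, using the branching rule $\mathrm{res}^{\mf S_n}_{\mf S_{n-1}}$ and the constraints just obtained: one shows $i_1=0$, that successive entries evolve as if one were adding an addable box to a Young diagram, and hence that $\mathrm{Spec}(n)$ is exactly the set of content sequences $c(T)$ of standard tableaux $T$ with $n$ cells; that $c(T)\sim c(T')$ iff $T$ and $T'$ have the same shape; and that the $s_k$ act transitively on the set of standard tableaux of a fixed shape $\la$. Consequently $V^\la=\sum_T\C v_T$ with the action \eqref{seminormalsymm} is a single irreducible module, the $V^\la$ are pairwise non-isomorphic, and since $\sum_{\la\in\mc P_n}\bigl(\#\{\text{standard tableaux of shape }\la\}\bigr)^2=n!=\dim\C\mf S_n$ they form a complete list. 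The remaining point — that \eqref{seminormalsymm} really defines an $\mf S_n$-action — is the direct verification of $s_k^2=1$, $s_ks_{k+1}s_k=s_{k+1}s_ks_{k+1}$, and $s_ks_j=s_js_k$ for $|j-k|>1$ on the relevant $2\times2$ and $3\times3$ blocks.

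The step I expect to be the main obstacle is the spectrum computation in the third paragraph: showing that every weight sequence occurring is the content vector of a \emph{standard} tableau (not merely that contents of standard tableaux occur as weights) and simultaneously determining the equivalence relation $\sim$. This forces one to combine the rank-one module analysis with the branching rule in an induction that controls all possible ``transitions'' $i_k\mapsto i_{k+1}$, and it is the portion of \cite{OV} demanding the most care; by contrast, once the explicit formula \eqref{seminormalsymm} and the shape-indexing are in hand, the braid-relation check, irreducibility, and the dimension count are routine.
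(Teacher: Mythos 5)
Your outline faithfully reproduces the Okounkov--Vershik strategy that the paper merely cites for Theorem~\ref{thm:seminormalsymm} without giving a proof: treat $\mc A_n$ as a Cartan analogue, extract the weight constraints from the relations $s_kL_k=L_{k+1}s_k-1$ and $s_k^2=1$, identify $\mathrm{Spec}(n)$ with content vectors of standard tableaux via branching, and pin down the $2\times2$ blocks to recover \eqref{seminormalsymm}. Since the paper defers entirely to \cite{OV} at this point, your sketch is exactly the argument being invoked, and the step you flag as the main obstacle (that every occurring weight is a standard-tableau content vector, together with the description of $\sim$) is indeed where the substantive work in \cite{OV} lies.
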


\subsection{Jucys-Murphy elements for $\HC_n$ }
As in the group algebra of symmetric groups, there also exist
Jucys-Murphy elements $J_k(1\leq k\leq n)$ in
$\HC_n$ defined as (see \cite{Naz})
\begin{align}
J_k=\sum_{1\leq j< k}(1+c_jc_k)(j,k).\label{JM}
\end{align}

\begin{lemma} The following holds:
\begin{enumerate}
 \item $J_iJ_k=J_kJ_i$, \quad for $1\leq i\neq k\leq n$.
 \item $c_iJ_i=-J_ic_i, \quad c_iJ_k=J_kc_i$,\quad for $1\leq i\neq k\leq n$.
 \item $s_iJ_i=J_{i+1}s_i-(1+c_ic_{i+1})$,\quad for $1\leq i\leq n-1$.
 \item $s_iJ_k=J_ks_i$,\quad for $k\neq i, i+1$.
 \end{enumerate}
\end{lemma}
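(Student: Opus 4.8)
All four identities are statements about the algebra $\HC_n = \Cl_n \rtimes \C\mf S_n$, and each follows from a direct computation using only the defining relations \eqref{clifford} for the Clifford generators, the Coxeter/braid relations \eqref{braid} for the $s_i$, and the cross relations \eqref{pc} governing $s_i c_j = c_{s_i(j)} s_i$. The plan is to expand $J_k = \sum_{j<k}(1+c_jc_k)(j,k)$ according to \eqref{JM} and push everything through these relations. I expect (3) to be the only part requiring genuine care; (1), (2), (4) are essentially bookkeeping.

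\textbf{Parts (2) and (4).} These are the easiest and I would dispatch them first. For (2): for $i < k$ one checks term by term that $c_i$ anticommutes with $(1+c_jc_k)(j,k)$ when the transposition involves index $i$ and commutes otherwise; the sign $-1$ in $c_iJ_i = -J_ic_i$ comes from the fact that in each summand $(1+c_jc_i)(j,i)$ of $J_i$, conjugating by $c_i$ both flips the sign of the relevant Clifford monomial and the transposition $(j,i)$ sends $c_i \mapsto c_j$; a short case analysis makes $c_iJ_i = -J_ic_i$ fall out, and for $k\ne i$ the index $i$ either does not appear in the sum defining $J_k$ (if $i>k$ it does appear, so one must also handle $j=i$ there) — in all cases the two mechanisms (Clifford sign, transposition relabeling) cancel to give commutation. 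For (4): if $k \ne i,i+1$, then $s_i$ commutes with every transposition $(j,k')$ appearing in $J_k$ (their supports are either disjoint or the indices $i,i+1$ do not occur as $k'$ and appear symmetrically among the $j$'s) and with every $c_j c_{k'}$ by \eqref{pc}, so $s_iJ_k = J_ks_i$ after re-indexing the sum via $j \mapsto s_i(j)$.

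\textbf{Part (3).} This is the substantive computation. Write $J_i = \sum_{1\le j<i}(1+c_jc_i)(j,i)$. Conjugating the $j$-th term by $s_i$ on the left: $s_i (j,i) = (j,i+1) s_i$ for $j<i$ (using $s_i(i)=i+1$, $s_i(j)=j$), and $s_i(1+c_jc_i) = (1 + c_j c_{i+1}) s_i$ by \eqref{pc}. Hence $s_i J_i = \bigl(\sum_{1\le j<i}(1+c_jc_{i+1})(j,i+1)\bigr) s_i = \bigl(J_{i+1} - (1+c_ic_{i+1})(i,i+1)\bigr)s_i$, where I have split off the $j=i$ term that is present in $J_{i+1} = \sum_{1\le j<i+1}(1+c_jc_{i+1})(j,i+1)$ but absent from the sum just obtained. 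Finally $(i,i+1)s_i = s_i^2 = 1$, so $s_i J_i = J_{i+1} s_i - (1+c_ic_{i+1})$, as claimed. The only place to be careful is the sign and the Clifford relabeling in applying \eqref{pc}, i.e.\ that $s_i c_i = c_{i+1}s_i$ and $s_i c_j = c_j s_i$ for $j \ne i, i+1$, which is exactly \eqref{pc}.

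\textbf{Part (1).} With (3) in hand, commutativity $J_iJ_k = J_kJ_i$ can be proved by induction on $\max(i,k)$, exactly as for the symmetric group Jucys--Murphy elements. Since $J_k$ is (as in the classical case) the difference of the ``sum of all $(1+c_jc_l)(j,l)$ over $j<l\le k$'' and the same sum over $j<l\le k-1$, and the former is readily checked to be central in the subalgebra $\HC_k = \Cl_k \rtimes \C\mf S_k$ using the conjugation formulas above, each $J_k$ lies in $\HC_k$ and commutes with the central element of $\HC_{k-1}$; the induction then yields that $J_1,\dots,J_n$ pairwise commute. The main obstacle throughout is purely the sign bookkeeping coming from the Clifford generators being odd; once one is disciplined about tracking those signs via \eqref{clifford} and \eqref{pc}, every identity reduces to the corresponding classical manipulation.
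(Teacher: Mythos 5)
Your computations for parts (2), (3), and (4) are correct and follow essentially the same ``expand $J_k$ and push through the cross relations'' route the paper has in mind; part (3) in particular is spot on. The issue is in your argument for (1).

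The element $Z_k := \sum_{j<l\le k}(1+c_jc_l)(j,l)$ is \emph{not} central in $\HC_k$. Conjugating by $s_i$ (for $i<k$) sends the summand $(1+c_ic_{i+1})(i,i+1)$ to $(1+c_{i+1}c_i)(i+1,i)=(1-c_ic_{i+1})(i,i+1)$, and all other summands are permuted among themselves, so $s_i Z_k s_i^{-1}=Z_k-2c_ic_{i+1}(i,i+1)$, equivalently $s_iZ_k=Z_ks_i-2c_ic_{i+1}$. Likewise $c_m Z_k=(Z_k-2J_m)c_m$ for $m\le k$ by your own part (2). This is precisely where the spin case departs from the classical case: the sign coming from the Clifford generators spoils the ``class sum is central'' argument, so $J_k=Z_k-Z_{k-1}$ cannot be proved to commute with $\HC_{k-1}$ via centrality of $Z_{k-1}$ or $Z_k$.

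The repair is immediate and is what the paper does: parts (2) and (4), which you have already proved, show directly that $J_n$ commutes with every generator $c_i$ ($i\le n-1$) and $s_i$ ($i\le n-2$) of $\HC_{n-1}$, hence $J_n$ centralizes $\HC_{n-1}$. Since $J_1,\dots,J_{n-1}\in\HC_{n-1}$, this gives $J_iJ_n=J_nJ_i$, and the pairwise commutativity follows by induction. You should replace the $Z_k$-centrality step with this observation; no new computation is required beyond what you have for (2) and (4). (Also a small slip in (2): for $i\neq k$ the index $i$ can occur as one of the $j$'s in $J_k$ only when $i<k$, not $i>k$, though the term-by-term check there does yield commutation as you claim.)
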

\begin{proof}
It follows by a direct computation that $c_iJ_n=J_nc_i$,
and $\sigma J_n=J_n\sigma$, for $1\le i \le n-1$ and $\sigma \in \mf S_{n-1}$.
Hence, $J_n$
commutes with $\HC_{n-1}$, and whence (1).
The remaining properties can be also verified by direct calculations.
\end{proof}

\subsection{Degenerate affine Hecke-Clifford algebras $\Haff$}\label{sec:Haff}
For $n\in\mathbb{Z}_+$, the affine Hecke-Clifford algebra $\Haff$ is
defined to be the superalgebra generated by even generators
$s_1,\ldots,s_{n-1},x_1,\ldots,x_n$ and odd generators
$c_1,\ldots,c_n$ subject to the following relations (besides the
relations \eqref{braid}, \eqref{clifford} and \eqref{pc}):
\begin{align}
x_ix_j&=x_jx_i, \quad 1\leq i,j\leq n, \label{poly}\\
s_ix_i&=x_{i+1}s_i-(1+c_ic_{i+1}), \quad 1\leq i\leq n-1,\label{px1}\\
s_ix_j&=x_js_i, \quad j\neq i, i+1, \quad 1\leq i,j\leq n, \label{px2}\\
x_ic_i=-c_ix_i,\; x_ic_j&=c_jx_i,\quad 1\leq i\neq j\leq n.
\label{xc}
\end{align}
\begin{remark}
The affine Hecke-Clifford algebra  $\Haff$ was introduced by
Nazarov~\cite{Naz} (sometimes called affine Sergeev algebra).  The Morita
super-equivalence \eqref{map:isorm.HC} between $\HC_n$ and  $\C
{\mf S}_n^-$ has been extended to one between $\Haff$ and the affine spin
Hecke algebras \cite[Proposition~3.4]{Wa1} and for other classical
type Weyl groups \cite[Theorem~4.4]{KW}.
\end{remark}

Denote by $\mc{P}_n^{\mathfrak{c}}$ the superalgebra generated by
even generators $x_1,\ldots,x_n$ and odd generators $c_1,\ldots,c_n$
subject to the relations (\ref{clifford}), (\ref{poly}) and
(\ref{xc}). For $\alpha=(\alpha_1,\ldots,\alpha_n)\in\mathbb{Z}_+^n$
and $\beta\in\mathbb{Z}_2^n$, set $x^{\alpha}=x_1^{\alpha_1}\cdots
x_n^{\alpha}$ and $c^{\beta}=c_1^{\beta_1}\cdots c_n^{\beta_n}$.
Then we have the following.

\begin{lemma}\cite{Naz}  \cite[Theorem 2.2]{BK}\label{lem:PBW}
The set $\{x^{\alpha}c^{\beta}w\mid \alpha\in\mathbb{Z}_+^n,
\beta\in\mathbb{Z}_2^n, w\in \mf S_n\}$ forms a basis of $\Haff$.
\end{lemma}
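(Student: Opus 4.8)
The plan is to prove the two halves of the statement separately: that the set $B := \{x^{\alpha}c^{\beta}w \mid \alpha\in\mathbb{Z}_+^n,\ \beta\in\mathbb{Z}_2^n,\ w\in\mf S_n\}$ spans $\Haff$ (easy), and that $B$ is linearly independent (the real content), by exhibiting a faithful enough representation.

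\emph{Spanning.} Since $\Haff$ is spanned by arbitrary words in the generators $s_i,x_j,c_j$, it suffices to rewrite each such word as a $\mathbb{C}$-linear combination of elements of $B$. Using \eqref{px2} and \eqref{pc} one slides every occurrence of a generator $s_i$ to the right of all the $x_j$'s and $c_j$'s; the only obstruction is \eqref{px1}, but sliding $s_i$ past an $x_i$ produces, besides the term with $x_i$ replaced by $x_{i+1}$, a correction $-(1+c_ic_{i+1})$ of strictly smaller total degree in the $x$-variables. Hence an outer induction on total $x$-degree (with an inner induction on the length of the $\mf S_n$-word, handled by \eqref{braid}) reduces any word to a combination of words of the shape (polynomial in $x$)$\cdot$(word in $c$)$\cdot$(word in $s$). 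Finally \eqref{xc} pushes all $x$'s left of all $c$'s, \eqref{poly} orders the $x$'s into a monomial $x^{\alpha}$, \eqref{clifford} reduces each $c_i$-exponent into $\{0,1\}$ giving a monomial $c^{\beta}$, and \eqref{braid} reduces the $s$-part to a reduced word for some $w\in\mf S_n$. Thus $B$ spans $\Haff$.

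\emph{Linear independence.} The plan is to construct an $\Haff$-module on which the elements of $B$ act by linearly independent operators. Let $V := \mathbb{C}[x_1,\dots,x_n]\otimes\Cl_n\otimes\mathbb{C}\mf S_n$, with $\mathbb{C}$-basis $\{x^{\alpha}\otimes c^{\beta}\otimes w\}$ (here $\{c^{\beta}\mid\beta\in\mathbb{Z}_2^n\}$ is the standard basis of $\Cl_n$). Let $\hat x_j$ act by multiplication by $x_j$ on the first factor; let $\hat c_j$ act by
\[
\hat c_j\bigl(x^{\alpha}\otimes c^{\beta}\otimes w\bigr)=(-1)^{\alpha_j}\,x^{\alpha}\otimes (c_jc^{\beta})\otimes w,
\]
where $c_jc^{\beta}$ is the product in $\Cl_n$ and the sign is dictated by \eqref{xc}; and let $\hat s_i$ be the ``super-Demazure'' operator
\[
\hat s_i\bigl(a\otimes w\bigr)={}^{s_i}\!a\otimes s_iw+D_i(a)\otimes w\qquad\bigl(a\in\mc{P}_n^{\mathfrak{c}},\ \text{identified with }\mathbb{C}[x]\otimes\Cl_n\bigr),
\]
where ${}^{s_i}\!a$ is the image of $a$ under the automorphism of $\mc{P}_n^{\mathfrak{c}}$ swapping $x_i\leftrightarrow x_{i+1}$, $c_i\leftrightarrow c_{i+1}$, and $D_i\colon\mc{P}_n^{\mathfrak{c}}\to\mc{P}_n^{\mathfrak{c}}$ is the explicit linear map on the monomial basis with $D_i(c^{\beta})=0$, $D_i(x_j^k c^{\beta})=0$ for $j\neq i,i+1$, and on powers of $x_i,x_{i+1}$ the value forced by iterating \eqref{px1} (so that ``$s_i a={}^{s_i}\!a\,s_i+D_i(a)$'' should hold in $\Haff$). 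One then checks directly that $\hat s_i,\hat x_j,\hat c_j$ satisfy all the defining relations \eqref{braid}, \eqref{clifford}, \eqref{pc}, \eqref{poly}, \eqref{px1}, \eqref{px2}, \eqref{xc}, yielding an algebra homomorphism $\rho\colon\Haff\to\mathrm{End}_{\mathbb{C}}(V)$. Since $\rho(x^{\alpha}c^{\beta}w)$ sends $1\otimes1\otimes1$ to $\pm\,x^{\alpha}\otimes c^{\beta}\otimes w$ and distinct $(\alpha,\beta,w)$ give distinct basis vectors of $V$, the operators $\rho(x^{\alpha}c^{\beta}w)$ — hence the elements $x^{\alpha}c^{\beta}w$ of $\Haff$ — are linearly independent. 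With spanning this proves $B$ is a basis.

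\emph{Main obstacle.} The only substantial step is verifying the relations for the $\hat s_i$: pinning down $D_i$ so that \eqref{px1} holds exactly (the sign bookkeeping being nontrivial because $x_i$ and $x_{i+1}$ anticommute with $c_ic_{i+1}$), checking $\hat s_i\hat c_j=\hat c_{s_i(j)}\hat s_i$ against the $(-1)^{\alpha_j}$ convention, and — most delicately — checking the braid relation $\hat s_i\hat s_{i+1}\hat s_i=\hat s_{i+1}\hat s_i\hat s_{i+1}$. These are finite but fiddly computations. (One may first pass to the associated graded algebra for the filtration $\deg x_i=1$, $\deg c_i=\deg s_i=0$, where \eqref{px1} degenerates to $s_ix_i=x_{i+1}s_i$ and $\gr\Haff$ becomes a quotient of the crossed product $(\mathbb{C}[x_1,\dots,x_n]\otimes\Cl_n)\rtimes\mathbb{C}\mf S_n$, which visibly has the expected PBW basis; but one still needs the module $V$ to see the surjection is an isomorphism, so this only reorganizes the core computation.)
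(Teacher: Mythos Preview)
Your proposal is correct and follows essentially the same strategy as the paper's sketch: spanning by a straightening argument, and linear independence via an explicit representation on which the putative basis elements act by independent operators. The paper constructs its representation on the smaller space $\mc{P}_n^{\mathfrak{c}}$ itself (with $x_i,c_i$ acting by left multiplication and the $s_i$-action then forced), whereas you work on $\mc{P}_n^{\mathfrak{c}}\otimes\C\mf S_n$; your larger module has the pleasant feature that linear independence is immediate by evaluating at $1\otimes1\otimes1$, while the paper must argue that the operators $\pi(x^{\alpha}c^{\beta}w)$ on $\mc{P}_n^{\mathfrak{c}}$ are independent. In either variant the substantive work is the same---verifying that the $s_i$-operators satisfy $s_i^2=1$ and the braid relation---and you correctly flag this as the crux and defer it (as does the paper, which points to \cite{KW}). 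One small remark: your definition of $D_i$ as ``the value forced by iterating \eqref{px1}'' should be read as a recursive definition on the monomial basis of $\mc{P}_n^{\mathfrak{c}}$ (e.g.\ $D_i(x_ia)=x_{i+1}D_i(a)-(1+c_ic_{i+1})a$, etc.), not as an appeal to an identity in $\Haff$, lest the argument become circular.
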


\begin{proof}[Sketch of a proof]
One can construct a representation $\pi$ of $\Haff$ on the
polynomial-Clifford algebra $\mc{P}_n^{\mathfrak{c}}$, where the
$x_i$ and $c_i$ for all $i$ act by left multiplication (and the
action of $s_i$'s is then determined uniquely). Then one checks that
the linear operators $\pi(x^{\alpha}c^{\beta}w)$ are linearly
independent. We refer to the proof of \cite[Theorem~3.4]{KW} for
detail.
\end{proof}

By \cite{Naz}, there exists a surjective homomorphism
\begin{align}
\digamma: \Haff&\longrightarrow \HC_n\label{surjmap}\\
c_k\mapsto c_k, s_l&\mapsto s_l, x_k\mapsto J_k, \quad (1\leq
k\leq n, 1\leq l\leq n-1),  \notag
\end{align}
and the kernel of $\digamma$ coincides with the ideal $\langle
x_1\rangle$ of $\Haff$ generated by $x_1$. Hence the category of
finite-dimensional $\HC_n$-modules can be identified as the category
of finite-dimensional $\Haff$-modules which are annihilated by
$x_1$. For the study of $\Haff$-modules, we shall mainly focus on
the so-called finite-dimensional {\it integral} modules, on which
$x^2_1,\ldots, x^2_n$ have eigenvalues of the form
$$
q(i)=i(i+1), \qquad i\in\Z_+.
$$
It is easy to see that a finite-dimensional $\Haff$-module $M$ is
integral if all of eigenvalues of $x^2_j$ {\em for a fixed $j$} on $M$ are
of the form $q(i)$  (cf. \cite[Lemma~4.4]{BK} or \cite[Lemma~
15.1.2]{Kle}). Hence the category of finite-dimensional
$\HC_n$-modules can be identified with the subcategory of integral
$\Haff$-modules on which $x_1=0$.

By Lemma~\ref{lem:PBW}, $\mc{P}_n^{\mathfrak{c}}$ can be identified
with the subalgebra of $\Haff$ generated by $x_1,\ldots,x_n$ and
$c_1,\ldots,c_n$. For $i\in\Z_+$, denote by $L(i)$ the
$2$-dimensional $\mathcal{P}_1^{\mathfrak{c}}$-module with
$L(i)_{\bar{0}}=\C v_0$ and $L(i)_{\bar{1}}=\C v_1$ and
$$
x_1v_0=\sqrt{q(i)}v_0,\quad x_1v_1=-\sqrt{q(i)}v_1, \quad
c_1v_0=v_1,\quad c_1v_1=v_0.
$$
Note that $L(i)$ is irreducible of type $\texttt{M}$ if $i\neq 0$,
and irreducible of type $\texttt{Q}$ if $i=0$. Moreover $L(i),
i\in\Z_+$ form a complete set of pairwise non-isomorphic integral
irreducible $\mathcal{P}_1^{\mathfrak{c}}$-module. Since
$\mc{P}_n^{\mathfrak{c}}\cong
\mathcal{P}_1^{\mathfrak{c}}\otimes\cdots\otimes
\mathcal{P}_1^{\mathfrak{c}}$,   Lemma~\ref{tensorsmod} implies the
following.

\begin{lemma} 
$\{L(\uli)=L(i_1)\circledast L(i_2)\circledast\cdots\circledast
L(i_n)|~\uli=(i_1,\ldots,i_n)\in\Z_+^n\}$ form a complete set of
pairwise non-isomorphic integral irreducible
$\mc{P}_n^{\mathfrak{c}}$-modules.
Furthermore,
$\text{dim}~L(\uli)=2^{n-\lfloor\frac{\gamma_0}{2}\rfloor}$, where
$\gamma_0$ denotes the number of $1\leq j\leq n$ with $i_j=0$, and
$\lfloor\frac{\gamma_0}{2}\rfloor$ denotes the greatest integer less
than or equal to $\frac{\gamma_0}{2}$ .
\end{lemma}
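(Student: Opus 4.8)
The plan is to bootstrap from the rank-one case via the superalgebra isomorphism $\mc{P}_n^{\mathfrak{c}}\cong \mathcal{P}_1^{\mathfrak{c}}\otimes\cdots\otimes\mathcal{P}_1^{\mathfrak{c}}$ ($n$ factors), under which the $k$th tensorand is identified with the subalgebra generated by $x_k$ and $c_k$. We already know that the integral irreducible $\mathcal{P}_1^{\mathfrak{c}}$-modules are exactly the $L(i)$, $i\in\Z_+$, with $L(i)$ of type $\texttt{M}$ for $i\neq0$ and of type $\texttt{Q}$ for $i=0$. I would then induct on $n$, writing $\mc{P}_n^{\mathfrak{c}}=\mathcal{P}_1^{\mathfrak{c}}\otimes\mc{P}_{n-1}^{\mathfrak{c}}$. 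Given $\uli=(i_1,\ldots,i_n)\in\Z_+^n$, by induction $L(i_2)\circledast\cdots\circledast L(i_n)$ is a well-defined (up to $\Pi$) integral irreducible $\mc{P}_{n-1}^{\mathfrak{c}}$-module, and Lemma~\ref{tensorsmod} shows that $L(i_1)\circledast\big(L(i_2)\circledast\cdots\circledast L(i_n)\big)$ is irreducible; tracking cases (1)--(3) of Lemma~\ref{tensorsmod} through the induction shows its type is $\texttt{M}$ if $\gamma_0$ (the number of $j$ with $i_j=0$) is even and $\texttt{Q}$ if $\gamma_0$ is odd, and in particular the $\circledast$-product is independent of the bracketing up to $\Pi$.

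For completeness I would invoke the last assertion of Lemma~\ref{tensorsmod}: every irreducible $\mc{P}_n^{\mathfrak{c}}$-module $M$ occurs as a component of $V\otimes W$ for some irreducible $\mathcal{P}_1^{\mathfrak{c}}$-module $V$ and irreducible $\mc{P}_{n-1}^{\mathfrak{c}}$-module $W$. If $M$ is integral, then $x_1^2$ has an eigenvalue of the form $q(i_1)$ on $M$, hence on the subspace $V$, so $V$ is integral and $V\cong L(i_1)$; similarly $W$ is integral, so by induction $W\cong L(i_2)\circledast\cdots\circledast L(i_n)$ up to $\Pi$. Hence $M$ is, up to $\Pi$, one of the $L(\uli)$. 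That distinct tuples give non-isomorphic modules is immediate: $x_k^2$ acts on $L(\uli)$ by the scalar $q(i_k)=i_k(i_k+1)$, and $q$ is injective on $\Z_+$, so $\uli$ is recovered from the isomorphism class.

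For the dimension count, one observes that each $L(i)$ has dimension $2$, so $L(i_1)\otimes\cdots\otimes L(i_n)$ has dimension $2^n$, and passing to $\circledast$ loses a factor of $2$ exactly when two modules both of type $\texttt{Q}$ are combined (case (3) of Lemma~\ref{tensorsmod}). Computing with the left-to-right bracketing, at step $k$ one combines $L(i_1)\circledast\cdots\circledast L(i_{k-1})$ --- of type $\texttt{Q}$ iff the number of zeros among $i_1,\ldots,i_{k-1}$ is odd --- with $L(i_k)$, which is of type $\texttt{Q}$ iff $i_k=0$. Writing $p_1<\cdots<p_{\gamma_0}$ for the positions of the zeros of $\uli$, a halving occurs at $k=p_m$ iff $m-1$ is odd, i.e.\ $m$ is even; hence there are $\lfloor\gamma_0/2\rfloor$ halvings and $\dim L(\uli)=2^{\,n-\lfloor\gamma_0/2\rfloor}$.

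The work here is entirely bookkeeping rather than conceptual: the one point to handle with care is keeping track of the $\texttt{M}$/$\texttt{Q}$ type of the iterated $\circledast$-product (together with the $\Pi$-ambiguity built into the notation $\circledast$), since this is precisely what drives the completeness step and pins down where the dimension drops by a factor of $2$.
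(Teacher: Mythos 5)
Your proof is correct and takes essentially the same route the paper has in mind: the paper offers no written proof, simply asserting the lemma as a consequence of the superalgebra isomorphism $\mc{P}_n^{\mathfrak{c}}\cong \mathcal{P}_1^{\mathfrak{c}}\otimes\cdots\otimes\mathcal{P}_1^{\mathfrak{c}}$ together with Lemma~\ref{tensorsmod}, and your argument is a faithful and accurate filling-in of that sketch (the induction, the type-tracking via the parity of $\gamma_0$, the reduction of completeness to the rank-one case using that $x_k^2$ is even central and hence acts by a scalar, and the bookkeeping that locates a halving at every second zero position).
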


The following definition of  \cite{HKS, Wan} is motivated by similar
studies for the affine Hecke algebras in \cite{Ch, Ram, Ru}.
\begin{definition}\label{defn:CS}
A representation of $\Haff$ is called {\em completely splittable} if
$x_1,\ldots,x_n$ act semisimply.
\end{definition}

Since the polynomial generators $x_1,\ldots,x_n$ commute, a
finite-dimensional integral completely splittable $\Haff$-module $M$
can be decomposed as
\begin{align*}
M=\bigoplus_{\uli\in\Z_+^n}M_{\uli},
\end{align*}
where
$$
M_{\uli}=\{z\in M \mid x_k^2z=q(i_k)z, 1\leq k\leq n\}.
$$
If $M_{\underline{i}}\neq 0$, then $\underline{i}$ is called a {\em
weight} of $M$ and $M_{\underline{i}}$ is called a {\em weight
space}. Since $x_k^2, 1\leq k\leq n$ commute with $c_1,\ldots,c_n$,
each $M_{\uli}$ is actually $\mc{P}_n^{\mathfrak{c}}$-submodule of
$M$.

Following Nazarov, we define the intertwining elements as
\begin{align}
\phi_k:=s_k(x_k^2-x^2_{k+1})+(x_k+x_{k+1})+c_kc_{k+1}(x_k-x_{k+1}),\label{eq:intertw}
1\leq k<n.
\end{align}
It is known \cite{Naz} and easy to check directly that
\begin{align}
\phi_k^2=2(x_k^2+x^2_{k+1})-(x_k^2-x^2_{k+1})^2\label{eq:sqinter},\\
\phi_kx_k=x_{k+1}\phi_k, \phi_kx_{k+1}=x_k\phi_k,
\phi_kx_l=x_l\phi_k\label{eq:xinter},\\
\phi_kc_k=c_{k+1}\phi_k, \phi_kc_{k+1}=c_k\phi_k,
\phi_kc_l=c_l\phi_k\label{eq:cinter},\\
\phi_j\phi_k=\phi_k\phi_j,
\phi_k\phi_{k+1}\phi_k=\phi_{k+1}\phi_k\phi_{k+1},\label{braidinter}
\end{align}
for all admissible $j,k,l$ with $l\neq k, k+1$ and $|j-k|>1$.

\subsection{Weights and standard skew shifted tableaux}

This subsection is technical though elementary in nature, and we
recommend the reader to skip most of the proofs in a first reading.
The upshot of this subsection is Proposition~\ref{prop:bijection}
which identifies the weights as content vectors associated to
standard skew shifted tableaux.

\begin{lemma}\label{lem:separate weig.}
Suppose that $M$ is an integral completely splittable $\Haff$-module
and that $\uli=(i_1,\ldots,i_n)\in\Z_+^n$ is a weight of $M$. Then
$i_k\neq i_{k+1}$ for all $1\leq k\leq n-1$.
\end{lemma}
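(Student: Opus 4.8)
The plan is to suppose, for contradiction, that $i_k = i_{k+1} =: i$ for some $k$, and to derive a contradiction by examining the action of the intertwining element $\phi_k$ on the weight space $M_{\uli}$. First I would invoke \eqref{eq:sqinter}: on $M_{\uli}$, the operator $\phi_k^2$ acts as the scalar $2(x_k^2 + x_{k+1}^2) - (x_k^2 - x_{k+1}^2)^2 = 2(q(i) + q(i)) - 0 = 4q(i) = 4i(i+1)$. There are two cases. If $i \neq 0$, this scalar is nonzero, so $\phi_k$ acts invertibly on $M_{\uli}$; in particular $\phi_k$ restricts to a bijection $M_{\uli} \to M_{\uli}$ (using \eqref{eq:xinter}, which shows $\phi_k$ preserves the weight since swapping the $k$ and $k{+}1$ entries of $\uli$ leaves $\uli$ unchanged when $i_k = i_{k+1}$). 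If $i = 0$, then $\phi_k^2 = 0$ on $M_{\uli}$, so $\phi_k$ is nilpotent of square zero there.

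The contradiction should come from comparing $\phi_k$ with the braid-type/Clifford relations and the explicit defining relation \eqref{px1} rewritten via \eqref{eq:intertw}. Concretely, I would expand $\phi_k = s_k(x_k^2 - x_{k+1}^2) + (x_k + x_{k+1}) + c_k c_{k+1}(x_k - x_{k+1})$ acting on a weight vector $v \in M_{\uli}$: since $x_k^2 v = x_{k+1}^2 v = q(i) v$, the first term kills any vector on which $s_k$ acts within the weight space, but more usefully the whole operator on $M_{\uli}$ reduces to $\phi_k|_{M_{\uli}}$ behaving like an element of the subalgebra $\mc P_n^{\mf c}$ composed with $s_k$. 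The cleanest route is probably: on $M_{\uli}$ with $i_k = i_{k+1}$, one shows directly from \eqref{eq:intertw} that $\phi_k$ acts by an operator whose square is forced by \eqref{eq:sqinter} to be $4q(i)$, yet one can also compute $\phi_k$ modulo terms that manifestly land in a proper submodule, forcing $4q(i) = 0$ hence $i = 0$; and then the $i = 0$ case is excluded by a separate parity/dimension argument using the structure of $\mc P_n^{\mf c}$-modules (a type $\texttt Q$ situation at index $0$ clashes with $\phi_k^2 = 0$ together with the intertwining relations \eqref{eq:cinter}).

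The main obstacle I anticipate is handling the $i = 0$ case cleanly, since there $\phi_k^2 = 0$ does not immediately give a contradiction — one must use that $c_k, c_{k+1}$ act on $M_{\uli}$ and that \eqref{eq:cinter} gives $\phi_k c_k = c_{k+1}\phi_k$, combined with $c_k^2 = c_{k+1}^2 = 1$, to show $\phi_k$ cannot be simultaneously nonzero and square-zero in the required way, or else to show $\phi_k = 0$ on $M_{\uli}$ and then derive a contradiction from the defining relation $s_k x_k = x_{k+1} s_k - (1 + c_k c_{k+1})$ applied to $v$: with $x_k v = \pm\sqrt{q(i)}\,v = 0$ (as $i=0$) one gets $(1 + c_k c_{k+1}) v' = 0$ type constraints on generators of $M_{\uli}$ as a $\mc P_n^{\mf c}$-module, which is incompatible with $M_{\uli}$ being a (sum of) integral irreducible $\mc P_n^{\mf c}$-module(s) as classified in the lemma preceding Definition~\ref{defn:CS}. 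I would write this last step by reducing to the rank-one/rank-two subalgebra generated by $x_k, x_{k+1}, c_k, c_{k+1}$ and checking the finitely many possibilities directly.
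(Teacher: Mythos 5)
Your central computation — that $\phi_k$, restricted to $M_{\uli}$, reduces to $(x_k+x_{k+1})+c_kc_{k+1}(x_k-x_{k+1})$ and that its square acts by $4q(i)$ when $i_k=i_{k+1}=i$ — is correct and is in fact implicitly present in the paper's proof (the operator the paper writes out and then squares is exactly this restricted $\phi_k$). But the conclusion ``forcing $4q(i)=0$ hence $i=0$'' does not follow from that observation alone, and the step you propose to get there (``compute $\phi_k$ modulo terms that manifestly land in a proper submodule'') is not a real argument: $M$ is not assumed irreducible, there is no natural quotient in play, and if $q(i)\neq 0$ then $\phi_k$ being a bijection $M_{\uli}\to M_{\uli}$ is not by itself contradictory. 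The ingredient you are missing is precisely where the hypothesis of complete splittability enters: from \eqref{px1} one derives $(x_k^2-q(i))\,s_k z = -\phi_k z$ for $z\in M_{\uli}$, and since $\phi_k z\in M_{\uli}$ this gives $(x_k^2-q(i))^2 s_k z=0$; the \emph{semisimplicity} of $x_k^2$ (and similarly $x_{k+1}^2$) then upgrades this to $(x_k^2-q(i))s_k z=0$, i.e.\ $s_k z\in M_{\uli}$, which in turn forces $\phi_k z=0$. Only now does $\phi_k^2=4q(i)$ yield $q(i)=0$.

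The same gap recurs in your treatment of $i=0$. You correctly note that one must appeal to the structure of the weight space as a $\mc P_n^{\mathfrak c}$-module, and the relation $s_kx_k=x_{k+1}s_k-(1+c_kc_{k+1})$ is indeed the right tool, but the derivation $(1+c_kc_{k+1})z=0$ (and hence $z=\tfrac12(1-c_kc_{k+1})(1+c_kc_{k+1})z=0$) again requires knowing $s_k z\in M_{\uli}$, so that $x_{k+1}s_kz=0$ follows from $x_{k+1}|_{M_{\uli}}=0$. A ``rank-two subalgebra'' case analysis or a type-$\texttt Q$ parity argument will not substitute for this, because the constraint on $s_kz$ is what pins down where the Clifford identity applies. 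In short: the intertwiner $\phi_k$ deliberately cancels the $s_k(x_k^2-x_{k+1}^2)$ term, and on the diagonal $i_k=i_{k+1}$ this cancellation is total, so $\phi_k$ alone ``forgets'' $s_k$. You must recover $s_k z\in M_{\uli}$ via semisimplicity before either of your two subcases closes.
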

\begin{proof}
Suppose $i_k=i_{k+1}$ for some $1\leq k\leq n-1$. Let $0\neq z\in
M_{\underline{i}}$. One can show using~(\ref{px1}) that
\begin{align}
x_k^2s_k&=s_kx_{k+1}^2-\big(x_k(1-c_kc_{k+1})+(1-c_kc_{k+1})x_{k+1}\big)
 \label{reln:x2s1}\\
x_{k+1}^2s_k&=s_kx_k^2+\big(x_{k+1}(1+c_kc_{k+1})+(1+c_kc_{k+1})x_k\big).
\label{reln:x2s2}
\end{align}
Since $M$ is completely
splittable, $(x_k^2-q(i_k))z=0=(x^2_{k+1}-q(i_{k+1}))z$. This
together with~(\ref{reln:x2s1}) shows that
\begin{equation}  \label{eq:aux1}
(x_k^2-q(i_k))s_kz=(x_k^2-q(i_{k+1}))s_kz=-\big(x_k(1-c_kc_{k+1})+(1-c_kc_{k+1})x_{k+1}\big)z,
\end{equation}
and hence
$$
(x_k^2-q(i_k))^2s_kz=
-\big(x_k(1-c_kc_{k+1})+(1-c_kc_{k+1})x_{k+1}\big)(x_k^2-q(i_k))z=0.
$$
Similarly, we see that
$$
(x_{k+1}^2-q(i_{k+1}))^2s_kz=0.
$$
Hence $s_kz\in M_{\underline{i}}$, i.e., $(x_k^2-q(i_k))s_kz=0$, and
therefore \eqref{eq:aux1} implies that
\begin{align*}
&\big(x_k(1-c_kc_{k+1})+(1-c_kc_{k+1})x_{k+1}\big)z=0,
 \\
&2(x_k^2+x^2_{k+1})z=\big(x_k(1-c_kc_{k+1})+(1-c_kc_{k+1})x_{k+1}\big)^2z=0.
\end{align*}
This means that
 $q(i_{k+1})=-q(i_k)$ and hence $q(i_k)=q(i_{k+1})=0$ since
$i_k=i_{k+1}$.
We conclude that $x_k=0=x_{k+1}$ on $ M_{\underline{i}}$. This
implies that $x_{k+1}s_kz=0$ since $s_kz\in M_{\underline{i}}$ as
shown above. Then
$$(1+c_kc_{k+1})z=x_{k+1}s_kz-s_kx_kz=0,
$$
and hence $z=\hf(1-c_kc_{k+1})(1+c_kc_{k+1})z=0$, which is a
contradiction.
\end{proof}
\begin{lemma}\label{lem:interact}
Assume that $\uli=(i_1,\ldots,i_n)\in\Z_+^n$ is a weight of an
irreducible integral completely splittable $\Haff$-module $M$. Fix
$1\leq k\leq n-1$.
\begin{enumerate}
\item If $i_k\neq i_{k+1}\pm1$, then
$\phi_kz$ is a nonzero weight vector of weight $s_k\cdot \uli$ for
any $0\neq z\in M_{\uli}$. Hence $s_k\cdot\uli$ is a weight of $M$.

\item If $i_k=i_{k+1}\pm 1$, then $\phi_k=0$ on $M_{\uli}$.
\end{enumerate}
\end{lemma}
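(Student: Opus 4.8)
The plan is to read both parts off the intertwiner identities \eqref{eq:sqinter}--\eqref{eq:cinter} together with the formula $q(i)=i(i+1)$.

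For part (1), I would first check that $\phi_k z\in M_{s_k\cdot\uli}$ for $0\neq z\in M_{\uli}$: iterating \eqref{eq:xinter} gives $\phi_k x_j^2=x_{s_k(j)}^2\phi_k$ for every $j$ (where $s_k$ swaps $k$ and $k+1$), so $x_k^2(\phi_k z)=q(i_{k+1})\phi_k z$, $x_{k+1}^2(\phi_k z)=q(i_k)\phi_k z$, and $x_l^2(\phi_k z)=q(i_l)\phi_k z$ for $l\neq k,k+1$, which is exactly the weight $s_k\cdot\uli$. Next, \eqref{eq:sqinter} shows $\phi_k^2$ acts on $M_{\uli}$ as the scalar $2\bigl(q(i_k)+q(i_{k+1})\bigr)-\bigl(q(i_k)-q(i_{k+1})\bigr)^2$; using $q(a)-q(b)=(a-b)(a+b+1)$ and a short simplification this scalar equals
\[
(i_k+i_{k+1})(i_k+i_{k+1}+2)\bigl(1-(i_k-i_{k+1})^2\bigr).
\]
By Lemma~\ref{lem:separate weig.}, $i_k\neq i_{k+1}$, so $i_k+i_{k+1}\ge 1$ and the first two factors are positive, while $1-(i_k-i_{k+1})^2\neq 0$ exactly when $i_k\neq i_{k+1}\pm 1$. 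Hence $\phi_k^2$ is a nonzero scalar on $M_{\uli}$, which forces $\phi_k z\neq 0$ and shows $s_k\cdot\uli$ is a weight.

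For part (2), the same computation gives $\phi_k^2=0$ on $M_{\uli}$ when $i_k=i_{k+1}\pm 1$, so the real point is to pass from $\phi_k^2=0$ to $\phi_k=0$. Assume (without loss of generality) $i_k=i_{k+1}+1$ and put $i:=i_{k+1}$, so $q(i_k)-q(i_{k+1})=2(i+1)$. The inhomogeneous terms in \eqref{reln:x2s1}--\eqref{reln:x2s2} (from the proof of Lemma~\ref{lem:separate weig.}) both equal $T:=(x_k+x_{k+1})+c_kc_{k+1}(x_k-x_{k+1})$, and from those relations plus complete splittability one gets that $s_k$ maps $M_{\uli}$ into $M_{\uli}\oplus M_{s_k\cdot\uli}$, say $s_k z=\sigma z+\tau z$ with $\sigma z\in M_{\uli}$, $\tau z\in M_{s_k\cdot\uli}$, where $\sigma=-\tfrac{1}{2(i+1)}T|_{M_{\uli}}$ and $\phi_k z=2(i+1)\,\tau z$. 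A direct Clifford computation gives $T^2=2(x_k^2+x_{k+1}^2)$, whence $\sigma^2=\mathrm{id}$ on $M_{\uli}$; expanding $s_k^2 z=z$ then forces $\tau'\tau=0$, and symmetrically $\tau\tau'=0$, where $\tau'$ is the analogous off-diagonal part of $s_k$ on $M_{s_k\cdot\uli}$. It remains to deduce $\tau=0$, i.e.\ $\phi_k=0$ on $M_{\uli}$.

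That last deduction is the main obstacle, since $\phi_k^2=0$ on $M_{\uli}\oplus M_{s_k\cdot\uli}$ is not by itself enough: one must use the irreducibility of $M$. Here I would observe that, by \eqref{px1}--\eqref{px2} and \eqref{pc} applied to weight vectors, $\tau$ is a homomorphism of $\mc{P}_n^{\mathfrak c}$-modules twisted by the transposition $x_k\leftrightarrow x_{k+1}$, $c_k\leftrightarrow c_{k+1}$, so $\ker\tau$ and $\mathrm{im}\,\tau$ are $\mc{P}_n^{\mathfrak c}$-submodules; then, combining $\tau'\tau=\tau\tau'=0$, the fact that $M$ is generated over $\Haff$ by any nonzero weight vector, and the already-established behaviour of $s_k$ and $\phi_k$ on the neighbouring weight spaces, I would rule out $\tau\neq 0$. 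I expect this closing argument, rather than the routine relation-chasing in the earlier steps, to be where the actual work lies.
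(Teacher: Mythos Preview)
Your argument for part (1) is correct and essentially identical to the paper's, with the added benefit of an explicit factorization of the scalar.

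For part (2) your setup is fine (indeed $T^2=2(x_k^2+x_{k+1}^2)$, so $\sigma^2=\mathrm{id}$ and hence $\tau'\tau=\tau\tau'=0$, and $\phi_k=2(i+1)\tau$ on $M_{\uli}$), but the closing step is a genuine gap, not just routine relation-chasing. Knowing $\tau'\tau=\tau\tau'=0$ and that $\ker\tau$, $\mathrm{im}\,\tau$ are $\mc{P}_n^{\mathfrak c}$-submodules does \emph{not} force $\tau=0$: the weight spaces $M_{\uli}$ and $M_{s_k\cdot\uli}$ are in general direct sums of several copies of the same irreducible $\mc{P}_n^{\mathfrak c}$-module, so $\tau$ could well be a nonzero map whose image lies in $\ker\tau'$ and vice versa. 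Your phrase ``combining \ldots\ I would rule out $\tau\neq 0$'' does not indicate any mechanism that actually produces a contradiction, and irreducibility of $M$ over $\Haff$ gives you no direct control over $\tau$ as a map between two specific weight spaces.

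The paper's argument is global rather than local. Assuming $\phi_k z\neq 0$, irreducibility of $M$ (together with the fact that each $\phi_j$ sends weight spaces to weight spaces and acts invertibly except possibly when the two entries differ by $1$) produces a word $\phi_{a_m}\cdots\phi_{a_1}\phi_k z=\alpha z$ with $\alpha\neq 0$; minimality of $m$ plus Lemma~\ref{lem:separate weig.} forces the underlying permutation $s_{a_m}\cdots s_{a_1}s_k$ to be trivial, and the exchange condition for $\mf S_n$ then reduces $m$ to $1$, i.e.\ $\phi_k^2 z=\alpha z\neq 0$, contradicting $\phi_k^2=0$ on $M_{\uli}$. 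That ``return to the starting weight space via a product of intertwiners, then use Coxeter combinatorics'' idea is exactly what is missing from your sketch.
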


\begin{proof}
It follows from~\eqref{eq:xinter}~that $\phi_k M_{\uli}\subseteq
M_{s_k\cdot\uli}$. By \eqref{eq:sqinter}, we have
$$
\phi_k^2z
=\big(2(x_k^2+x^2_{k+1})-(x_k^2-x^2_{k+1})^2\big)z
=\big(2(q(i_k)+q(i_{k+1}))-(q(i_k)-q(i_{k+1}))^2\big)z
$$
for any $z\in M_{\uli}$. A calculation shows that
$2(q(i_k)+q(i_{k+1}))-(q(i_k)-q(i_{k+1}))^2 \neq 0$ when $i_k\neq
i_{k+1}\pm1$ and hence $\phi_k^2z\neq 0$. This proves (1).

Assume now that $i_k=i_{k+1}\pm 1$. Suppose $\phi_kz\neq0$ for some
$z\in M_{\uli}$. Since $M$ is irreducible, there exists a sequence
$1\leq a_1, a_2,\ldots, a_{m}\leq n-1$ such that
\begin{equation}\label{eq:intertact1}
\phi_{a_m}\cdots\phi_{a_2}\phi_{a_1}\phi_kz=\alpha z
\end{equation}
for some $0\neq\alpha\in\C$. Assume that $m$ is minimal such that
\eqref{eq:intertact1} holds. Let $\sigma=s_{a_m}\cdots
s_{a_1}s_k\in\mf S_n$. Then $\sigma\cdot\uli=\uli$. If $\sigma\neq
1$, then there exists $1\le b_1\le b_2\le n$ such that
$i_{b_1}=i_{b_2}$, and $\sigma=(i_1,i_2)$ by the minimality of $m$.
Hence, $i_{b_1}$ and $i_{b_2}$ can be brought to be adjacent by the
permutation $s_{a_j}\cdots s_{a_1}s_k\cdot\uli$ for some $1\leq
j\leq m$. That is, $s_{a_j}\cdots s_{a_1}s_k\cdot\uli$ is a weight
of $M$ of the form $(\cdots,\beta,\beta,\cdots)$,
%
which contradicts Lemma \ref{lem:separate weig.}. Hence $\sigma=1$
and $s_{a_m}\cdots s_{a_2}s_{a_1}=s_k$. We further claim that $m=1$.
Suppose that $m>1$. Then, by the exchange condition for Coxeter
groups, there exists $1\le p<q\le m$ such that $s_{a_m}\cdots
s_{a_q} \cdots s_{a_p}\cdots s_{a_1}=s_{a_m}\cdots \check{s}_{a_q}
\cdots \check{s}_{a_p}\cdots s_{a_1}$, where $\check{s}$ means the
very term is removed. This leads to an identity similar to
\eqref{eq:intertact1} for a product of ($m-1)$ $\phi$'s,
contradicting the minimality of $m$.
%
Therefore $m=1$ and then $a_1=k$, which together with
\eqref{eq:intertact1} leads to $\phi_k^2z=\alpha z\neq 0.$ This is
impossible by a simple computation:
$$
\phi_k^2=2(x^2_k+x^2_{k+1})-(x^2_k-x^2_{k+1})^2
=2(q(i_k)+q(i_{k+1}))-(q(i_k)-q(i_{k+1}))^2=0
$$
on $M_{\uli}$ since $i_k=i_{k+1}\pm1$. This proves (2).
\end{proof}

\begin{corollary}\label{cor:wtbraid}
Assume that $\uli=(i_1,\ldots,i_n)\in\Z_+^n$ is a weight of an irreducible
integral completely splittable $\Haff$-module $M$. If
$i_k=i_{k+2}$ for some $1\leq k\leq n-2$, then $i_k=i_{k+2}=0$ and
$i_{k+1}=1$.
\end{corollary}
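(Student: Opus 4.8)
The plan is to use Lemma~\ref{lem:separate weig.} and Lemma~\ref{lem:interact} to reduce to the case $i_{k+1}=i_k\pm1$, and then to exploit the braid relation among the $s_i$'s -- made explicit through the vanishing of the intertwiners $\phi_k$ and $\phi_{k+1}$ on the relevant weight space -- to pin down the remaining possibilities. Concretely, write $a=i_k=i_{k+2}$ and $b=i_{k+1}$; Lemma~\ref{lem:separate weig.} already gives $b\neq a$. If in addition $i_k\neq i_{k+1}\pm1$, then by Lemma~\ref{lem:interact}(1) the tuple $s_k\cdot\uli$ is again a weight of $M$, yet its entries in positions $k+1$ and $k+2$ both equal $a$, contradicting Lemma~\ref{lem:separate weig.} (using $\phi_{k+1}$ in place of $\phi_k$ gives the same contradiction). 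Hence $b=a\pm1$; in particular $i_k=i_{k+1}\pm1$ and $i_{k+1}=i_{k+2}\pm1$, so Lemma~\ref{lem:interact}(2) yields $\phi_k=\phi_{k+1}=0$ on $M_{\uli}$.

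Next I would turn these vanishings into explicit ``seminormal'' formulas on $M_{\uli}$. Fix $0\neq z\in M_{\uli}$. Since $M_{\uli}$ is a $\mc{P}_n^{\mathfrak{c}}$-submodule on which $x_k^2,x_{k+1}^2,x_{k+2}^2$ act by the scalars $q(a),q(b),q(a)$, and $q$ is injective on $\Z_+$ so that $q(a)\neq q(b)$, the defining expression \eqref{eq:intertw} for $\phi_k$ together with $\phi_kz=0$ gives
$$
s_kz=\frac{-1}{q(a)-q(b)}\bigl[(x_k+x_{k+1})+c_kc_{k+1}(x_k-x_{k+1})\bigr]z\in M_{\uli},
$$
and in the same way $s_{k+1}z=-\bigl(q(b)-q(a)\bigr)^{-1}\bigl[(x_{k+1}+x_{k+2})+c_{k+1}c_{k+2}(x_{k+1}-x_{k+2})\bigr]z\in M_{\uli}$. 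Thus on $M_{\uli}$ the generators $s_k$ and $s_{k+1}$ act as explicit operators $S_k$ and $S_{k+1}$ lying in the image of $\mc{P}_n^{\mathfrak{c}}$; moreover, since the $x_j$ act semisimply and $x_k^2=x_{k+2}^2=q(a)$, one has $x_k=x_{k+2}=0$ on $M_{\uli}$ exactly when $a=0$. (One checks that imposing $s_k^2=1$ produces no new information, so the genuine constraint must come from the braid relation.)

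Finally, because $s_ks_{k+1}s_k=s_{k+1}s_ks_{k+1}$ holds in $\Haff$ and, by the formulas above, all intermediate vectors stay inside $M_{\uli}$, it descends to the operator identity $S_kS_{k+1}S_k=S_{k+1}S_kS_{k+1}$ on $M_{\uli}$, i.e.\ a relation in the polynomial--Clifford action subject to $x_k^2=x_{k+2}^2=q(a)$, $x_{k+1}^2=q(b)$ and $b=a\pm1$. I would expand both sides using $c_i^2=1$ from \eqref{clifford} and the (anti)commutation rules \eqref{xc}; the point is that when $a\geq1$ the operators $x_k$ and $x_{k+2}$ are both nonzero and lie in distinct Clifford factors, so the two triple products -- one governed by $x_{k+2}$, the other by $x_k$, which are not proportional on $M_{\uli}$ -- cannot agree, a contradiction. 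Hence $a=0$, and then $b=a\pm1$ together with $b\in\Z_+\setminus\{0\}$ forces $b=1$, giving $i_k=i_{k+2}=0$ and $i_{k+1}=1$. I expect the last step -- carrying out the braid-relation expansion carefully enough to rule out $a\ge1$ -- to be the main obstacle; the reductions in the first two paragraphs are routine consequences of Lemmas~\ref{lem:separate weig.} and \ref{lem:interact}.
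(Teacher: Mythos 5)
Your first two reduction steps match the paper's proof exactly: use Lemma~\ref{lem:separate weig.} to get $i_k\neq i_{k+1}$, then Lemma~\ref{lem:interact}(1) to rule out $i_{k+1}\neq i_k\pm1$ (since $s_k\cdot\uli$ would be a weight with two adjacent equal entries), so $i_{k+1}=i_k\pm1$ and Lemma~\ref{lem:interact}(2) gives $\phi_k=\phi_{k+1}=0$ on $M_{\uli}$, which indeed yields the explicit formulas you write for $s_kz$ and $s_{k+1}z$.

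The gap is in your final step. You assert that ``when $a\geq1$ the operators $x_k$ and $x_{k+2}$ are both nonzero and lie in distinct Clifford factors, so the two triple products $\dots$ cannot agree,'' but this is not an argument. Both sides of the braid identity are concrete operators in $x_k,x_{k+1},x_{k+2},c_k,c_{k+1},c_{k+2}$, and nothing about $x_k,x_{k+2}$ being ``not proportional'' or ``in distinct Clifford factors'' prevents the identity from holding for some values of $a,b$ --- it \emph{does} hold when $a=0,b=1$, so whatever makes $a\ge1$ fail must be a genuine numerical fact extracted by computation. The paper does precisely this: after substituting the expressions for $s_kz$ and $s_{k+1}z$, it computes
$$
(a-b)^2(b-a)\,(s_ks_{k+1}s_k-s_{k+1}s_ks_{k+1})z
=\big[(x_k+x_{k+2})(6x_{k+1}^2+2x_kx_{k+2})+c_kc_{k+2}(x_k-x_{k+2})(6x_{k+1}^2-2x_kx_{k+2})\big]z,
$$
which must vanish on $M_{\uli}$. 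It then decomposes $M_{\uli}=N_1\oplus N_2$ into the joint eigenspaces where $x_k=x_{k+2}$ and $x_k=-x_{k+2}$; on either piece the identity reduces to the single scalar equation $2\sqrt{q(i_k)}\,\big(6q(i_{k+1})+2q(i_k)\big)=0$, and substituting $i_{k+1}=i_k\pm1$ shows the only solution in $\Z_+$ is $i_k=0$, $i_{k+1}=1$. This eigenspace decomposition of $M_{\uli}$ (needed because $x_k$ and $x_{k+2}$ act semisimply with the same square but possibly opposite signs) and the resulting explicit polynomial constraint are the content of the corollary; without them you have only the setup, as you yourself flag. You should carry out the expansion rather than leave it as an expectation.
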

\begin{proof}
 If $i_k\neq i_{k+1}\pm1 $, then $s_k\cdot\underline{i}$
is a weight of $M$ of the form $(\cdots, u,u,\cdots)$  by
Lemma~\ref{lem:interact}(1), which contradicts
Lemma~\ref{lem:separate weig.}. Hence $i_k=i_{k+1}\pm 1$. By
Lemma~\ref{lem:interact}(2), we have
\begin{align*}
(a-b)s_kz&=
-\big((x_k+x_{k+1})+c_kc_{k+1}(x_k-x_{k+1})\big)z,\\
(a-b)s_{k+1}z&=
-\big((x_{k+1}+x_{k+2})+c_{k+1}c_{k+2}(x_{k+1}-x_{k+2})\big)z,
\end{align*}
for $z\in M_{\underline{i}}$, where $a=q(i_k)=q(i_{k+2})$, $ b=q(i_{k+1})$.
A direct calculation shows that
\begin{align}
&{(a-b)(b-a)(a-b)} (s_k s_{k+1} s_k-s_{k+1}s_ks_{k+1})z\notag\\
&=\big((x_k+x_{k+2})(6x_{k+1}^2+2x_kx_{k+2}) +
c_kc_{k+2}(x_k-x_{k+2})(6x^2_{k+1}-2x_kx_{k+2})\big) z\notag\\
&=0. \label{eq:braidaction}
\end{align}
for $z\in M_{\underline{i}}$ since $s_k s_{k+1} s_k=s_{k+1}s_ks_{k+1}$.
Decompose
$M_{\underline{i}}$ as $M_{\underline{i}}=N_1\oplus N_2$, where
$N_1=\{z\in M_{\underline{i}}~|~x_kz=x_{k+2}z=\pm\sqrt{a}z\}$ and
$N_2=\{z\in M_{\underline{i}}~|~x_kz=-x_{k+2}z=\pm\sqrt{a}z\}$. Now
applying the equality \eqref{eq:braidaction} to $z$ in $N_1$ and $N_2$, we obtain that
$$
2\sqrt{q(i_k)}\big(6q(i_{k+1})+2q(i_k)\big)=0,\label{eq.ik}
$$
which, thanks to $i_{k+1}=i_{k}\pm 1$, is equivalent to one of the
following two identities:
\begin{align}
\text{if } i_{k+1}=i_k-1, \text{ then } &\sqrt{i_k(i_k+1)}(4i_k-2)i_k=0;  \label{eq.ik1}\\
\text{if } i_{k+1}=i_k+1, \text{ then }
&\sqrt{i_k(i_k+1)}(4i_k+6)(i_k+1)=0.\label{eq.ik2}
\end{align}
There is no solution for (\ref{eq.ik1}), and the solution of
(\ref{eq.ik2}) is $i_k=0, i_{k+1}=1$.
\end{proof}

Denote by $\mc{W}(n)$ the set of weights of all integral
irreducible completely splittable $\Haff$-modules.
\begin{proposition}\label{prop:wtprop}
Assume $\uli\in \mc{W}(n)$ and $i_k=i_{\ell}=a$ for some $1\leq k<\ell\leq n$.
\begin{enumerate}
\item If $a=0$, then $1\in\{i_{k+1},\ldots, i_{\ell-1}\}$.
\item If $a\geq1$, then $\{a-1, a+1\}\subseteq\{i_{k+1},\ldots, i_{\ell-1}\}$.
\end{enumerate}
\end{proposition}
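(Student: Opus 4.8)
The plan is to argue by induction on the distance $d:=\ell-k$, the key tool being the intertwining elements $\phi_j$ of \eqref{eq:intertw} together with Lemma~\ref{lem:separate weig.}, Lemma~\ref{lem:interact} and Corollary~\ref{cor:wtbraid}. The two base cases are free: for $d=1$, Lemma~\ref{lem:separate weig.} says no weight can have $i_k=i_{k+1}$, so the claim is vacuous; for $d=2$, Corollary~\ref{cor:wtbraid} says $i_k=i_{k+2}=a$ forces $a=0$ and $i_{k+1}=1$, which is exactly (1) and makes (2) vacuous. Now let $d\ge 3$ and assume the proposition for all smaller distances. If $i_j=a$ for some $k<j<\ell$, then the inductive hypothesis applied to the pair $(k,j)$ already places $1$ (when $a=0$), resp.\ $a-1$ and $a+1$ (when $a\ge 1$), inside $\{i_{k+1},\ldots,i_{j-1}\}\subseteq\{i_{k+1},\ldots,i_{\ell-1}\}$, and we are done. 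So we may assume $i_j\ne a$ for all $k<j<\ell$; in particular $i_{k+1}\ne a$ and $i_{\ell-1}\ne a$.

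Next I would run a short case analysis on the two interior neighbours $i_{k+1}$ and $i_{\ell-1}$. If $i_{k+1}\notin\{a-1,a+1\}$, then $i_{k+1}\ne i_k\pm1$, so by Lemma~\ref{lem:interact}(1) the permuted tuple $s_k\cdot\uli$ is again a weight of the same module; it has equal entries $a$ in positions $k+1$ and $\ell$, its entries in positions $k+2,\ldots,\ell-1$ coincide with $i_{k+2},\ldots,i_{\ell-1}$, and it has smaller distance $d-1$, so the inductive hypothesis for $s_k\cdot\uli$ gives the conclusion for $\uli$. Symmetrically, if $i_{\ell-1}\notin\{a-1,a+1\}$ one slides the entry $a$ one step to the left via $\phi_{\ell-1}$ and applies the inductive hypothesis to the pair $(k,\ell-1)$. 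It remains to treat the case $i_{k+1},i_{\ell-1}\in\{a-1,a+1\}$; here if $a=0$ then $i_{k+1}=1$ and we are done, so assume $a\ge 1$. If $i_{k+1}\ne i_{\ell-1}$ then $\{i_{k+1},i_{\ell-1}\}=\{a-1,a+1\}$ (and $k+1\ne\ell-1$ since $d\ge 3$), so (2) holds. Finally, if $i_{k+1}=i_{\ell-1}=:b\in\{a-1,a+1\}$, I would apply the inductive hypothesis to the pair $(k+1,\ell-1)$, of distance $d-2$: when $b\ge 1$ it forces $b-1$ and $b+1$ into $\{i_{k+2},\ldots,i_{\ell-2}\}$, and since $a\in\{b-1,b+1\}$ this means $a$ occurs strictly between positions $k$ and $\ell$; when $b=0$ (so $a=1$) it forces $1=a$ into $\{i_{k+2},\ldots,i_{\ell-2}\}$ likewise. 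Either way this contradicts our standing assumption, so this last case does not occur, and the induction is complete.

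The work here is bookkeeping rather than conceptual, and the place to be careful is exactly that the ``forbidden'' values must stay forbidden after an allowed transposition; this is why I only ever transpose the outer entries $i_k$ or $i_{\ell-1}$, whose image under $s_k$ or $s_{\ell-1}$ lands outside the relevant window $\{k+1,\ldots,\ell-1\}$. One also has to keep the degenerate value $a=0$ separate throughout, since there $a-1$ is not a legitimate content and the statement collapses to the single requirement $1\in\{i_{k+1},\ldots,i_{\ell-1}\}$; and all appeals to the inductive hypothesis at distance $1$ or $2$ are to be read as appeals to Lemma~\ref{lem:separate weig.} or Corollary~\ref{cor:wtbraid} respectively, which makes every invocation legitimate.
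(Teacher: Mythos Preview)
Your proof is correct and takes a genuinely different route from the paper's.

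The paper argues each part by direct contradiction via repeated swapping. For (1) it assumes $1$ is absent and slides $i_\ell$ leftward past every intervening entry until two $0$'s become adjacent, violating Lemma~\ref{lem:separate weig.}. For (2) it assumes $a+1$ is absent, shows $a-1$ must then occur at least twice, and iterates this to produce a descending chain $a-1,a-2,\ldots,0$ of repeated values until it contradicts (1); the case where $a-1$ is absent is handled symmetrically by an ascending chain, which is impossible in a finite tuple.

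Your argument replaces these chain constructions by a single induction on the distance $d=\ell-k$, with a short case split on the two boundary entries $i_{k+1}$ and $i_{\ell-1}$. This is cleaner bookkeeping and treats (1) and (2) uniformly; the only cost is that the geometric picture behind the paper's proof (the repeated sliding that eventually collides two equal entries) is hidden inside the inductive machinery. Both approaches rest on exactly the same three ingredients: Lemma~\ref{lem:separate weig.}, Lemma~\ref{lem:interact}(1), and Corollary~\ref{cor:wtbraid}.
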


\begin{proof}
Without loss of generality, we can assume that
$a\notin\{i_{k+1},\ldots,i_{\ell-1}\}$.

If $a=0$ but $1\not\in\{i_{k+1},\ldots,i_{\ell-1}\}$, we can
repeatedly swap $i_{\ell}$ with $i_{\ell-1}$ then with $i_{\ell-2}$,
etc., all the way to obtain a weight of $M$ of the form
$(\cdots,0,0,\cdots)$ by Lemma~\ref{lem:interact}. This contradicts
Lemma~\ref{lem:separate weig.}. This proves (1).

Now assume $a\geq 1$ and $a+1\notin\{i_{k+1},\ldots,i_{\ell-1}\}$.
If $a-1$ does not appear between $i_{k+1}$ and $i_{\ell-1}$ in
$\underline{i}$, then we can swap $i_{\ell}$ with $i_{\ell-1}$ then
with $i_{\ell-2}$, etc., and by Lemma~\ref{lem:interact} this gives
rise to a weight of $M$ having the form $(\cdots,a,a,\cdots)$, which
contradicts Lemma~\ref{lem:separate weig.}. If $a-1$ appears only
once between $i_{k+1}$ and $i_{\ell-1}$ in $\underline{i}$, then
again by swapping $i_{\ell}$ with $i_{\ell-1}$  then with
$i_{\ell-2}$, etc. we obtain a weight of $M$ of the form
$(\cdots,a,a-1,a,\cdots)$, which contradicts
Corollary~\ref{cor:wtbraid}. Hence $a-1$ appears at least twice
between $i_{k+1}$ and $i_{l-1}$ in $\underline{i}$. This implies
that there exist $k<k_1<\ell_1<\ell$ such that
$$
i_{k_1}=i_{\ell_1}=a-1,
\{a,a-1\}\cap\{i_{k_1+1},\ldots,i_{\ell_1-1}\}=\emptyset.
$$
An identical argument shows that there exist $k_1<k_2<\ell_2<\ell_1$
such that
$$
i_{k_2}=i_{\ell_2}=a-2,
\{a,a-1,a-2\}\cap\{i_{k_2+1},\ldots,i_{\ell_2-1}\}=\emptyset.
$$
Continuing in this way, we obtain $k<s<t<l$ such that
$$
i_{s}=i_{t}=0,
\{a,a-1,\ldots,1,0\}\cap\{i_{s+1},\ldots,i_{t-1}\}=\emptyset,
$$
which contradicts (1).

Now assume that $a\geq 1$ and
$a-1\notin\{i_{k+1},\ldots,i_{\ell-1}\}$. Then $a+1$ must appear in
the subsequence $(i_{k+1},\ldots,i_{\ell-1})$ at least twice,
otherwise we can repeatedly swap $i_{\ell}$ with $i_{\ell-1}$ then
with $i_{\ell-2}$, etc., all the way to obtain a weight of $M$ of
the form $(\cdots,a,a+1,a\cdots)$ by Lemma~\ref{lem:interact}, which
contradicts Corollary~\ref{cor:wtbraid}.
%
Continuing this way we see that any integer greater than $a$ will
appear in the finite sequence $(i_{k+1},\ldots,i_{l-1})$ which is
impossible. This completes the proof of (2).
\end{proof}

For $\nu,\xi \in \mc{SP}$ such that $\nu\subseteq\xi$, the diagram
obtained by removing the subdiagram $\nu^*$ from the shifted diagram
$\xi^*$ is called a {\em skew shifted diagram} and denoted by
$\xi/\nu$. It is possible that a skew shifted diagram is realized by
two different pairs $\nu\subseteq\xi$ and $\tilde{\nu}
\subseteq\tilde{\xi}$.
\begin{example}
Assume $\xi=(5,3,2,1)$ and $\nu=(5,1)$. Then the corresponding skew
shifted Young diagram $\xi/\nu$ is
$$
\young(:::\,\,,:::\,\,,::::\,)
$$
\end{example}

A filling by $1,2, \ldots, n$ in a skew shifted diagram $\xi/\nu$
with $|\xi/\nu|=n$ such that the entries strictly increase from left to
right along each row and down each column
is called a {\em standard skew shifted tableau}
of size $n$. Denote
\begin{align*}
\mc{W}'(n) &=\{\uli\in\Z_+^n \text{ satisfying the properties in
Proposition~\ref{prop:wtprop}}\},
  \\
\mc{F}(n)&=\{\text{standard skew shifted tableaux of size } n\}.
\end{align*}
\begin{proposition}\label{prop:bijection}
There exists a canonical bijection between $\mc{W}'(n)$ and
$\mc{F}(n).$
\end{proposition}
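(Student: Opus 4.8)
The plan is to show that the asserted bijection is the \emph{content vector} map $T\mapsto c(T)$. To a standard skew shifted tableau $T$ of size $n$ I assign $c(T)=(c(T_1),\dots,c(T_n))\in\Z_+^n$, where $c(T_k)=j-i$ if the entry $k$ of $T$ occupies the cell $(i,j)$; since every cell of a shifted diagram lies weakly above the main diagonal, $c(T_k)\ge 0$, with $c(T_k)=0$ precisely for the diagonal cells. It then remains to prove: (a) $c(T)\in\mc{W}'(n)$ for every standard skew shifted tableau $T$; and (b) each $\uli\in\mc{W}'(n)$ equals $c(T)$ for exactly one such $T$.

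For (a) I would check directly that $c(T)$ satisfies the two properties (those of Proposition~\ref{prop:wtprop}) that define $\mc{W}'(n)$. Suppose entries $k<\ell$ of $T$ lie in cells $u,v$ on a common diagonal of content $a$. As entries strictly increase along rows and columns, $u$ lies strictly above $v$ on that diagonal; and since the cells of a fixed content in a skew shifted diagram form one contiguous run, I may assume $v$ is the cell immediately below $u$, say $u=(p,p+a)$ and $v=(p+1,p+1+a)$. The fact that $u$ belongs to the diagram constrains the inner shape enough — here strictness of the inner and outer shifted partitions is used — to guarantee that the cell $(p+1,p+a)$ (present when $a\ge 1$, of content $a-1$) and the cell $(p,p+a+1)$ (of content $a+1$) also belong to the diagram; their entries lie strictly between $k$ and $\ell$, since one is directly below $u$ and directly to the left of $v$, and the other is directly to the right of $u$ and directly above $v$. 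When $a=0$ the cell below $u$ is not part of the diagram, but the same argument places $(p,p+1)$, of content $1$, in the diagram with entry strictly between $k$ and $\ell$. This is exactly the conclusion of Proposition~\ref{prop:wtprop}, so $c(T)\in\mc{W}'(n)$.

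For (b) I would induct on $n$, the case $n=0$ being trivial. Given $\uli=(i_1,\dots,i_n)\in\mc{W}'(n)$, the truncation $\uli'=(i_1,\dots,i_{n-1})$ still satisfies the defining properties, hence $\uli'\in\mc{W}'(n-1)$, and by induction there is a unique standard skew shifted tableau $T'$ with $c(T')=\uli'$. The crux is to show that the underlying cell set of $T'$ has \emph{exactly one} cell $C$ of content $i_n$ whose adjunction again produces a skew shifted diagram; labeling $C$ by $n$ then automatically yields a standard skew shifted tableau, because $n$ is the maximal entry. Granting this, $c$ of the enlarged tableau is $\uli$; conversely, any standard skew shifted tableau with content vector $\uli$ restricts, after deleting the cell labeled $n$, to one with content vector $\uli'$, hence to $T'$ by the inductive uniqueness, and its last cell must then be $C$ — which gives uniqueness. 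The existence and uniqueness of $C$ is exactly where the hypothesis $\uli\in\mc{W}'(n)$ enters: one examines, treating $i_n\ge 1$ and $i_n=0$ separately, the ways a skew shifted diagram can be enlarged by one box of a prescribed content (such a box may arise either from extending the outer shape or from shrinking the inner shape), and verifies that the positions of the previously inserted boxes of contents $i_n$ and $i_n\pm 1$ (respectively $0$ and $1$) — precisely the data controlled by the defining conditions of $\mc{W}'(n)$ — pin down a single admissible location.

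The hard part is this last combinatorial step in (b): proving that the conditions of Proposition~\ref{prop:wtprop} are simultaneously strong enough to force a box of content $i_n$ to be addable and weak enough that it is unique, all while remembering that a skew shifted diagram need not determine its outer and inner shapes. This is elementary but fiddly, and the cases $i_n=0$ and $i_n\ge 1$ genuinely differ, mirroring the dichotomy in the statement of Proposition~\ref{prop:wtprop}.
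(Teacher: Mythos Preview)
Your proposal is correct and follows essentially the same route as the paper: the content map $T\mapsto c(T)$, a direct check that content vectors lie in $\mc{W}'(n)$, and an inductive construction of the inverse by adjoining a box of content $i_n$ to the tableau built from $(i_1,\dots,i_{n-1})$. Your verification of (a) is in fact more detailed than the paper's (which simply asserts ``it is easy to show''), and your use of the strictness of both the inner and outer partitions to force the neighboring cells into the diagram is exactly the right point. For (b) the paper organizes the case analysis slightly differently---splitting according to whether $T(\uli')$ already contains cells of content $i_n-1$ and/or $i_n+1$ (four cases), rather than by $i_n=0$ versus $i_n\ge 1$---but the content of the argument is the same, and your acknowledgment that this step is ``elementary but fiddly'' is accurate.
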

\begin{proof}
For $T\in\mc {F}(n)$, set
$$
c(T)=(c(T_1), c(T_2),\ldots, c(T_n))\in\Z_+^n,
$$
where $c(T_k)$ denotes the content of the cell occupied by $k$ in
$T$, for $1\leq k\leq n$. It is easy to show that  $c(T)\in
\mc{W}'(n)$. Then we define
\begin{align}
\Theta: \mc{F}(n)&\longrightarrow \mc{W}'(n),
 \qquad \Theta(T)= c(T).
  \label{map:bijection}
\end{align}
To show that $\Theta$ is a  bijection, we shall construct  by
induction on $n$ a unique tableau $T(\uli)\in\mc{F}(n)$ satisfying
$\Theta(T(\uli))=\uli$, for a given
$\uli=(i_1,\ldots,i_{n})\in\mc{W}'(n)$. If $n=1$, let
$T(\uli)\in\mc{F}(n)$ be a cell labeled by $1$ of content $i_1$.
Assume that $T(\uli')\in \mc F(n-1)$  is already defined, where
$\uli^{\prime}=(i_1,\ldots,i_{n-1})\in \mc{W}'(n-1)$. Set $u=i_n$.

\noindent\textit{Case 1}: $T(\uli')$ contains neither a cell of
content $u-1$ nor a cell of content $u+1$. Adding a new component
consisting of one cell labeled by $n$ of content $u$ to $T'$, we
obtain a new  standard tableau $T\in\mc F(n)$. Set $T(\uli)=T$.

\noindent\textit{Case 2}:  $T(\uli')$ contains cells of content
$u-1$ but no cell of content $u+1$. This implies
$u+1\notin\{i_1,\ldots,i_n\}$. Since $(i_1,\ldots,i_n)$ belongs to
$\mc{W}'(n)$, $u$ does not appear in $\underline{i}'$ and hence
$u-1$ appears only once in $\uli'$ by Proposition~\ref{prop:wtprop}.
Therefore there is no cell of content $u$ and only one cell denoted
by $A$ of content $u-1$ in $T(\uli')$. So we can add a new cell
labeled by $n$ with content $u$ to the right of $A$ to obtain a new
tableau $T$. Set $T(\uli)=T$.
 Observe that
there is no cell above $A$ in the column containing $A$ since there
is no cell of content $u$ in $T(\uli')$.  Hence
$T(\uli)\in\mc F(n)$.

\noindent\textit{Case 3}: $T(\uli')$ contains cells of content $u+1$
but no cell of content $u-1$. This implies
$u-1\notin\{i_1,\ldots,i_n\}$. Since $(i_1,\ldots,i_n)$ is in
$\mc{W}'(n)$,  $u$ does not appear in $\uli'$ and hence $u+1$
appears only once in $\uli'$ by Proposition~\ref{prop:wtprop}.
Therefore $T(\uli')$ contains only one cell denoted by $B$ of
content $u+1$ and no cell of content $u$. This means that there is
no cell below $B$ in $T(\uli')$. Adding a new cell labeled by $n$ of
content $u$ below $B$, we obtain a new tableau $T$. Set $T(\uli)=T$.
Clearly $T(\uli)\in\mc F(n)$.

\noindent\textit{Case 4}: $T(\uli')$ contains cells of contents
$u-1$ and $u+1$. Let $C$ and $D$ be the last cells on the diagonals
of content $u-1$ and $u+1$, respectively. Suppose that $C$ is
labeled by $s$ and $D$ is labeled by $t$. Then $i_s=u-1, i_t=u+1$,
and moreover $u-1\notin\{i_{t+1},\ldots,i_{n-1}\},
u+1\notin\{i_{s+1},\ldots,i_{n-1}\}$. Since $i_n=u$, by
Proposition~\ref{prop:wtprop} we see that
$u\notin\{i_{t+1},\ldots,i_{n-1}\}$ and
$u\notin\{i_{s+1},\ldots,i_{n-1}\}$.  This implies that there is no
cell below $C$ and no cell to the right of $D$ in $T(\uli')$.
Moreover $C$ and $D$ must be of the following shape
$$
\young(:C,D).
$$
Add a new cell labeled by $n$ to the right of $D$ and below $C$ to
obtain a new tableau $T$. Set $T(\uli)=T$.  Again it is clear that
$T(\uli)\in\mc F(n)$.
\end{proof}

\begin{example}\label{example1}
Suppose $n=5$. Then the standard skew shifted tableau corresponding
to
 $\uli=(1,2,0,1,0)\in\mc W'(5)$ is
$$
T(\uli)=\young(12,34,:5).
$$
\end{example}

\subsection{Classification of irreducible completely splittable $\Haff$-modules}

For a skew shifted diagram $\xi/\nu$ of size $n$, denote by
$\mc{F}(\xi/\nu)$ the set of standard skew shifted tableaux of shape
$\xi/\nu$, and form a vector space
$$
\widehat{U}^{\xi/\nu}=\bigoplus_{T\in\mc{F}(\xi/\nu)}\Cl_nv_{T}.
$$
Define
\begin{align}
x_iv_T&=\sqrt{q(c(T_i))}v_T, ~1\leq i\leq n,\label{xaction}\\
\begin{split}
 \label{saction}
s_kv_T&=\ds\Big(\frac{1}{\sqrt{q(c(T_{k+1}))}-\sqrt{q(c(T_k))}}
+\frac{1}{\sqrt{q(c(T_{k+1}))}+\sqrt{q(c(T_k))}}c_kc_{k+1}\Big)v_T \\
&+\sqrt{1-\frac{2(q(c(T_{k+1}))+q(c(T_k)))}{(q(c(T_{k+1}))-q(c(T_k)))^2}}v_{s_kT},~1\leq k\leq n-1,
\end{split}
\end{align}
where $s_kT$ denotes the tableau obtained by switching $k$ and $k+1$
in $T$ and $v_{s_kT}=0$ if $s_kT$ is not standard.

\begin{proposition}
Suppose $\xi/\nu$ is a skew shifted diagram of size $n$. Then
$\widehat{U}^{\xi/\nu}$ affords a completely splittable
$\Haff$-module under the action defined by~\eqref{xaction} and
\eqref{saction}.
\end{proposition}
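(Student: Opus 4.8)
The plan is to verify that the operators given by \eqref{xaction} and \eqref{saction}, together with the left multiplication action of the $c_i$ on the free $\Cl_n$-module $\widehat{U}^{\xi/\nu}=\bigoplus_{T\in\mc{F}(\xi/\nu)}\Cl_nv_T$, satisfy all the defining relations of $\Haff$; complete splittability will then follow at once. Several of the relations hold by construction: since $\widehat{U}^{\xi/\nu}$ is a free left $\Cl_n$-module, \eqref{clifford} holds, and the action of each $x_i$ is extended to all of $\Cl_nv_T$ by the unique rule compatible with \eqref{xc}, namely $x_i(c^{\beta}v_T)=(-1)^{\beta_i}\sqrt{q(c(T_i))}\,c^{\beta}v_T$, so that \eqref{poly} and \eqref{xc} also hold. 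Before proceeding I would record the combinatorial input that makes \eqref{saction} meaningful: for a standard skew shifted tableau $T$ one has $c(T_k)\neq c(T_{k+1})$, hence $q(c(T_k))\neq q(c(T_{k+1}))$ (as $q$ is injective and strictly positive on $\Z_+\setminus\{0\}$) and the denominators $\sqrt{q(c(T_{k+1}))}\pm\sqrt{q(c(T_k))}$ never vanish; moreover $s_kT$ is non-standard exactly when $k$ and $k+1$ occupy adjacent cells, i.e.\ when $c(T_{k+1})=c(T_k)\pm1$, and this is precisely when the radicand in \eqref{saction} equals $0$. Hence \eqref{xaction}--\eqref{saction} define honest operators on $\widehat{U}^{\xi/\nu}$.

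Writing $a=c(T_k)$ and $b=c(T_{k+1})$, I would then check the remaining relations one by one. Relation \eqref{px2} and the commutation $s_js_k=s_ks_j$ for $|j-k|>1$ are immediate, since the operators involved act on disjoint sets of positions. For \eqref{pc} one checks that $s_k$ as in \eqref{saction} satisfies $s_kc_k=c_{k+1}s_k$, $s_kc_{k+1}=c_ks_k$, and commutes with $c_j$ for $j\neq k,k+1$; this is internally consistent because the coefficient of $v_T$ in \eqref{saction} is of the form $p+q\,c_kc_{k+1}$ with $p,q$ scalar, and intertwining $c_k$ with $c_{k+1}$ forces $s_k\,c_kc_{k+1}=-c_kc_{k+1}\,s_k$. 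For \eqref{px1} one evaluates both sides on $v_T$, uses $c((s_kT)_k)=c(T_{k+1})$ and $c((s_kT)_{k+1})=c(T_k)$, and compares the $\C v_T$- and $\C v_{s_kT}$-components; the identity then collapses to elementary relations among $\sqrt{q(a)}$, $\sqrt{q(b)}$, $c_k$, $c_{k+1}$ and $(c_kc_{k+1})^2=-1$. The relation $s_k^2=1$ is a short computation on $v_T$: in either case ($s_kT$ standard or not) the ``diagonal'' part of $s_k^2$ evaluates, using $(c_kc_{k+1})^2=-1$, to the scalar $\frac{2(q(a)+q(b))}{(q(a)-q(b))^2}$, the radicand in \eqref{saction} supplies the complementary $1-\frac{2(q(a)+q(b))}{(q(a)-q(b))^2}$, and the off-diagonal cross terms cancel; one uses that the scalar equals $1$ exactly when $|a-b|=1$.

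The genuinely hard step is the braid relation $s_ks_{k+1}s_k=s_{k+1}s_ks_{k+1}$ from \eqref{braid}. I would work on the span of the $\Cl_nv_{\sigma T}$ with $\sigma$ running over the permutations of the three positions $k,k+1,k+2$; everything is then controlled by the triple of contents $(a,b,d)=(c(T_k),c(T_{k+1}),c(T_{k+2}))$ together with the information of which of the (at most six) tableaux $\sigma T$ are standard. The combinatorics of standard skew shifted tableaux restricts the admissible triples --- in particular, $c(T_k)=c(T_{k+2})$ forces $c(T_k)=c(T_{k+2})=0$ and $c(T_{k+1})=1$, the tableau counterpart of Corollary~\ref{cor:wtbraid}, which is elementary to check directly --- leaving only finitely many configurations, in each of which the braid identity reduces to a polynomial identity in $\sqrt{q(a)}$, $\sqrt{q(b)}$, $\sqrt{q(d)}$ and $c_k,c_{k+1},c_{k+2}$ that one verifies by hand. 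This case analysis, parallel to Nazarov's \cite{Naz} and to the treatments in \cite{HKS} and \cite{Wan}, is the main obstacle; it is organized most cleanly by rewriting the $s_k$-action through the intertwiners $\phi_k$ of \eqref{eq:intertw}, since on the decomposition $\widehat{U}^{\xi/\nu}=\bigoplus_T\Cl_nv_T$ the operators $\phi_k$ carry $\Cl_nv_T$ to $\Cl_nv_{s_kT}$ with scalar square $2(q(a)+q(b))-(q(a)-q(b))^2$ as in \eqref{eq:sqinter}, which makes the triple-index braid computation \eqref{braidinter} symmetric and tractable. Finally, complete splittability is immediate from \eqref{xaction}: the commuting operators $x_1,\dots,x_n$ act on $\widehat{U}^{\xi/\nu}$ with eigenvalues $\pm\sqrt{q(c(T_i))}$, hence semisimply.
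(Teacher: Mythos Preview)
Your proposal is correct and follows essentially the same route as the paper: one verifies the ``easy'' relations \eqref{pc}, \eqref{px1}, \eqref{px2} and $s_k^2=1$ directly, then handles the braid relation by separating the degenerate case $c(T_k)=c(T_{k+2})$ (forcing $(0,1,0)$) from the generic case, where the computation is organized via the intertwiners $\phi_k$ exactly as you suggest. If anything, your treatment is slightly more careful in one respect: you note that the constraint $c(T_k)=c(T_{k+2})\Rightarrow (c(T_k),c(T_{k+1}),c(T_{k+2}))=(0,1,0)$ must be checked directly from the tableau combinatorics rather than invoked from Corollary~\ref{cor:wtbraid}, which as stated concerns weights of already-constructed modules.
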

\begin{proof}
We check the defining relations (\ref{braid}), (\ref{pc}),
(\ref{px1}), and (\ref{px2}).
It is routine to check (\ref{px1}), (\ref{px2}) and (\ref{pc}). It
remains to check the Coxeter relations (\ref{braid}).

It is clear by (\ref{px2}) that $s_ks_l=s_ls_k$ if $|l-k|>1$. We now
prove $s_k^2=1$. Let $T\in\mc{F}(\xi/\nu)$. A direct calculation
shows that if $s_kT$ is standard then
$$
s_k^2v_T=\Big(\frac{2(q(c(T_{k+1}))+q(c(T_k))}{(q(c(T_{k+1}))-q(c(T_k)))^2}\Big)v_T
 +\Big(1-\frac{2(q(c(T_{k+1}))+q(c(T_k))}{(q(c(T_{k+1}))-q(c(T_k)))^2}\Big)v_T=v_T.
$$
Otherwise, if $s_kT$ is not standard then $c(T_k)=c(T_{k+1})\pm1$,
and we have
$$
s_k^2v_T
=\Big(\frac{2(q(c(T_{k+1}))+q(c(T_k))}{(q(c(T_{k+1}))-q(c(T_k)))^2}\Big)v_T
 =v_T.
$$

So it remains to prove that $s_ks_{k+1}s_k=s_{k+1}s_ks_{k+1}$.
Fix $1\leq k\leq n-2$ and $T\in\mc{F}(\xi/\nu)$.
Let $a=q(c(T_k)), b=q(c(T_{k+1})), c=q(c(T_{k+2}))$.
If $c(T_{k})= c(T_{k+2})$, then by Corollary~\ref{cor:wtbraid} we
have $c(T_{k})=c(T_{k+2})=0,c(T_{k+1})=1$ and hence $a=c=0, b=2$.
Then $(a-b)^2=2(a+b)$.
By \eqref{saction}, we obtain that
\begin{align*}
s_kv_T=\frac{\sqrt{2}}{2}(1+c_kc_{k+1})v_T, \qquad
s_{k+1}v_T=\frac{\sqrt{2}}{2}(-1+c_{k+1}c_{k+2})v_T.
\end{align*}
Then one can check that $s_ks_{k+1}s_kv_T=s_{k+1}s_ks_{k+1}v_T$.

Now assume $c(T_k)\neq c(T_{k+2})$ and hence $a, b, c$ are distinct.
Then it suffices to show $\phi_k\phi_{k+1}\phi_kv_T=\phi_{k+1}\phi_k\phi_{k+1}v_T$ for the
intertwining elements $\phi_k, \phi_{k+1}$ defined via \eqref{eq:intertw}.
It is clear by~\eqref{saction}~that
$$\phi_rv_T=
\ds \sqrt{(q(c(T_{r+1}))-q(c(T_r)))^2-2(q(c(T_{r+1}))+q(c(T_r)))}v_{s_rT},
$$
if $s_rT$ is standard and $\phi_r v_T=0$ otherwise for $1\leq r\leq n-1$.
Now for our fixed $1\leq k\leq n-2$,
if one of $c(T_k)-c(T_{k+1})$,  $c(T_{k+1})-c(T_{k+2})$ and
$c(T_{k})-c(T_{k+2})$ is $\pm1$, then
$\phi_k\phi_{k+1}\phi_kv_T
=0=\phi_{k+1}\phi_k\phi_{k+1}v_T$.
Otherwise, one can check that
\begin{align*}
\phi_k\phi_{k+1}\phi_kv_T
&=\Big(\sqrt{(a-b)^2-2(a+b)}\sqrt{(b-c)^2-2(b+c)}
\sqrt{(a-c)^2-2(a+c)}\Big)v_T\\
&=\phi_{k+1}\phi_k\phi_{k+1}v_T.
\end{align*}
Therefore the proposition is proved.

\end{proof}

For  a skew shifted diagram $\xi/\nu$ of size $n$,  pick a  standard
skew shifted tableau $T^{\xi/\nu}$ of shape $\xi/\nu$. Observe that
the $\mc{P}_n^{\mathfrak{c}}$-module $\Cl_nv_{T^{\xi/\nu}}$ contains
an irreducible submodule $\mc{L}(\xi/\nu)$ which is isomorphic to
$L(c(T^{\xi/\nu}_1))\circledast
L(c(T^{\xi/\nu}_2))\circledast\cdots\circledast L(c(T^{\xi/\nu}_n))$
and moreover
\begin{equation}\label{eq:decomp7}
\Cl_nv_{T^{\xi/\nu}}\cong (\mc{L}(\xi/\nu))^{\oplus2^{\lfloor\frac{\ell(\xi)-\ell(\nu)}{2}\rfloor}}.
\end{equation}
Set
$$
U^{\xi/\nu}:=\sum_{\sigma\in \mf
S_n}\phi_{\sigma}\mc{L}(\xi/\nu)\subseteq \widehat{U}^{\xi/\nu},
$$
where $\phi_{\sigma}=\phi_{i_1}\phi_{i_2}\cdots\phi_{i_k}$ with a reduced expression
$\sigma=s_{i_1}s_{i_2}\cdots s_{i_k}$.

\begin{lemma}
Suppose $\xi/\nu$ is a skew shifted diagram of size $n$. Then
$U^{\xi/\nu}$ is a $\Haff$-submodule of $\widehat{U}^{\xi/\nu}$.
\end{lemma}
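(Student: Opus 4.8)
The plan is to show that the finite-dimensional subspace $U^{\xi/\nu}\subseteq\widehat U^{\xi/\nu}$ is stable under each algebra generator of $\Haff$ — the $x_i$, the $c_i$ and the $s_k$ — all operators being understood inside the $\Haff$-module $\widehat U^{\xi/\nu}$ just constructed. First I would record that $\phi_\sigma$ is independent of the chosen reduced word for $\sigma$: by \eqref{braidinter} the elements $\phi_1,\dots,\phi_{n-1}$ satisfy the braid relations of $\mf S_n$, so by Matsumoto's theorem (any two reduced words differ by braid moves) $\phi_\sigma$ is well defined, with $\phi_e=1$. From \eqref{eq:xinter} and \eqref{eq:cinter} one then gets $x_i\phi_\sigma=\phi_\sigma x_{\sigma^{-1}(i)}$ and $c_i\phi_\sigma=\phi_\sigma c_{\sigma^{-1}(i)}$ for every $i$. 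Since $\mc L(\xi/\nu)$ is a $\mc P_n^{\mathfrak c}$-submodule, it is stable under all $x_j$ and all $c_j$; hence each summand $\phi_\sigma\mc L(\xi/\nu)$, and therefore $U^{\xi/\nu}$, is stable under every $x_i$ and every $c_i$.

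Next I would check stability under each $\phi_k$. If $\ell(s_k\sigma)>\ell(\sigma)$, then $s_k$ followed by a reduced word for $\sigma$ is reduced, so $\phi_k\phi_\sigma=\phi_{s_k\sigma}$ and $\phi_k\phi_\sigma\mc L(\xi/\nu)\subseteq U^{\xi/\nu}$. If $\ell(s_k\sigma)<\ell(\sigma)$, write $\sigma=s_k\sigma'$ with $\ell(\sigma')=\ell(\sigma)-1$; then $\phi_\sigma=\phi_k\phi_{\sigma'}$, so $\phi_k\phi_\sigma=\phi_k^2\phi_{\sigma'}$, and by \eqref{eq:sqinter} the element $\phi_k^2=2(x_k^2+x_{k+1}^2)-(x_k^2-x_{k+1}^2)^2$ is a polynomial in $x_k,x_{k+1}$. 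Thus $\phi_k\phi_\sigma\mc L(\xi/\nu)$ lies in the span of the images of $\phi_{\sigma'}\mc L(\xi/\nu)\subseteq U^{\xi/\nu}$ under polynomials in $x_k,x_{k+1}$, hence in $U^{\xi/\nu}$ by the first paragraph.

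Finally, to handle $s_k$ I would use that $x_k^2-x_{k+1}^2$ acts on the summand $\Cl_n v_T$ of $\widehat U^{\xi/\nu}$ by the scalar $q(c(T_k))-q(c(T_{k+1}))$, which is nonzero, because neighbouring entries of a standard skew shifted tableau occupy cells of distinct content — equivalently $c(T)\in\mc W'(n)$, cf.\ Proposition~\ref{prop:wtprop} and \eqref{map:bijection} — and $q$ is injective on $\Z_+$. Thus $x_k^2-x_{k+1}^2$ is injective on $\widehat U^{\xi/\nu}$; since it also maps the finite-dimensional subspace $U^{\xi/\nu}$ into itself by the first paragraph, it restricts to a bijection of $U^{\xi/\nu}$, so $(x_k^2-x_{k+1}^2)^{-1}$ preserves $U^{\xi/\nu}$. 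The defining formula \eqref{eq:intertw} for $\phi_k$ now rearranges, as operators on $\widehat U^{\xi/\nu}$, into
\[
s_k=\bigl(\phi_k-(x_k+x_{k+1})-c_kc_{k+1}(x_k-x_{k+1})\bigr)\,(x_k^2-x_{k+1}^2)^{-1},
\]
and the right-hand side preserves $U^{\xi/\nu}$ by everything established above; therefore so does $s_k$. Since the $x_i$, $c_i$ and $s_k$ generate $\Haff$, this proves that $U^{\xi/\nu}$ is an $\Haff$-submodule.

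The commutation identities for $\phi_\sigma$ and the rearrangement of \eqref{eq:intertw} are routine. The one place that needs genuine care is the invertibility of $x_k^2-x_{k+1}^2$ on $\widehat U^{\xi/\nu}$, i.e.\ the elementary combinatorial fact that two consecutive labels in a standard skew shifted tableau never sit on the same diagonal; once that is in hand the argument is purely formal.
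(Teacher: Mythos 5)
Your argument is correct and follows essentially the same route as the paper: stability under $\mc P_n^{\mathfrak c}$ and under the intertwiners $\phi_k$ via \eqref{eq:xinter}--\eqref{braidinter}, and then stability under $s_k$ by isolating $s_k(x_k^2-x_{k+1}^2)$ from the defining formula \eqref{eq:intertw} for $\phi_k$ and using that $x_k^2-x_{k+1}^2$ acts invertibly on weight vectors (equivalently, that consecutive entries of a standard skew shifted tableau lie on distinct diagonals, so $c(T)\in\mc W'(n)$). You have merely spelled out the steps the paper leaves implicit — well-definedness of $\phi_\sigma$ via the braid relations, the reduced-word/$\phi_k^2$ case analysis, and the invertibility formulation.
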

\begin{proof}
Clearly, $U^{\xi/\nu}$ is a $\mc{P}_n^{\mathfrak{c}}$-submodule of
$\widehat{U}^{\xi/\nu}$ by~\eqref{eq:xinter}~and~\eqref{eq:cinter}.
Let $\sigma\in \mf S_n$ and $z\in \mc{L}(\xi/\nu)$ be such that
$\phi_{\sigma}z\neq 0$.  Then
$$
\phi_k\phi_{\sigma}z=\big(s_k(x_k^2-x^2_{k+1})+(x_k+x_{k+1})+c_kc_{k+1}(x_k-x_{k+1})\big)\phi_{\sigma}z\in U^{\xi/\nu}.
$$
Meanwhile $(x_k^2-x_{k+1}^2)$ acts as a nonzero scalar on $\phi_{\sigma}z$ and hence $s_k\phi_{\sigma}z\in U^{\xi/\nu}$.
%
\end{proof}

The following theorem is due independently to \cite{HKS, Wan}. The
results of the paper of the first author \cite{Wan} were actually formulated and
established over any characteristic $p\neq 2$.
\begin{theorem}\label{thm:CSmodule}
Suppose $\xi/\nu$ and $\xi'/\nu'$ are skew shifted diagrams of size
$n$. Then
\begin{enumerate}
\item
$U^{\xi/\nu}$ is an irreducible $\Haff$-module.

\item
$U^{\xi/\nu}\cong U^{\xi'/\nu'}$ if and only if $\xi/\nu=\xi'/\nu'$.

\item
$\widehat{U}^{\xi/\nu}\cong (U^{\xi/\nu})^{\oplus
2^{\lfloor\frac{\ell(\xi)-\ell(\nu)}{2}\rfloor}}$.

\item
$\dim
U^{\xi/\nu}=2^{n-\lfloor\frac{\ell(\xi)-\ell(\nu)}{2}\rfloor}g^{\xi/\nu}$,
where $g^{\xi/\nu}$ denotes the number of standard skew shifted
tableaux of shape $\xi/\nu$.

\item
Every integral irreducible completely splittable $\Haff$-module is
isomorphic to $U^{\xi/\nu}$ for some skew shifted diagram $\xi/\nu$
of size $n$.
\end{enumerate}
\end{theorem}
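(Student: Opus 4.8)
The plan is to establish the five assertions in order, with the weight space decomposition relative to the commuting semisimple operators $x_1,\ldots,x_n$ and the intertwining elements $\phi_k$ of \eqref{eq:intertw} as the central tools. First I would record the bookkeeping on $\widehat U^{\xi/\nu}$: the $x^\alpha$-eigenvalues sort the vectors $v_T$ by the content vector $c(T)\in\Z_+^n$, the map $T\mapsto c(T)$ is injective on $\mc F(\xi/\nu)$, and by \eqref{xaction}--\eqref{saction} the $c(T)$-weight space of $\widehat U^{\xi/\nu}$ is exactly $\Cl_n v_T$, which as a $\mc{P}_n^{\mathfrak{c}}$-module is a sum of $2^{\lfloor(\ell(\xi)-\ell(\nu))/2\rfloor}$ copies of $L(c(T))$ (using that the number of zero entries of $c(T)$ is the number of main-diagonal cells $\ell(\xi)-\ell(\nu)$ of $\xi/\nu$). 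Next, using \eqref{eq:xinter}, \eqref{eq:cinter} and the braid relations \eqref{braidinter} together with \eqref{eq:decomp7}, I would show that $U^{\xi/\nu}=\sum_\sigma\phi_\sigma\mc L(\xi/\nu)$ has $c(T)$-weight space a single copy of $L(c(T))$ for each $T\in\mc F(\xi/\nu)$ and is zero in all other weights; in particular $U^{\xi/\nu}\neq 0$.

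For part (1), let $N$ be a nonzero $\Haff$-submodule of $U^{\xi/\nu}$. Since $x_1,\ldots,x_n$ act semisimply, $N$ is the sum of its weight spaces, and each $N_{c(T)}$ is a $\mc{P}_n^{\mathfrak{c}}$-submodule of the irreducible module $U^{\xi/\nu}_{c(T)}\cong L(c(T))$, hence equals $0$ or the whole space. Pick $T_0$ with $N_{c(T_0)}\neq 0$. I would then invoke the combinatorial fact that any two standard skew shifted tableaux of the same shape are joined by a chain of adjacent transpositions $s_k$ each staying within $\mc F(\xi/\nu)$; since $s_kT$ standard forces $c(T)_k\neq c(T)_{k+1}\pm 1$, the scalar $\phi_k^2=2(q(c(T)_k)+q(c(T)_{k+1}))-(q(c(T)_k)-q(c(T)_{k+1}))^2$ is nonzero on $U^{\xi/\nu}_{c(T)}$ by \eqref{eq:sqinter}, so $\phi_k$ maps $U^{\xi/\nu}_{c(T)}$ isomorphically onto $U^{\xi/\nu}_{c(s_kT)}$. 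Hence $N$ contains every $U^{\xi/\nu}_{c(T)}$, so $N=U^{\xi/\nu}$.

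Parts (3) and (4) then follow by counting dimensions: $\dim U^{\xi/\nu}=\sum_{T\in\mc F(\xi/\nu)}\dim L(c(T))=2^{\,n-\lfloor(\ell(\xi)-\ell(\nu))/2\rfloor}g^{\xi/\nu}$, which is (4); and since each of the $2^{\lfloor(\ell(\xi)-\ell(\nu))/2\rfloor}$ copies of $\mc L(\xi/\nu)$ spanning the $c(T^{\xi/\nu})$-weight space of $\widehat U^{\xi/\nu}$ generates, by the same intertwiner argument, an $\Haff$-submodule isomorphic to $U^{\xi/\nu}$, comparison with $\dim\widehat U^{\xi/\nu}=2^n g^{\xi/\nu}$ forces the direct sum decomposition (3). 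For part (2), the multiset $\{c(T):T\in\mc F(\xi/\nu)\}$ of weights is an invariant of $U^{\xi/\nu}$, and a short combinatorial argument recovers the skew shifted diagram $\xi/\nu$ from the set of content vectors of its standard tableaux (reconstructing $\xi/\nu$ cell by cell starting from an entrywise-extremal weight), giving the "only if" direction; "if" is trivial.

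For part (5), let $M$ be an integral irreducible completely splittable $\Haff$-module. By Proposition~\ref{prop:wtprop} every weight of $M$ lies in $\mc W'(n)$, hence is $c(T)$ for a standard skew shifted tableau of some shape $\xi/\nu$ via Proposition~\ref{prop:bijection}. Using \eqref{reln:x2s1}--\eqref{reln:x2s2} together with Lemmas~\ref{lem:separate weig.} and~\ref{lem:interact}, I would show that $s_kM_{\uli}\subseteq M_{\uli}+M_{s_k\cdot\uli}$ for every weight $\uli$ and every $k$ (the second summand only when $i_k\neq i_{k+1}\pm 1$), so that by irreducibility the weights of $M$ are exactly those in the $\phi_k$-orbit of one of them, namely $\{c(T):T\in\mc F(\xi/\nu)\}$; moreover each $M_{c(T)}$ is a single copy of $L(c(T))$ because these spaces are linked by the invertible operators $\phi_k$. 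Normalizing a nonzero even vector in the weight space of a chosen $T^{\xi/\nu}$ and transporting it by the $\phi_k$ (rescaled so that \eqref{eq:sqinter}--\eqref{braidinter} are respected) then forces the $s_k$-action to be \eqref{saction}, whence $M\cong U^{\xi/\nu}$. The main obstacle throughout is the control of the $\mf S_n$-action through the intertwiners: the well-definedness of $\phi_\sigma$ modulo the braid relations underlying the "single copy of $L(c(T))$" claim, and in part (5) the rigidity argument pinning down the matrix coefficients of $s_k$, together with the combinatorial input that $\mc F(\xi/\nu)$ is connected under good adjacent transpositions.
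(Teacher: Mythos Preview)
Your approach is essentially the same as the paper's: weight-space decomposition, irreducibility of each weight space as a $\mc P_n^{\mathfrak c}$-module, and transport via the intertwiners $\phi_k$ together with the connectivity of $\mc F(\xi/\nu)$ under ``good'' adjacent transpositions. The only noteworthy difference is in part~(5): the paper avoids first proving that every weight space of $M$ has multiplicity one by instead picking an irreducible $\mc P_n^{\mathfrak c}$-submodule $U'\subseteq M_{c(T^{\xi/\nu})}$, observing that $\sum_\sigma\phi_\sigma U'$ is an $\Haff$-submodule (hence all of $M$ by irreducibility), and then directly defining the isomorphism $\overline\tau:\phi_\sigma z\mapsto\phi_\sigma\tau(z)$ from a $\mc P_n^{\mathfrak c}$-isomorphism $\tau:U'\to\mc L(\xi/\nu)$; this sidesteps the normalization/rigidity argument you sketch and the separate verification that $\dim M_{c(T)}=\dim L(c(T))$.
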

\begin{proof}
Suppose $N$ is a nonzero submodule of $U^{\xi/\nu}$. Then
$N_{\uli}\neq 0$ for some $\uli=\sigma\cdot c(T^{\xi/\nu})$ and
$\sigma\in\mf S_n$, and hence $N_{c(T^{\xi/\nu})}\neq 0$. Observe
that $U^{\xi/\nu}_{c(T^{\xi/\nu})}\cong \mc L(\xi/\nu)$. This
implies that $N_{c(T^{\xi/\nu})}=U^{\xi/\nu}_{c(T^{\xi/\nu})}$ as
$L(\xi/\nu)$ is irreducible as $\mc{P}_n^{\mathfrak{c}}$-module.
Therefore $N=U^{\xi/\nu}$. This proves (1). If $U^{\xi/\nu}\cong
U^{\xi'/\nu'}$, then $T^{\xi/\nu}\in\mc F(\xi'/\nu')$. Hence,
$\xi/\nu=\xi'/\nu'$ and whence (2). Part~(3) follows by the
definition of $\widehat{U}^{\xi/\nu}$ and \eqref{eq:decomp7},  and
(4) follows from (3).

It remains to prove (5). Suppose $U$ is an integral irreducible
completely splittable $\Haff$-module  and let $u_{\uli}$ be a
non-zero weight vector of $U$. By Propositions~ \ref{prop:wtprop}
and \ref{prop:bijection}, there exists $T\in\mc{F}(n)$ such that
$\uli=c(T)$. Assume $T$ is of shape $\xi/\nu$. Observe that there
always exists a sequence of simple transpositions $s_{k_1},\ldots,
s_{k_r}$ such that $s_{k_j}\cdots s_{k_1}T$ is standard for $1\leq
j\leq r$ and $s_{k_r}\cdots s_{k_1}T=T^{\xi/\nu}$. Then it follows
by Lemma \ref{lem:interact} that $u_{\xi/\nu}:=\phi_{s_{k_r}}\cdots
\phi_{s_{k_1}}u_{\uli}$ is a non-zero weight vector of $U$ of weight
$c(T^{\xi/\nu})$. Hence $U_{c(T^{\xi/\nu})}\neq 0$ and it must
contain a $\mc{P}_n^{\mathfrak{c}}$-submodule $U'$ isomorphic to
$\mc L(\xi/\nu)$. Again by Lemma \ref{lem:interact},
$\sum_{\sigma\in\mf S_n}\phi_{\sigma}U'$ forms a $\Haff$-submodule
of $U$. Thus $U=\sum_{\sigma\in\mf S_n}\phi_{\sigma}U'.$ Let
$\tau:U'\rightarrow \mc L(\xi/\nu)$ be a
$\mc{P}_n^{\mathfrak{c}}$-module isomorphism. Then it is easy to
check that the map $ \overline{\tau}:\sum_{\sigma\in\mf
S_n}\phi_{\sigma}U'\rightarrow U^{\xi/\nu}$, which sends
$\phi_{\sigma}z$ to $\phi_{\sigma}\tau(z)$ for all $z\in U'$, is an
$\Haff$-module isomorphism.
\end{proof}

\subsection{The seminormal form construction for $\HC_n$}
When restricting Theorem \ref{thm:CSmodule} to the case of shifted
diagrams, we have the following.

\begin{theorem}  \label{th:seminormalHn}
$\{U^{\xi}|\xi\in\mc{SP}_n\}$ forms a complete set of non-isomorphic
irreducible $\HC_n$-modules. The Jucys-Murphy elements $J_1,
J_2,\ldots, J_n$ act semisimply on each $U^{\xi}$.
\end{theorem}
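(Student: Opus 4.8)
The plan is to deduce Theorem~\ref{th:seminormalHn} from Theorem~\ref{thm:CSmodule} by passing along the surjection $\digamma\colon \Haff \to \HC_n$ of \eqref{surjmap}, using the dictionary established in Section~\ref{sec:Haff}: finite-dimensional $\HC_n$-modules are exactly the finite-dimensional $\Haff$-modules killed by $x_1$, and the integral ones are automatically all of them once $x_1$ acts as $0$. First I would observe that $\HC_n$ is a semisimple superalgebra (this is already implicit in Section~\ref{sec:spinch}, where its simple modules $D^\la$ are enumerated), so \emph{every} $\HC_n$-module is completely reducible; in particular each simple $\HC_n$-module, viewed as an $\Haff$-module via $\digamma$, is irreducible and integral. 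The crucial semisimplicity input is that on such a module $x_k^2 = J_k^2$ acts semisimply: since $J_1,\dots,J_n$ commute and generate a commutative subalgebra of the semisimple algebra $\HC_n$ (indeed the Gelfand--Zetlin-type subalgebra), they act diagonalizably on every $\HC_n$-module, so the corresponding $\Haff$-module is completely splittable in the sense of Definition~\ref{defn:CS}. Hence Theorem~\ref{thm:CSmodule}(5) applies: every simple $\HC_n$-module is isomorphic, as an $\Haff$-module, to some $U^{\xi/\nu}$.

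Next I would pin down which skew shifted diagrams $\xi/\nu$ occur. A module $U^{\xi/\nu}$ descends through $\digamma$ (equivalently, is an $\HC_n$-module) precisely when $x_1$ acts as $0$ on it, i.e.\ when $q(c(T_1)) = 0$ for the tableaux $T \in \mc F(\xi/\nu)$, by \eqref{xaction}. Since $q(i) = i(i+1)$ vanishes only at $i=0$, this forces the cell labelled $1$ in every standard skew shifted tableau of shape $\xi/\nu$ to have content $0$, i.e.\ to lie on the main diagonal. For a genuine skew shape $\xi/\nu$ with $\nu\neq\emptyset$ the minimal cell (that labelled $1$) has strictly positive content, so $\nu=\emptyset$ and $\xi/\nu = \xi$ is an (unshifted-in-the-ordinary-sense) shifted diagram of a strict partition $\xi\in\mc{SP}_n$. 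Conversely, for each $\xi\in\mc{SP}_n$ the diagonal cell $(1,1)$ has content $0$, so $U^\xi$ really does factor through $\digamma$ and is an $\HC_n$-module. Thus $\{U^\xi\mid \xi\in\mc{SP}_n\}$ is exactly the set of $U^{\xi/\nu}$ that are $\HC_n$-modules, these are pairwise non-isomorphic by Theorem~\ref{thm:CSmodule}(2), each is irreducible by part~(1), and part~(5) shows every simple $\HC_n$-module arises this way; so they form a complete irredundant list.

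The semisimplicity of the action of $J_1,\dots,J_n$ on each $U^\xi$ is then immediate: under $\digamma$ the element $J_k$ acts as $x_k$, and on $U^{\xi/\nu}$ the operator $x_k$ acts by \eqref{xaction} through the eigenvalue $\sqrt{q(c(T_k))}$ on the span of $\Cl_n v_T$, which exhibits it as diagonalizable. One should also sanity-check that the count matches Corollary~\ref{cor:number simple}: the number of $\xi\in\mc{SP}_n$ equals the number of simple $\HC_n$-modules, and the dimension formula in Theorem~\ref{thm:CSmodule}(4) with $\nu=\emptyset$ gives $\dim U^\xi = 2^{\,n-\lfloor \ell(\xi)/2\rfloor} g^\xi$, which can be reconciled with the degree of $D^\xi$ from Theorem~\ref{th:irredch} via $\delta(\xi)$. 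I expect the only genuinely delicate point to be the verification that $J_1,\dots,J_n$ act semisimply on an arbitrary $\HC_n$-module --- equivalently, that the $\HC_n$-analogue of the Gelfand--Zetlin subalgebra is semisimple; this is where one either invokes semisimplicity of $\HC_n$ together with commutativity of the $J_k$'s, or argues directly that the $J_k$ are symmetric with respect to a suitable invariant form. Everything else is bookkeeping with the $\digamma$-dictionary and the content-$0$ constraint.
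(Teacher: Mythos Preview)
Your approach differs from the paper's and has a gap at precisely the point you yourself flag. The paper's argument is shorter: it checks that for $\xi\in\mc{SP}_n$ every standard shifted tableau $T$ has $c(T_1)=0$, so $x_1 v_T=0$ and the $\Haff$-action on $U^\xi$ descends to $\HC_n$ via $\digamma$; irreducibility and distinctness then come from Theorem~\ref{thm:CSmodule}(1),(2), and completeness is obtained by \emph{counting} against Corollary~\ref{cor:number simple}, which already guarantees exactly $|\mc{SP}_n|$ simple $\HC_n$-modules. The semisimple action of each $J_k$ on $U^\xi$ is read off from \eqref{xaction}. In particular the paper never needs to argue, a priori, that an arbitrary simple $\HC_n$-module is completely splittable; what you treat as a sanity check is in fact the paper's mechanism for completeness.

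Your route through Theorem~\ref{thm:CSmodule}(5) would also work and has the merit of not relying on the character-theoretic enumeration of Section~\ref{sec:spinch}, but the key step is not justified as written. The assertion that ``$J_1,\dots,J_n$ commute and sit inside the semisimple algebra $\HC_n$, hence act diagonalizably'' is false as a general principle: a commutative subalgebra of a matrix algebra need not be semisimple (take $\C[N]$ with $N$ nilpotent). What is special about the Jucys--Murphy elements is the inductive structure: $J_n$ is an \emph{even} element commuting with all of $\HC_{n-1}$, so on each simple $\HC_{n-1}$-summand of a simple $\HC_n$-module it acts by a scalar via the even part of super Schur's Lemma~\ref{lem:superSchur}, and one concludes by induction on $n$ using semisimplicity of $\HC_{n-1}$. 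That argument, or a fully worked-out invariant-form argument, is what your plan is missing. A minor further wrinkle: the conclusion should be that the skew diagram $\xi/\nu$ coincides with a straight shifted shape, rather than literally ``$\nu=\emptyset$'', since the paper explicitly allows one diagram to be realized by several pairs $(\xi,\nu)$.
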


\begin{proof}
Consider the $\Haff$-modules $\widehat{U}^{\xi}$ and $U^{\xi}$, for
$\xi\in\mc{SP}_n$. For any standard shifted tableau $T$ of shape
$\xi$, we have $c(T_1)=0$ and hence $x_1v_T=0$. Hence the action of
$\Haff$ on $\widehat{U}^{\xi}$ and $U^{\xi}$ factors through to an
action of $\HC_n$ and $x_k$ acts as $J_k$ by \eqref{surjmap}, as
$\HC_n\cong \Haff/\langle x_1\rangle$. The theorem now follows from
Theorem~\ref{thm:CSmodule}.
\end{proof}

The construction of $\HC_n$-modules $U^\xi$ above can be regarded a
seminormal form for irreducible $\HC_n$-modules.
Theorem~\ref{th:seminormalHn} in different forms has been
established via different approaches in \cite{Naz, VS, HKS, Wan}.

\end{document}